\numberwithin{equation}{section}
\newtheorem{thm}{Theorem}[section]
\newtheorem{lem}[thm]{Lemma}
\newtheorem{cor}[thm]{Corollary}
\newtheorem{pro}[thm]{Proposition}
\newtheorem{rmk}[thm]{Remark}
\newtheorem{ex}[thm]{Example}
\newcommand{\bess}{\begin{eqnarray*}}
	\newcommand{\eess}{\end{eqnarray*}}
\newcommand{\C}{\mathbb{\widehat{C}}}
\renewcommand{\setminus}{-}
\renewcommand{\epsilon}{\varepsilon}
\title{Boundary of the central hyperbolic component II:  boundary extension theorem}
  \author{Jie Cao}
\address{(J. Cao) School of Mathematical Sciences, Shenzhen University, Shenzhen, 518060, China}
\email{mathcj@foxmail.com}
\author{Xiaoguang Wang}
\address{(X. Wang) School of Mathematical Sciences, Zhejiang University, Hangzhou, 310027, China}
\email{wxg688@163.com}
\author{Yongcheng Yin}
\address{(Y. Yin) School of Mathematical Sciences, Zhejiang University, Hangzhou, 310027, China}
\email{yin@zju.edu.cn}
\begin{document}

  \begin{abstract}  In this paper, we study the boundary behavior of   Milnor's parameterization $\Phi: \mathcal B_d\rightarrow \mathcal H_d$ of the central hyperbolic component $\mathcal H_d$ via Blaschke products.
  	We  establish a  boundary extension theorem   by giving a necessary and sufficient condition for $D\in \partial \mathcal B_d$ which allows $\Phi$-extension.   Further we show that cusps are  dense in a  full Hausdorff dimensional subset of $\partial \mathcal H_d$,  
  	partially confirming a conjecture of McMullen.
\end{abstract}

\subjclass[2020]{Primary 37F45; Secondary 37F10, 37F15}


\keywords{central hyperbolic component, boundary extension, local connectivity, cusps}

  \date{\today} 
  
  	\maketitle


	
	\section{Introduction}

Let $\mathcal P_d$ be the space  of degree $d\geq 3$ monic  polynomials 
$$f(z)=a_1z+\cdots+a_{d-1} z^{d-1}+z^d,$$
where $(a_1, \cdots, a_{d-1})\in \mathbb C^{d-1}$. A polynomial $f\in \mathcal P_d$ is {\it hyperbolic} if the orbit of each critical point   tends to  $\infty$ or a bounded attracting cycle. The collection of all hyperbolic polynomials is an open subset of $\mathcal P_d\cong  \mathbb C^{d-1}$, and each component is called a {\it hyperbolic component}.  The hyperbolic component $\mathcal H_d$ containing  $z^d$ is called  the  {\it central hyperbolic component} (or  {\it  principal hyperbolic domain},  {\it  main hyperbolic component} in literature). 

Among all hyperbolic components, the central hyperbolic component $\mathcal H_d$ is of fundamental importance in holomorphic dynamics. While the maps in $\mathcal H_d$ have  the simplest  dynamical behavior,  their bifurcations on the boundary $\partial \mathcal H_d$ exhibit  abundant variety. Viewing each map $f\in \mathcal H_d$ as the `mating' of   $z^d$ and a Blaschke  product, McMullen \cite{Mc94b} discovers analogies between $\partial\mathcal H_d$ and the geometric boundary of the Teichm\"uller space. Problems and conjectures on $\partial \mathcal H_d$ are posed in \cite{Mc94b}.  Besides these  analogies, understanding $\partial \mathcal H_d$  is a  foundational 
step to understand the   boundaries of other hyperbolic components as well as the bifurcation locus.
 
 It is known from Milnor \cite{Mil12} that $\mathcal H_d$ is a topological cell.
 DeMarco \cite{De01} shows that $\mathcal H_d$ is a  domain of holomorphy.
 Petersen and Tan \cite{PT09} construct  an analytic coordinate for $\mathcal H_3$ which can extend to a large part of $\partial \mathcal H_3$. Blokh, Oversteegen,  Ptacek and  Timorin \cite{BOPT14,BOPT16,BOPT18} give a  combinatorial model for $\partial \mathcal H_d$ and study the properties of maps in $\partial \mathcal H_d$. In \cite{Luo24}, Luo  classifies the geometrically finite polynomials on $\partial \mathcal H_d$. Recently, Gao, X. Wang and Y. Wang \cite{GWW25} prove that the locally connected part of $\partial \mathcal H_d$ has full Hausdorff dimension $2d-2$.


An effective  way to understand $\mathcal H_d$ is through its Blaschke model.
Let $\mathcal B_d$ be the space of Blaschke products of degree $d$, with $0,1$ as fixed points. 
Each $B\in \mathcal B_d$ takes the form 
$$B(z)=z\Bigg(\prod_{k=1}^{d-1}\frac{z-a_k}{1-\overline{a_k}z}\Bigg) \Bigg(\prod_{k=1}^{d-1}\frac{1-\overline{a_k}}{1- {a_k}}\Bigg), \ a_1, \cdots, a_{d-1}\in \mathbb D.$$
Milnor \cite{Mil12} shows that there is a natural homeomorphism $\Psi:\mathcal H_d\rightarrow \mathcal B_d$, defined as follows: for each $f\in \mathcal H_d$, since the Fatou set $U_{f}(0)$ containing $0$ is  a Jordan disk,
there is a unique Riemann mapping $\psi_f:  U_{f}(0)\rightarrow \mathbb D$ normalized as $\psi_f(0)=0$ and $\psi_f(\nu_f)=1$, where  $\nu_f\in \partial U_{f}(0)$  is the landing point of the $0$-external ray.  
The map $\Psi$ is defined  as $\Psi(f)=\psi_f\circ f\circ \psi_f^{-1}$. 

The  homeomorphism  $\Psi:\mathcal H_d\rightarrow \mathcal B_d$ offers a promising strategy for exploring the structure of $\partial \mathcal H_d$. 
That is,  to understand $\partial \mathcal H_d$, one needs to study the boundary behavior of  
$$\Phi=\Psi^{-1}: \mathcal B_d\rightarrow \mathcal H_d.$$

  
   The space $\mathcal B_d$ can be identified as the set ${\rm Div}_{d-1}(\mathbb D)$ of integral divisors of degree $d-1$ over the unit disk $\mathbb D$. Following McMullen \cite[\S 3]{Mc09a}, there is an algebraic compactification of   ${\mathcal B_d}$, by identifying each 
  $D=(B, S)\in \partial  {\mathcal B_d}$ as the pair of a Blaschke product $B$ of degree $1\leq l<d$ and a source of integral  divisor $S\in {\rm Div}_{d-l}(\partial \mathbb D)$.
  The boundary therefore  can be decomposed as
  $$ \partial  {\mathcal B_d}=\bigsqcup_{1\leq l< d} \Big({\mathcal B_l}\times {\rm Div}_{d-l}(\partial \mathbb D)\Big).$$


Based on the compactification $\overline{\mathcal B_d}$, the following  problem naturally arises:


\vspace{5pt}
\textbf{Boundary Extension Problem: }{\it Given any $D\in  \partial \mathcal B_d$, can $\Phi: \mathcal B_d\rightarrow \mathcal H_d$ extend continuously to $D$? }

\vspace{5pt}
 
 

Our first main result gives a complete answer to this problem.
 
 	\begin{thm}  \label{bet-main} The homeomorphism  $\Phi: {\mathcal B_d}\rightarrow \mathcal H_d$ extends continuously to $D=(B, S)\in \partial  {\mathcal B_d}$ if and only if $D$ is one 
 		of the following two types: 
 		
 	(\textbf{R}).  $D$ is regular, $S$ is simple, $1\notin {\rm supp}(S)$  and $D$ has no dynamical relation;
 	
 	(\textbf{S}).    $D$ is singular,  $S$ is simple and $1\notin {\rm supp}(S)$.
 		
 	Further,	let $\mathcal R$ and $\mathcal S$ be the sets of all $D\in \partial  {\mathcal B_d}$ of type (\textbf{R}) and	type (\textbf{S}), respectively.   Then the extension $\overline{\Phi}: {\mathcal B_d}\sqcup \mathcal R \sqcup \mathcal S\rightarrow \overline{ \mathcal H_d}$
 	 satisfies that 
 	 
 	 $\overline{\Phi}|_{\mathcal R}:\mathcal R\rightarrow \overline{\Phi}(\mathcal R)$
is  a homeomorphism,   and

 $\overline{\Phi}|_{\mathcal S}$ is the constant map   $\overline{\Phi}|_{\mathcal S}\equiv f_*$, where   $f_*(z)=z+z^d$.
  \end{thm}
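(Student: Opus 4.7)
My plan is to prove the four claims of the theorem --- necessity of the conditions, sufficiency (existence of continuous extension), the homeomorphism property on $\mathcal{R}$, and the universality on $\mathcal{S}$ --- in that order, using a combination of quasi-conformal surgery, parabolic-implosion analysis, and dynamical rigidity.

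For sufficiency in the regular case (\textbf{R}), I build a candidate limit polynomial $f_D$ from the data $D=(B,S)$ as follows. The degree-$l$ Blaschke product $B$ continues to govern the dynamics on the central basin of $0$, while the simple divisor $S$ on $\partial\mathbb{D}$ records where the remaining $d-l$ critical points of the polynomial accumulate on $\partial U_f(0)$. The hypotheses that $S$ is simple, that $1\notin\mathrm{supp}(S)$, and that $D$ has no dynamical relation together guarantee that (a) the boundary extension $\psi_f^{-1}|_{\partial\mathbb{D}}$ stays injective at every point of $\mathrm{supp}(S)$ in the limit, (b) the normalization $\psi_f(\nu_f)=1$ is preserved, and (c) no two critical orbits collide parasitically. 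Then for a sequence $B_n\to D$, I extract a subsequence so that $\Phi(B_n)\to f$ in $\overline{\mathcal{H}_d}\subset\mathcal{P}_d$ and identify $f$ with $f_D$ by matching dynamics on the central Fatou component with the exterior external-ray combinatorics. The no-dynamical-relation hypothesis supplies the rigidity that pins down the limit uniquely, so every subsequence converges to the same $f_D$.

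The singular case (\textbf{S}) is handled by a different mechanism. First I would show that the singularity of $D$ forces every subsequential limit $f$ of $\Phi(B_n)$ to have $0$ as a parabolic fixed point of multiplier $1$. A parabolic-implosion analysis \`a la Douady--Lavaurs then controls the degeneration of both the central Fatou component and the critical orbits. Combined with the $\mathcal{P}_d$-normalization this pins down $f$ uniquely, and direct computation identifies $f=f_\ast(z)=z+z^d$. The universality statement --- that this limit is independent of the parameters of $B$ and $S$, and hence constant on all of $\mathcal{S}$ --- is what I expect to be the main obstacle. I would attack it by first proving a precompactness/equicontinuity statement for $\{\Phi(B_n)\}$, then using the \'Ecalle-cylinder description of parabolic degeneration to single out the limit.

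For necessity, in each failure case I would construct two families of Blaschke products approaching a putative boundary point along different directions whose $\Phi$-images accumulate on distinct polynomials. If $S$ has a multiple point $\zeta$, different angular approaches at $\zeta$ produce distinct critical collisions; if $1\in\mathrm{supp}(S)$, the normalization at $\nu_f$ collapses, producing rotational indeterminacy in the limit; if $D$ is regular with a dynamical relation, exchanging the roles of two dynamically related postcritical points along the sequence yields at least two distinct subsequential limits. The homeomorphism statement for $\overline{\Phi}|_\mathcal{R}$ then follows once injectivity is verified --- each $f\in\overline{\Phi}(\mathcal{R})$ reconstructs $B$ via the conformal dynamics on its central Fatou component and reconstructs $S$ via external-ray landing data at critical points --- combined with the standard fact that a continuous injection with locally closed image is a homeomorphism onto its image.
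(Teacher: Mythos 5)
Your sketch identifies the right three-way case split and the right intuitions, but it has concrete gaps in the places where the real work happens. I'll focus on the three most serious ones.

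First, in the regular case you propose to ``build a candidate limit polynomial $f_D$ from the data $D=(B,S)$'' and then ``identify $f$ with $f_D$ by matching dynamics.'' There is no construction of $f_D$ from $D$ available a priori; what one actually has is a nonempty compact connected impression $I_\Phi(D)$ and the task is to show it is a singleton. The mechanism that pins down the limit cannot be mere ``matching of combinatorics'': two limits $f,g\in I_\Phi(D)$ will share the same real lamination and the same central-basin dynamics $B=\phi_f^{-1}\circ f\circ\phi_f=\phi_g^{-1}\circ g\circ\phi_g$, yet this alone does not force $f=g$ --- that step requires a genuine rigidity theorem for maps on $\partial_{\rm reg}\mathcal H_d$ (the paper invokes the rigidity statement from the prequel [CWY], Theorem 2, after constructing a global topological conjugacy piece by piece: on $U_f(0)$, on the parabolic basins via the explicit model $B_{*,q}(z)=\frac{3z^2+1}{z^2+3}$, on aperiodic Fatou components, and on the Julia set via the B\"ottcher coordinates). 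Without citing such a rigidity theorem your uniqueness claim is not established.

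Second, for the necessity direction your proposals are qualitative where quantitative control is needed. For a dynamical relation $B^l(q)=q'$ you suggest ``exchanging the roles of two dynamically related postcritical points'' to produce distinct limits, but it is not clear what invariant distinguishes the resulting limits. The paper instead produces, for every $L\geq 0$, Blaschke products $\widehat B$ near $D$ with a \emph{prescribed} hyperbolic distance $d_{\mathbb D}(z_{q'}(\widehat B),\widehat B^l(c_q(\widehat B)))=L$ (Proposition \ref{prescribed-hyp-d}, proved by a winding-number argument), and then passes this to the limit using the Carath\'eodory-convergence statement $d_{U_n}(a_n,b_n)\to d_U(a,b)$ (Lemma \ref{hyp-d-limit}); the resulting map $f_L$ carries a conformally meaningful invariant that changes with $L$, so $I_\Phi(D)$ is a continuum. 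A similar prescribed-distance argument handles the non-simple-$S$ case. Your ``different angular approaches at a multiple point'' idea is close in spirit but you do not say what numerical invariant separates the limits, which is exactly the crux.

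Third, you flag the universality $\overline{\Phi}|_{\mathcal S}\equiv f_*$ as ``the main obstacle'' and sketch an \'Ecalle-cylinder / Douady--Lavaurs attack without carrying it out, so this part is admittedly incomplete. For what it's worth, the paper avoids parabolic implosion machinery: it shows (Lemma \ref{para-m}) that if the parabolic multiplicity at $0$ is $m$ then $K(f)\setminus\{0\}$ has exactly $m$ components via a polynomial-like restriction argument, and then rules out $m<d-1$ using a degenerating-conformal-mapping estimate (Koebe distortion plus the $L^1$-convergence of the radial length functions, Lemmas \ref{to-zero}, \ref{to-zero2}, Proposition \ref{convergent-rays}) that shows $\phi_{f_n}$ collapses to $0$. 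This is an elementary but genuinely different route; your proposed route would also need to resolve how the \'Ecalle-cylinder description interacts with the location of all $d-1$ free critical points, which is nontrivial when $S$ is not concentrated at a single point.

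Finally, the homeomorphism claim: you invoke ``a continuous injection with locally closed image is a homeomorphism,'' but verifying that $\overline\Phi(\mathcal R)$ is locally closed in $\partial\mathcal H_d$ is itself the hard part. The paper instead constructs the inverse $\Pi:\partial_{\rm reg}\mathcal H_d\to\partial_{\rm reg}\mathcal B_d$ explicitly (Proposition \ref{decomposition}) and shows it is continuous (Proposition \ref{proj-cont}), which sidesteps the locally-closed issue.
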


See \S \ref{Blaschke}, \ref{exploration}, \ref{regular} for the basic notions (i.e. regular, singular, simple, dynamical relation) of the divisor $D$.
We remark that the subsets $\mathcal R, \mathcal S \subset  \partial{\mathcal B_d}$  have real dimensions $2d-3$ and $d-1$ respectively, while $\overline{\Phi}(\mathcal R)\subset \partial \mathcal H_d$ has maximal  Hausdorff dimension $2d-2$  \cite{GWW25}\footnote{In \cite{GWW25}, it is shown that $\Phi$ extends to  a smaller subset $\mathcal A$ of $\mathcal R$, the set of  {\it $\mathcal H$-admissible divisors}.  All maps in  $\overline{\Phi}(\mathcal A)$ are Misiurewicz, and  $\overline{\Phi}(\mathcal A)$  has  Hausdorff dimension $2d-2$.}.  
Therefore the homeomorphism  $\overline{\Phi}|_{\mathcal R}:\mathcal R\rightarrow \overline{\Phi}(\mathcal R)$  exhibits distorted behavior.

The image set  $\overline{\Phi}(\mathcal R)$ contains an abundance of maps with parabolic  cycles and accumulates at such maps in $\partial \mathcal H_d\setminus \overline{\Phi}(\mathcal R)$, therefore $\overline{\Phi}(\mathcal R)$ is  grossly distorted due to the parabolic implosion. However, as the target of a continuous extension, one might expect that $\partial \mathcal H_d$ has a nice topology near   $\overline{\Phi}(\mathcal R)$.
Our next theorem shows that this is indeed the case.

	\begin{thm}\label{lc-R}  For any $f\in \overline{\Phi}(\mathcal R)$,  $\partial \mathcal H_d$ is locally connected at $f$.
\end{thm}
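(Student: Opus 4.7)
The plan is to establish local connectivity of $\partial\mathcal{H}_d$ at $f\in\overline{\Phi}(\mathcal{R})$ by transporting the local topology back to the Blaschke compactification via the extension $\overline{\Phi}$ of Theorem~\ref{bet-main}. Let $D\in\mathcal{R}$ be the unique divisor with $\overline{\Phi}(D)=f$ (uniqueness from the homeomorphism $\overline{\Phi}|_{\mathcal{R}}$; the case $f=f_*$, should it arise, is absorbed by the same framework). Since $\overline{\mathcal{B}_d}$ is essentially a symmetric product of closed disks, it is locally connected at $D$, with concrete local coordinates provided by the $d-1$ zeros of the Blaschke product varying independently in $\overline{\mathbb{D}}$. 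The aim is to take a shrinking basis $\{U_n\}$ of connected open neighborhoods of $D$ in $\overline{\mathcal{B}_d}$, set $W_n:=\Phi(U_n\cap\mathcal{B}_d)$, and show that the sets $\overline{W_n}\cap\partial\mathcal{H}_d$ form a shrinking neighborhood basis of connected sets around $f$ in $\partial\mathcal{H}_d$.

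The key technical ingredient is a \emph{properness} statement: if $\{B_n\}\subset\mathcal{B}_d$ and $\Phi(B_n)\to f$ in $\overline{\mathcal{H}_d}$, then $B_n\to D$ in $\overline{\mathcal{B}_d}$. First I would extract any subsequential limit $D'\in\overline{\mathcal{B}_d}$ by compactness. If $D'\in\mathcal{R}\sqcup\mathcal{S}$, continuity of $\overline{\Phi}$ at $D'$ forces $\overline{\Phi}(D')=f$, and injectivity of $\overline{\Phi}|_{\mathcal{R}}$ gives $D'=D$. If $D'\notin\mathcal{R}\sqcup\mathcal{S}$, then $D'$ violates one of the conditions in Theorem~\ref{bet-main}, and the non-extension direction of that theorem produces two sequences limiting to $D'$ whose $\Phi$-images converge to distinct polynomials in $\overline{\mathcal{H}_d}$. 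A finer inspection of what these limits can look like (polynomials with parabolic-implosion residues, with forced critical orbit relations, or with critical points captured at the $\alpha$-fixed point) should show that none of them can lie in $\overline{\Phi}(\mathcal{R})$, yielding the contradiction.

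With properness in hand, the remaining steps are relatively standard. Continuity of $\overline{\Phi}$ at $D$ gives $\mathrm{diam}(W_n)\to 0$; properness gives the reverse inclusion, so that every $g\in\overline{\mathcal{H}_d}$ sufficiently close to $f$ belongs to $\overline{W_n}$, which makes $\overline{W_n}$ a neighborhood of $f$ in $\overline{\mathcal{H}_d}$. Each $W_n$ is connected, hence so is $\overline{W_n}$. To pass from $\overline{\mathcal{H}_d}$ to $\partial\mathcal{H}_d$, one must further verify that $\overline{W_n}\cap\partial\mathcal{H}_d$ itself is connected; this would follow by using the local product structure of $\overline{\mathcal{B}_d}$ at $D$---where the boundary directions (varying points on $\partial\mathbb{D}$) decouple from the interior directions (varying points in $\mathbb{D}$)---and transporting this product structure through $\overline{\Phi}$ to recognize $\overline{W_n}\cap\partial\mathcal{H}_d$ as the image of the boundary stratum of $U_n$, which is connected.

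The main obstacle will be the properness statement, specifically the exclusion of subsequential limits $D'\notin\mathcal{R}\sqcup\mathcal{S}$. Equivalently, one must show that the set of polynomial limits arising from sequences $B_n\to D'$ for $D'$ degenerate (non-simple divisor, $1\in\mathrm{supp}$, or dynamical relation present) is entirely disjoint from $\overline{\Phi}(\mathcal{R})$. I expect this to require a careful parabolic-implosion analysis together with a rigidity statement: any $f\in\overline{\Phi}(\mathcal{R})$ is characterized by dynamical/combinatorial data incompatible with the algebraic or topological constraints that a degenerate $D'$ imposes on its limit polynomials, echoing the techniques developed in the proof of Theorem~\ref{bet-main}.
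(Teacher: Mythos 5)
Your overall strategy---pulling the local topology of $\partial\mathcal{H}_d$ near $f$ through the Blaschke compactification---is the same as the paper's, but the execution has several genuine gaps that the paper addresses by a different mechanism: instead of pushing forward via $\overline{\Phi}$, it pulls back via the map $\Pi:\partial_{\rm reg}\mathcal{H}_d\to\partial_{\rm reg}\mathcal{B}_d$ that assigns to $g\in\partial_{\rm reg}\mathcal{H}_d$ the unique divisor whose impression contains $g$ (Proposition~\ref{decomposition}) and whose continuity is established in Proposition~\ref{proj-cont}.

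The conceptual problem with pushing forward is that $\overline{\Phi}$ is only defined on $\mathcal{B}_d\sqcup\mathcal{R}\sqcup\mathcal{S}$, not on a neighborhood of $D$ in $\overline{\mathcal{B}_d}$. Arbitrarily close to $D\in\mathcal{R}$, the boundary $\partial\mathcal{B}_d$ contains divisors $D'$ that are non-simple or carry a dynamical relation; for these $D'$, the impression $I_\Phi(D')$ is a non-degenerate continuum (Proposition~\ref{regular-ns}), and it genuinely sits inside $\overline{W_n}\cap\partial\mathcal{H}_d$. So your identification of $\overline{W_n}\cap\partial\mathcal{H}_d$ ``as the image of the boundary stratum of $U_n$'' under $\overline{\Phi}$ is incorrect---it is rather the \emph{union of impressions} $\bigcup_{D'\in U_n\cap\partial\mathcal{B}_d}I_\Phi(D')=\Pi^{-1}(U_n\cap\partial\mathcal{B}_d)$, which is much larger than $\overline{\Phi}(\mathcal{R}\cap U_n)$. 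Your connectivity claim via a ``local product structure transported through $\overline{\Phi}$'' therefore does not apply: what must be shown is that this union of impressions is connected, and the paper does this by showing that $\Pi^{-1}(\gamma_E)$, for a path $\gamma_E$ from $D$ to $E$ in $\partial_{\rm reg}\mathcal{B}_d$, is a nested intersection of connected compacta (equality \eqref{eq-set}), hence connected, and then taking a union over $E$.

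Your properness statement is correctly identified as the crux, but the route you propose (inspecting degenerate limits of two sequences via parabolic implosion and forced critical orbit relations) is left as a sketch and would reproduce most of the work of \S\ref{regular-nss}--\ref{sing-div}. The paper gets properness for free from Proposition~\ref{decomposition}: if $\Phi(B_n)\to f$ and $B_n\to D'$, then $f\in I_\Phi(D')$, and the uniqueness in Proposition~\ref{decomposition} forces $D'=D=\Pi(f)$. That uniqueness in turn rests on the rigidity of the conformal marking $\phi_f$ (Proposition~\ref{unique-rm}) and the divisor equality $\zeta B=\phi_f^{-1}\circ f\circ\phi_f$, $S=\phi_f^*R_f^0$ (Proposition~\ref{div-eq}), which pin $D$ down from $f$ alone. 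You do not identify this route.

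Finally, $\overline{W_n}\cap\partial\mathcal{H}_d$ is closed, not open, so it does not directly satisfy the paper's definition of local connectivity (open, connected, shrinking neighborhoods). The paper instead takes $\mathcal{U}_\epsilon=\Pi^{-1}(U_\epsilon(D)\cap\partial\mathcal{B}_d)$, which is open in $\partial\mathcal{H}_d$ by continuity of $\Pi$ and openness of $\partial_{\rm reg}\mathcal{H}_d$. (A minor point: the case $f=f_*$ cannot arise in this theorem, since $\overline{\Phi}(\mathcal{R})\subset\partial_{\rm reg}\mathcal{H}_d$ while $f_*\in\partial_{\rm sing}\mathcal{H}_d$.)
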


Here   a  set $X$ is {\it locally connected} at $x\in X$, if there exists a family $\{U_k\}_{k\geq 1}$ of open and connected neighborhoods of $x$ in  $X$ such that $\lim_k{\rm diam}(U_k)=0$. 

    \vspace{5pt}
\noindent{\bf Remark 1.1}. \emph{A map $f\in \overline{\Phi}(\mathcal R)$ can have parabolic cycles or recurrent critical points, or both.
	  Theorem \ref{lc-R} does not mean that $\partial \mathcal H_d$ has bad topology near $\partial \mathcal H_d\setminus \overline{\Phi}(\mathcal R)$. In fact, it is conjectured that  $\partial \mathcal H_d$ is also locally connected at most maps $f\in \partial \mathcal H_d\setminus \overline{\Phi}(\mathcal R)$.} 
\vspace{5pt}


According to Luo \cite{Luo24},  when $d\geq 4$,  self-bumps occur   on $\partial \mathcal H_d$ and $\overline{\mathcal H_d}$ is not a topological manifold with boundary. This  phenomenon  means that there are different accesses approaching some map on $\partial \mathcal H_d$.  In our work, for any  sequence $(f_n)_n$ in $\mathcal H_d$ approaching some $f\in \partial\mathcal H_d$, we use  all possible aglebraic limits of    $(\Psi(f_n))_n$  to encode different ways approaching $f$.  As a by-product of the proof of  Theorem \ref{bet-main},  we show that the ways of approaching  $f_*$ can realize  all singular  divisors.
 Precisely, 
 
  \vspace{5pt}
 \noindent{\bf Corollary 1.1}(Maximal self-bumps). \emph{For any singular divisor $D=(B, S)\in \partial \mathcal B_d$,  there is a sequence $(f_n)_n$ in $\mathcal H_d$  converging to $f_*$, for which }
 	$$\Psi(f_n)\rightarrow D   \ \text{ algebraically}.$$
 

 	  
 	  The Boundary Extension Theorem (Theorem \ref{bet-main}) demonstrates its efficacy in elucidating the structure and fundamental 
 	   properties of $\partial \mathcal H_d$.  Specifically, it allows us to study the distribution of cusps in $\partial \mathcal H_d$.
 	  Recall that a rational map $f$ is  {\it geometrically finite} if the critical points in the Julia set $J(f)$ have finite orbits.  A {\it cusp} is  a geometrically finite map with parabolic cycles. Based on his celebrated work \cite{Mc91} and the analogies between rational maps and   Teichm\"uller theory,   McMullen posed the following 
 	  
 	    \vspace{5pt}
 	  \noindent{\bf Conjecture 1.1} (\cite{Mc94b}). \emph{Cusps are dense in $\partial \mathcal H_d$.} 
 	  \vspace{5pt}
 	  
   	
   	Faught \cite{F92} and Roesch \cite{R}  have shown that the boundary $\partial \mathcal H_d$, when considered within the one parameter family $f_a(z)=az^{d-1}+z^d$ where $a\in \mathbb C$,  is a Jordan curve, which provides an  evidence  of Conjecture 1.1 in a slice.

 Our last main theorem shows that cusps are dense in the   full Hausdorff dimensional  subset $\overline{\Phi}(\mathcal R)$ of $\partial \mathcal H_d$, 
   partially confirming this conjecture.

 	\begin{thm} \label{cusp-dense} Cusps are dense in $\overline{\Phi}(\mathcal R)$. Precisely, for any $f\in \overline{\Phi}(\mathcal R)$, any $\epsilon>0$, any 
 		integers $m, n\geq 0$ satisfying that 
 		$$m\geq 1, \  m+n\leq d-{\rm deg}(f|_{U_f(0)}),$$
 		there is a geometrically finite polynomial  $g\in \overline{\Phi}(\mathcal R) \cap  \mathcal N_\epsilon(f)$, which has exactly $m$ parabolic cycles and $n$ critical points on $\partial U_g(0)$.
 	\end{thm}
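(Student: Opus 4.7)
By Theorem~\ref{bet-main}, the map $\overline{\Phi}|_{\mathcal R}\colon \mathcal R\to \overline{\Phi}(\mathcal R)$ is a homeomorphism, so it suffices to produce, for $f=\overline{\Phi}(D)$ with $D=(B,S)\in \mathcal R$, a nearby divisor $D'=(B',S')\in \mathcal R$ (in the algebraic topology of $\overline{\mathcal B_d}$) such that $g=\overline{\Phi}(D')$ is geometrically finite with exactly $m$ parabolic cycles and $n$ critical points on $\partial U_g(0)$. Continuity of $\overline{\Phi}|_{\mathcal R}$ at $D$ will then force $g\in \mathcal N_\epsilon(f)$. Write $l=\deg B=\deg(f|_{U_f(0)})$ and $S=p_1+\cdots+p_{d-l}$.

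The construction of $D'$ has two parts. First, I would perturb $B$ to $B'\in \mathcal B_l$ near $B$ such that $B'|_{\partial \mathbb D}$ has $m$ prescribed parabolic periodic cycles, of chosen periods $k_1<\cdots<k_m$, at periodic points $\zeta_1,\ldots,\zeta_m\in \partial \mathbb D\setminus\{1\}$ close to $m$ of the support points of $S$. The key input is that the multiplier of a Blaschke product at a boundary periodic point is a real positive number depending real-analytically on $B$, so the condition ``multiplier $=1$'' is a real codimension-one condition on $\mathcal B_l$; combined with the density of periodic orbits of every period on $\partial \mathbb D$ (expansion and topological mixing of $B|_{\partial \mathbb D}$) and a transversality/implicit-function argument, this yields such $B'$ close to $B$. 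Second, I would choose the simple divisor $S'=q_1+\cdots+q_{d-l}$ with support in $\partial \mathbb D\setminus\{1\}$ as follows: $q_i=\zeta_i$ for $i\leq m$ (placing support on the $m$ new parabolic cycles); $q_i$ strictly pre-periodic to a repelling orbit of $B'|_{\partial \mathbb D}$ near $p_i$ for $m<i\leq m+n$ (producing critical points on $\partial U_g(0)$ at Misiurewicz-type points); and the remaining $q_i$ chosen generically near $p_i$ so that $D'\in \mathcal R$ (no dynamical relation, $1\notin\mathrm{supp}(S')$, $S'$ simple). Using the correspondence (from \S\ref{Blaschke}--\S\ref{regular}) between support points of $S'$ and the asymptotic location of the extra critical points of $g$, one reads off that $g$ has precisely $m$ parabolic cycles and $n$ critical points on $\partial U_g(0)$, and that every critical point of $g$ has a finite forward orbit, so $g$ is geometrically finite.

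The main obstacle is the simultaneous realization of $m$ parabolic cycles for $B'$ when $m$ exceeds the real dimension $2(l-1)$ of $\mathcal B_l$, which is permitted by the hypothesis $m\leq d-l$. I expect this to require choosing the periods $k_1<\cdots<k_m$ sufficiently large and spread out so that the associated multiplier functionals on $\mathcal B_l$ are generically independent, then inductively imposing the $m$ parabolic conditions via an implicit-function argument: after $j$ conditions are imposed, the remaining locus is still a real-analytic submanifold on which the $(j{+}1)$st multiplier is non-constant, thanks to the topological mixing of $B|_{\partial \mathbb D}$. When this linear-algebraic independence genuinely fails, the remaining parabolic cycles of $g$ are produced not on $\partial U_g(0)$ but via the polynomial extension outside $U_g(0)$, where the extra freedom in placing the support points of $S'$ supplies further parameters beyond those of $\mathcal B_l$. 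Care must also be taken that the final adjustment of the generic $q_i$'s simultaneously restores all the open conditions defining $\mathcal R$; this is a finite list of codimension-one conditions that is achievable by a small generic perturbation in $\partial \mathbb D\setminus\{1\}$.
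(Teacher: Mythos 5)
There is a genuine gap, and it is conceptual rather than technical. Your first step asks for a Blaschke product $B'\in \mathcal B_l$ ($l\ge 2$) with $m$ prescribed \emph{parabolic} periodic cycles on $\partial\mathbb D$. No such $B'$ exists: any $B'\in \mathcal B_l$ with $l\ge 2$ fixes $0$ with $B'(\mathbb D)=\mathbb D$, so its Julia set is $\partial\mathbb D$ and $B'$ is a hyperbolic map; every periodic point on $\partial\mathbb D$ is repelling. The subsequent paragraph about ``imposing the $m$ parabolic conditions via an implicit-function argument'' therefore aims at an empty target. The mechanism the paper uses (Lemma~\ref{char-map-imp}(1)) is different: one places a support point $q$ of $S$ at a $B$-\emph{repelling} periodic point; it is the degeneration of a critical point of $B_n\in\mathcal B_d$ onto $q$ that forces $\phi_g(q)$ to be parabolic for the limiting polynomial $g$, by the snail lemma argument via the $f^l$-invariant external ray landing there. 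Nothing needs to be parabolic for the circle map.

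There is a second gap. You keep the inner Blaschke product at degree $l=\deg B$ and use all $\deg S_0=d-l$ support points of $S'$, placing $m$ of them at periodic points, $n$ at preperiodic points, and the remaining $\deg S_0-m-n$ ``generically.'' But Lemma~\ref{char-map-imp}(2) shows that \emph{every} non-periodic support point of $S'$ produces a critical point on $\partial U_g(0)$, so a generic remainder yields $\deg S_0-m$ such critical points, not $n$; the count is wrong whenever $m+n<\deg S_0$. The paper handles this by moving $\deg S_0-m-n$ support points of $S_0$ slightly \emph{into} $\mathbb D$, which raises the inner Blaschke degree from $\deg B_0$ to $d-m-n$ and leaves a singular divisor of degree exactly $m+n$. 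Only then does it choose $m$ points on distinct periodic cycles and $n$ strictly preperiodic points (on $m+n$ distinct cycles to avoid dynamical relations). With your fixed degree $l$ there is no way to avoid the extra boundary critical points.

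To repair the argument: replace $B'$ by a Blaschke product in $\mathcal B_{d-m-n}$ obtained from $B$ by adding $\deg S_0-m-n$ zeros in $\mathbb D$ near the corresponding support points of $S_0$, drop the parabolicity condition on $B'$ entirely, and take $S'$ supported at $m$ repelling $B'$-periodic points plus $n$ strictly $B'$-preperiodic points, all chosen on $m+n$ distinct cycles near the remaining support points of $S_0$; the density of periodic points and of their grand orbits on $\partial\mathbb D$ makes this possible. Your remaining steps (invoking the homeomorphism $\overline{\Phi}|_{\mathcal R}$ and checking the open conditions defining $\mathcal R$) are in the right spirit and match the paper once the two issues above are corrected.
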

 

 The paper is organized as follows: 
 
In \S  \ref{Blaschke},  we prove   a boundary extension theorem (Theorem \ref{bet}) for the parameterization of  Blaschke products via critical points.  In  \S \ref{continuity-property}, some continuity properties (for pointed disks, rays, maps) are established.  In \S  \ref{exploration},   each $D\in \partial \mathcal B_d$ is associated with a connected compact set $I_{\Phi}(D)\subset \partial \mathcal H_d$,  consisting of all possible limits of $(\Phi(B_n))_n$ for the sequences $(B_n)_n$ in $\mathcal B_d$ converging to $D$.  
The  Boundary Extension Problem is then reduced to classify those $D$ for which $I_{\Phi}(D)$ is a singleton. In \S \ref{regular} and
 \S \ref{regular-nss}, we study $I_{\Phi}(D)$ for regular divisors. In \S \ref{sing-div}, we study $I_{\Phi}(D)$ for singular divisors.  In \S \ref{proof-main}, we prove Theorems \ref{bet-main},  \ref{lc-R} and  \ref{cusp-dense}.
 
 This paper   extends the work of \cite{CWY}, in which the local connectivity of Julia sets  and rigidity theorem  were established for the maps in the regular part of $\partial \mathcal H_d$. The rigidity is applied to study  the extension of $\Phi$ in   \S \ref{regular}. 
 
 	

 \vspace{5pt}
  \noindent\textbf{Acknowledgments.}   The research is supported by National Key R\&D Program of China (Grants No. 2021YFA1003200 and No. 2021YFA1003202),  National Natural Science Foundation of China (Grants No. 12131016 and No. 12331004), and  the Fundamental Research Funds for the Central Universities 2024FZZX02-01-01.

 		
 	

	\section{Blaschke products}\label{Blaschke}
	
	Throughout the paper we adopt the following notations:
	
	\begin{itemize}
		\item $\mathbb C$ and $\C$:  the complex plane and the Riemann sphere
		
		\item $\mathbb N$ and $\mathbb Z$: the set of natural numbers $0,1,2,\cdots$ and  the set of integers
		
		\item $\mathbb D(a,r)=\{z\in \mathbb C; |z-a|<r\}$, $\mathbb D=\mathbb D(0,1)$
		
		\item $d_{U}(a, b)$: the hyperbolic distance between $a,b$ in a Jordan disk $U$
		
		\item $\mathbb D_{\rm hyp}(a,r)$: the hyperbolic disk in $\mathbb D$, centered at $a$ with radius $r$ 
		
		\item ${\rm diam}(E)$: the Euclidean diameter $\sup_{a,b\in E}|a-b|$ of a  set $E\subset \mathbb C$

		\item   A  sequence of maps  $(f_n)_n$ {\it converges}  to $f$ in a domain $\Omega$ means that $f_n$  \textbf{converges locally and uniformly} to $f$ in $\Omega$.
		
	\end{itemize}

	A {\it divisor} $D$ on a set $\Omega\subset \mathbb C$ is a formal sum
	$$D=\sum_{q\in \Omega} \nu(q) \cdot q,$$
	where $\nu: \Omega\rightarrow \mathbb Z$ is a map, $\nu(q)\neq 0$ for only finitely many $q\in \Omega$.
	The {\it support} of   $D$, denoted by ${\rm supp}(D)$, is the finite set $\{q\in \Omega; \nu(q)\neq 0\}$.  
	The divisor $D$ is called {\it integral} (or {\it effective}) if $\nu\geq 0$;    {\it simple} if $\ \nu(q)=1$ for all $q\in {\rm supp}(D)$.  The {\it degree} of an integral divisor $D$ is defined by 
	${\rm deg}(D)=\sum_{q\in \Omega} \nu(q)$.
	
	Let  ${\rm Div}_e(\Omega)$ be the set of all integral divisors on $\Omega$ of degree $e\geq 1$. 
	There is a natural quotient map from $\Omega^e$ to ${\rm Div}_e(\Omega)$
	 sending an ordered $e$-tuple $(z_1, \cdots, z_e)\in \Omega^e$ to $D=\sum_{1\leq k\leq e} 1 \cdot z_k$. This implies that when $\Omega$ is a planar set, ${\rm Div}_e(\Omega)$ inherits a   quotient   topology.
	
	For any integers $e, m\geq 1$, let $\mathcal B_{e,m}$ be the space of   Blaschke product $f$ of degree $e+m$, with $f(0)=0, f(1)=1$ and   local degree \footnote{The {\it local degree} ${\rm deg}(f, q)$ is the multiplicity of $q$ as the zero of $f(z)-f(q)$.} ${\rm deg}(f, 0)\geq m$:
	$$f(z)=z^m\prod_{k=1}^e\bigg(\frac{1-\overline{a_k}}{1-{a_k}} \cdot \frac{z-a_k}{1-\overline{a_k}z}\bigg), \ a_1, \cdots, a_e\in \mathbb D.$$ 
 Clearly  each $f\in \mathcal B_{e,m}$ is uniquely determined by its {\it zero divisor}
 $$Z(f):=m\cdot 0+\sum_{k=1}^{e} 1\cdot a_k=: m\cdot 0+Z_f.$$
	The  critical set of $f$ in $\mathbb D$  induces  the {\it ramification divisor} $R(f)$,   defined by
	$$R(f)=\sum_{q\in \mathbb D}({\rm deg}(f,q)-1)\cdot q=: (m-1)\cdot 0 +R_f.$$
	We call $Z_f$  and $R_f$  the {\it free zero divisor} and  the {\it free ramification divisor}. 
	Clearly   $f\mapsto Z_f$ gives a bijection from $\mathcal B_{e,m}$ to ${\rm Div}_e(\mathbb D)$, so one can identify  $\mathcal B_{e, m}$ with ${\rm Div}_e(\mathbb D)$ by this map. Since $R(f)$ is  uniquely determined  by its free part  $R_f\in {\rm Div}_e(\mathbb D)$, there is a natural self map of ${\rm Div}_e(\mathbb D)$: $Z_f\mapsto R_f$.

	\begin{thm} [Heins \cite{H}, Zakeri \cite{Z}]\label{z-p}  For any integer $e\geq 1$, the map $$\Psi_{e,m}:
		\begin{cases} {\rm Div}_e(\mathbb D)\rightarrow {\rm Div}_e(\mathbb D)\\
			Z_f\mapsto R_f \end{cases}$$
		is a homeomorphism.
	\end{thm}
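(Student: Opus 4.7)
The plan is to derive the homeomorphism from three properties of $\Psi_{e,m}$—continuity, properness, and injectivity—and then appeal to invariance of domain. Both the domain and the target $\mathrm{Div}_e(\mathbb D)\cong\mathbb D^e/S_e$ are connected topological manifolds of real dimension $2e$. Continuity is immediate: writing $f$ as a rational function whose coefficients are polynomial in the elementary symmetric functions of $(a_1,\dots,a_e)$, the zeros of $f'$ in $\mathbb D$ depend continuously on $Z_f$ by Rouch\'e's theorem.

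For properness, I would start with a sequence $(f_n)$ in $\mathcal B_{e,m}$ for which $R_{f_n}\to R_\infty\in\mathrm{Div}_e(\mathbb D)$, pass to a subsequence so that each free zero $a_j^{(n)}$ converges in $\overline{\mathbb D}$, and let $k$ be the number that reach $\partial\mathbb D$. Using the identity $\lim_{a\to\zeta\in\partial\mathbb D}(z-a)/(1-\bar a z)=-\zeta$ locally uniformly in $\mathbb D$, one sees that $f_n$ converges locally uniformly to a finite Blaschke product $f_\infty$ of degree $e+m-k$. Riemann--Hurwitz bounds the critical multiplicities of $f_\infty$ in $\mathbb D$ by $e+m-k-1$; on the other hand, since $R_{f_n}\to R_\infty$ loses no mass to $\partial\mathbb D$, Hurwitz forces $f_\infty'$ to have at least $(m-1)+e$ zeros in $\mathbb D$. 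Comparing the two estimates yields $k=0$, so $(Z_{f_n})$ has a subsequential limit in $\mathrm{Div}_e(\mathbb D)$.

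Injectivity is the main obstacle. Suppose $f_1,f_2\in\mathcal B_{e,m}$ share the same free ramification divisor; then their full critical divisors $(m-1)\cdot 0+R_{f_j}$ in $\mathbb D$ coincide. The key input I would invoke is the classical uniqueness principle for Blaschke products with prescribed critical divisor due to Heins: two finite Blaschke products of the same degree sharing the same critical divisor in $\mathbb D$ differ by post-composition with an automorphism of $\mathbb D$. Applied here, $f_1=M\circ f_2$ for some $M\in\operatorname{Aut}(\mathbb D)$; the condition $f_j(0)=0$ together with the common $m$-fold branching at $0$ forces $M(0)=0$, so $M(z)=e^{i\theta}z$, and $f_j(1)=1$ then forces $\theta=0$. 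Hence $f_1=f_2$ and $Z_{f_1}=Z_{f_2}$. If one wishes to avoid citing Heins, an alternative is to prove this uniqueness on the fly by Schwarz reflection across $\partial\mathbb D$ followed by a monodromy argument on the common branched-cover structure.

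Finally, combining the three properties, $\Psi_{e,m}$ is a continuous, injective, proper map between connected $2e$-dimensional topological manifolds; invariance of domain provides an open image, properness provides a closed image, and connectedness of the target then yields surjectivity, making $\Psi_{e,m}$ a homeomorphism.
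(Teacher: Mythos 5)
The paper does not prove Theorem~\ref{z-p}; it is cited from Heins and Zakeri, so there is no in-paper argument to compare against. Your proof correctly reconstructs the standard argument from those two references: Zakeri's paper establishes the homeomorphism via exactly this topological scheme (continuity $+$ properness $+$ invariance of domain), with the injectivity/uniqueness statement ultimately traceable to Heins. All four steps check out. The properness argument is the most substantive part and is sound: under the hypothesis $R_{f_n}\to R_\infty\in\mathrm{Div}_e(\mathbb D)$, the full critical divisor $(m-1)\cdot 0+R_{f_n}$ of degree $e+m-1$ loses no mass to $\partial\mathbb D$, so Hurwitz's theorem pins down exactly $e+m-1$ critical points of the limit $f_\infty$ in $\mathbb D$, while Riemann--Hurwitz for a proper self-map of $\mathbb D$ of degree $e+m-k$ caps this count at $e+m-k-1$; comparing gives $k=0$. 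One small point you leave implicit: after sending some zeros to $\partial\mathbb D$, the unimodular normalizing constants $(1-\overline{a_j^{(n)}})/(1-a_j^{(n)})$ need a further subsequence to converge, so $f_\infty$ is \emph{a priori} a unimodular multiple of a Blaschke product with $f_\infty(1)=1$ rather than an element of some $\mathcal B_{e',m'}$; since the Riemann--Hurwitz count is insensitive to this unimodular factor, the argument is unaffected, but it is worth stating. The injectivity step correctly applies Heins's uniqueness (critical divisor determines a finite Blaschke product up to post-composition with $\mathrm{Aut}(\mathbb D)$) and uses the two normalizations $f(0)=0$, $f(1)=1$ to kill the residual automorphism. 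Finally, $\mathrm{Div}_e(\mathbb D)$ is identified with an open connected subset of $\mathbb C^e$ via elementary symmetric functions, so invariance of domain applies as you say, and a proper continuous bijection between locally compact Hausdorff spaces is a homeomorphism.
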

	
	Theorem \ref{z-p} implies  that each $f\in \mathcal B_{e,m}$ is uniquely determined by its free ramification divisor $R_f\in {\rm Div}_e(\mathbb D)$, and each
	  $R\in {\rm Div}_e(\mathbb D)$ can be realized as a free ramification divisor of  a unique $f\in \mathcal B_{e,m}$.
	
	The main purpose of this section is to show that the map $\Psi_{e,m}$ can   extend  to the closure 
	$\overline{{\rm Div}_e(\mathbb D)}={\rm Div}_e(\overline{\mathbb D})$, with a nice boundary behavior. For this end, it is worth noting  the set-theoretic expression
	
\begin{equation}\label{decom-div}{\rm Div}_e(\overline{\mathbb D})=\bigsqcup_{d_1+d_2=e; \ d_1,d_2\geq 0}\Big({\rm Div}_{d_1}({\mathbb D})+
	{\rm Div}_{d_2}(\partial{\mathbb D})\Big), \end{equation}
	
\begin{equation}\label{decom-boundary}\partial {\rm Div}_e(\overline{\mathbb D})=\bigsqcup_{d_1+d_2=e; \  d_2\geq 1}\Big({\rm Div}_{d_1}({\mathbb D})+
	{\rm Div}_{d_2}(\partial{\mathbb D})\Big).\end{equation}
	
		Note that  $\partial {\rm Div}_e({\mathbb D})=\partial {\rm Div}_e(\overline{\mathbb D})$ in its topology. If $D\in  \partial {\rm Div}_e({\mathbb D})$, then 
	$D\in  {\rm Div}_{d_1}(\mathbb D)+{\rm Div}_{d_2}(\partial\mathbb D)$ for some $d_1\geq 0, d_2\geq 1,  d_1+d_2=e$. 	 There are two equivalent ways to express $D$,  one is 
		$D=D_1+D_2$ where $D_1\in {\rm Div}_{d_1}(\mathbb D)$ and $D_2\in {\rm Div}_{d_2}(\partial\mathbb D)$, the other is $D=(B,S)$ where $B\in \mathcal B_{d_1, m}(={\rm Div}_{d_1}(\mathbb D))$ and $S\in {\rm Div}_{d_2}(\partial\mathbb D)$. The relation is $D_1=Z_B, D_2=S$. We use both ways in the paper without further explanation.
		
			 A sequence $(B_n)_n$ in ${\mathcal B_{e,m}}={\rm Div}_e({\mathbb D})$  {\it   converges to $D=(B,S)\in \partial {\mathcal B_{e,m}}=\partial {\rm Div}_e({\mathbb D})$ algebraically} (see \cite[\S 13]{Mc09b},  \cite[\S 1]{De05}),  denoted by $B_n\rightarrow D$, if the free zero divisors  $Z_{B_n}$ converge to 
		$Z_B+S$ in the   topology of ${\rm Div}_{e}(\overline{\mathbb D})$.  
	
	\begin{thm}  \label{bet}
		The map $\Psi_{e,m}$ extends to a homeomorphism 
		$$\Phi_{e,m}: {\rm Div}_e(\overline{\mathbb D})\rightarrow {\rm Div}_e(\overline{\mathbb D}).$$
		The extension is given as follows: write $D\in  \partial {\rm Div}_e({\mathbb D})$ as 
		$D=D_1+D_2$, where $D_1\in {\rm Div}_{d_1}(\mathbb D),  D_2\in {\rm Div}_{d_2}(\partial\mathbb D)$ such that 
		$d_1+d_2=e$, then 
		$$\Phi_{e,m}(D_{1}+D_2)=\Psi_{d_1,m}(D_1)+D_2,$$
		where $\Psi_{d_1, m}$ is the map given by Theorem \ref{z-p}.
	\end{thm}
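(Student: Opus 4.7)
The plan is to exploit compactness: since ${\rm Div}_e(\overline{\mathbb D})\cong \overline{\mathbb D}^e/S_e$ is compact Hausdorff and the proposed $\Phi_{e,m}$ is a bijection (the stratified assembly of the Heins--Zakeri homeomorphisms at lower degrees with the identity on the $\partial\mathbb D$-factors), a continuous bijection between compact Hausdorff spaces is automatically a homeomorphism. Thus only the continuity of $\Phi_{e,m}$ needs to be verified. By Theorem \ref{z-p} continuity already holds on ${\rm Div}_e(\mathbb D)$, so the real content is: for any $D_n\to D=D_1+D_2$ with $D_1\in {\rm Div}_{d_1}(\mathbb D)$ and $D_2\in {\rm Div}_{d_2}(\partial \mathbb D)$, one must show $\Phi_{e,m}(D_n)\to \Psi_{d_1,m}(D_1)+D_2$. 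The main case (the others will follow analogously, since the polynomial below is defined from $D_n$ directly) is $D_n\subset \mathbb D$ interior: here $D_n=Z_{B_n}$ for some $B_n\in\mathcal B_{e,m}$, and one must prove $R_{B_n}\to R_B+D_2$ where $B\in\mathcal B_{d_1,m}$ has $Z_B=m\cdot 0+D_1$.

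The key tool is a single polynomial encoding all the critical points. Writing $B_n(z)=C_n\, z\, p_n(z)/q_n(z)$ with $p_n(z)=z^{m-1}\prod_{k=1}^{e}(z-\zeta_k^{(n)})$ and $q_n(z)=\prod_k(1-\overline{\zeta_k^{(n)}}z)$, direct differentiation gives $B_n'(z)=C_nP_n(z)/q_n(z)^2$, where
\[
P_n(z)=q_n(z)\bigl(zp_n(z)\bigr)'-zp_n(z)\,q_n'(z).
\]
This polynomial has degree $m+2e-1$, and its zeros on $\mathbb C^\ast$ are precisely the critical points of $B_n$ viewed as a rational map $\widehat{\mathbb C}\to\widehat{\mathbb C}$: namely $(m-1)\cdot 0+R_{B_n}$ inside $\mathbb D$ together with the $e$ Blaschke-reflected critical points $1/\overline{R_{B_n}}$ in $\widehat{\mathbb C}\setminus\overline{\mathbb D}$.

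As $D_n\to D$ the coefficients of $P_n$ converge, so $P_n\to P_\infty$ coefficient-wise. The crucial algebraic observation is that for $\alpha\in\partial \mathbb D$ one has $1-\overline\alpha z=-\overline\alpha(z-\alpha)$, so the limits of the $D_2$-contributions to $p_n$ and $q_n$ differ only by a unimodular constant. Introducing $L(z):=\prod_{\alpha\in{\rm supp}(D_2)}(z-\alpha)^{D_2(\alpha)}$, a short manipulation yields the factorization
\[
P_\infty(z)=C\cdot L(z)^2\cdot P^B(z),
\]
where $C$ is a unimodular constant and $P^B$ is the polynomial built from $B$ in the same way. This identifies the zeros of $P_\infty$ as the zeros of $P^B$ — namely $(m-1)\cdot 0+R_B$ in $\mathbb D$ together with $1/\overline{R_B}$ outside $\overline{\mathbb D}$ — plus each boundary point $\alpha\in{\rm supp}(D_2)$ with multiplicity $2D_2(\alpha)$.

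Convergence of polynomial roots then shows that the $2D_2(\alpha)$ zeros of $P_n$ approaching each $\alpha$ are, by the Blaschke reflection symmetry $z\mapsto 1/\overline z$ of the zero set of $P_n$ (which fixes $\alpha$), forced to split evenly into $D_2(\alpha)$ approaching from inside $\mathbb D$ and $D_2(\alpha)$ approaching from outside. The interior-side roots, together with the $d_1$ interior roots of $P_n$ that converge by Hurwitz to $R_B$, assemble into $R_{B_n}\to R_B+D_2$ in ${\rm Div}_e(\overline{\mathbb D})$, as desired. The hard part will be the explicit factorization $P_\infty=CL^2P^B$: although it is a short algebraic calculation, it is essential that the boundary contributions to $p_n$ and $q_n$ match up after the identity $1-\overline\alpha z=-\overline\alpha(z-\alpha)$ is applied — this is precisely the mechanism by which escaping zero-divisor mass is transferred to ramification-divisor mass on $\partial\mathbb D$. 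The remainder of the argument (polynomial root continuity, the Blaschke reflection, Hurwitz's theorem) is standard.
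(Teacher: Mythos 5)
Your proof is correct, but it takes a genuinely different route from the paper's. The paper's argument is ``soft'': it defines the impression $I(D)=\bigcap_{\epsilon>0}\overline{\Psi_{e,m}(N_\epsilon(D))}$ (connected and compact), uses the locally uniform convergence $B_n\to\zeta B$ off $\operatorname{supp}(S)$ (Lemma \ref{degenerate-0}) together with Walsh's theorem (Theorem \ref{walsh}) to confine $I(D)$ to a finite set, concludes from connectivity that $I(D)$ is a singleton, and finally identifies the singleton by a repeated-limit bootstrap that peels off one boundary coordinate at a time. Your argument is ``hard'' and algebraic: you package all critical points as roots of the single polynomial $P_n=q_n(zp_n)'-zp_nq_n'$, note that its coefficients depend continuously on the divisor, and derive the factorization $P_\infty=c\,L^2\,P^B$ from the identity $1-\overline{\alpha}z=-\overline{\alpha}(z-\alpha)$ on $\partial\mathbb D$; the Blaschke reflection $z\mapsto 1/\overline z$ then forces the $2D_2(\alpha)$ roots of $P_n$ near each $\alpha$ to split evenly between $\mathbb D$ and its exterior. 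This correctly identifies the factorization as the mechanism transferring escaping zero-mass to ramification-mass, sidesteps Walsh entirely, and your observation that a continuous bijection between compact Hausdorff spaces is automatically a homeomorphism cleanly finishes the argument (where the paper simply asserts bicontinuity of $\Phi_{e,m}$ and $\Phi_{e,m}^{-1}$). The paper's route buys robustness — it never needs to locate the degenerating critical points precisely until the very last step; yours buys transparency and an elementary, self-contained computation.

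Two points are worth tightening. First, the leading coefficient of $P_n$ is $m(-1)^e\prod_k\overline{\zeta_k^{(n)}}$, which tends to $0$ when $0\in\operatorname{supp}(D_1)$; the degree drops and roots escape to $\infty$, so ``polynomial root continuity'' should be invoked via Hurwitz on a fixed compact neighborhood of $\overline{\mathbb D}$ rather than on a full ordered root vector. Second, your reduction to the case $D_n\subset\mathbb D$ deserves one more sentence: when some $D_n$ already has boundary mass $D_n''$, the polynomial $P_{D_n}$ has $\sigma$-fixed zeros on $\partial\mathbb D$ (with even multiplicity $2D_n''(\alpha)$, by the same factorization applied at finite $n$), so the ``no fixed roots, hence even pairing'' statement is not literally available; instead one reads off the boundary contribution to $\Phi_{e,m}(D_n)$ directly from $L_n$ and applies the reflection-pairing only to the interior part. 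Once these are noted, the argument goes through and establishes the same extension formula as the paper.
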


	The proof is based on the following facts about the position of the critical points of a Blaschke product.
	
		\begin{thm} [Walsh \cite{W}] \label{walsh} The critical points of a finite Blaschke product are contained in the hyperbolic convex hull of the zeros.
			\end{thm}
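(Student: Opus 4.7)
The plan is to prove the contrapositive: if $c \in \mathbb{D}$ lies outside the closed hyperbolic convex hull $K$ of the zero set $\{a_1, \ldots, a_n\}$ of a finite Blaschke product $B$, then $B'(c) \neq 0$. Since $K$ is the intersection of the closed hyperbolic half-planes that contain every $a_k$, the complement $\mathbb{D} \setminus K$ is a union of open hyperbolic half-planes each disjoint from $\{a_k\}$, so there exists a hyperbolic geodesic $\gamma$ with $c$ strictly on one side and all $a_k$ on the closed opposite side.

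The setup is M\"obius-equivariant: pre- and post-composing $B$ with automorphisms of $\mathbb{D}$ sends finite Blaschke products to finite Blaschke products, permutes the zeros, and preserves both the critical set and the hyperbolic metric. I would therefore apply an automorphism of $\mathbb{D}$ sending $\gamma$ to the real diameter $(-1,1)$ so that $c$ lies in the open lower half-disk and each $a_k$ in the closed upper half-disk, and then exploit the logarithmic derivative
\[
\frac{B'(z)}{B(z)} \;=\; \sum_{k=1}^{n} \frac{1 - |a_k|^2}{(z - a_k)(1 - \overline{a_k}\, z)}.
\]
Since $c \notin \{a_k\}$ we have $B(c) \neq 0$, so it suffices to reach a contradiction on the imaginary part of the right-hand side at $z = c$.

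The main step is a sign computation. Writing $c = x + iy$ with $y < 0$ and $a_k = \alpha_k + i\beta_k$ with $\beta_k \geq 0$, a direct expansion gives
\[
\operatorname{Im}\bigl((c - a_k)(1 - \overline{a_k}\, c)\bigr) \;=\; y\bigl(1 + |a_k|^2 - 2\alpha_k x\bigr) + \beta_k (x^2 - y^2 - 1).
\]
The identity $1 + |a_k|^2 - 2\alpha_k x = (1 - \alpha_k x)^2 + \alpha_k^2(1 - x^2) + \beta_k^2$ shows the first factor is strictly positive whenever $|x|,|\alpha_k| < 1$, so combined with $y < 0$ the first summand is strictly negative; and since $|c|<1$ forces $x^2 - y^2 - 1 < 0$ while $\beta_k \geq 0$, the second summand is nonpositive. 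Hence $\operatorname{Im}\bigl((c - a_k)(1 - \overline{a_k}\, c)\bigr) < 0$, which gives $\operatorname{Im}\bigl(1/((c - a_k)(1 - \overline{a_k}\, c))\bigr) > 0$, and after multiplying by $1 - |a_k|^2 > 0$ every summand in the logarithmic derivative has strictly positive imaginary part. Summing yields $\operatorname{Im}(B'(c)/B(c)) > 0$, contradicting $B'(c) = 0$.

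The only delicate point I anticipate is the boundary case in the separation (some $a_k$ actually lying on $\gamma$, so $\beta_k = 0$); but the inequalities above used only $y < 0$ and $\beta_k \geq 0$, so no degeneracy arises, and the case where $c$ itself equals some $a_k$ is automatic since then $c \in K$ already.
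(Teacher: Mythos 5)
The paper cites Walsh's theorem as an external result (Walsh, 1952) and does not give a proof, so there is no argument in the paper to compare against. Your proof is correct, and it is the standard ``hyperbolic Gauss--Lucas'' argument: pass to the contrapositive, use M\"obius equivariance to place a separating geodesic along the real diameter, and control the sign of $\operatorname{Im}(B'/B)$ term by term via the partial-fraction expansion of the logarithmic derivative. I checked the algebra: the identity
\[
\operatorname{Im}\bigl((c-a)(1-\bar a c)\bigr) = y\bigl(1+|a|^2-2\alpha x\bigr)+\beta\bigl(x^2-y^2-1\bigr),
\]
the positivity of $1+|a|^2-2\alpha x$ for $|x|<1$, $|a|<1$, and the inequality $x^2-y^2-1<0$ for $x^2+y^2<1$ are all correct, so each summand in $B'(c)/B(c)$ has strictly positive imaginary part and $B'(c)\neq 0$. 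The boundary case $\beta_k=0$ is indeed harmless, as you note, and $c\notin K$ already rules out $c=a_k$, so $B(c)\neq 0$ and the logarithmic derivative is available. The one cosmetic remark: only pre-composition by automorphisms of $\mathbb D$ is used (and needed) for the normalization, since post-composition moves the zeros to other fibers rather than permuting them; this does not affect the argument.
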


	\begin{lem} \label{degenerate-0}   Let $B_n\in {\rm Div}_e({\mathbb D})=\mathcal B_{e,m}$ be a sequence of Blaschke products converging to $D=(B, S)\in \partial {\rm Div}_e({\mathbb D})$ algebraically.
		
		1.   If $1\notin {\rm  supp}(S)$, then the sequence  $(B_n)$ converges
		to $B$ in
		$\C\setminus {\rm  supp}(S)$.
		
		2.   If  $1\in {\rm  supp}(S)$, then  there exist  $\zeta\in \partial \mathbb D$ and a subsequence $(B_{n_k})_{k\geq 1}$, such that $B_{n_k}$ converges  to $\zeta B$ in 	$\C\setminus {\rm  supp}(S)$.
	\end{lem}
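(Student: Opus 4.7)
The plan is to exploit the explicit product formula for $B_n$ and pass to the limit factor by factor, carefully tracking the normalization constants as zeros drift to the unit circle. Write
$$B_n(z) = z^m \prod_{k=1}^{e} F_{a_k^{(n)}}(z), \quad F_a(z) := \frac{1-\overline{a}}{1-a}\cdot \frac{z-a}{1-\overline{a}z}.$$
Algebraic convergence $B_n \to D$ means that the multiset of zeros $\{a_k^{(n)}\}$ converges to the one prescribed by $D$, so after relabeling, $a_k^{(n)} \to a_k \in \mathbb D$ for $k = 1, \dots, d_1$ (contributing to the free zero divisor of $B$) and $a_k^{(n)} \to \sigma_k \in \partial \mathbb D$ for $k = d_1+1, \dots, e$ (contributing to $S$).

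The key step is to analyze the limit of a single factor $F_a$ as $a \to \sigma \in \partial \mathbb D$. Using $\overline{\sigma} = \sigma^{-1}$, an elementary manipulation yields the identity $\frac{z-\sigma}{1-\overline{\sigma}z} \equiv -\sigma$ as rational functions, and for $\sigma \neq 1$ the normalization prefactor satisfies $\frac{1-\overline{\sigma}}{1-\sigma} = -\overline{\sigma}$. Consequently, when $\sigma \in \partial \mathbb D \setminus \{1\}$, $F_a(z) \to (-\overline{\sigma})(-\sigma) = 1$ locally uniformly on $\C \setminus \{\sigma\}$. For interior limits $a_k^{(n)} \to a_k \in \mathbb D$, $F_{a_k^{(n)}}$ converges to the standard factor $F_{a_k}$ uniformly on compacts of $\C$.

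For part 1, when no $\sigma_k$ equals $1$, multiplying these local limits together yields $B_n \to z^m \prod_{k \leq d_1} F_{a_k} = B$ locally uniformly on $\C \setminus {\rm supp}(S)$.

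For part 2, a zero $a_k^{(n)} \to 1$ still produces an inner Blaschke factor with limit $\frac{z-1}{1-z} = -1$, but the prefactor $\frac{1-\overline{a_k^{(n)}}}{1-a_k^{(n)}}$ is a unimodular number whose argument depends sensitively on the angle of approach: writing $a_k^{(n)} = 1 - \rho_n e^{i\theta_n}$, the prefactor equals $e^{-2i\theta_n}$. Since $\partial \mathbb D$ is compact, I extract a subsequence so that each such prefactor converges to some $c_k \in \partial \mathbb D$, giving $F_{a_k^{(n)}} \to -c_k =: \zeta_k$ on $\C \setminus \{1\}$. Setting $\zeta := \prod_{\sigma_k = 1}\zeta_k \in \partial \mathbb D$, the full product converges to $\zeta B$ on $\C \setminus {\rm supp}(S)$. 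The only subtle point is isolating the contribution of the normalization prefactor from that of the Blaschke factor itself when a zero approaches $1$; apart from this bookkeeping, the argument is a routine limit computation with M\"obius factors.
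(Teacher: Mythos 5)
Your proof is correct and follows essentially the same route as the paper: pass to the limit factor by factor in the explicit product formula, using that the M\"obius part $\frac{z-a}{1-\bar a z}$ tends to the constant $-\sigma$ in $\widehat{\mathbb C}\setminus\{\sigma\}$ as $a\to\sigma\in\partial\mathbb D$ and that the normalization prefactor $\frac{1-\bar a}{1-a}$ tends to $-\bar\sigma$ when $\sigma\neq1$ (giving a net factor of $1$) but has no unique limit when $\sigma=1$, forcing a compactness/subsequence argument. The paper combines the two pieces into a single product $\prod(-1)\frac{1-\bar a_{k,n}}{1-a_{k,n}}\to\zeta$, whereas you isolate the prefactor via the parametrization $a=1-\rho e^{i\theta}$ and compute it as $e^{-2i\theta}$; this is a nice explicit way to see why the limit depends on the angle of approach and is otherwise the same bookkeeping. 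Both arguments are complete.
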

\begin{proof} 
The first statement follows from   \cite[Proposition 3.1]{Mc10}. 
The two statements  can be treated uniformly. 
 Write $S=\sum_{q\in {\rm supp}(S)} \nu(q)\cdot q$ and
 $$B_n(z)=z^m\Bigg(\prod_{k=1}^{e}\frac{z-a_{k,n}}{1-\overline{a_{k,n}}z}\Bigg) \Bigg(\prod_{k=1}^{e}\frac{1-\overline{a_{k,n}}}{1- {a_{k,n}}}\Bigg), \ a_{1,n}, \cdots, a_{e,n}\in \mathbb D  $$ 

	Note  that $\frac{z-a}{1-\bar{a}z}$ converges   to $-q$ in $\C\setminus\{q\}$  as $a\rightarrow q\in \partial \mathbb D$, and the inequality $|z_1\cdots z_m-w_1\cdots w_m|\leq \sum_{k=1}^m|z_k-w_k|$ for $z_k,w_k\in \overline{\mathbb D}$, we conclude that 
	\begin{itemize}
		\item If $1\notin {\rm  supp}(S)$, then  $B_n$ converges   in
	$\C\setminus {\rm  supp}(S)$ to 
	$$B \cdot \prod_{q\in  {\rm  supp}(S)}\bigg(\frac{1-\overline{q}}{1- q} (-q)\bigg)^{\nu(q)}=B.$$
	 	\item If $1\in {\rm  supp}(S)$, then there exist a subsequence $(n_j)_j$ and $\zeta\in \partial \mathbb D$ so that 
	 $$\lim_{j\rightarrow \infty}\prod_{k: \ a_{k,n_j}\rightarrow 1} (-1) \frac{1-\overline{a_{k,n_j}}}{1- {a_{k,n_j}}}=\zeta.$$
	  It follows that  $B_{n_j}$ converges to $\zeta B$ in 	$\C\setminus {\rm  supp}(S)$.
	 \end{itemize}
\end{proof}
	
		\begin{lem} \label{degenerate-1}   Let  $D=(B, S)\in \partial {\rm Div}_e({\mathbb D})$ with $1\in {\rm  supp}(S)$. For any  $\zeta\in \partial \mathbb D$, there is a   sequence  $B_n\in \mathcal B_{e,m}$ such that 
			$B_n\rightarrow D$ algebraically, and $B_{n}$ converges  to $\zeta B$ in 	$\C\setminus {\rm  supp}(S)$.
	\end{lem}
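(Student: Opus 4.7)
The plan is to construct $B_n\in\mathcal B_{e,m}$ by prescribing its free zero divisor $Z_{B_n}$, and then to invoke the unit-modulus product formula derived in the proof of Lemma \ref{degenerate-0} to control exactly which $\zeta\in\partial\mathbb D$ appears in the limit. Write $S=\nu\cdot 1+S'$ with $\nu=\nu_S(1)\ge 1$ and $1\notin{\rm supp}(S')$. Define $Z_{B_n}$ as the sum of three pieces: the fixed divisor $Z_B\in{\rm Div}_{d_1}(\mathbb D)$; for each $q\in{\rm supp}(S')$, a choice of $\nu_S(q)$ points in $\mathbb D$ converging radially to $q$; and $\nu$ further points $a_{1,n},\dots,a_{\nu,n}\in\mathbb D$ converging to $1$, to be specified. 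With any such choices, $Z_{B_n}\to Z_B+S$ in ${\rm Div}_e(\overline{\mathbb D})$, so $B_n\to D$ algebraically.

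The argument in part 2 of Lemma \ref{degenerate-0} then gives $B_n\to\zeta_0 B$ in $\C\setminus{\rm supp}(S)$, where
$$\zeta_0=\lim_{n\to\infty}\prod_{k=1}^{\nu}\Bigl(-\frac{1-\overline{a_{k,n}}}{1-a_{k,n}}\Bigr),$$
provided this limit exists. Hence the lemma reduces to showing that, for any prescribed $\zeta\in\partial\mathbb D$, the points $a_{k,n}$ can be chosen so that the above product tends to $\zeta$. For $a\in\mathbb D$ close to $1$, writing $1-a=\rho e^{i\phi}$ with $\rho>0$ small, the admissibility condition $|a|<1$ amounts to $\cos\phi>\rho/2$, and the $k$-th factor becomes $-e^{-2i\phi}$. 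Since $\{-e^{-2i\phi}:\phi\in(-\pi/2,\pi/2)\}$ already fills $\partial\mathbb D\setminus\{1\}$, for $\zeta\neq 1$ we may simply fix $\nu-1$ of the zeros to approach $1$ along the positive real axis (each contributing the factor $-1$) and pick the remaining one so that its factor equals $(-1)^{\nu-1}\zeta$.

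The main subtle point is the case $\zeta=1$ combined with $\nu=1$, in which no single $a\in\mathbb D$ yields factor exactly $1$. Here one resolves matters by a tangential limit: take $\phi_n\to\pi/2$ and $\rho_n\to 0$ with $\rho_n/\cos\phi_n\to 0$, for instance $\phi_n=\pi/2-1/n$ and $\rho_n=n^{-2}$, so that $a_n=1-\rho_n e^{i\phi_n}$ lies in $\mathbb D$ while $-e^{-2i\phi_n}\to 1$. Combining the three pieces of $Z_{B_n}$ then produces the desired sequence, and the convergence $B_n\to\zeta B$ in $\C\setminus{\rm supp}(S)$ follows directly from the product formula above. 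The whole argument is a construction plus one elementary phase computation; no further rigidity or compactness input is required.
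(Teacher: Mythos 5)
Your construction follows the same route as the paper: prescribe the free zero divisor of $B_n$, send $\nu=\nu_S(1)$ of its zeros to $1$ at controlled angles, and read off the phase of the limit from the elementary product formula used in Lemma~\ref{degenerate-0}. The key computation ($1-a=\rho e^{i\phi}$ with $\cos\phi>\rho/2$ yielding factor $-e^{-2i\phi}$, which sweeps out $\partial\mathbb D\setminus\{1\}$) is correct, and the tangential limit trick to realize the excluded value $1$ is exactly what one needs.

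There is, however, a small but genuine error in your case analysis. You assert that for $\zeta\neq 1$ the ``constant angle'' construction suffices: send $\nu-1$ zeros radially (factor $-1$ each) and one zero at a fixed angle with factor $(-1)^{\nu-1}\zeta$. But $\zeta\neq 1$ does not imply $(-1)^{\nu-1}\zeta\neq 1$; for instance $\nu=2$ and $\zeta=-1$ gives $(-1)^{\nu-1}\zeta=1$, which a single fixed-angle factor cannot achieve. Symmetrically, for $\zeta=1$ you only treat $\nu=1$, leaving $\nu$ odd $\geq 3$ with $\zeta=1$ unaddressed. The correct statement is that the fixed-angle construction works precisely when $\zeta\neq(-1)^{\nu-1}$, and for $\zeta=(-1)^{\nu-1}$ one applies your tangential sequence to a single zero (with the others radial) so that its factor tends to $1$. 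Since you already supply the tangential device, the gap is one of bookkeeping rather than substance, and the proof closes once the problematic values of $\zeta$ are correctly identified.
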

	\begin{proof} Write $S=\sum_{q\in {\rm supp}(S)} \nu(q)\cdot q$.  Since  $1\in {\rm  supp}(S)$, we may find a divisor sequence $X_n=\sum_{l=1}^{\nu(1)} 1\cdot a_{l,n}\in {\rm Div}_{\nu(1)}(\mathbb D)$ so that 
		$$X_n\rightarrow \nu(1)\cdot 1, \ \lim_{n\rightarrow \infty}(-1)^{\nu(1)}\prod_{l=1}^{\nu(1)} \frac{1-\overline{a_{l,n}}}{1- a_{l,n}}=\zeta.$$
		Let $(Y_n)_n$
 be a sequence of divisors in ${\rm Div}_{e-\nu(1)}(\mathbb D)$	so that 
$Y_n\rightarrow Z_B+S-\nu(1)\cdot 1$, where $Z_B$ is the free zero divisor of $B$.
Let  $B_n\in \mathcal B_{e,m}$ have free zero divisor    
$X_n+Y_n$.
By the same reasoning as that of Lemma \ref{degenerate-0}, we conclude that $B_{n}$ converges to $\zeta B$ in 	$\C\setminus {\rm  supp}(S)$.
	\end{proof}
	
Before the	proof of Theorem \ref{bet}, we introduce the following notations: For 
$D=\sum_{k=1}^{e} 1\cdot a_k\in  \partial {\rm Div}_e({\mathbb D})$ and $\epsilon>0$, define 
	$$N_\epsilon(D)=\Big\{\sum_{k=1}^{e} 1\cdot b_k; \
	b_k\in \mathbb D(a_k, \epsilon)\cap \mathbb D, \ 1\leq  k\leq e
	\Big\},$$
	$$U_\epsilon(D)=\Big\{\sum_{k=1}^{e} 1\cdot b_k; \
	b_k\in \mathbb D(a_k, \epsilon)\cap \overline{\mathbb D}, \ 1\leq  k\leq e
	\Big\}.$$

	\begin{proof}[Proof of Theorem \ref{bet}] Let $B_n\in \mathcal B_{e,m}$ be a sequence  converging to $D=(B, S)\in {\rm Div}_{d_1}(\mathbb D)\times  {\rm Div}_{d_2}(\partial\mathbb D)$ algebraically.
		By Lemma \ref{degenerate-0},  passing to choosing subsequence if necessary, there is  $\zeta\in \partial \mathbb D$, such that $B_{n}$ converges  to $\zeta B$ in 	$\C\setminus {\rm  supp}(S)$ (we set $\zeta=1$ if  $1\notin {\rm  supp}(S)$).

		By Weierstrass theorem, $B'_n$ converges to  $\zeta B'$ in 	$\C\setminus {\rm  supp}(S)$. 
		Note that $B$ has $d_1+m-1$ ciritical points in $\mathbb D$. Hence  $d_1+m-1$ critical points of 
		$B_n$ converges to that of $B$, and $d_2$ critical points of $B_n$ escape to the bounday $\partial \mathbb D$.

	In the following, we shall find out the    positions and multiplicity of the  degenerate critical points on $\partial \mathbb D$.
	Define the $\Psi_{e,m}$-impression $I(D)$ of   $D$:
		$$I(D)=\bigcap_{\epsilon>0}\overline{\Psi_{e,m}(N_\epsilon(D))}.$$
		Clearly, $I(D)$ is a connected and compact subset of $\partial{\rm Div}_e(\overline{\mathbb D})$. 
		
		By Theorem \ref{walsh}, the zero set 
		$(B'_n)^{-1}(0)$  is contained in the hyperbolic convex hull of $B_n^{-1}(0)$ for all $n$.
	Hence 
		 the  sequence of   free ramification divisors  $(R_{B_n})_n$ has only finitely many possible limits,  all  contained in the  finite set
		$$\Big\{\Psi_{d_1,m}(Z_B)+S';  \ S'\in {\rm Div}_{d_2}(\partial\mathbb D) \text{ and } {\rm supp}(S')\subset  {\rm supp}(S)\Big\}.$$
	  The connectivity of  $I(D)$ implies  that it is a singleton, say $\{R\}$.
			Write $Z_{B_n}=X_n+Y_n$ so that $X_n\in {\rm Div}_{d_1}(\mathbb D), Y_n\in {\rm Div}_{d_2}(\mathbb D)$ and $X_n\rightarrow Z_B$, $Y_n=\sum_{j=1}^{d_2} 1\cdot {b}_j(n)\rightarrow S=\sum_{j=1}^{d_2} 1\cdot {b}_j $.  We may assume $ {b}_j(n)\rightarrow b_j$ for each $j$. 

		To get $R$, we  evaluate the limit $R=\lim_{n\rightarrow \infty}\Psi_{d, m}(Z_{B_n})$ by repeated limit: 
		\bess R&=&\lim_{X_n\rightarrow Z_B}\lim_{b_1(n)\rightarrow b_1} \cdots \lim_{b_{d_2}(n)\rightarrow b_{d_2}}\Psi_{d, m}\Big(X_n+\sum_{j=1}^{d_2} 1\cdot b_j(n)\Big)\\
		&=&\lim_{X_n\rightarrow  Z_B}\lim_{b_1(n)\rightarrow b_1} \cdots \lim_{b_{d_2-1}(n)\rightarrow b_{d_2-1}}\Psi_{d-1,m}\Big(X_n+\sum_{j=1}^{d_2-1} 1\cdot b_j(n)\Big)+1\cdot b_{d_2}\\
		&=&\cdots=\lim_{X_n\rightarrow  Z_B}\Psi_{d_1,m}\big(X_n\big)+\sum_{j=1}^{d_2} 1\cdot b_j
	=\Psi_{d_1,m}\big(Z_B\big)+S.
		\eess
		This gives the extension $\Phi_{e,m}(D)=\Psi_{d_1,m}\big(Z_B\big)+S$. 
		One may verify that   $\Phi_{e,m}$ is continuous, bijective, and the inverse $\Phi^{-1}_{e,m}$ is also continuous. Hence  $\Phi_{e,m}$  is a homeomorphism. 
	\end{proof}
	
	\begin{ex} When $e=1$, ${\rm Div}_1(\overline{\mathbb D})=\overline{\mathbb D}$, the map $\Phi_{1,m}: \overline{\mathbb D}\rightarrow \overline{\mathbb D}$  has 
		formula:
		$$\Phi_{1,m}(a)=\frac{2am}{(m-1)|a|^2+(m+1)+\sqrt{[(m-1)|a|^2+(m+1)]^2-4m^2|a|^2}}.$$
		In particular, when $m=1$,
		$$\Phi_{1,1}(a)=\frac{a}{1+\sqrt{1-|a|^2}}, \ a\in \overline{\mathbb D}.$$
	Clearly
		$\Phi_{1,m}|_{\partial{\mathbb D}}$ is the identity map.
	\end{ex}

	\section{Continuity properties} \label{continuity-property}

	
	For a rational map $f$,  let $J(f)$ and $F(f)$ denote the Julia set and the Fatou set.
	Each component of $F(f)$ is called a {\it Fatou component}.  The Fatou  component   containing   $a\in F(f)$ is denoted by $U_f(a)$.  When $f$ is a polynomial, we use 
	$K(f)$ to denote the filled Julia set. 
	
	\vspace{5pt}
\noindent \textbf{Polynomial dynamics.}	Let $\mathcal C_d=\{f\in \mathcal P_d; J(f) \text{ is  connected}\}$ be the connectedness  locus.  It's known that $\mathcal C_d$ is compact and connected (see  \cite{DH,DeMP}).
For any $f\in\mathcal{C}_d$, 
 there is a unique conformal map   $\psi_{f,\infty}:\mathbb{C}\setminus K(f)\rightarrow\mathbb{C}\setminus \overline{\mathbb{D}}$   tangent to the identity at $\infty$ and satisfying that $\psi_{f,\infty}(f(z))=\psi_{f,\infty}(z)^d$  \cite[\S 9]{Mil06}. This $\psi_{f,\infty}$ is called the {\it B\"ottcher map} of $f$ at $\infty$.  For each  $\theta\in\mathbb{R/Z}$, the \emph{external ray} $R_f(\theta)$ is defined by $R_f(\theta) = \psi_{f,\infty}^{-1}((1,\infty)e^{2\pi i\theta})$.  It satisfies $f(R_f(\theta))=R_f(d\theta)$.
 We say $R_f(\theta)$ {\it lands } at $a\in J(f)$ if $\lim_{r\rightarrow 1^+} \psi_{f,\infty}^{-1}(re^{2\pi i\theta})=a$. 
	

	Let $K\subset\mathbb{C}$ be a full connected compact set with a Jordan domain $U\subset K$.  We say $K$ admits a {\it limb 
		decomposition} with respect to $U$ if 
	$$K=U\bigsqcup \bigsqcup_{x\in \partial U} L_{U, x},$$ 
	where $L_{U, x}$ is a connected compact set and 
	$L_{U, x}\cap\overline{U}=\{x\}$   for each  $x\in \partial U$. 

	\begin{thm}  [\cite{RY22}]\label{RY} 
		Let $f\in \mathcal C_d$  and let $U$ be a pre-attracting or pre-parabolic bounded Fatou component of $f$. 
		Then the following properties hold. 
		\begin{enumerate}
			\item
			\label{U-Jordan}
			$U$ is a Jordan domain. 
			
			\item $K(f)$ admits  a   limb decomposition $K(f)=U\bigsqcup \bigsqcup_{x\in \partial U} L_{U, x}$ with respect to $U$. 
			
			\item 
			\label{RY-separate}
			If $L_{U, x}=\{x\}$, there is only one external ray landing at $x$; if $L_{U, x}\neq\{x\}$, there are two external rays landing at $x$ and separating $L_{U, x}$ from $U$. 
			
			\item 
			\label{dynam-prop-crit}
			For any $x\in \partial U$, the limb $L_{U, x}$ is not reduced to $\{x\}$ if and only if there is an integer $n\geq 0$ such that $L_{f^n(U), f^n(x)}$ contains a critical point.
		\end{enumerate}
	\end{thm}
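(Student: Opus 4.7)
The plan is to deduce all four parts from a single technical core: local connectivity of $\partial U$ via a shrinking puzzle-piece argument, in the spirit of Roesch--Yin. First I would reduce to the case that $U$ is invariant under some iterate $f^p$ by replacing $(U, x)$ with $(f^k(U), f^k(x))$ for a suitable $k$ and then pulling the desired statements back along the proper map $f^k \colon U \to f^k(U)$, since this map respects the limb structure up to a finite-to-one identification.

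For part (1), the strategy is to build a nested sequence of puzzle pieces $P_n(x) \subset U$ based at any $x \in \partial U$, cut out by equipotentials together with external rays landing at (pre-)periodic repelling or parabolic boundary points. In the attracting case the linearizing coordinate at the attracting cycle supplies a natural base partition; in the parabolic case Fatou coordinates play the same role, at the price of having to argue compactness uniformly away from the parabolic point. The crucial step, and the one I expect to be the main obstacle, is to prove $\bigcap_n \overline{P_n(x)} = \{x\}$. For points whose forward orbit avoids the post-critical set this follows from Koebe distortion applied to univalent pullbacks. For the finitely many critical and postcritical orbits one passes to bounded-degree pullbacks and invokes the standard shrinking lemma, together with a separate control argument near parabolic cycles handling the parabolic-implosion effect. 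Once shrinking is established, $\partial U$ is locally connected and the Riemann map of $U$ extends continuously to a bijection on the closure, so $U$ is a Jordan domain.

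Granted part (1), parts (2) and (3) become combinatorial. For each $x \in \partial U$, the Douady landing theorem yields finitely many external rays landing at $x$, and I would argue that at most two of them do so: three or more rays landing at the same boundary point of the Jordan curve $\partial U$ would disconnect $U$ itself, which is impossible. The union of $\{x\}$ with the landing rays at $x$ separates $\mathbb C - K(f)$, and exactly one complementary sector is adjacent to $U$; defining $L_{U, x}$ as the intersection of $K(f)$ with the closure of the union of the remaining sectors produces the desired decomposition. The disjointness $L_{U, x} \cap \overline{U} = \{x\}$ is then immediate, and if $L_{U, x} \neq \{x\}$ two rays are genuinely needed for the separation because a single ray cannot bound a set meeting $K(f)$ beyond the landing point itself.

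For part (4), the ``only if'' direction carries the content. If every $L_{f^n(U), f^n(x)}$ is trivial for $n \geq 0$, then each forward image has a single landing ray, and pulling these rays back through the univalent branches of $f^{-n}$ (no critical points are encountered along the orbit) produces a single ray landing at $x$ together with a shrinking puzzle neighborhood of $x$ in $K(f)$, forcing $L_{U, x} = \{x\}$. The converse direction, that a critical point in some $L_{f^n(U), f^n(x)}$ forces $L_{U, x} \neq \{x\}$, follows since $f^n$ maps $L_{U, x}$ onto a set containing that critical limb, which has positive diameter. The decisive obstacle remains the shrinking step in (1); once local connectivity of $\partial U$ is secured, parts (2)--(4) are combinatorial consequences of the landing and separation theory of external rays.
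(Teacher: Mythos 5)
This theorem carries the citation tag \cite{RY22} and is imported verbatim from Roesch and Yin's paper; the present paper offers no proof of its own, so there is no in-paper argument against which to compare your proposal. As a reconstruction of the cited source, your outline has roughly the right shape: Roesch and Yin do establish local connectivity of $\partial U$ via a shrinking argument in the attracting and parabolic cases, and parts (2)--(4) are then derived on top of that.

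That said, there is a concrete error in your passage from (1) to (2)--(3). You claim that ``three or more rays landing at the same boundary point of the Jordan curve $\partial U$ would disconnect $U$ itself, which is impossible.'' External rays lie in $\mathbb C - K(f)$, hence entirely outside $\overline U$; they cannot disconnect $U$, so this step fails. The correct source of the bound is the limb decomposition itself: once one knows $K(f) = U \sqcup \bigsqcup_{x\in\partial U} L_{U,x}$ with each $L_{U,x}$ connected and meeting $\overline U$ only at $x$, the set $K(f)-\{x\}$ has one connected component if $L_{U,x}=\{x\}$ and exactly two otherwise (one containing $\overline U - \{x\}$, the other equal to $L_{U,x}-\{x\}$), and for a cut point of a locally connected filled Julia set the number of landing rays equals the number of these components. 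So the logical order is forced: first establish the limb decomposition --- which requires showing that each component of $K(f)-\overline U$ attaches to $\partial U$ at a single point, a nontrivial fact your disconnection argument does not cover --- and then read off the ray count, rather than bounding the rays first and defining the limbs from the rays, as your sketch attempts. The rest of the outline, including the treatment of part (4) via forward images and univalent pullbacks away from critical orbits, is in the right spirit, but that inversion between (2) and (3) needs repair.
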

	
	Let $f$ and $U$ be as in Theorem \ref{RY}.
For each $y\in K(f)\setminus U$, there is a unique point $x\in \partial U$ so that $y\in L_{U, x}$.  This induces  a natural projection
	\begin{equation} \label{proj-u}
		\sigma_U:
		\begin{cases} K(f)\setminus U\rightarrow\partial U\\
			y\mapsto x
		\end{cases}.
	\end{equation}

	For each $x\in  \partial U$, if $L_{U, x}=\{x\}$, we denote the unique external ray landing at $x$ by $R_f(\theta)$, and set $\theta_U^+(x)=\theta_U^-(x)=\theta$; if   $L_{U, x}\supsetneq \{x\}$, there are two different external rays, say $R_f(\alpha), R_f(\beta)$ landing at $x$ so that $R_f(\alpha), L_{U, x}, R_f(\beta)$ attach at $x$ in counterclockwise order. We  set $\theta_U^+(x)=\beta, \theta_U^-(x)=\alpha$.  
	
	In this way, we get two maps $\theta_U^{\pm}: \partial U\rightarrow \mathbb R/\mathbb Z$.

	\begin{lem} \label{cont-limb}  We have the following assertions.
		
		(1).  The map $\sigma_U:
		K(f)\setminus U\rightarrow\partial U$  is continuous.
		
		(2).  Let $(x_n)_n$ be a sequence  in $\partial U$,   $x\in \partial U$. 
		If  $(x_n)_n$ converges  to $x$ in clockwise order, then 
		$$\lim_n\theta_U^+(x_n)=\theta_U^+(x), \ \lim_n\theta_U^-(x_n)=\theta_U^+(x).$$  
		If  $(x_n)_n$ converges  to $x$  in counterclockwise order, then 
		$$\lim_n\theta_U^+(x_n)=\theta_U^-(x), \ \lim_n\theta_U^-(x_n)=\theta_U^-(x).$$  
		
		In particular, $\theta_U^{\pm}$  is  continuous at $x\in \partial U$ if and only if $L_{U,x}=\{x\}$.
		
		(3).  $a\in  \partial  U$ is a cut point of $J(f)$ if and only if $L_{U, a}\neq \{a\}$.
	\end{lem}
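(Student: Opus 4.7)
The plan is to handle the three assertions in order, using Theorem \ref{RY} as the main tool: it gives $U$ Jordan, the limb decomposition, and at most two separating external rays at each boundary point. Throughout I will exploit that distinct external rays of $f$ do not cross in $\C\setminus K(f)$ and that each such ray has a unique landing point on $\partial U$.

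For (1), I take $y_n\to y$ in $K(f)\setminus U$, set $x_n=\sigma_U(y_n)$, and extract a subsequence with $x_n\to x^*\in\partial U$; the goal is to show $y\in L_{U,x^*}$, which by uniqueness of the limb decomposition forces $\sigma_U(y)=x^*$. If $y_n=x_n$ infinitely often this is immediate. Otherwise $L_{U,x_n}\ne\{x_n\}$ eventually, so by Theorem \ref{RY}(3) the two external rays $R_f(\theta_U^\pm(x_n))$ land at $x_n$ and, together with an arc of $\partial U$, bound a closed \emph{wake} $W_n\supset L_{U,x_n}\ni y_n$. Disjointness of external rays, the convergence $x_n\to x^*$, and uniqueness of landing imply that the $W_n$ accumulate on a closed wedge attached to $x^*$ whose intersection with $K(f)$ is exactly $L_{U,x^*}$, forcing $y\in L_{U,x^*}$.

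For (2), I use that $\partial U$ is a Jordan curve so a cyclic order near $x$ is well defined. If $x_n\to x$ clockwise, each angle $\theta_U^{\pm}(x_n)$ lies in the open arc of $\mathbb R/\mathbb Z$ swept out by the landing angles of rays encountered as one moves clockwise from $x$; as $x_n\to x$, this arc shrinks to the singleton $\{\theta_U^+(x)\}$, since by the definition above the lemma $\theta_U^+(x)$ is the angle of the ray at $x$ lying on the clockwise side. The counterclockwise case is symmetric. The final statement follows at once: the one-sided limits at $x$ agree iff $\theta_U^+(x)=\theta_U^-(x)$ iff $L_{U,x}=\{x\}$.

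For (3), one direction is immediate from Theorem \ref{RY}(3): if $L_{U,a}\ne\{a\}$, the rays $R_f(\theta_U^\pm(a))$ together with $\{a,\infty\}$ form a Jordan curve in $\C$ that separates $L_{U,a}\setminus\{a\}$ from the rest of $J(f)$; since the rays lie in $F(f)$, $a$ is a cut point of $J(f)$. Conversely, if $L_{U,a}=\{a\}$, then $K(f)\setminus\{a\}=(\overline U\setminus\{a\})\cup\bigcup_{x\ne a}L_{U,x}$ is connected (each $L_{U,x}$ with $x\ne a$ meets the connected Jordan arc $\partial U\setminus\{a\}$), and removing the interior bounded Fatou components preserves connectedness, so $J(f)\setminus\{a\}$ is connected. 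The main obstacle is the wake-accumulation step in (1): one must rule out that extraneous pieces of $K(f)$ accumulate in the limit wedge at $x^*$, which relies crucially on the disjointness of external rays in $F(f)$ and on the unique landing property from Theorem \ref{RY}(3).
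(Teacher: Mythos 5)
The paper itself omits the proof, declaring the lemma an immediate consequence of Theorem \ref{RY}, so there is no internal argument to match against; your outline hits the right ideas, but the wake-accumulation step in (1) is more delicate than you suggest and, as stated, is not the assertion you need. When $x_n\to x^*$ with $x_n\neq x^*$ eventually, the wakes $W_n$ of the $x_n$ are pairwise disjoint and disjoint from any wake at $x^*$, so they cannot Hausdorff-accumulate on a wedge meeting $K(f)$ in $L_{U,x^*}$. Rather, their angular widths $\theta_U^+(x_n)-\theta_U^-(x_n)$ tend to $0$ (distinct wakes occupy pairwise disjoint sub-arcs of $\mathbb R/\mathbb Z$, hence the widths are summable), so the $W_n$ collapse onto a single closed ray on one side of $x^*$; thus $W_n\cap K(f)$ accumulates only at $\{x^*\}$, giving $y=x^*$, which is a special case of $y\in L_{U,x^*}$. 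A cleaner route to (1) avoids wakes of the $x_n$ entirely: suppose $\sigma_U(y)=x'\neq x^*$; since at most countably many limbs are nontrivial (Theorem \ref{RY}(4) plus finiteness of critical points), pick $a,b\in\partial U$ with $L_{U,a}=\{a\}$, $L_{U,b}=\{b\}$ separating $x'$ from $x^*$ on $\partial U$; the unique external rays at $a$ and $b$ joined by a crosscut of $U$ form a Jordan curve $\Gamma$ in $\C$ separating $L_{U,x'}$ from $L_{U,x_n}$ for all large $n$, so $y$ and $y_n$ lie on opposite sides of $\Gamma$, contradicting $y_n\to y$.

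For (2), the claim that the arc of candidate angles shrinks to $\{\theta_U^+(x)\}$ rests on an unstated fact: the map $\rho:\partial\mathbb D\to\partial U$ sending $\theta$ to the unique point $x$ with $\theta\in[\theta_U^-(x),\theta_U^+(x)]$ (well defined by Theorem \ref{RY}(3) and disjointness of wakes) is monotone, surjective, of degree one, hence continuous, and the one-sided limits of $\theta_U^{\pm}$ are just the endpoint behavior of $\rho^{-1}$. For (3), the forward implication is fine; the converse tacitly invokes that for a full compact connected $K$ with $a\in\partial K$, connectedness of $K\setminus\{a\}$ implies connectedness of $\partial K\setminus\{a\}$ — equivalently the number of components of $K(f)\setminus\{a\}$ equals the number of external rays landing at $a$, which is \cite[Corollary 6.7]{Mc94}, already cited in the proof of Lemma \ref{para-m}; citing it would close the step. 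With these repairs the proof is correct.
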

  
  Here $a\in J(f)$ is called  a {\it cut point} of $J(f)$, if $J(f)\setminus \{a\}$ is disconnected.
  Lemma \ref{cont-limb}   is an immediate consequence  of Theorem \ref{RY},  so we omit its proof.
	
	
	\begin{cor}\label{limb-arc}  Suppose  $L_{U, x}=\{x\}$ for some $x \in \partial U$.
		
		(1).  For any shrinking sequence $(C_n)_n$ of arcs in $\partial U$ with $\bigcap_n C_n =\{x\}$, we have ${\rm diam}(\sigma_U^{-1}(C_n))\rightarrow 0$.
		
		(2).  $J(f)$ is locally connected at $x$. 
	\end{cor}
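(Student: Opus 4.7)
My plan is to prove (1) by enclosing $\sigma_U^{-1}(C_n)$ in a shrinking family of Jordan domains, and then to deduce (2) from (1) by constructing open connected neighborhoods of $x$ in $J(f)$.

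For (1), the hypothesis $L_{U,x}=\{x\}$ gives, via Theorem \ref{RY}(3), a unique external ray $R_f(\theta)$ landing at $x$, where $\theta=\theta_U^+(x)=\theta_U^-(x)$. Writing $C_n=[a_n,b_n]$, Lemma \ref{cont-limb}(2) supplies ``outer'' ray arguments $\alpha_n$ at $a_n$ and $\beta_n$ at $b_n$, both on the $C_n$-side, with $\alpha_n,\beta_n\to\theta$. For each $\rho>1$, set $E_\rho:=\psi_{f,\infty}^{-1}(\{|w|=\rho\})$ and let $D_\rho$ denote the bounded component of $\mathbb C\setminus E_\rho$. The four arcs obtained from $R_f(\alpha_n)\cap\overline{D_\rho}$, the equipotential arc $A_n^\rho\subset E_\rho$ between the two rays on the $C_n$-side, $R_f(\beta_n)\cap\overline{D_\rho}$, and $C_n$ concatenate into a Jordan curve $\Gamma_n^\rho$ bounding a Jordan domain $W_n^\rho$. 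By the wake structure in Theorem \ref{RY}, for each $y\in C_n$ the limb $L_{U,y}$ lies in the region enclosed by the two rays at $y$, whose arguments lie between $\alpha_n$ and $\beta_n$; hence $\sigma_U^{-1}(C_n)\subset\overline{W_n^\rho}$ and ${\rm diam}(\sigma_U^{-1}(C_n))\leq{\rm diam}(\Gamma_n^\rho)$.

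Fixing $\rho$ and letting $n\to\infty$, $R_f(\alpha_n)$ and $R_f(\beta_n)$ converge uniformly on $\overline{D_\rho}$ to $R_f(\theta)$, the arc $A_n^\rho$ collapses to a point, and $C_n\to\{x\}$; therefore $\limsup_n{\rm diam}(\Gamma_n^\rho)={\rm diam}(\overline{R_f(\theta)\cap D_\rho})$. Letting $\rho\to 1^+$, the right-hand side tends to $0$ because $R_f(\theta)$ lands at $x$, proving (1).

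For (2), take a shrinking sequence $(A_n)_n$ of open arcs in $\partial U$ with $x\in A_n$ and $\bigcap\overline{A_n}=\{x\}$, and set
\[
V_n:=J(f)\cap\sigma_U^{-1}(A_n)=\bigcup_{y\in A_n}\bigl(L_{U,y}\cap J(f)\bigr).
\]
Continuity of $\sigma_U$ (Lemma \ref{cont-limb}(1)) yields an open $\Omega_n\subset\mathbb C$ with $\sigma_U^{-1}(A_n)=\Omega_n\cap(K(f)\setminus U)$; since $J(f)\cap U=\emptyset$, $V_n=\Omega_n\cap J(f)$ is open in $J(f)$, contains $x$, and has ${\rm diam}(V_n)\to 0$ by (1). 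The main obstacle is the connectedness of $V_n$: since $A_n\subset V_n$ is connected and each slice $L_{U,y}\cap J(f)$ meets $A_n$ at $y$, it reduces to showing $L_{U,y}\cap J(f)$ is connected for every $y\in\partial U$. I would derive this from the structure of $L_{U,y}$ as a connected continuum attached to $\overline{U}$ only at $y$ (Theorem \ref{RY}), using local-connectivity analysis in the spirit of \cite{CWY}. Granted this, $V_n$ is a union of connected sets sharing the connected $A_n$, hence connected, completing (2).
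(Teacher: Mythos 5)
Your proof of (1) takes a genuinely geometric route, but there is a gap at its pivotal step: you assert that $R_f(\alpha_n)\cap\overline{D_\rho}$ and $R_f(\beta_n)\cap\overline{D_\rho}$ converge uniformly to $R_f(\theta)\cap\overline{D_\rho}$. On any closed sub-annulus $\{\rho'\le |w|\le\rho\}$ with $\rho'>1$ this follows from continuity of $\psi_{f,\infty}^{-1}$, but the claim that the ray tails $\psi_{f,\infty}^{-1}\bigl((1,\rho')e^{2\pi i\alpha_n}\bigr)$ stay uniformly close to the tail of $R_f(\theta)$ is exactly the nontrivial content. Without local connectivity or some equivalent control already in hand, a ray landing at $a_n$ near $x$ can a priori make large excursions near $J(f)$ even though $\alpha_n\to\theta$; proving the needed Hausdorff convergence requires essentially the same input as the statement you are trying to prove. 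The paper's argument avoids all of this: replace $C_n$ by $\overline{C_n}$, note that by continuity of $\sigma_U$ (Lemma~\ref{cont-limb}(1)) the sets $\sigma_U^{-1}(\overline{C_n})$ form a nested sequence of compacta whose intersection is $\sigma_U^{-1}\bigl(\bigcap_n\overline{C_n}\bigr)=L_{U,x}=\{x\}$, and invoke the standard fact that nested compacta shrinking to a singleton have diameters tending to $0$. No rays, equipotentials, or convergence of rays are needed, and the argument works even where $J(f)$ is not yet known to be locally connected.

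For (2) your argument matches the paper's in structure; the issue you flag (connectedness of $L_{U,y}\cap J(f)$) is also left as ``clearly'' in the paper. It does, however, have a short direct justification that you should give rather than deferring to unspecified analysis in the spirit of \cite{CWY}: when $L_{U,y}\neq\{y\}$, Theorem~\ref{RY}(3) says exactly two external rays land at $y$, so by \cite[Corollary 6.7]{Mc94} the set $J(f)\setminus\{y\}$ has exactly two components; the wake $W_y$ cut out by those two rays satisfies $W_y\cap J(f)=L_{U,y}\cap J(f)\setminus\{y\}$ and is relatively open and closed in $J(f)\setminus\{y\}$, while $\partial U\setminus\{y\}$ lies in the other component, so $W_y\cap J(f)$ is a single component of $J(f)\setminus\{y\}$ and $L_{U,y}\cap J(f)$ is its closure, hence connected. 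With that filled in, your derivation of (2) from (1) is the same as the paper's.
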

	\begin{proof} (1). Replacing $C_n$ with $\overline{C_n}$, we  assume $C_n$ is a closed set.
		By  Lemma \ref{cont-limb} (1),  $( \sigma_U^{-1}(C_n))_n$ is a sequence of shrinking compact sets.  By the equality
		\begin{equation}\label{intersection-sigma} \bigcap_n \sigma_U^{-1}(C_n)=\sigma_U^{-1}\Big(\bigcap_n C_n \Big)=\sigma_U^{-1}(x)=L_{U,x}
			\end{equation}
		and the assumption  $L_{U, x}=\{x\}$, 
		we get   ${\rm diam}(\sigma_U^{-1}(C_n))\rightarrow 0$.

		(2).  Let $C_n$ be the component of $\mathbb D(x, 1/n)\cap \partial U$ containing $x$. 
		Then $(C_n)_n$ is a sequence of open arcs with ${\rm diam}(C_n)\rightarrow 0$. 
		By Lemma \ref{cont-limb} (1),  the restriction $\sigma_U|_{J(f)}$ is continuous, hence  $\sigma_U|_{J(f)}^{-1}(C_n)=\bigcup_{x\in C_n}(L_{U, x}\cap J(f))$  is an open subset of $J(f)$.  Clearly $\sigma_U|_{J(f)}^{-1}(C_n)$ is connected. 
		By (1), we have  ${\rm diam}(\sigma_U^{-1}(C_n))\rightarrow 0$.
		Therefore  $\{\sigma_U|_{J(f)}^{-1}(C_n)\}_n$ gives a basis of open and connected neighborhoods of $x$, implying the local connectivity of $J(f)$ at $x$.
	\end{proof}


	\noindent \textbf{Kernel convergence.}	
		A disk is a simply connected domain in $\mathbb C$.  Let $\mathcal D$ be the set of pointed disks $(U,u)$. The {\it Carath\'eodory topology} or {\it kernel convergence} on $\mathcal D$ is defined as follows: $(U_n, u_n)\rightarrow (U, u)$ if and only if 
	
	(i).  $u_n\rightarrow u$;
	
	(ii).  for any compact $K\subset U$, $K\subset U_n$ for all $n$ sufficiently large; and 
	
	
	(iii). for $w\in \partial U$, there exist $w_n\in \partial U_n$ such that $w_n\rightarrow w$ as $n\rightarrow +\infty$.

	Let $\mathcal E\subset \mathcal D$ denote  the subspace of disks not equal to $\mathbb C$.
	
	Let $f_n: (U_n, u_n) \rightarrow \mathbb C$ be a sequence of holomorphic maps. Following McMullen \cite[\S 5.1]{Mc94}, we say that  $f_n$ converges to $f:(U, u)\rightarrow \mathbb C$ in {\it Carath\' eodory topology on functions}  if
	
	(i). $(U_n, u_n)\rightarrow (U, u)$ in $\mathcal D$, and
	
	(ii). for any compact $K\subset U$ and  large  $n$,  $f_n|_K$ converges  uniformly to $f|_K$.
	
	\vspace{5pt}
	
	In our discussion, a Riemann (or conformal)  mapping $f:(\mathbb D,0)\rightarrow (U, u)$ is a biholomorphic map $f:\mathbb D\rightarrow U$ with $f(0)=u$. 
	
	The following is well-known, see \cite{Car}, \cite[\S 5.1]{Mc94}.
	
	\begin{thm}\label{cara-riemann}  Let $(U_n ,u_n), (U, u)$ be in $\mathcal E$. Let  $f_n:(\mathbb D,0)\rightarrow (U_n, u_n)$ and 	$f:(\mathbb D,0)\rightarrow (U, u)$ be Riemann mappings with $f_n'(0)>0$ and $f'(0)>0$. Then 
		
		(1).  $(U_n ,u_n)\rightarrow (U, u)$ if and only if $f_n$ converges  to $f$ in $\mathbb D$; 
		
		(2). If $(U_n ,u_n)\rightarrow (U, u)$, then $f_n^{-1}\rightarrow f^{-1}$ in Carath\' eodory topology on functions. 		
	\end{thm}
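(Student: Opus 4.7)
The plan is to establish part (1) by a normal families argument run in both directions, and then derive part (2) by applying (1) and extracting subsequential limits of the inverses.

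First, assume $(U_n, u_n) \to (U, u)$. I would begin by controlling the derivatives $f_n'(0)$ from above and below. The Koebe one-quarter theorem gives $\mathbb D(u_n, f_n'(0)/4)\subset U_n$, so if $f_n'(0)$ were unbounded, $U_n$ would contain arbitrarily large disks around $u_n\to u$, contradicting condition (iii) of kernel convergence (apply it to any $w\in\partial U$). Conversely, condition (ii) produces a fixed disk $\mathbb D(u,\rho)\subset U_n$ for large $n$, and the Schwarz lemma applied to $f_n^{-1}\circ(\text{translation})$ yields $f_n'(0)\geq\rho/2$. Since $U\neq\mathbb C$, one can pick $w_0\notin U$ and, using (iii) to produce $w_n\notin U_n$ with $w_n\to w_0$, bound $|f_n|$ uniformly on compact subsets of $\mathbb D$ via the Schwarz--Pick estimate applied to $1/(f_n-w_n)$. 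Hence $\{f_n\}$ is a normal family on $\mathbb D$.

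Any locally uniform subsequential limit $g$ is holomorphic with $g(0)=u$ and $g'(0)>0$, so by Hurwitz it is univalent (the positive derivative rules out a constant limit). I would then show $g(\mathbb D)=U$. For $U\subset g(\mathbb D)$, fix $w_0\in U$; by (ii), $w_0\in U_n=f_n(\mathbb D)$ eventually, so the argument principle applied to $f_n-w_0$ on a disk containing $f^{-1}(w_0)$ together with uniform convergence forces $w_0\in g(\mathbb D)$. For $g(\mathbb D)\subset U$, a point of $g(\mathbb D)\cap\partial U$ would, via (iii) and Hurwitz, force some $f_n$ to omit a point it actually contains, a contradiction. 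Uniqueness of normalized Riemann maps then gives $g=f$, and normality upgrades subsequential convergence to full convergence. For the converse of (1), assume $f_n\to f$ locally uniformly. Then $u_n=f_n(0)\to f(0)=u$, giving (i). Condition (ii) is checked by the argument principle: for compact $K\subset U$, each $w\in K$ has a preimage $\zeta\in\mathbb D$ with positive winding of $f-w$ around $\zeta$, and uniform convergence preserves the winding number, so $w\in f_n(\mathbb D)=U_n$ eventually, uniformly in $w\in K$ by a compactness argument. Condition (iii) comes from choosing $\zeta_k\in\mathbb D$ with $f(\zeta_k)\to w\in\partial U$ and running a diagonal argument to select $w_n\in\partial U_n$ converging to $w$.

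For part (2), given $(U_n,u_n)\to(U,u)$, part (1) yields $f_n\to f$ locally uniformly on $\mathbb D$. For any compact $K\subset U$, condition (ii) gives $K\subset U_n$ for all large $n$, and $\{f_n^{-1}|_K\}$ is uniformly bounded (valued in $\mathbb D$) hence normal. Any subsequential limit $h$ on $K$ satisfies $f\circ h=\mathrm{id}_K$, since $f_n\circ f_n^{-1}=\mathrm{id}$ and $f_n\to f$ uniformly on the compact set $h(K)\subset\mathbb D$. This forces $h=f^{-1}|_K$, so the full sequence $f_n^{-1}$ converges to $f^{-1}$ uniformly on $K$, which is precisely the Carath\'eodory convergence of functions.

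The main obstacle is proving $g(\mathbb D)=U$ in the forward direction of (1), since this is where conditions (ii) and (iii) of kernel convergence must be coordinated with the univalence supplied by Hurwitz. The delicate point is the argument-principle computation that transports the image $f_n(\mathbb D)=U_n$ to $g(\mathbb D)$ in the limit; keeping this uniform in $w$ requires carefully chosen exhausting compacta of both $U$ and $g(\mathbb D)$, and ruling out the possibility that boundary points of $U$ are attained by $g$ relies on the openness of univalent images combined with (iii).
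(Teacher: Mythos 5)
The paper does not actually prove this theorem: it records it as well-known and cites Carath\'eodory's book and McMullen's \emph{Complex Dynamics and Renormalization}, \S 5.1, so there is no in-paper proof to compare against. Your argument is the standard normal-families route, and the overall skeleton (bound $f_n'(0)$ above and below, extract a locally uniform limit $g$, prove $g(\mathbb D)=U$ in two inclusions, quote uniqueness of normalized Riemann maps, and in the converse direction read off kernel convergence from the argument principle) is the right one. A few steps as written would not go through and need repair. The phrase ``Schwarz--Pick estimate applied to $1/(f_n-w_n)$'' is not a valid move: $1/(f_n-w_n)$ maps $\mathbb D$ into $\mathbb C-\{0\}$, which is not hyperbolic, so there is no Schwarz--Pick inequality to invoke; what you actually want is either the Koebe distortion theorem applied to the normalized maps $(f_n-u_n)/f_n'(0)\in S$ (which, together with your bound on $f_n'(0)$, gives local boundedness with no need for $w_n$), or, if you insist on using omitted values, Montel's fundamental normality criterion applied to $f_n$ itself after producing \emph{two} boundary sequences $w_n\to w_0$ and $w_n''\to w_0''$ with $w_0\neq w_0''$. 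Your justification of $U\subset g(\mathbb D)$ via ``the argument principle on a disk containing $f^{-1}(w_0)$'' is circular as written (you would need to already know the limit is $f$); the clean version is the hyperbolic-isometry estimate the paper itself exploits in Lemma~\ref{hyp-d-limit}: a path from $u$ to $w_0$ in $U$ lies in $U_n$ for large $n$ by (ii), so $d_{U_n}(u_n,w_0)$ is uniformly bounded, hence $f_n^{-1}(w_0)$ stays in a fixed compact subset of $\mathbb D$ and one passes to the limit. The same estimate is what certifies, in your proof of part (2), that the subsequential limit $h$ of $f_n^{-1}|_K$ takes values in $\mathbb D$ rather than on $\partial\mathbb D$, a point you use silently when you write ``uniformly on the compact set $h(K)\subset\mathbb D$.'' Finally, your diagonal argument for condition (iii) in the converse direction produces points $v_n\in U_n$ converging to $w$, not points of $\partial U_n$; you need the additional observation that if $\operatorname{dist}(w,\partial U_n)$ stayed bounded below along a subsequence then a fixed disk about $w$ would be contained in $U_n$, and the hyperbolic-distance bound would force $w\in U$, contradicting $w\in\partial U$.
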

	
	\begin{rmk}\label{cara-function} In Theorem \ref{cara-riemann}, assume $(U_n ,u_n)\rightarrow (U, u)$,  if  $f_n$  is not normalized so that $f_n'(0)>0$, then  the statement reads as: there exist a  Riemann   mapping $g:(\mathbb D,0)\rightarrow (U, u)$  and a subsequence $(f_{n_k})_k$
		so that 	
		
		(1). $f_{n_k}$ converges to $g$ in $\mathbb D$; 
		
		(2). $f_{n_k}^{-1}\rightarrow g^{-1}$ in Carath\' eodory topology on  functions. 		
	\end{rmk}
	
	 
	The technique of  utilizing hyperbolic metrics  in  the  kernel convergence of pointed disks appears in  Luo’s work  \cite[\S 6]{Luo24}  to study the limits of quasi-invariant trees, Petersen-Zakeri's  work \cite[\S 2.4]{PZ24} on  Hausdoff limits of external rays.
	A notable property is that in  the kernel convergence,  the hyperbolic distance  descends to the limit:
	\begin{lem} \label{hyp-d-limit} Assume $(U_n ,u_n)\rightarrow (U, u)$ in $\mathcal E$.  
		
		(1).		Suppose $a_n, b_n\in U_n$, $a,b\in U$ satisfy that   $a_n\rightarrow a$, $b_n\rightarrow b$. Then we have the convergence of the hyperbolic distances
		$$d_{U_n}(a_n, b_n)\rightarrow d_{U}(a, b).$$
		
		(2).		Suppose $a_n, b_n\in U_n$   satisfy that   $a_n\rightarrow a\in U$, $b_n\rightarrow b\in \mathbb C$,  then 
		$$b\in U   \ \text{ if and only if }  \  \sup_{n} d_{U_n}(a_n, b_n)<+\infty.$$				
	\end{lem}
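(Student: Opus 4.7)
The strategy is to pull everything back to the unit disk via Riemann maps, where the hyperbolic metric is explicit. Fix Riemann mappings $\varphi_n : (\mathbb D, 0) \to (U_n, u_n)$ and $\varphi : (\mathbb D, 0) \to (U, u)$ normalized so that $\varphi_n'(0) > 0$ and $\varphi'(0) > 0$. By Theorem \ref{cara-riemann}, the hypothesis $(U_n, u_n) \to (U, u)$ yields $\varphi_n \to \varphi$ locally uniformly on $\mathbb D$, together with $\varphi_n^{-1} \to \varphi^{-1}$ in the Carath\'eodory topology on functions. Since conformal isomorphisms preserve the hyperbolic metric,
\[
d_{U_n}(a_n, b_n) = d_{\mathbb D}\bigl(\varphi_n^{-1}(a_n),\,\varphi_n^{-1}(b_n)\bigr),
\]
which reduces both assertions to elementary computations inside $\mathbb D$.

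For part (1), the key step is to verify that $\varphi_n^{-1}(a_n) \to \varphi^{-1}(a)$ whenever $a_n \to a \in U$. Choose a compact neighborhood $K_a$ of $a$ inside $U$; by condition (ii) of Carath\'eodory convergence, $K_a \subset U_n$ for all large $n$, so $\varphi_n^{-1}$ is defined on $K_a$ for such $n$ and converges uniformly to $\varphi^{-1}$ there. Since $a_n \in K_a$ eventually, the triangle inequality together with continuity of $\varphi^{-1}$ at $a$ gives the claim. Applying the same argument to $b$, both pullbacks converge to points in $\mathbb D$, and joint continuity of $d_{\mathbb D}$ on $\mathbb D \times \mathbb D$ produces $d_{U_n}(a_n, b_n) \to d_U(a, b)$.

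For part (2), the forward direction ($b \in U$ implies $\sup_n d_{U_n}(a_n, b_n) < \infty$) is immediate from (1). For the converse, assume $\sup_n d_{U_n}(a_n, b_n) \leq M < \infty$ and set $\alpha_n := \varphi_n^{-1}(a_n)$, $\beta_n := \varphi_n^{-1}(b_n)$. As above, $\alpha_n \to \alpha := \varphi^{-1}(a) \in \mathbb D$, and by assumption $\beta_n \in \mathbb D_{\rm hyp}(\alpha_n, M)$. Since $\alpha_n$ eventually lies in a fixed compact subset of $\mathbb D$, these hyperbolic balls are eventually contained in a single compact subset $K$ of $\mathbb D$. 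Extracting a subsequence $\beta_{n_k} \to \beta \in K \subset \mathbb D$, the locally uniform convergence $\varphi_n \to \varphi$ yields $b_{n_k} = \varphi_{n_k}(\beta_{n_k}) \to \varphi(\beta) \in U$, and since $b_n \to b$ we conclude $b = \varphi(\beta) \in U$. The main subtlety throughout is upgrading Carath\'eodory convergence of functions to convergence at a moving evaluation point, which is resolved in each case by the triangle-inequality argument sketched above; once that is in hand, the rest of the proof is routine compactness in $\mathbb D$.
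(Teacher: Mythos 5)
Your proof is correct and follows essentially the same strategy as the paper: pull back to $\mathbb D$ via Riemann maps, invoke Carath\'eodory convergence of the maps and their inverses (Theorem \ref{cara-riemann}), and compute with the explicit hyperbolic metric on $\mathbb D$. The one genuine difference is the choice of normalization. You keep the base point at $u_n$ (so $\varphi_n(0)=u_n$, $\varphi_n'(0)>0$), which means Theorem \ref{cara-riemann}(1) applies directly and you get convergence of the whole sequence $\varphi_n \to \varphi$ without passing to a subsequence; the cost is that you must separately show $\varphi_n^{-1}(a_n)\to\varphi^{-1}(a)$ and $\varphi_n^{-1}(b_n)\to\varphi^{-1}(b)$ at moving evaluation points, which you do correctly via the compact-neighborhood triangle-inequality argument. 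The paper instead re-bases the Riemann map at $a_n$ and rotates so that $b_n$ pulls back to a positive real $r_n$, which gives the especially clean identity $d_{U_n}(a_n,b_n)=\log\frac{1+r_n}{1-r_n}$ but loses the canonical normalization and therefore requires Remark \ref{cara-function} (subsequence extraction) even in part (1). Both routes are sound; yours is marginally more uniform across (1) and (2), the paper's is marginally more explicit. One small point worth making explicit in your write-up for part (2): after extracting $\beta_{n_k}\to\beta\in K$, the convergence $\varphi_{n_k}(\beta_{n_k})\to\varphi(\beta)$ again needs the same moving-point triangle inequality (split as $|\varphi_{n_k}(\beta_{n_k})-\varphi(\beta_{n_k})|+|\varphi(\beta_{n_k})-\varphi(\beta)|$), not just pointwise convergence; you gesture at this but it is worth stating.
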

	\begin{proof}  Let $f_n: \mathbb D\rightarrow U_n$ be the Riemann mapping so that $f_n(0)=a_n, f_n(r_n)=b_n$, where $r_n>0$ is chosen so that $d_{\mathbb D}(0, r_n)=d_{U_n}(a_n, b_n)$.
		By Theorem \ref{cara-riemann} and Remark \ref{cara-function},  also by passing to a subsequence, $f_n$  converges  to a conformal map  $g:  \mathbb D\rightarrow U$ with $g(0)=a$, and 
		$f_n^{-1}\rightarrow g^{-1}$ in Carath\' eodory topology on functions.

		(1).			  Since $b_n\rightarrow b\in U$, we get $r_n=f_n^{-1}(b_n)\rightarrow r_g:= g^{-1}(b)$. Hence
		$$d_{U_n}(a_n, b_n)=\log\frac{1+r_n}{1-r_n}\rightarrow \log\frac{1+r_g}{1-r_g}=d_U(a,b).$$
		
		(2).	  If $b\in U$, by (1), $d_{U_n}(a_n, b_n)\rightarrow d_{U}(a, b)$ and $\sup_{n} d_{U_n}(a_n, b_n)<+\infty$.
		Conversely, assume $\sup_{n} d_{U_n}(a_n, b_n)\leq L$ for some $L\geq 0$,
		then $r_n\leq r:=(e^L-1)/(e^L+1)<1$ for all $n$.  Assume $r_n\rightarrow r_\infty\leq r$,  by the uniform convergence of $f_n$ to $g$ in the closed disk $\overline{\mathbb D(0,r)}$, we have 
		$b_n=f_n(r_n)\rightarrow g(r_\infty)$. It follows that $b=g(r_\infty)\in g(\mathbb D)=U$.
	\end{proof}
	
		\vspace{3pt}
	\noindent \textbf{Kernel convergence arising from dynamics.}	
	We say  a sequence of rational maps  $(f_n)_{n}$ converges to $f$ algebraically if ${\rm deg}(f_n)={\rm deg}(f)$ and the coefficients of $f_n$   can be chosen to converge to those of $f$.
	
	
	\begin{lem} \label{kernel-att}
	Let $(f_n)_n  $ be a sequence of rational maps converging to $f$ algebraically. Assume that  each $f_n$ has an attracting  fixed point $a_n$, and $a_n\rightarrow a$, which is    an $f$-attracting fixed point. Assume the  Fatou components $U_{f_n}(a_n), U_{f}(a)$ are simply connected. 
	Then we have the kernel convergence
	$$(U_{f_n}(a_n), a_n)\rightarrow (U_f(a), a).$$
	\end{lem}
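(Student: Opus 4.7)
My plan is to check the three defining conditions of kernel convergence $(U_{f_n}(a_n),a_n)\to(U_f(a),a)$ directly. Condition (i) is the hypothesis $a_n\to a$. Condition (ii), that every compact $K\subset U_f(a)$ is eventually contained in $U_{f_n}(a_n)$, I will verify by a local-attraction plus normal-family argument. Condition (iii), that every boundary point of $U_f(a)$ is a limit of boundary points of $U_{f_n}(a_n)$, is the more delicate one, and I will verify it by contradiction, invoking the lower semicontinuity of Julia sets under algebraic limits.

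For (ii) I fix a small attracting disk $V=\mathbb D(a,r_0)\subset U_f(a)$ with $f(\overline V)\subset V$, which exists since $|f'(a)|<1$. Uniform convergence $f_n\to f$ on $\overline V$ then gives $f_n(\overline V)\subset V$ and $a_n\in V$ for large $n$, so $V\subset U_{f_n}(a_n)$. For a general compact $K\subset U_f(a)$, I use path-connectedness to enlarge to a compact connected set $K'\supset K\cup\{a\}$ in $U_f(a)$; since $f^k\to a$ uniformly on $K'$, some iterate sends $K'$ into $V$, and by a routine induction on $j\leq N$ one has $f_n^N(K')\subset V$ for all large $n$. Hence $\{f_n^k|_{K'}\}_{k\geq N}$ takes values in the bounded set $V$, so it is normal, giving $K'\subset F(f_n)$; as $K'$ is connected and meets $V\subset U_{f_n}(a_n)$, I conclude $K\subset K'\subset U_{f_n}(a_n)$.

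For (iii) I argue by contradiction: if some $w\in\partial U_f(a)$ admits no approximants in $\partial U_{f_n}(a_n)$, then after passing to a subsequence there is $r>0$ with $\mathbb D(w,r)\cap\partial U_{f_n}(a_n)=\emptyset$ for all $n$. Since $w\in\partial U_f(a)$, I pick $z_0\in\mathbb D(w,r)\cap U_f(a)$; by (ii), $z_0\in U_{f_n}(a_n)$ for large $n$, and the connected disk $\mathbb D(w,r)$ must therefore sit entirely in $U_{f_n}(a_n)\subset F(f_n)$. If $\mathbb D(w,r)\cap J(f)=\emptyset$, then $\mathbb D(w,r)$ lies in a single Fatou component of $f$, which must be $U_f(a)$ because it contains $z_0$, contradicting $w\in\partial U_f(a)$. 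Otherwise there is $z_2\in\mathbb D(w,r)\cap J(f)$, and the lower semicontinuity of Julia sets under algebraic limits produces $z_{2,n}\in J(f_n)$ with $z_{2,n}\to z_2$, eventually landing in $\mathbb D(w,r)\subset F(f_n)$, a contradiction. The main obstacle is precisely (iii): kernel convergence is strictly weaker than Hausdorff convergence of closures or of boundaries, so approximating the boundary demands an external input beyond (ii), namely that Julia sets cannot shrink in the limit. The simple connectedness hypothesis plays no substantive role in the argument; it only places the pointed domains in the ambient space $\mathcal E$ where kernel convergence is defined.
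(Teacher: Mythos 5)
Your proof is correct and follows essentially the same route as the paper's, which simply verifies conditions (ii) and (iii) of the definition; the paper's one-line appeal to ``density of repelling periodic points and their stability'' is precisely the mechanism behind the lower semicontinuity of Julia sets you invoke in (iii), so the two arguments coincide in substance, with yours supplying the details the paper leaves implicit. One minor redundancy: since $w\in\partial U_f(a)\subset J(f)$, the case $\mathbb D(w,r)\cap J(f)=\emptyset$ in your step (iii) cannot occur, so only the second branch of that dichotomy is needed.
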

	\begin{proof} We check the definition of kernel convergence. (ii) is immediate. 
		(iii) is due to the density of   repelling periodic points on Julia set and their stability. 
		\end{proof}
	\begin{rmk}\label{kernel-att-pre} Under the condition of Lemma \ref{kernel-att}, if $f_n^{l}(b_n)=a_n$ for some integer $l\geq 1$ and for all $n$, and if $b_n\rightarrow b$, $U_{f_n}(b_n)$ and $U_f(b)$ are simply connected,  we also have  the kernel convergence:
		$$(U_{f_n}(b_n), b_n)\rightarrow (U_f(b), b).$$
		\end{rmk}







A sequence of  compacta $(E_n)_n$ converges to a compactum $E$ in {\it Hausdorff topology}  if 
$d_{H}(E_n, E)\rightarrow 0$, where $d_H$ is the Hausdorff distance  defined by
$$d_H(A, B)=\max\Big\{ \max_{a\in A}\min_{b\in B} d(a,b),  \  \max_{b\in B}\min_{a\in A} d(a,b)\Big\},$$
and $d(a,b)$ is the Euclidean or spherical distance  depending on the situation.


\begin{lem} \label{c-internal-ray} Let $(f_n)_n$ be a  sequence in $\mathcal C_d$ converging to $f$.   Let $(U_n, a_n), (U, a)$ be pointed bounded  attracting or parabolic Fatou components of $f_n, f$ respectively.
	Let $p\in \partial U$ be a repelling periodic point of $f$.
Assume the kernel convergence 
	$$(U_n, a_n)\rightarrow (U, a).$$
	Then there exist arcs $\gamma_n: [0,1]\rightarrow \overline{U_n}$, $\gamma: [0,1]\rightarrow  \overline{U}$ with the properties  
	
	\begin{itemize}
		 \item $\gamma_n([0,1))\subset U_n$,  $\gamma_n(1)\in \partial  U_n$ is $f_n$-repelling;
		  $\gamma([0,1))\subset U, \gamma(1)=p$;
		  
		  \item $\gamma_n(0)=\gamma(0)$ for $n$ large enough;
		 
		 \item $\gamma_n\rightarrow\gamma$ in  Hausdorff topology. 
		\end{itemize}
\end{lem}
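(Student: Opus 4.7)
The plan is to construct the arcs via the Riemann maps of the Jordan domains $U$ and $U_n$, and to identify the boundary endpoints with repelling fixed points of the conjugated Blaschke models. After replacing $f,f_n$ by iterates, I may assume $p$ is a repelling fixed point of $f$. By Theorem \ref{RY}, $U$ and $U_n$ are Jordan domains, so the Riemann maps $\phi:(\overline{\mathbb D},0)\to(\overline U,a)$ and $\phi_n:(\overline{\mathbb D},0)\to(\overline{U_n},a_n)$ extend to homeomorphisms of closed disks. Applying Theorem \ref{cara-riemann} and Remark \ref{cara-function}, after absorbing a rotation into $\phi_n$ and passing to a subsequence, $\phi_n\to\phi$ locally uniformly on $\mathbb D$.

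The conjugated maps $B:=\phi^{-1}\circ f\circ\phi$ and $B_n:=\phi_n^{-1}\circ f_n\circ\phi_n$ are proper holomorphic self-maps of $\mathbb D$, hence finite Blaschke products. By Hurwitz's theorem and the kernel convergence, $\deg B_n=\deg B=:e$ for $n$ large, and local uniform convergence in $\mathbb D$ combined with equal degrees forces $B_n\to B$ uniformly on $\overline{\mathbb D}$ (since a Blaschke product is determined by its zero divisor in $\mathbb D$ together with its normalization). With $\zeta:=\phi^{-1}(p)\in\partial\mathbb D$ a repelling fixed point of $B$, the implicit function theorem applied to $B_n(w)=w$ on $\partial\mathbb D$ yields a unique nearby fixed point $\zeta_n\in\partial\mathbb D$ of $B_n$, with $\zeta_n\to\zeta$ and $|B_n'(\zeta_n)|>1$. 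Setting $p_n:=\phi_n(\zeta_n)\in\partial U_n$, the conjugation shows $p_n$ is a repelling fixed point of $f_n$.

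Define $\gamma(t):=\phi(t\zeta)$ and $\widetilde\gamma_n(t):=\phi_n(t\zeta_n)$ on $[0,1]$. To enforce $\gamma_n(0)=\gamma(0)=a$, I prepend to $\widetilde\gamma_n$ a short arc inside $U_n$ joining $a$ to $a_n$; this is possible for large $n$ because the kernel convergence forces $a\in U_n$ eventually, and the prepended arc has diameter tending to $0$. On the interior piece $[0,1-\delta]$, $\phi_n\to\phi$ uniformly on the compact set $\{t\zeta:t\in[0,1-\delta]\}$ and $\zeta_n\to\zeta$, so $\widetilde\gamma_n|_{[0,1-\delta]}\to\gamma|_{[0,1-\delta]}$ uniformly, giving Hausdorff closeness there.

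The main obstacle is controlling the tail $\widetilde\gamma_n([1-\delta,1])$ uniformly in $n$: Carath\'eodory convergence alone does not yield equicontinuity of $\{\phi_n\}$ at the boundary point $\zeta_n$. I would resolve this dynamically. Since $|B_n'(\zeta_n)|\to|B'(\zeta)|>1$, $B_n$ admits a uniformly contracting inverse branch defined on a disk around $\zeta_n$ of size bounded below. Pulling the radial tail back by a definite number of iterates of this branch maps it onto a piece of an earlier radial segment lying in a fixed compact subset of $\mathbb D$, where $\phi_n\to\phi$ uniformly. Conjugating through $\phi_n$, this corresponds to iterating a contracting inverse branch of $f_n$ fixing $p_n$; a Koebe distortion estimate on this branch transfers uniform smallness back to $\widetilde\gamma_n([1-\delta,1])$, with constants independent of $n$. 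Combined with the interior convergence, this yields $\gamma_n\to\gamma$ in Hausdorff topology, completing the proof.
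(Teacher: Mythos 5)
The paper's own proof works directly with $f$: it applies the implicit function theorem to locate a repelling periodic point $r(g)$ near $p$ for all $g$ close to $f$, finds a common linearization neighborhood $V$ for $g^l$, chooses a fundamental arc $\alpha_g\subset V$ with common starting point depending continuously on $g$, and defines $\gamma_g$ as the $g^l|_V^{-1}$-invariant curve generated by $\alpha_g$. Kernel convergence then forces $\alpha_{f_n}\subset U_n$, hence $\gamma_{f_n}\subset U_n\cup\{r(f_n)\}$, and Hausdorff continuity of the whole construction in $g$ completes the proof. Your route through the Riemann maps and conjugated Blaschke models is genuinely different.

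Unfortunately, your proof has a serious gap at the step ``$\deg B_n=\deg B=:e$ for $n$ large.'' Hurwitz together with kernel convergence only gives $\deg B_n\geq \deg B$: every preimage of $a$ in $U$ persists into $U_n$, but additional preimages of $a_n$ can enter $U_n$ from outside. In the main setting of this paper this is not a pathology but the \emph{generic} situation: if $(f_n)\subset\mathcal H_d$ converges to $f\in\partial_{\rm reg}\mathcal H_d$, then $\deg(f_n|_{U_{f_n}(0)})=d$ for every $n$ while $\deg(f|_{U_f(0)})<d$, so $\deg B_n>\deg B$ always. When the degrees differ, zeros of $B_n$ escape to $\partial\mathbb D$, and $B_n\to B$ only locally uniformly on $\widehat{\mathbb C}\setminus\mathrm{supp}(S)$ for the escaping divisor $S$ (Lemma \ref{degenerate-0}); it is \emph{not} uniform on $\overline{\mathbb D}$. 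Consequently the implicit function theorem argument that locates $\zeta_n$ near $\zeta=\phi^{-1}(p)$ breaks down unless one first shows $\zeta\notin\mathrm{supp}(S)$. That exclusion is plausible because $p$ is repelling, but proving it cleanly requires an argument like Lemma \ref{hyp-d-limit} (a zero of $B_n$ tending to $\zeta$ has divergent hyperbolic distance from $0$, which rules out an interior limit for its $\phi_n$-image), and it is not automatic; you never address it. A related softer issue: your tail-control step claims the pullback of a radial segment by a contracting inverse branch of $B_n$ lands on an ``earlier radial segment,'' but radial segments are not $B_n^{-1}$-invariant, and the intended contraction estimate needs uniformity in $n$ of the inverse-branch domain and distortion, which is asserted rather than established. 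The paper's construction avoids both problems at once by never leaving the dynamical plane: the fundamental arc and its $f_n^{-l}$-pullbacks sit in a fixed linearization neighborhood of $r(f_n)$ with uniform estimates supplied directly by the linearization, and no degree bookkeeping of the Blaschke models is needed.
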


\begin{proof} 
Suppose the $f$-period of $p$ is $l\geq 1$.	By the implicit function theorem,   there exist a neighborhood $\mathcal N$ of $f$ and a   continuous map $r: \mathcal N\rightarrow\mathbb C$ with $r(f)=p$,  so that $r(g)$ is $g$-repelling for all $g\in \mathcal N$.  By shrinking $\mathcal N$ if necessary, we can find a common linearization neighborhood  $V$  of $g^l$ near $r(g)$ for all $g\in \mathcal N$.
There is  a fundamental arc $\alpha_g\subset V$ which generates a $g^l|_V^{-1}$-invariant curve $\gamma_g$  converging to $r(g)$.  By shrinking $\mathcal N$,  
we may further require that
\begin{itemize}
	\item  the family of arcs $\{\alpha_g\}_{g\in \mathcal N}$ have a common starting point;
	
	\item $\alpha_g$ is continuous with respect to $g\in \mathcal N$ in Hausdorff topology; 
	
	\item $\alpha_f\subset U$.
		\end{itemize}
	
	It follows that $\mathcal N\ni g\mapsto \gamma_g$ is Hausdorff continuous. By the kernel convergence $(U_n, a_n)\rightarrow (U, a)$,  we have that $f_n\in \mathcal N$ and  $\alpha_{f_n}\subset U_n$ for all large $n$.  Therefore $\gamma_n:=\gamma_{f_n}\subset U_{n}\cup\{r(f_n)\}$ and the conclusion follows.
	\end{proof}

	\begin{lem} \label{continuity-rays} Let $f\in \mathcal C_d$ and let $U$ be a bounded attracting  Fatou component of $f$.  Suppose that $R_f(\theta)$ lands at  $\xi\in \partial U$ and  $\xi$ is not a cut point of $J(f)$.  
		Then for any sequence of maps $(f_n)_n\subset\mathcal C_d$ and any sequence of angles $(\theta_n)_n$ with  $f_n\rightarrow f$ and $\theta_n\rightarrow \theta$, we have the Hausdorff convergence (in spherical metric)		$$ \overline{R_{f_n}(\theta_n)}\rightarrow  \overline{R_{f}(\theta)}.$$
 	\end{lem}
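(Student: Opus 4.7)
The plan is to establish the Hausdorff convergence $\Gamma_n:=\overline{R_{f_n}(\theta_n)}\to \Gamma:=\overline{R_f(\theta)}$ by verifying $\Gamma\subseteq \liminf_n\Gamma_n$ and $\limsup_n\Gamma_n\subseteq \Gamma$ separately. For the first containment, I would use the stability of B\"ottcher coordinates at infinity: since $f_n\to f$ algebraically, $\psi_{f_n,\infty}\to \psi_{f,\infty}$ locally uniformly on $\mathbb{C}\setminus K(f)$, so for each fixed $t>1$, $\psi_{f_n,\infty}^{-1}(te^{2\pi i\theta_n})\to \psi_{f,\infty}^{-1}(te^{2\pi i\theta})$. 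A diagonal argument letting $t\to 1^+$ also puts $\xi$ in $\liminf_n\Gamma_n$, giving $\Gamma\subseteq \liminf_n\Gamma_n$.

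For the reverse containment I would take $p\in \limsup_n\Gamma_n$, realized as a limit of $p_{n_k}\in \Gamma_{n_k}$. When $p\notin K(f)$, upper semicontinuity of $f\mapsto K(f)$ places $p_{n_k}$ eventually in $\mathbb{C}\setminus K(f_{n_k})$, and B\"ottcher convergence then forces $\psi_{f,\infty}(p)$ to have argument $\theta$, so $p\in R_f(\theta)\subseteq \Gamma$. The substantive case is to exclude $p\in J(f)$ with $p\neq \xi$, which is where the non-cut-point hypothesis enters through Lemma \ref{cont-limb}: $L_{U,\xi}=\{\xi\}$, and $\theta_U^\pm$ is continuous at $\xi$.

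I would then run a trapping argument. Using density of pre-periodic points on the Jordan curve $\partial U$, bracket $\xi$ with pre-periodic points $y_k^-, y_k^+\in\partial U$ with $L_{U,y_k^\pm}=\{y_k^\pm\}$ converging to $\xi$ clockwise and counter-clockwise, so that the unique external rays landing at them have rational angles $\alpha_k\nearrow \theta$ and $\beta_k\searrow \theta$. By the classical holomorphic stability of rational rays landing at repelling (pre-)periodic points, $\overline{R_{f_n}(\alpha_k)}\to \overline{R_f(\alpha_k)}$ and $\overline{R_{f_n}(\beta_k)}\to \overline{R_f(\beta_k)}$ in Hausdorff metric as $n\to\infty$, with landing points $y_{k,n}^\pm\to y_k^\pm$. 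For $n$ large (depending on $k$), $\theta_n\in(\alpha_k,\beta_k)$, and the non-crossing of external rays of a single polynomial confines $R_{f_n}(\theta_n)$ to the open sector $S_{k,n}\subseteq \mathbb{C}\setminus K(f_n)$ bounded by $R_{f_n}(\alpha_k)\cup R_{f_n}(\beta_k)$. Passing to the Hausdorff limit in $n$, any $J(f)$-cluster point of $\Gamma_n$ lies in $J(f)\cap \overline{S_k}$, which by Theorem \ref{RY} and the limb decomposition is contained in $\sigma_U^{-1}(C_k)$, where $C_k\subseteq \partial U$ is the closed subarc from $y_k^-$ to $y_k^+$ through $\xi$. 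Since $\bigcap_k C_k=\{\xi\}$ and $L_{U,\xi}=\{\xi\}$, Corollary \ref{limb-arc}(1) forces $\mathrm{diam}(\sigma_U^{-1}(C_k))\to 0$, and hence $p=\xi$.

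The hard part will be checking carefully that for each fixed $k$, and every sufficiently large $n$, the ray $R_{f_n}(\theta_n)$ is genuinely trapped inside $S_{k,n}$ (so no $J(f)$-cluster point leaks out of $\overline{S_k}$ in the limit), and then packaging the two-parameter passage to the limit ($n\to\infty$ first, then $k\to\infty$) into a uniform statement $\Gamma_n\subseteq N_\epsilon(\Gamma)$. This will rely on the classical holomorphic stability of rational (pre-)periodic ray landings, the non-crossing of external rays of a single polynomial, the kernel convergence $(U_n,a_n)\to(U,a)$ supplied by Lemma \ref{kernel-att} to identify the converging landings on $\partial U_n$, and the shrinking-limbs principle from Corollary \ref{limb-arc}(1) that is ultimately responsible for closing the argument.
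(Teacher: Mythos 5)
Your proposal follows essentially the same strategy as the paper: trap $R_{f_n}(\theta_n)$ between rays landing near $\xi$, then invoke Corollary~\ref{limb-arc}(1) to force the accumulation set in $J(f)$ down to $\{\xi\}$ as the bracketing arc shrinks. Two implementation choices differ. First, you bracket $\xi$ by \emph{pre-periodic points with trivial limbs} so as to get a unique rational ray at each; the paper instead picks repelling \emph{periodic} endpoints $a,b$ of the arc $C$ (dense on $\partial U$, no trivial-limb constraint imposed) and simply selects one external ray landing at each. The trivial-limb requirement on $y_k^\pm$ is both unnecessary for the diameter estimate — Corollary~\ref{limb-arc}(1) already needs only $L_{U,\xi}=\{\xi\}$, not triviality of the endpoint limbs — and not obviously achievable via density alone (pre-periodic points and trivial-limb points are each dense, but their intersection needs a small extra argument, e.g.\ that among the $\sim m^n$ repelling periodic points of period $n$ on $\partial U$, at most finitely many lie on the grand orbit of a critical-limb projection); dropping it simplifies your argument and matches the paper. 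Second, you close the trapping region with the sector between the two external rays; the paper closes it via internal arcs $\gamma_{a,n},\gamma_{b,n}\subset U_{f_n}(\alpha_n)$ produced by Lemma~\ref{c-internal-ray}, joined by a fixed arc $\beta\subset U$. Both devices yield a separating curve whose Hausdorff limit is controlled, and the remaining bound ${\rm diam}(\sigma_U^{-1}(C_k))\to 0$ is identical. Your explicit split into $\Gamma\subseteq\liminf\Gamma_n$ and $\limsup\Gamma_n\subseteq\Gamma$ with the B\"ottcher-coordinate argument for the off-$K(f)$ part is a clean packaging of what the paper compresses into the ``$3\epsilon$'' diameter estimate.
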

 Note that we don't assume   the external ray $R_{f_n}(\theta_n)$ lands for each $n$.
 \begin{proof}   Since   $\xi\in \partial U$ is not a cut point of $J(f)$, we have $L_{U, \xi}=\{\xi\}$ (see Lemma \ref{cont-limb}(3)).  By Corollary \ref{limb-arc}, for
 	 any $\epsilon>0$,   there is  an open arc $C\subset \partial U$  containing $\xi$ and satisfying that 
\begin{itemize}     
	\item the two endpoints $a, b$ of $C$ are repelling periodic points of $f$.
	\item ${\rm diam}(\sigma_U^{-1}(C))\leq \epsilon$, where $\sigma_U$ is defined by \eqref{proj-u}.
	\end{itemize}

Let $\alpha$ be the attracting  periodic point in $U$.
By the stability of  attracting point,  there is an attracting point $\alpha_n$ of $f_n$ with $\alpha_n\rightarrow \alpha$. By Lemma \ref{kernel-att},  we have the kernel convergence   $(U_{f_n}(\alpha_n), \alpha_n)\rightarrow (U, \alpha)$. By Lemma \ref{c-internal-ray},  for $\omega\in\{a,b\}$ and for each $n$,  there exist an arc $\gamma_{\omega,n}: [0,1]\rightarrow \overline{U_{f_n}(\alpha_n)}$  so that

\begin{itemize}
	\item $\gamma_{\omega,n}([0,1))\subset U_{f_n}(\alpha_n)$,  $\gamma_{\omega,n}(1)\in \partial   U_{f_n}(\alpha_n)$ is $f_n$-repelling;
	$\gamma_{\omega}([0,1))\subset U, \gamma_{\omega}(1)=\omega$;
	
	\item $\gamma_{\omega,n}(0)=\gamma_\omega(0)$ for all $n$;
	
	\item $\gamma_{\omega,n}\rightarrow\gamma_\omega$ in  Hausdorff topology. 
\end{itemize}

For $\omega\in\{a,b\}$,  there is an external ray, say $R_f(\theta_\omega)$, landing at $\omega$ (see \cite[Theorem 18.11]{Mil06}). Set $\zeta_\omega=\gamma_\omega(0)$. 
 Let $\beta\subset U$ be an arc connecting $\zeta_a$ and $\zeta_b$. 
By suitable choices  of $\gamma_a, \gamma_b$ and  $\beta$, we may assume ${\rm diam}(\gamma_a\cup\gamma_b\cup \beta)\leq 2\epsilon$.
 Let 
 $$X_n=\overline{R_{f_n}(\theta_a)}\cup \gamma_{a,n}\cup \overline{R_{f_n}(\theta_b)}\cup  \gamma_{b,n}, \ X=\overline{R_{f}(\theta_a)}\cup \gamma_{a}\cup \overline{R_{f}(\theta_b)}\cup  \gamma_{b}.$$

The assumption  $f_n\rightarrow f$ and $\theta_n\rightarrow \theta$ implies that for large $n$, the set $\overline{R_{f_n}(\theta_n)}$ is  in the  component of $\mathbb C\setminus X_n\cup \beta$ containing $R_{f}(\theta)$.  By the Hausdorff convergence $X_n\rightarrow X$, we conclude that 
  $\overline{R_{f}(\theta)}$  and the accumulation set of  $(\overline{R_{f_n}(\theta)})_n$  differ by a set with diameter no larger than 
  $$ {\rm diam}(\sigma_U^{-1}(C))+ {\rm diam}(\gamma_a\cup\gamma_b\cup \beta)\leq 3\epsilon.$$
Since $\epsilon$ is arbitrary, we get the Hausdorff convergence.
 	\end{proof}
 
 Lemma \ref{continuity-rays} can be generalized to the following situation, which is applicable to the parabolic case.
 
 	\begin{lem} \label{continuity-rays-general}  Let $(f_n)_n\subset \mathcal C_d$ converge to $f\in \mathcal C_d$.   Let $(U_n, a_n), (U, a)$ be given in Lemma \ref{c-internal-ray}. Suppose that $R_f(\theta)$ lands at  $\xi\in \partial U$ and $\xi$ is not  a cut point of $J(f)$.  For  any sequence of angles $(\theta_n)_n$  with   $\theta_n\rightarrow \theta$, we have the Hausdorff convergence (in spherical metric)		$$ \overline{R_{f_n}(\theta_n)}\rightarrow  \overline{R_{f}(\theta)}.$$
 \end{lem}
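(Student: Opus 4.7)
The plan is to run the argument of Lemma \ref{continuity-rays} almost verbatim, replacing exactly one ingredient. In the proof of Lemma \ref{continuity-rays}, the kernel convergence $(U_{f_n}(\alpha_n), \alpha_n) \to (U, \alpha)$ was derived for free from the stability of the attracting periodic point $\alpha$ via Lemma \ref{kernel-att}. In the parabolic setting this derivation breaks down (parabolic implosion allows $U_n$ to fail to converge to $U$), so the kernel convergence $(U_n, a_n) \to (U, a)$ is instead imposed as a hypothesis -- precisely the setup inherited from Lemma \ref{c-internal-ray}. With that hypothesis in hand, every tool the original proof relied on is still available.

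Concretely, I would proceed as follows. First, since $\xi \in \partial U$ is not a cut point of $J(f)$, Lemma \ref{cont-limb}(3) gives $L_{U, \xi} = \{\xi\}$. Fix $\epsilon > 0$ and apply Corollary \ref{limb-arc} to produce an open arc $C \subset \partial U$ containing $\xi$, whose endpoints $a, b$ are repelling periodic points of $f$, and such that $\mathrm{diam}(\sigma_U^{-1}(C)) \leq \epsilon$. Next, apply Lemma \ref{c-internal-ray} (with $p = a$ and $p = b$) -- this is legitimate because its hypothesis, kernel convergence of the pointed Fatou components, is exactly what we are assuming. This yields, for $\omega \in \{a, b\}$, arcs $\gamma_{\omega, n}$ in $\overline{U_n}$ terminating at an $f_n$-repelling preimage of $\omega$, Hausdorff-converging to arcs $\gamma_\omega$ in $\overline{U}$ terminating at $\omega$, and sharing a common starting point with their limits.

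Then, by \cite[Theorem 18.11]{Mil06}, fix external rays $R_f(\theta_a), R_f(\theta_b)$ landing at $a, b$. By stability of repelling periodic points together with the algebraic convergence $f_n \to f$, the corresponding external rays $R_{f_n}(\theta_a), R_{f_n}(\theta_b)$ of $f_n$ land at the repelling endpoints of $\gamma_{a, n}, \gamma_{b, n}$ and converge in Hausdorff topology to $\overline{R_f(\theta_a)}, \overline{R_f(\theta_b)}$. Choosing an arc $\beta \subset U$ joining the common starting points of $\gamma_{a,n}$ and $\gamma_{b,n}$, with $\mathrm{diam}(\gamma_a \cup \gamma_b \cup \beta) \leq 2\epsilon$, form
\[
X_n = \overline{R_{f_n}(\theta_a)} \cup \gamma_{a,n} \cup \overline{R_{f_n}(\theta_b)} \cup \gamma_{b,n}, \qquad X = \overline{R_{f}(\theta_a)} \cup \gamma_{a} \cup \overline{R_{f}(\theta_b)} \cup \gamma_{b}.
\]
For large $n$, the convergences $f_n \to f$ and $\theta_n \to \theta$ place $\overline{R_{f_n}(\theta_n)}$ in the same component of $\C \setminus (X_n \cup \beta)$ as the one containing $R_f(\theta)$. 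Combining the Hausdorff convergence $X_n \to X$ with the diameter estimates
\[
\mathrm{diam}(\sigma_U^{-1}(C)) + \mathrm{diam}(\gamma_a \cup \gamma_b \cup \beta) \leq 3\epsilon,
\]
the accumulation set of $(\overline{R_{f_n}(\theta_n)})_n$ differs from $\overline{R_f(\theta)}$ by a set of diameter at most $3\epsilon$; since $\epsilon$ is arbitrary, the claimed Hausdorff convergence follows.

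The only point where one might worry is whether the original attracting-case argument uses the attracting dynamics for anything beyond producing the kernel convergence. It does not: once the pointed-disk convergence and the Hausdorff convergence of the internal arcs $\gamma_{\omega, n}$ are in hand (Lemma \ref{c-internal-ray}), the remainder of the argument is a topological separation argument driven purely by the non-cut-point hypothesis $L_{U, \xi} = \{\xi\}$ and by general stability of repelling periodic rays. Thus the generalization is essentially bookkeeping; the substantive preparatory work was already done in Lemmas \ref{cont-limb}, \ref{kernel-att} (and Remark \ref{kernel-att-pre}), and \ref{c-internal-ray}.
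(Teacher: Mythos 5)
Your proof is correct and is exactly the argument the paper has in mind: the authors simply write ``The proof of Lemma~\ref{continuity-rays-general} is same as that of Lemma~\ref{continuity-rays}. We omit the details,'' and you have correctly identified that the only change is that the kernel convergence $(U_n, a_n) \to (U, a)$ is now imposed as a hypothesis (via the setup of Lemma~\ref{c-internal-ray}) rather than derived from Lemma~\ref{kernel-att} using stability of an attracting cycle. Your observation that the remainder of the argument is purely topological, driven by $L_{U,\xi}=\{\xi\}$, Lemma~\ref{c-internal-ray}, and stability of external rays at repelling periodic points, is precisely the reason the paper considers the two proofs identical.
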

 The proof of Lemma \ref{continuity-rays-general} is same as that of Lemma \ref{continuity-rays}. We omit the details.
 
 	
 	\vspace{5pt}
 \noindent \textbf{Continuity of radial rays.}	
  The following  Proposition \ref{convergent-rays} proves the continuity of most radial rays for a sequence of convergent holomorphic maps
  with uniformly bounded $L^2$-derivatives. 
  Proposition \ref{kernel-conformal} is one of its applications.
 
 	\begin{pro} \label{convergent-rays} Let $A=\{r<|z|<R\}$ be an annulus. 
 	Let $f_n: A\rightarrow \mathbb C$ be a  sequence of holomorphic maps converging to $f: A\rightarrow \mathbb C$.  Assume that 
 	$$\sup_n \int_{A}|f_n'(z)|^2dxdy<+\infty.$$
 	For $\theta\in [0,2\pi]$ and $g\in \{f_n, f\}$, define the length function
 	$$L_g:   
 	\begin{cases}  [0,2\pi]\rightarrow (0, +\infty], \\
 		\theta \mapsto \displaystyle\int_r^R |g'(\rho e^{i\theta})|d\rho.
 	\end{cases}$$
 	
 	
 	(1).  $L_{f_n}, L_f $ are in  $L^1[0, 2\pi]$, and  we have the $L^1$-convergence:
 	$$\lim_{n\rightarrow \infty}\int_{0}^{2\pi }|L_{f_n}(\theta)-L_f(\theta)| d\theta=0.$$

 	(2). There exist a full measure set $E$ of $[0,2\pi]$, and a subsequence $(f_{n_k})_k$  of $(f_n)_n$ satisfying that
 	
 	(a). For any  $\theta\in E$ and any $g\in \{f, f_{n_k}; k\geq 1\}$, the following limits exist: 
 	$$\lim_{\rho\rightarrow R^-}g(\rho e^{i\theta}), \ \lim_{\rho\rightarrow r^+}g(\rho e^{i\theta}).$$ 
 	
 	(b). For any  $\theta\in E$, the sequence $(f_{n_k})_k$ converges uniformly to $f$ on $[r,R]e^{i\theta}$.
 \end{pro}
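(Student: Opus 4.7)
The key input is the polar-coordinate identity
$$\int_0^{2\pi} L_g(\theta)\, d\theta \;=\; \int_A |g'(z)|\cdot \frac{1}{|z|}\, dx\, dy, \qquad g \in \{f_n, f\},$$
combined with the Cauchy--Schwarz inequality and the computation $\int_A |z|^{-2}\, dx\,dy = 2\pi\log(R/r) < \infty$. This yields $\int_0^{2\pi} L_g\,d\theta \leq \sqrt{2\pi\log(R/r)}\,\|g'\|_{L^2(A)}$, and since $\sup_n \|f_n'\|_{L^2(A)} < \infty$ and $\|f'\|_{L^2(A)} \leq \liminf_n \|f_n'\|_{L^2(A)}$ by Fatou's lemma (using the pointwise convergence $f_n' \to f'$ that Weierstrass supplies from the locally uniform convergence $f_n \to f$), we obtain $L_{f_n}, L_f \in L^1[0,2\pi]$ with uniformly bounded norms.

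For the $L^1$-convergence claim in (1), observe that
$|L_{f_n}(\theta) - L_f(\theta)| \leq \int_r^R |f_n'(\rho e^{i\theta}) - f'(\rho e^{i\theta})|\, d\rho$, so it suffices to show $I_n := \int_A |f_n'(z) - f'(z)|\cdot |z|^{-1}\, dx\, dy \to 0$. The plan is to split $A = A_\epsilon \sqcup (A \setminus A_\epsilon)$ with $A_\epsilon := \{r+\epsilon < |z| < R-\epsilon\}$. On $A_\epsilon$, Weierstrass gives $f_n' \to f'$ uniformly, so the corresponding part of $I_n$ tends to zero. On the thin collars $A\setminus A_\epsilon$, Cauchy--Schwarz together with the uniform $L^2$-bound on $f_n' - f'$ and the vanishing $\|1/|z|\|_{L^2(A \setminus A_\epsilon)} \to 0$ as $\epsilon\to 0$ handle the remainder. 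Letting first $n \to \infty$ and then $\epsilon \to 0$ yields $I_n \to 0$.

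For (2), since $L_{f_n}, L_f \in L^1[0,2\pi]$ they are a.e.\ finite, and intersecting countably many full-measure sets yields a full-measure set $E_0$ on which $L_f(\theta)$ and every $L_{f_n}(\theta)$ is finite. For any $\theta \in E_0$ the radial function $\rho \mapsto g(\rho e^{i\theta})$ is absolutely continuous on $(r,R)$ with integrable derivative, so it extends continuously to $[r,R]$ via radial limits; this gives (a). For (b), define
$$M_k(\theta) := \int_r^R |f_{n_k}'(\rho e^{i\theta}) - f'(\rho e^{i\theta})|\, d\rho;$$
the same splitting argument shows $\int_0^{2\pi} M_k(\theta)\, d\theta \to 0$, so after passing to a further subsequence $(f_{n_{k_j}})_j$ we may assume $M_{k_j}(\theta) \to 0$ for a.e.\ $\theta$. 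Let $E \subset E_0$ be the resulting full-measure set. For $\theta \in E$ and any fixed $\rho_0 \in (r,R)$, the fundamental theorem of calculus along the ray gives
$$\sup_{\rho\in[r,R]} |f_{n_{k_j}}(\rho e^{i\theta}) - f(\rho e^{i\theta})| \;\leq\; |f_{n_{k_j}}(\rho_0 e^{i\theta}) - f(\rho_0 e^{i\theta})| + M_{k_j}(\theta),$$
where the first term vanishes by pointwise convergence and the second by construction. The only delicate point is the near-boundary control needed for $I_n, \int M_k \to 0$, which is not really an obstacle once one notices that the $L^2$-hypothesis on $f_n'$ is exactly what Cauchy--Schwarz needs against the borderline-integrable weight $1/|z|$.
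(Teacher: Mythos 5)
Your proof is correct. Parts (1) and (2)(a) follow essentially the same route as the paper: convert the $d\rho\,d\theta$ integral to an area integral against the weight $|z|^{-1}$, control the thin boundary collars by Cauchy--Schwarz against the uniform $L^2$ bound on $f_n'-f'$, and control a fixed compact middle annulus by Weierstrass; for (2)(a), the finiteness of $L_g(\theta)$ gives existence of radial limits via absolute continuity along the ray.

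Part (2)(b) is where you genuinely diverge, and your route is cleaner. The paper extracts a subsequence with $L_{f_{n_k}}(\theta)\to L_f(\theta)$ a.e.\ (using only the literal statement of (1)) and then works hard with a truncated length functional $L_g^s(\theta)=\int_{rs}^{R/s}|g'|\,d\rho$, comparing tails of $L_{f_{n_k}}$ and $L_f$ to control the near-boundary behavior. You instead notice that the argument for (1) already establishes the stronger fact $\int_0^{2\pi} M_n(\theta)\,d\theta\to 0$ with $M_n(\theta)=\int_r^R |f_n'(\rho e^{i\theta})-f'(\rho e^{i\theta})|\,d\rho$ (of which $\int|L_{f_n}-L_f|\to 0$ is a consequence, since $|L_{f_n}-L_f|\le M_n$ pointwise). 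From there, a subsequence with $M_{n_k}(\theta)\to 0$ a.e.\ plus the one-line FTC estimate $\sup_{\rho}|f_{n_k}(\rho e^{i\theta})-f(\rho e^{i\theta})|\le |f_{n_k}(\rho_0 e^{i\theta})-f(\rho_0 e^{i\theta})|+M_{n_k}(\theta)$ immediately gives uniform convergence on the closed ray (with the endpoint values supplied by (2)(a)). This avoids the paper's $L_g^s$ bookkeeping entirely. Two cosmetic remarks: you write $M_k$ as if the subsequence has already been fixed before it is extracted --- defining $M_n$ for the full sequence, proving $\int_0^{2\pi}M_n\to 0$, and then extracting one subsequence for both (a) and (b) would read more cleanly; and your ``let $n\to\infty$ then $\epsilon\to 0$'' phrasing really means $\limsup_n I_n \le C\|\,|z|^{-1}\|_{L^2(A\setminus A_\epsilon)}$ followed by $\epsilon\to 0$, which is correct but is the reverse quantifier order from the paper's $\epsilon$--$N$ presentation.
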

 \begin{proof}  Write $\|g\|= (\int_{A}|g(z)|^2dxdy )^{1/2}$ for a holomorphic map $g:A\rightarrow \mathbb C$.
 	Let $M=\sup_{n} \|f'_n\|$. 
 	Since $f_n$ converges  to $f$ in $A$,  we  get $\|f'\|\leq M$.
 		 
		By Cauchy-Schwarz, for $g=f_n$ or $f$, 
		$$\bigg(\int_{0}^{2\pi }L_{g}(\theta)d\theta\bigg)^2\leq  2\pi\log(R/r)  \|g'\|^2\leq 2\pi M^2\log(R/r). $$
		Hence $L_g\in L^1[0, 2\pi]$ and  $E_g:=\{\theta\in[0,2\pi]; L_g(\theta)<+\infty\}$ has full measure.
		
		Choose $r<r'<R'<R$, then  
		\bess&&\int_{0}^{2\pi }|L_{f_n}(\theta)-L_f(\theta)| d\theta\leq \int_{0}^{2\pi }\int_{r}^R |f'_n-f'| d\rho d\theta\\
		&=&  \underbrace{\int_{0}^{2\pi }  \int_{r}^{r'} |f'_n-f'| d\rho d\theta}_{I_1}+
		 \underbrace{ \int_{0}^{2\pi }  \int_{r'}^{R'} |f'_n-f'| d\rho d\theta}_{I_2}+
		 \underbrace{   \int_{0}^{2\pi }  \int_{R'}^{R} |f'_n-f'| d\rho d\theta}_{I_3}.
			\eess
			
		By Cauchy-Schwarz again,
		\bess
		I_1^2&\leq& 2\pi \log(r'/r) \|f'_n-f'\|^2\leq 8\pi M^2  \log(r'/r),\\
			I_3^2&\leq& 2\pi \log(R/R') \|f'_n-f'\|^2\leq 8\pi M^2 \log(R/R').
		\eess
		For any $\epsilon>0$, choose $R'$ sufficiently close to $R$, and $r'$ sufficiently close to $r$, so that $I_1 \leq \epsilon, I_3\leq \epsilon$. For the chosen $r'$ and $R'$, since $f_n$ converges uniformly in $\{r'\leq |z|\leq R'\}$ to $f$, by Weierstrass's Theorem, 
		 there is an integer $N>0$ so that $I_2\leq \epsilon$ for $n\geq N$. If follows that $\int_{0}^{2\pi }|L_{f_n}-L_f| d\theta\leq 3\epsilon$ for $n\geq N$, establishing the $L^1$-convergence.
		
		(2). Let $E_0=E_f\bigcap \bigcap_n E_{f_n}$. Then $E_0$ is a full measure subset of $[0,2\pi]$. Moreover, for any $g\in \{f_n, f; n\geq 1\}$ and  any $\theta \in E_0$, we have $L_g(\theta)<\infty$, this implies that the limits $\lim_{\rho\rightarrow R^-}g(\rho e^{i\theta})$,  $\lim_{\rho\rightarrow r^+}g(\rho e^{i\theta})$ exist.
		
		Define $L_g^{s}(\theta)=\displaystyle\int_{rs}^{R/s} |g'(re^{i\theta})|dr$
	for  $s\in (1, \sqrt{R/r})$.   
	By the $L^1$-convergence, there is a subsequence $(f_{n_k})_k$ of $(f_n)_n$ and a full measure subset $E$ of $E_0$ so that $L_{f_{n_k}}(\theta)\rightarrow L_f(\theta)$  for any $\theta\in E $.  Hence for the given $\theta\in E $ and for any $\epsilon>0$, there is a number $s\in (1, \sqrt{R/r})$ and independently a positive integer $k_1$ so that
		$$ \ L_f(\theta)-L_f^{s}(\theta)\leq \epsilon; \ |L_{f_{n_k}}(\theta)-L_{f}(\theta)|\leq \epsilon, \ \forall k\geq k_1.$$    By the uniform convergence $f_{n_k}\rightarrow f$ in $A_s:=\{rs\leq |z|\leq R/s\}$,   there is  $k_2\geq k_1$ so that $|L_{f_{n_k}}^{s}(\theta)-L_{f}^{s}(\theta)|\leq \epsilon$ for $k\geq k_2$. It follows that 
		$$L_{f_{n_k}}(\theta)-L_{f_{n_k}}^{s}(\theta)\leq |L_{f_{n_k}}(\theta)-L_{f}(\theta)|+|L_{f}(\theta)-L_{f}^{s}(\theta)|+|L_{f}^{s}(\theta)-L_{f_{n_k}}^{s}(\theta)|\leq 3\epsilon.$$
		
		Choose $k_3\geq k_2$ so that $\max_{z\in A_s}|f_{n_k}(z)-f(z)|\leq \epsilon$ for $k\geq k_3$.
		For any $\rho\in [r,rs]\cup[R/s, R]$, 
		$$|f_{n_k}(\rho e^{i\theta})-f(\rho e^{i\theta})|\leq
		L_{f_{n_k}}(\theta)-L_{f_{n_k}}^{s}(\theta)+ L_{f}(\theta)-L_{f}^{s}(\theta) +\epsilon \leq 5\epsilon.$$
		The uniform convergence follows.
		\end{proof}
	
		\begin{rmk} (1). In Proposition \ref{convergent-rays}, the annulus $A$ can be replaced by the disk $\mathbb D$ without changing the idea of the proof. 
			
		(2).	If all $f_n$ are univalent, then $\|f'_n\|^2={\rm area}(f_n(A))$. In this case,  the uniform boundness of $L^2$-derivatives has the geometric meaning 
			$$\sup_{n} {\rm area}(f_n(A))<+\infty.$$	
 	\end{rmk}

		\begin{pro} \label{kernel-conformal}
		Let $(f_n)_n  $ be a sequence of polynomials in $\mathcal C_d$  converging to $f$. 
		Let $(U_n, a_n), (U, a)$ be pointed bounded  attracting or parabolic Fatou components of $f_n, f$ respectively. Let $\phi_{n}: (\mathbb D, 0)\rightarrow (U_n, a_n)$  and  $\phi: (\mathbb D, 0)\rightarrow (U, a)$ be conformal maps\footnote{By Theorem \ref{RY} and Carath\'eodory's boundary extension theorem, $\phi_{n}$ and $\phi$ can extend to   homeomorphisms between the closures of their domains and ranges.   So it is meaningful to write $\phi_n(\zeta), \phi(\zeta)$ when $\zeta\in \partial \mathbb D$.}. Assume that $\phi_n$  converges to $\phi$ in $\mathbb D$.

			(1).  Let
		$(q_n)_n$  be a sequence  in $\overline{\mathbb D}$ converging to $q\in \partial\mathbb D$.  
		
		\begin{itemize}
			\item  If  $\phi(q)\in \partial U$ is not a cut point of $J(f)$, then
			$$\lim_{n\rightarrow \infty}\phi_n(q_n)=\phi(q).$$
			
			\item   If  $\phi(q)\in \partial U$ is  a cut point of $J(f)$, then any accumulation point of 
			the sequence $(\phi_n(q_n))_n$ is contained in $L_{U, \phi(q)}$.
			\end{itemize}
			In particular, $\phi_n$ converges pointwisely to $\phi$ in the following subset of $\partial \mathbb D$: 
		$$\{q\in \partial \mathbb D; \phi(q) \text{ is not a cut point of  }J(f)\}.$$

	(2).  Let
$(q_n)_n$  be a sequence in  $\partial \mathbb D$  converging to $q\in \partial\mathbb D$. For each $n$, let $R_{f_n}(\theta_n)$ be an external ray landing at $\phi_n(q_n)$ \footnote{The existence of such  $R_{f_n}(\theta_n)$ is guaranteed by Theorem \ref{RY}.}.   Then
$$\Big( \bigcap_{k\geq 1}\overline{\bigcup_{n\geq k}  {R_{f_n}(\theta_n)}}\Big) \bigcap K(f)\subset L_{U, \phi(q)}. $$

			\end{pro}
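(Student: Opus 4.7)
My plan is a bracketing argument. For each $k \geq 1$ I enclose $\phi_n(q_n)$, and for Part (2) the entire ray $R_{f_n}(\theta_n)$, inside a Jordan region $\Omega_{k,n} \subset \C$ whose Hausdorff limit $\Omega_k$ intersects $K(f)$ in $\sigma_U^{-1}(C_k)$ for a closed arc $C_k \subset \partial U$ shrinking to $\{\phi(q)\}$. For Part (1), I first reduce the non-cut bullet to the cut bullet: when $\phi(q)$ is not a cut point, Lemma \ref{cont-limb}(3) gives $L_{U,\phi(q)} = \{\phi(q)\}$, so containment of every accumulation point in $L_{U,\phi(q)}$ forces convergence to $\phi(q)$. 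Since cut points on $\partial U$ are countable (Theorem \ref{RY}(\ref{dynam-prop-crit})) and repelling periodic points are dense on $\partial U$, I pick non-cut repelling periodic points $a_k, b_k \in \partial U$ on opposite sides of $\phi(q)$ with distance to $\phi(q)$ less than $1/k$. Let $C_k$ be the closed short arc of $\partial U$ from $a_k$ to $b_k$ through $\phi(q)$.

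By Lemma \ref{c-internal-ray}, the stable perturbations $a_{k,n}, b_{k,n} \in \partial U_{f_n}$ satisfy $a_{k,n} \to a_k$ and $b_{k,n} \to b_k$, and internal arcs $\gamma_{a_k,n}, \gamma_{b_k,n} \subset \overline{U_{f_n}}$ Hausdorff-converge to internal arcs $\gamma_{a_k}, \gamma_{b_k} \subset \overline{U}$ ending at $a_k, b_k$. Since $a_k, b_k$ are non-cut, Lemma \ref{continuity-rays} supplies external rays $R_{f_n}(\alpha_k), R_{f_n}(\beta_k)$ whose closures Hausdorff-converge to $\overline{R_f(\alpha_k)}, \overline{R_f(\beta_k)}$ landing at $a_k, b_k$. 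Joining the inner endpoints of $\gamma_{a_k,n}, \gamma_{b_k,n}$ by an arc $\beta_{k,n} \subset U_{f_n}$ converging to an arc $\beta_k \subset U$, I assemble Jordan curves
\[
J_{k,n} = \overline{R_{f_n}(\alpha_k)} \cup \gamma_{a_k,n} \cup \beta_{k,n} \cup \gamma_{b_k,n} \cup \overline{R_{f_n}(\beta_k)}
\]
through $\infty$, with $J_{k,n} \to J_k$ in Hausdorff. Let $\Omega_{k,n}$ and $\Omega_k$ be the limb-side components of the respective complements in $\C$; the limb decomposition of Theorem \ref{RY} yields the identity $K(f) \cap \overline{\Omega_k} = \sigma_U^{-1}(C_k)$.

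The main technical step is to show $\phi_n(q_n) \in \overline{\Omega_{k,n}}$ for all large $n$. I transfer to $\mathbb{D}$: the key sub-claim is that the boundary preimage angles converge, namely $\phi_n^{-1}(a_{k,n}) \to \phi^{-1}(a_k)$ and $\phi_n^{-1}(b_{k,n}) \to \phi^{-1}(b_k)$ on $\partial\mathbb{D}$. To prove this I apply $\phi_n^{-1}$ to $\gamma_{a_k,n}$: the Carath\'eodory function convergence $\phi_n^{-1} \to \phi^{-1}$ (Theorem \ref{cara-riemann}(2)) combined with the Hausdorff convergence of $\gamma_{a_k,n}$ away from its endpoint gives convergence of the preimage arcs inside $\mathbb{D}$, and a prime-end continuity argument at the non-cut endpoint $a_k$ promotes this to convergence of the $\partial\mathbb{D}$-endpoints. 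Granting the sub-claim, the short arc of $\partial\mathbb{D}$ between $\phi_n^{-1}(a_{k,n})$ and $\phi_n^{-1}(b_{k,n})$ contains $q$, hence contains $q_n$ for large $n$; the Carath\'eodory boundary bijection then places $\phi_n(q_n)$ on the short arc of $\partial U_{f_n}$ between $a_{k,n}$ and $b_{k,n}$, which lies in $\overline{\Omega_{k,n}}$. Any accumulation point $p$ of $(\phi_n(q_n))$ therefore lies in $\overline{\Omega_k}$ for all $k$; since $p \in K(f)$, it lies in $\sigma_U^{-1}(C_k)$ for all $k$. Letting $k \to \infty$ and using continuity of $\sigma_U$ (Lemma \ref{cont-limb}(1)) yields $p \in L_{U,\phi(q)}$.

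For Part (2), the same brackets trap the entire ray. Distinct external rays of a polynomial are disjoint, the internal arcs $\gamma_{a_k,n} \cup \beta_{k,n} \cup \gamma_{b_k,n}$ lie in $K(f_n)$ while $R_{f_n}(\theta_n) \subset \mathbb{C} \setminus K(f_n)$, and for large $n$ we have $\theta_n \notin \{\alpha_k, \beta_k\}$ because the landing point $\phi_n(q_n)$ lies strictly between $a_{k,n}$ and $b_{k,n}$ on $\partial U_{f_n}$. Jordan-curve separation then confines $R_{f_n}(\theta_n) \subset \overline{\Omega_{k,n}}$. Any accumulation point $p \in K(f)$ of points on these rays satisfies $p \in \overline{\Omega_k} \cap K(f) = \sigma_U^{-1}(C_k)$ for every $k$, hence $p \in L_{U,\phi(q)}$. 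The hard part is the boundary-preimage sub-claim: the interior kernel convergence $\phi_n \to \phi$ does not by itself control $\phi_n^{-1}$ on $\partial\mathbb{D}$, and it is the combination of Theorem \ref{cara-riemann}(2) with Lemma \ref{c-internal-ray} and the non-cut hypothesis on $a_k, b_k$ that makes the passage to boundary values work.
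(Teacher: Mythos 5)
Your bracketing strategy mirrors the paper's in outline, but the direction in which you build the bracket creates a genuine gap that the paper is specifically engineered to avoid. You choose the endpoints $a_k, b_k$ on $\partial U$ first and then try to pull everything back to $\partial\mathbb D$; the whole argument then hinges on the ``boundary-preimage sub-claim'' $\phi_n^{-1}(a_{k,n}) \to \phi^{-1}(a_k)$ in $\partial\mathbb D$, which you defer to an unspecified ``prime-end continuity argument.'' This step is not a consequence of anything you have cited. Carath\'eodory function convergence (Theorem~\ref{cara-riemann}(2)) is convergence on compact subsets of $U$ and says nothing about boundary values; the Hausdorff convergence $\gamma_{a_k,n}\to\gamma_{a_k}$ controls Euclidean diameters of tails $\gamma_{a_k,n}([1-\delta,1])$ in $\overline{U_n}$, but the preimages $\phi_n^{-1}(\gamma_{a_k,n}([1-\delta,1]))$ could still sweep along $\partial\mathbb D$ unless one has a \emph{uniform} modulus of continuity for $\phi_n^{-1}$ up to the boundary. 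That uniformity is precisely the kind of thing kernel convergence does not give, and the non-cut hypothesis on $a_k$ controls only the limit map $\phi$, not the sequence $(\phi_n)$. Indeed, your sub-claim is essentially a preimage-side twin of the statement you are trying to prove, so asserting it is close to circular.

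The paper sidesteps this by never needing to pull $\gamma_{a_k,n}$ back to $\mathbb D$. It instead begins on the disk side: Proposition~\ref{convergent-rays} (the $L^2$-derivative estimate, using $\|\phi_n'\|^2={\rm area}(U_n)\leq\pi$) produces a full-measure set $E\subset\partial\mathbb D$ of radial directions along which $\phi_n$ converges uniformly to $\phi$ \emph{including the endpoint}. The endpoints $\xi,\zeta$ of the bracketing arc $\Gamma_\epsilon$ are then chosen from $E$ with $\phi(\xi),\phi(\zeta)$ non-cut, and the internal arcs of the bracket are taken to be $\phi_n([0,1]\xi)$, $\phi_n([0,1]\zeta)$ (not abstractly chosen $\gamma$'s). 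With this choice the endpoint behavior is already under control, $\phi_n(\xi)\to\phi(\xi)$, and the containment $q_n\in\Gamma_\epsilon$, hence $\phi_n(q_n)\in\overline{V_n}$, is automatic because $q$ is interior to $\Gamma_\epsilon$. The remaining work (the paper's Claim, proved by contradiction using a separating repelling ray plus Lemma~\ref{c-internal-ray}) is to show the \emph{external} angles $\alpha_n,\beta_n$ landing at $\phi_n(\xi),\phi_n(\zeta)$ converge; this is the nontrivial analogue of your sub-claim, and the paper does supply a full argument for it. To repair your version you would need either to re-route through Proposition~\ref{convergent-rays} as the paper does, or to give a real proof of the boundary-preimage sub-claim; the latter would likely require a separation argument of similar depth to the paper's Claim, and is not supplied by appealing to prime-end continuity.
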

		
		\begin{proof}  
			Note that $\|\phi'_n\|^2={\rm area}(U_n)\leq \pi$ for all $n$.			
				By Proposition \ref{convergent-rays} and also by choosing a subsequence, there is a full measure set $E$ of $[0,2\pi]$, such that for any $\theta\in E$, the sequence $(\phi_{n})_n$ converges uniformly to $\phi$ on $[0,1]e^{i\theta}$.

			For any $\epsilon>0$, there is an   arc $\Gamma_\epsilon\subset \partial \mathbb D$ whose interior contains $q$ so that
			\begin{itemize}
				\item the two endpoints $\xi, \zeta$ of $\Gamma_\epsilon$ are contained in $\{e^{i\theta}; \theta\in E\}$, and  $\phi(\xi), \phi(\zeta)$  are not cut points of $J(f)$;
				
				\item the $\phi$-image $C_\epsilon=\phi(\Gamma_\epsilon)$ is contained in  $\mathbb D(\phi(q), \epsilon)$.
				\end{itemize}

By Theorem \ref{RY},  there are  unique external rays $R_f(\alpha)$ and $R_f(\beta)$   landing at $\phi(\xi)$ and $\phi(\zeta)$ respectively; moreover,  there are external rays    $R_{f_n}(\alpha_n)$ and $R_{f_n}(\beta_n)$  landing at  $\phi_{n}(\xi)$ and $\phi_{n}(\zeta)$ respectively, for each $n$. 
			Note that $\alpha_n$ (or $\beta_n$) might be not unique, and  we choose one of them.
			
			\vspace{5pt}
	 \textbf{Claim: } 	{\it  $\lim_n \alpha_n=\alpha$ and  $\lim_n \beta_n=\beta$.}
			\vspace{5pt}
			
			We only prove the first limit, the same argument works for the second one.
		   If it is false, by choosing a subsequence,  we assume $\lim_n \alpha_n=\alpha'\neq \alpha$.

			\begin{figure}[h]
				\begin{center}
					\includegraphics[height=3.5cm]{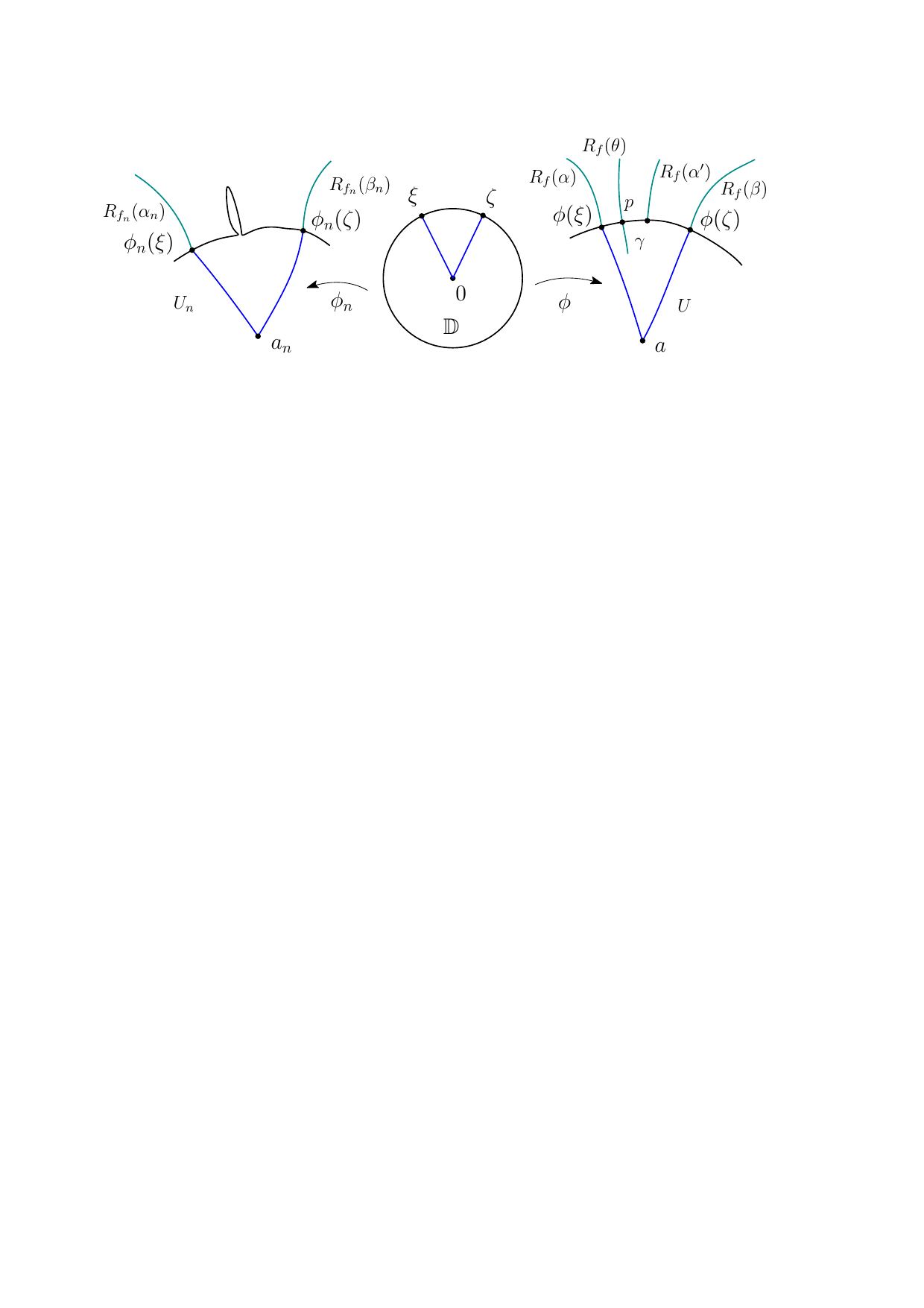}
				\end{center}
				\caption{Rays and convergence}
				\label{fig:e-rays}
			\end{figure}

			Take  $\theta$    lying in between $\alpha$ and $\alpha'$, so that $R_f(\theta)$ lands at a repelling point $p\in \partial U$, see Figure \ref{fig:e-rays} (right).   By Theorem  \ref{cara-riemann}, we have the kernel convergence $(U_{n}, a_n)\rightarrow (U, a)$.  By Lemma \ref{c-internal-ray}, 
			  there exist arcs $\gamma_n: [0,1]\rightarrow \overline{U_{n}}$, $\gamma: [0,1]\rightarrow  \overline{U}$ with the properties  
			
			\begin{itemize}
				\item $\gamma_n([0,1))\subset U_{n}$,  $\gamma_n(1)\in \partial  U_{n}$ is $f_n$-repelling;
				
				\item	$\gamma([0,1))\subset U, \gamma(1)=p$,  $\gamma\cap \phi([0,1]\xi)=\emptyset$;

				\item $\gamma_n\rightarrow\gamma$ in  Hausdorff topology. 
			\end{itemize}

			It follows that  for large $n$, 
			the rays  $\phi_{n}([0,1]\xi)$ and $R_{f_n}(\alpha_n)$  are in different sides of   
			$R_{f_n}(\theta)\cup \gamma_n$.  However, this contradicts the fact that $\phi_{n}([0,1]\xi)$ and $R_{f_n}(\alpha_n)$ have a common endpoint.  The proof of the Claim is completed.


			By the Claim and Lemmas \ref{continuity-rays}, \ref{continuity-rays-general}, we have the Hausdorff  convergence  $$\overline{R_{f_n}(\alpha_n)}\rightarrow \overline{R_{f}(\alpha)}, \ \ \overline{R_{f_n}(\beta_n)}\rightarrow \overline{R_{f}(\beta)}.$$
			
			Let $V_n$ be the  component of $\mathbb C\setminus (\overline{R_{f_n}(\alpha_n)}\cup \overline{R_{f_n}(\beta_n)}\cup \phi_{n}([0,1]\xi)\cup \phi_{n}([0,1]\zeta))$ containing $\phi_{n}(q)$, and let $V$ be the  component of $\mathbb C\setminus (\overline{R_{f}(\alpha)}\cup \overline{R_{f}(\beta)}\cup \phi([0,1]\xi)\cup \phi([0,1]\zeta))$ containing $\phi(q)$. 
			Then $\overline{V_n}\rightarrow \overline{V}$ in Hausdorff topology.
		 If $(q_n)_n$ is a sequence in $\overline{\mathbb D}$  converging  to $q$, any
		accumulation point $b$ of 
			the sequence $(\phi_n(q_n))_n$ is contained $\overline{V}\setminus U$.
			By \cite[Proposition 8.1]{DH}, the set 
			$$\mathcal K:=\{(g, z)\in \mathcal P_d\times \mathbb C; z\in K(g)\}$$
			is closed in  $\mathcal P_d\times \mathbb C$.  Since $(f_n, \phi_n(q_n))\in \mathcal K$, we have $b\in K(f)$. Hence $b\in (\overline{V}\setminus U)\cap K(f)=\sigma_U^{-1}(C_\epsilon)$, where $\sigma_U$ is defined by \eqref{proj-u}.
			
			 Since $\epsilon>0$ is arbitrary,  by the fact  $\bigcap_{\epsilon>0}  \sigma_U^{-1}(C_\epsilon)=L_{U, \phi(q)}$ (see \eqref{intersection-sigma}),
			we conclude that $b\in L_{U, \phi(q)}$.   In particular, if
$\phi(q)\in \partial U$ is not a cut point of $J(f)$ (equivalently $L_{U, \phi(q)}=\{\phi(q)\}$, see Lemma \ref{cont-limb}), we have $b=\phi(q)$.	Hence all convergent subsequences of $(\phi_n(q_n))_n$ have the same limit $\phi(q)$, implying that $\phi_n(q_n)\rightarrow \phi(q)$.
The pointwise convergence follows immediately by taking $(q_n)_n$ to be the constant sequence $(q)_n$. This finishes the proof of (1).

	For (2), note that for large $n$, we have $q_n\in \Gamma_\epsilon$ which implies that $ {R_{f_n}(\theta_n)}\subset V_n$.  Hence $R:= \bigcap_{k\geq 1}\overline{\bigcup_{n\geq k}  {R_{f_n}(\theta_n)}}
			\subset \overline{V}$. Note also $R \cap U=\emptyset$.  It follows that 
			$R \cap K(f)\subset(\overline{V}\setminus U)\cap K(f)=\sigma_U^{-1}(C_\epsilon)$. 
			 Since $\epsilon>0$ is arbitrary, the equality    $\bigcap_{\epsilon>0}  \sigma_U^{-1}(C_\epsilon)=L_{U, \phi(q)}$ implies (2).
			\end{proof}
		


	\section{Exploration of $\overline{\mathcal H_d}$ via $\overline{\mathcal B_d}$} \label{exploration}
		In this section,   we study   $\overline{\mathcal H_d}$ by the algebraic compactification of the space of  Blaschke products.  
	For simplicity, we write $\mathcal B_{d-1,1}$ as $\mathcal B_d$.  Note that $\mathcal B_1$ consists of the identity map.
	
		For each map $f\in \mathcal H_d$, let $\nu_f$  be the landing point of the external ray $R_f(0)$. Clearly $\nu_f$ is continuous in $f\in \mathcal H_d$. Since $\partial U_f(0)$ is a Jordan curve, there is a unique Riemann mapping 
		$\psi_f: U_f(0)\rightarrow \mathbb D$ satisfying that $\psi_f(0)=0, \psi_f(\nu_f)=1$.
		Then $B_f=\psi_f\circ f\circ \psi_f^{-1}$ is a Blaschke product 
	in $\mathcal B_d$.  See the following diagram
		$$
		\xymatrix{ & (U_f(0), 0, \nu_f) 
			\ar[d]_{\psi_f}   \ar[r]^{f}
			& (U_f(0), 0, \nu_f)  \ar[d]^{\psi_f}\\
			&(\mathbb{D}, 0, 1)  \ar[r]_{B_f} & (\mathbb{D}, 0 ,1)}.
		$$ 
		\begin{thm}[Milnor, \cite{Mil12}]\label{homeo-Hd} The map $\Psi:  {\mathcal  H_d} \rightarrow \mathcal B_d$ defined by  $\Psi(f)=B_f$
			is a homeomorphism.
		\end{thm}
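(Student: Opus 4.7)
The plan is to verify four properties in turn: $\Psi$ is well-defined, continuous, injective, and surjective, and then upgrade the last two to a bicontinuous bijection. Well-definedness is essentially built into the construction: since every $f\in\mathcal H_d$ has its entire critical set attracted to $0$, the Fatou component $U_f(0)$ contains all $d-1$ critical points and $f|_{U_f(0)}$ is a proper degree-$d$ self-map; conjugating by the normalized Riemann map $\psi_f$ produces a proper holomorphic self-map $B_f$ of $\mathbb D$ of degree $d$, hence a Blaschke product, and the normalization $\psi_f(0)=0$, $\psi_f(\nu_f)=1$ forces $B_f(0)=0$ and $B_f(1)=1$, so $B_f\in\mathcal B_d$. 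For continuity of $\Psi$ I would combine (i) stability of the attracting fixed point $0$ and of the landing point $\nu_f$ of $R_f(0)$ under perturbation of $f$, (ii) the kernel convergence $(U_{f_n}(0),0)\to(U_f(0),0)$ given by Lemma \ref{kernel-att}, and (iii) Theorem \ref{cara-riemann}, which then yields $\psi_{f_n}\to\psi_f$ locally uniformly on $U_f(0)$; composing with $f_n\to f$ gives $B_{f_n}\to B_f$ coefficientwise.

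For injectivity, assume $B_{f_1}=B_{f_2}=B$. Since no critical points escape and $f_i\in\mathcal H_d$ admits only the fixed Fatou cycle $\{U_{f_i}(0)\}$, we have $K(f_i)=\overline{U_{f_i}(0)}$ and $J(f_i)$ is locally connected, so $\psi_{f_i}$ extends to a homeomorphism of closures. The map $\phi:=\psi_{f_2}^{-1}\circ\psi_{f_1}$ is a holomorphic conjugacy from $f_1$ to $f_2$ on the immediate basins, while the B\"ottcher coordinates give a second conjugacy $\phi_\infty:=\psi_{f_2,\infty}^{-1}\circ\psi_{f_1,\infty}$ on the basins of infinity. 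I would check that the continuous boundary extensions of $\phi$ and $\phi_\infty$ agree along the Julia sets: both must send the landing point of $R_{f_1}(\theta)$ to that of $R_{f_2}(\theta)$, and the coherence of the normalization at $\theta=0$ (via $\nu_{f_i}\leftrightarrow 1$) together with equivariance under $f_i$ propagates this to all rational angles, hence to all angles by density. The glued map is a homeomorphism of $\C$ that is holomorphic off the Julia set, which has empty interior and zero area, so a standard removability argument forces it to be a M\"obius transformation; tangency to the identity at $\infty$ from the B\"ottcher normalization then forces it to be the identity, giving $f_1=f_2$.

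Surjectivity is obtained by quasiconformal surgery. Given $B\in\mathcal B_d$, build a quasi-regular model $\tilde F:\C\to\C$ that agrees with $B$ on a disk $\{|z|<r\}$ and with $z\mapsto z^d$ on $\{|z|>R\}$, interpolated in the thin annulus by a quasiconformal homeomorphism chosen so that $\tilde F$ has no critical points in the interpolation region. Pulling back the standard structure under iterates of $\tilde F$ yields a $\tilde F$-invariant Beltrami coefficient whose dilatation stays bounded, because all critical points of $B$ are confined to the invariant disk and never re-enter the annulus. Straightening by the measurable Riemann mapping theorem produces a quasiconformal $h$ with $f:=h\circ\tilde F\circ h^{-1}$ a monic centered polynomial of degree $d$; one checks $f\in\mathcal H_d$ (the critical points, being carried from those of $B$, are attracted to $h(0)$, which we arrange to be $0$) and $\Psi(f)=B$ after adjusting $h$ by the affine normalization matching $\nu_f$. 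Finally, since $\mathcal H_d$ and $\mathcal B_d$ are connected complex manifolds of the same complex dimension $d-1$, the continuous bijection $\Psi$ is a homeomorphism by invariance of domain. The main obstacle is the surgery step: verifying that the invariant Beltrami coefficient has uniformly bounded norm, that the straightened map lies in $\mathcal H_d$ rather than on its boundary or elsewhere, and that the normalizations can be arranged so $\Psi(f)$ equals the prescribed $B$ exactly rather than up to a rotation in $\mathcal B_d$.
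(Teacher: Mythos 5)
The paper offers no proof of this statement; it is cited directly from Milnor's Theorem~5.1 in \cite{Mil12}, which parametrizes a general hyperbolic component by a product of Blaschke spaces via quasiconformal surgery, rigidity, and a topological degree/openness argument. Your outline is thus essentially the specialization of that standard route: well-definedness from the fully invariant Jordan basin $U_f(0)$, continuity from kernel convergence of $(U_{f_n}(0),0)$ plus Carath\'eodory convergence of Riemann maps, injectivity from gluing the internal conjugacy $\psi_{f_2}^{-1}\circ\psi_{f_1}$ with the B\"ottcher conjugacy and invoking removability, surjectivity by mating $B$ with $z^d$ via quasiconformal surgery, and finally invariance of domain. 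Two points deserve sharpening. First, in the injectivity step your phrase ``zero area, so a standard removability argument'' is too weak: Jordan curves of zero area need not be conformally removable. The correct reason here is that for $f\in\mathcal H_d$ the Julia set is a quasicircle (the map is hyperbolic and $K(f)=\overline{U_f(0)}$ is a Jordan domain), and quasicircles are conformally (even quasiconformally) removable; alternatively, observe that both $\phi$ and $\phi_\infty$ extend quasiconformally across the quasicircle and the glued map is a global QC homeomorphism with $\mu=0$ a.e., hence conformal, hence affine. Second, the surgery step you honestly flag as the main obstacle is indeed where the real work lies: one must verify the interpolation annulus is visited at most once along any forward orbit (so the pulled-back Beltrami coefficient has uniformly bounded dilatation), that the straightening is conformal on the attracting basin and hence realizes $\Psi(f)=B$ exactly (not merely up to conjugacy), and that the output $f$ can be normalized to be monic with $f(0)=0$ and with $\nu_f$ matching the marked point $1$; these are all doable but need to be written out. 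Modulo those two items your argument is correct and is, in substance, the same as Milnor's.
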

	Theorem \ref{homeo-Hd} is a  special case of   \cite[Theorem 5.1]{Mil12}, which gives a canonical  parameterization for all  hyperbolic components in $\mathcal C_d$.
	
	
	The boundary $ \partial  {\mathcal B_d}$ is the disjoint union of the  {\it regular} part $\partial_{\rm reg}  {\mathcal B_d}$ and the {\it  singular} part $\partial_{\rm sing}  {\mathcal B_d}$, defined as
	$$\partial_{\rm reg}  {\mathcal B_d}=\bigsqcup_{2\leq l< d} \Big({\mathcal B_l}\times {\rm Div}_{d-l}(\partial \mathbb D)\Big), \ \ \partial_{\rm sing}  {\mathcal B_d}={\mathcal B_1}\times {\rm Div}_{d-1}(\partial \mathbb D).$$
		We call $D=(B,S)\in \partial  {\mathcal B_d}$
	{\it regular} if $D\in \partial_{\rm reg}  {\mathcal B_d}$;  {\it  singular}   if $D\in \partial_{\rm sing}  {\mathcal B_d}$.
	
	The boundary $\partial \mathcal H_d$ admits a decomposition into the  {\it regular} part $\partial_{\rm reg}  {\mathcal H_d}$ and the {\it  singular} part $\partial_{\rm sing}  {\mathcal H_d}$: 
	$$\partial_{\rm reg}  {\mathcal H_d}=\{f\in \partial \mathcal H_d; |f'(0)|<1\}, \  \partial_{\rm sing}  {\mathcal H_d}=\{f\in \partial \mathcal H_d; |f'(0)|=1\}.$$

			Let $\Phi=\Psi^{-1}:  {\mathcal  B_d} \rightarrow \mathcal H_d$. 
For each $D=(B,S)\in   \partial  {\mathcal B_d}$, define 
$$I_{\Phi}(D)=\Big\{f\in \partial \mathcal H_d;  \text{there exist } (f_n)_n \text{ in  } \mathcal H_d\text{ so that } f_n\rightarrow f \text{ and }  \Psi(f_n)\rightarrow D \Big\}.$$
We call $I_{\Phi}(D)$ the $\Phi$-{\it impression} associated with $D$.
It can be expressed as 
	$$I_{\Phi}(D)=\bigcap_{\epsilon>0}\overline{\Phi(N_{\epsilon}(D))}.$$
	It follows  that $I_{\Phi}(D)$ is a connected and compact subset of $\partial \mathcal H_d$.  It is worth observing that $\partial \mathcal H_d, \partial_{\rm reg} \mathcal H_d,  \partial_{\rm sing} \mathcal H_d$ can be written as
	 $$\partial \mathcal H_d=\bigcup_{D\in  \partial  {\mathcal B_d}}I_{\Phi}(D), \ \partial_{*} \mathcal H_d=\bigcup_{D\in  \partial_*  {\mathcal B_d}}I_{\Phi}(D), \ *\in\{\rm reg, sing\}.$$
	
	In what follows, we focus on the relations between $\partial_{\rm reg}  {\mathcal B_d}$ and $\partial_{\rm reg}  {\mathcal H_d}$, the singular parts  $\partial_{\rm sing}  {\mathcal B_d}$ and $\partial_{\rm sing}  {\mathcal H_d}$  will be discussed in \S \ref{sing-div}.
	
	Let $D=(B,S)\in   \partial_{\rm reg}  {\mathcal B_d}$ and let $f\in I_{\Phi}(D)$.  There exist a sequence $(B_n)_n$ in $\mathcal B_d$     so that
	$B_n\rightarrow D$ and $f_n:=\Phi(B_n)\rightarrow f$.
	 The inverse of the conformal mapping $\psi_{f_n}: (U_{f_n}(0), 0)\rightarrow (\mathbb D, 0)$ is denoted by $\phi_{f_n}: (\mathbb D, 0) \rightarrow (U_{f_n}(0), 0)$.  {By  Lemma \ref{kernel-att}, Theorem \ref{cara-riemann} and Remark \ref{cara-function}, choosing a subsequence if necessary, we assume $\phi_{f_n}$ converges   to a  conformal mapping  $\phi_f: (\mathbb D, 0)\rightarrow (U_f(0), 0)$ in $\mathbb D$.}

The critical points and the zeros  of $f$ outside $U_f(0)$ induce two divisors
$$R_f^0:=\sum_{c\in \mathbb C\setminus U_f(0), f'(c)=0} ({\rm deg}(f,c)-1)\cdot \sigma_{U_f(0)}(c),$$
$$Z_f^0:=\sum_{a\in \mathbb C\setminus U_f(0), f(a)=0} {\rm deg}(f,a)\cdot \sigma_{U_f(0)}(a),$$
where $\sigma_{U_f(0)}: K(f)\setminus U_f(0)\rightarrow \partial U_f(0)$ is defined by \eqref{proj-u} for $U=U_f(0)$.
Note that $R_f^0, Z_f^0 \in {\rm Div}_{{\rm deg}(S)}(\partial U_f(0))$.

Let $h: X\rightarrow Y$ be a homeomorphism between planar sets, let $e\geq 1$ be an integer, the
pull-back $h^*: {\rm Div}_e(Y)\rightarrow {\rm Div}_e(X)$ is defined by 
$$h^*(S)=\sum_{q\in {\rm supp}(S)} \nu(q)\cdot h^{-1}(q), \  \forall \  S=\sum_{q\in {\rm supp}(S)} \nu(q)\cdot q\in  {\rm Div}_e(Y).$$


\begin{pro} \label{div-eq} Let $D=(B, S)\in  \partial_{\rm reg}  {\mathcal B_d}$ and let $f\in I_{\Phi}(D)$.  Assume
	
	(1).  $B_n\rightarrow D,  \  f_n:=\Phi(B_n)\rightarrow f$; 
	
	
	(2).   $\phi_{f_n}$ converges   to $ \phi_f$ in $\mathbb D$.  
	
	Then there is a $\zeta\in \partial \mathbb D$ so that  the   equalities   hold
	$$\zeta B=\phi_{f}^{-1}\circ f\circ \phi_{f}, \  S=\phi_f^*(R_f^0)=\phi_f^*(Z_f^0).$$
	
	In particular, $R_f^0=Z_f^0$.
\end{pro}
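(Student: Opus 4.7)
The plan is to prove the proposition in two main parts: first derive the dynamical conjugacy $\zeta B = \phi_f^{-1} \circ f \circ \phi_f$ on $\mathbb{D}$, then identify the boundary divisors by tracking where excess zeros and critical points of $B_n$ escape to.

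For the conjugacy, I would start from Milnor's formula $B_n = \phi_{f_n}^{-1} \circ f_n \circ \phi_{f_n}$ on $\mathbb{D}$ and take the limit in two different ways. Applying Lemma \ref{degenerate-0} (after possibly passing to a subsequence if $1 \in {\rm supp}(S)$) yields $\zeta \in \partial\mathbb{D}$ with $B_n \to \zeta B$ locally uniformly on $\widehat{\mathbb{C}} \setminus {\rm supp}(S)$. On the other hand, Theorem \ref{cara-riemann}(2) gives $\phi_{f_n}^{-1} \to \phi_f^{-1}$ in Carath\'eodory topology on functions; combined with $\phi_{f_n} \to \phi_f$ in $\mathbb{D}$ and $f_n \to f$ algebraically, the composition $\phi_{f_n}^{-1} \circ f_n \circ \phi_{f_n}$ converges locally uniformly on $\mathbb{D}$ to $\phi_f^{-1} \circ f \circ \phi_f$. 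Matching these two limits yields the first equality.

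This conjugacy forces $f|_{U_f(0)}$ to have degree $l$ (where $B \in \mathcal{B}_l$). Since each $f_n \in \mathcal{H}_d$ has $U_{f_n}(0)$ completely invariant, whereas for $f$ only $l-1$ of the $d-1$ critical multiplicities and $l$ of the $d$ preimages of $0$ lie in $U_f(0)$, we immediately get $\deg R_f^0 = \deg Z_f^0 = d-l$. To pin these divisors down, fix $q \in {\rm supp}(S)$ with $\mu := S(q)$ and choose a small disk $N$ around $q$ isolated from the other support points of $S$ and from $Z_B, R_B$. The hypothesis $Z_{B_n} \to Z_B + S$ and Theorem \ref{bet} (giving $R_{B_n} \to R_B + S$) together imply $Z_{B_n}|_N$ and $R_{B_n}|_N$ each have degree exactly $\mu$ for large $n$. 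Their $\phi_{f_n}$-images are $\mu$ zeros and $\mu$ critical multiplicities of $f_n$ in $U_{f_n}(0)$; by algebraic convergence and Hurwitz, a further subsequence converges to $\mu$ zeros and $\mu$ critical multiplicities of $f$. Proposition \ref{kernel-conformal}(1) forces these limits into $L_{U_f(0), \phi_f(q)}$, and the Carath\'eodory convergence $\phi_{f_n}^{-1} \to \phi_f^{-1}$ forbids them from lying in $U_f(0)$---were such a limit in $U_f(0)$, its preimages would stay inside $\mathbb{D}$, contradicting convergence to $q$. So these points lie in $K(f) \setminus U_f(0)$ and project under $\sigma_{U_f(0)}$ to $\phi_f(q)$.

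Summing over $q$ gives $R_f^0(\phi_f(q)), Z_f^0(\phi_f(q)) \geq \mu$, with totals at least $d-l$; combined with the degree count these must be equalities at every $q$ with no extra contribution, producing $S = \phi_f^*(R_f^0) = \phi_f^*(Z_f^0)$ and hence $R_f^0 = Z_f^0$. The hard part will be the non-leakage claim in the previous paragraph: ruling out that some zero or critical point of $f_n$ drifts back into $U_f(0)$ in the limit. The Carath\'eodory convergence of the inverse Riemann maps is the essential tool there, translating boundary escape in $\mathbb{D}$ into genuine exit from the Fatou component in the dynamical plane.
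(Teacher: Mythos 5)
Your proposal is correct and takes essentially the same approach as the paper: both extract the conjugacy by taking the limit of Milnor's formula $B_n = \phi_{f_n}^{-1}\circ f_n\circ\phi_{f_n}$ using Lemma \ref{degenerate-0}, and both pin down the divisor equalities by applying Theorem \ref{bet} to track the escaping zeros and critical points of $B_n$ and then Proposition \ref{kernel-conformal}(1) to force their $\phi_{f_n}$-images into the limb $L_{U_f(0),\phi_f(q)}$. The main difference is presentational: you make the degree count $\deg R_f^0=\deg Z_f^0=\deg S=d-l$ and the non-leakage into $U_f(0)$ explicit, whereas the paper leaves these implicit (the latter already follows from $L_{U,x}\cap\overline{U}=\{x\}$, so your separate Carath\'eodory argument for it is correct but redundant).
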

We remark that  $\zeta=1$ if $1\notin {\rm supp}(S)$, and $\zeta$  is a number so that  a subsequence  of $(B_n)_n$ converges  to $\zeta B$ in  $\C\setminus {\rm  supp}(S)$ (by 
Lemma \ref{degenerate-0})  if $1\in {\rm supp}(S)$. In the latter case, we shall prove in \S \ref{proof-main} (see Corollary  \ref{unique-zeta})  that $\zeta$ is uniquely determined by  $f$ (not $D$!). 
\begin{proof}
	By Lemma \ref{degenerate-0}, passing to a subsequence if necessary, $B_n$ converges  to $\zeta B$ in $\mathbb D$.  Let  $n\rightarrow \infty$ in the equality 
	$B_n=\phi_{f_n}^{-1}\circ f_n\circ \phi_{f_n}$, we get
	$\zeta B=\phi_{f}^{-1}\circ f\circ \phi_{f}$.

	 Write $S=\sum_{q\in {\rm supp}(S)}\nu(q)\cdot q$.
	Applying Theorem \ref{bet} to the case $(e,m)=(d-1,1)$, for each $q\in {\rm supp}(S)$,    there are exactly $\nu(q)$ critical points of $B_n$ converging  to $q$ as $n\rightarrow \infty$.  By Proposition \ref{kernel-conformal},    the $\phi_{f_n}$-image of these $\nu(q)$  critical points  converge to the $\nu(q)$ critical points of $f$ that are contained in the limb $L_{U_f(0), \phi_f(q)}$.
	The equality $S=\phi_f^*(R_f^0)$ follows immediately.   The same reasoning yields $S=\phi_f^*(Z_f^0)$. Consequently, $R_f^0=Z_f^0$. 
\end{proof}


\begin{pro} \label{pointwise} Let $D=(B, S)\in   \partial_{\rm reg}  {\mathcal B_d}$ and let $f\in I_{\Phi}(D)$. Assume

(a).	$B_n\rightarrow D,  \  f_n:=\Phi(B_n)\rightarrow f$; 
  
  (b). $\phi_{f_n}$ converges  to $ \phi_f$ in $\mathbb D$;
	 
	(c).  $B_n$ converges  to $B_\zeta:=\zeta B$ in  $\C\setminus {\rm  supp}(S)$ for some $\zeta \in \partial \mathbb D$. \footnote{If $1\notin {\rm supp(S)}$,  the condition (c)  is  redundant by Lemma \ref{degenerate-0}. In this case, $\zeta=1$.}
	
 Let $E_\zeta(D)=\bigcup_{j\geq 0} B_\zeta^{-j}({\rm supp}(S))$.

 (1).  If  $q\in \partial{ \mathbb D}\setminus E_\zeta(D)$ is $B_\zeta$-periodic, then $\phi_f(q)$ is an $f$-repelling point.
 
  (2).  If  $q\in  E_\zeta(D)$ is $B_\zeta$-periodic, then $\phi_f(q)$ is an $f$-parabolic point.
 
  (3).  The limb  $L_{U_f(0), \phi_f(q)}$ is trivial if and only if   $q\in \partial{ \mathbb D}\setminus E_\zeta(D)$.
 
  (4). $\phi_{f_n}$ converges pointwisely to $\phi_{f}$ in  $\partial{ \mathbb D}\setminus E_\zeta(D)$.
\end{pro}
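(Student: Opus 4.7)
The plan is to handle the four parts in the order (3), (4), (1), (2), since the latter depend on topological information from (3). The fundamental input throughout is Proposition \ref{div-eq}: on $\mathbb{D}$ one has the semi-conjugacy $f\circ\phi_f = \phi_f\circ B_\zeta$, and $\phi_f({\rm supp}(S))$ coincides with the $\sigma_{U_f(0)}$-image of every $f$-critical point lying in $\mathbb{C}\setminus U_f(0)$.

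For (3), I would invoke Theorem \ref{RY}(4): $L_{U_f(0),\phi_f(q)}$ is non-trivial iff some iterate $f^n(\phi_f(q)) = \phi_f(B_\zeta^n(q))$ has a critical point in its limb, i.e.\ iff $\phi_f(B_\zeta^n(q))\in\phi_f({\rm supp}(S))$ for some $n\geq 0$; by injectivity of $\phi_f$ on $\partial\mathbb{D}$ this reduces to $B_\zeta^n(q)\in{\rm supp}(S)$, that is, $q\in E_\zeta(D)$. Part (4) is then immediate: for $q\notin E_\zeta(D)$, (3) gives $L_{U_f(0),\phi_f(q)}=\{\phi_f(q)\}$, so $\phi_f(q)$ is not a cut point of $J(f)$, and Proposition \ref{kernel-conformal}(1) applied to the constant sequence $q_n\equiv q$ yields $\phi_{f_n}(q)\to\phi_f(q)$.

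For (1), fix $q\in\partial\mathbb{D}\setminus E_\zeta(D)$ that is $B_\zeta$-periodic of period $p$. Because $B$ has degree $l\geq 2$ with all critical points in $\mathbb{D}$ absorbed by the attractor $0$, the Fatou--Julia dichotomy forces every periodic point of $B_\zeta$ on $\partial\mathbb{D}$ to be repelling, giving $|(B_\zeta^p)'(q)|>1$. Since the orbit of $q$ is disjoint from ${\rm supp}(S)$, $B_n\to B_\zeta$ uniformly on a neighborhood of this orbit, and the implicit function theorem supplies $B_n$-periodic points $q_n\to q$ of period $p$ with $(B_n^p)'(q_n)\to (B_\zeta^p)'(q)$. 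The standard Koenigs-linearization argument, comparing the linearizations of $f_n^p$ at $\phi_{f_n}(q_n)\in\partial U_{f_n}(0)$ and of $B_n^p$ at $q_n$ via the restriction of $\phi_{f_n}$ to one side of $\partial\mathbb{D}$, yields the multiplier identity $(f_n^p)'(\phi_{f_n}(q_n)) = (B_n^p)'(q_n)$. Using Proposition \ref{kernel-conformal}(1) (applicable by (3), giving $\phi_{f_n}(q_n)\to\phi_f(q)$) together with continuity of derivatives under algebraic convergence, we conclude $(f^p)'(\phi_f(q)) = (B_\zeta^p)'(q)$, so $\phi_f(q)$ is $f$-repelling.

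For (2), let $q\in E_\zeta(D)$ be $B_\zeta$-periodic of period $p$ with $q_0=B_\zeta^k(q)\in{\rm supp}(S)$. Then $\phi_f(q)$ is $f$-periodic of period $p$, is a cut point of $J(f)$ by (3), and cannot be attracting since it lies on $\partial U_f(0)$. I would prove it is parabolic by excluding the remaining non-parabolic possibilities. By Proposition \ref{kernel-conformal}(2), the $\phi_{f_n}$-images of the $\nu_S(q_0)\geq 1$ critical points of $B_n$ accumulating at $q_0$ converge into $L_{U_f(0),\phi_f(q_0)}$, so there is a critical orbit of $f$ absorbed at the periodic cycle of $\phi_f(q_0)$. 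A Siegel or Cremer periodic point on the boundary of an attracting Fatou component cannot absorb such a critical orbit arising from the Fatou-component side in the limit of hyperbolic perturbations (ultimately a consequence of parabolic-implosion theory), ruling out the irrational-indifferent case. The repelling case is excluded by contradiction: were $\phi_f(q_0)$ repelling, a Koenigs linearization of $f_n^p$ at a perturbed repelling point $x_n\to\phi_f(q_0)$, together with Lemma \ref{c-internal-ray} and the stability of the access from $U_{f_n}(0)$, would force these critical points of $f_n$ to remain in the attracting basin rather than converge into the limb, contradicting Proposition \ref{kernel-conformal}(2). The main obstacle is making this repelling-exclusion rigorous: one must control the scale of the Koenigs linearization of $f_n^p$ at $x_n$ relative to the critical-point accumulation, which I expect to achieve via a Riemann--Hurwitz count on small invariant disks combined with the kernel convergence of pointed Fatou components.
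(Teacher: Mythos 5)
Your proofs of (3) and (4) are correct and follow essentially the paper's route: (3) combines Theorem~\ref{RY}(4) with the divisor equalities of Proposition~\ref{div-eq}, and (4) then follows from Proposition~\ref{kernel-conformal}(1). No issue there.

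The proofs of (1) and (2), however, have genuine gaps, and the key missing ingredient is the same in both: you never invoke the Snail Lemma dichotomy. Since $\phi_f(q)\in\partial U_f(0)$ is $f$-periodic and Theorem~\ref{RY}(3) supplies an external ray landing at it, the landing angle is necessarily rational (the finitely many rays at the finite orbit are permuted by $f$), so by \cite[Lemma~18.2, Cor.~18.13]{Mil06} the point is \emph{either repelling or parabolic} --- Cremer and Siegel are automatically excluded. Once this dichotomy is in hand, (1) and (2) fall out immediately from (3): a parabolic point on $\partial U_f(0)$ has a parabolic basin attached, hence a nontrivial limb, hence $q\in E_\zeta(D)$ via Theorem~\ref{RY}(4) and Proposition~\ref{div-eq}; a repelling point with a nontrivial limb would have two rays landing, which by stability \cite[Prop.~8.5]{DH} would persist to nearby $g\in\mathcal H_d$ even though $J(g)$ is a Jordan curve --- a contradiction --- so a repelling boundary periodic point has trivial limb, forcing $q\notin E_\zeta(D)$. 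This is exactly the paper's argument.

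The specific step that fails in your (1) is the claimed multiplier identity $(f_n^p)'(\phi_{f_n}(q_n)) = (B_n^p)'(q_n)$. A conformal conjugacy on a one-sided neighborhood of a boundary fixed point does \emph{not} force equality of multipliers: the map $z\mapsto z^{\log\mu/\log\lambda}$ on a half-plane conjugates $z\mapsto\lambda z$ to $z\mapsto\mu z$ there for any $\lambda,\mu>1$. Concretely, for $f_a(z)=az+z^2$ with $a\in(0,1)$ one has $\Psi(f_a)=B_w$ with $w=-a$ (matching the multiplier at $0$), and then $f_a'(1-a)=2-a$ while $B_w'(1)=2/(1-w)=2/(1+a)$; these are unequal for $a\neq 0$. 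So the Riemann map $\phi_{f_n}$ does not transport the boundary multiplier, and one cannot conclude $(f^p)'(\phi_f(q))=(B_\zeta^p)'(q)$. Indeed, without first excluding the irrational-indifferent cases, the fact that $\phi_{f_n}(q_n)$ are repelling points converging to $\phi_f(q)$ says nothing: the limit could be parabolic or indifferent. Your (2) inherits the same issue (the Koenigs argument in the ``repelling exclusion'' relies on the same flawed transport) and you acknowledge the parabolic-implosion ingredient is not carried out; both steps are rendered unnecessary once the Snail Lemma dichotomy is applied.
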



\begin{proof} 	Let $q\in \partial{ \mathbb D}$ be a $B_\zeta$-periodic point, then $\phi_f(q)$ is an $f$-periodic point. If  $\phi_f(q)$ is $f$-repelling, then there is only one external ray landing at $\phi_f(q)$ (otherwise, all external rays landing at $f$ persist as we perturb $f$ into a nearby map in $\mathcal H_d$ \cite[Proposition 8.5]{DH}, contradiction!), and the limb  $L_{U_f(0), \phi_f(q)}$ is trivial.   If  $\phi_f(q)$ is $f$-parabolic, then the  limb  $L_{U_f(0), \phi_f(q)}$ is not trivial. In this case, there is an integer $l\geq 0$ so that   $L_{U_f(0), f^l(\phi_f(q))}$ contains a critical point. By the  equalities  $B_\zeta=\phi_{f}^{-1}\circ f\circ \phi_{f}$,  $\phi_f^* R_f^0=S$ given by Proposition \ref{div-eq},  we conclude that in the former case,  $q\in  \partial{ \mathbb D}\setminus E_\zeta(D)$,  while in the latter case, $B_\zeta^l(q)\in {\rm supp}(S)$, which  implies that $q\in E_\zeta(D)$. This proves (1) and (2).

	

	  
	  If    $L_{U_f(0), \phi_f(q)}$ is not  trivial,  by the same reasoning as above, the $f$-orbit of $\phi_f(q)$ meets  either a critical point or a parabolic point. In either case, there is an integer $l\geq 0$ so that the limb $L_{U_f(0), f^l(\phi_f(q))}$ contains a critical point. 
	Again by
 Proposition \ref{div-eq}, we have that $q\in E_\zeta(D)$.   If the limb $L_{U_f(0), \phi_f(q)}$ is   trivial,  Proposition \ref{div-eq} also implies that $q\in \partial{ \mathbb D}\setminus E_\zeta(D)$. This proves (3).
 
	
(4).  It follows from (3) and Proposition \ref{kernel-conformal}.
	\end{proof}

		\begin{pro} \label{unique-rm} Let $D=(B, S)\in \partial_{\rm reg} \mathcal B_{d}$ and $f\in I_{\Phi}(D)$.  There is a unique conformal map $\phi_f:(\mathbb D, 0)\rightarrow (U_f(0),  0)$
			with the  property:
		for any sequence $(f_n)_n$ in $\mathcal H$ converging to $f$,  the conformal maps 
		$(\phi_{f_n})_n$ converge   to $\phi_f$ in $\mathbb D$.
	\end{pro}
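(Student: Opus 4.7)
The plan is to extract a subsequential limit $\phi$ of $(\phi_{f_n})_n$ in $\mathbb D$, and then pin down $\phi$ by its value at $1\in\partial\mathbb D$, which I will show depends only on $f$ and not on the chosen sequence.

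Since $\overline{\mathcal H_d}\subset \mathcal C_d$, the filled Julia sets $K(f_n)$ are uniformly bounded, so $(\phi_{f_n})_n$ is normal on $\mathbb D$; I extract a subsequence converging locally uniformly to some $\phi$. Because $D\in\partial_{\rm reg}\mathcal B_d$ has $B\in\mathcal B_l$ with $l\geq 2$, the Schwarz--Pick inequality gives $|B'(0)|<1$, and passing to the limit in $B_{f_n}'(0)=f_n'(0)$ yields $|f'(0)|<1$. Hence $0$ is attracting for $f$ and Lemma \ref{kernel-att} produces the kernel convergence $(U_{f_n}(0),0)\to (U_f(0),0)$. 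By Theorem \ref{cara-riemann} together with Remark \ref{cara-function}, $\phi:(\mathbb D,0)\to(U_f(0),0)$ is a conformal map, and by Theorem \ref{RY}(1) and Carath\'eodory's boundary extension theorem it extends continuously to $\overline{\mathbb D}\to\overline{U_f(0)}$.

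To identify $\phi(1)$, I apply Proposition \ref{kernel-conformal}(2) with the constant sequences $q_n\equiv 1$ and $\theta_n\equiv 0$; since $R_{f_n}(0)$ lands at $\phi_{f_n}(1)=\nu_{f_n}$, this gives
$$\Big(\bigcap_{k\geq 1}\overline{\bigcup_{n\geq k}R_{f_n}(0)}\Big)\cap K(f)\subset L_{U_f(0),\phi(1)}.$$
The B\"ottcher coordinate depends continuously on $f\in\mathcal C_d$, so $\psi_{f_n,\infty}^{-1}(r)\to\psi_{f,\infty}^{-1}(r)$ for every $r>1$; letting $r\to 1^+$ places the landing point $\nu_f$ in $\bigcap_k\overline{\bigcup_{n\geq k}R_{f_n}(0)}$. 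Since $\nu_f\in J(f)\subset K(f)\setminus U_f(0)$, the above inclusion forces $\nu_f\in L_{U_f(0),\phi(1)}$, i.e.\ $\phi(1)=\sigma_{U_f(0)}(\nu_f)$.

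Any conformal map $(\mathbb D,0)\to(U_f(0),0)$ is determined up to rotation of $\mathbb D$, and the boundary value $\phi(1)=\sigma_{U_f(0)}(\nu_f)$ depends only on $f$; this pins down the rotation, making $\phi=\phi_f$ the unique possible subsequential limit and yielding $\phi_{f_n}\to\phi_f$ along the full sequence. The main obstacle lies in the identification of $\phi(1)$: the simpler candidate argument $\phi(1)=\nu_f$ via $\phi_{f_n}(1)=\nu_{f_n}\to\nu_f$ would rely on Lemma \ref{continuity-rays}, which requires $\nu_f$ not to be a cut point of $J(f)$; for a general regular $D=(B,S)$ the limb $L_{U_f(0),\nu_f}$ may capture a critical point (for instance when $1\in{\rm supp}(S)$, so that $1\in E_\zeta(D)$) and then $\nu_{f_n}$ need not converge to $\nu_f$. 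The projection $\sigma_{U_f(0)}$ built into Proposition \ref{kernel-conformal}(2) is the intrinsic device that bypasses this obstruction.
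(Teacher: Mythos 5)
Your proof is correct and takes essentially the same route as the paper's: extract a subsequential limit $\phi$, apply Proposition \ref{kernel-conformal}(2) with the constant sequences $q_n\equiv 1$, $\theta_n\equiv 0$ to conclude $\nu_f\in L_{U_f(0),\phi(1)}$, and note that the normalization $\phi(0)=0$, $\phi(1)=\sigma_{U_f(0)}(\nu_f)$ pins $\phi$ down uniquely. The only differences are that you spell out the kernel-convergence and B\"ottcher-coordinate details that the paper leaves implicit, and your closing remark about why the naive $\phi_{f_n}(1)\to\nu_f$ fails when $1\in{\rm supp}(S)$ is a correct and useful observation about why the limb projection $\sigma_{U_f(0)}$ is the right device here.
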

	\begin{proof}  
		Let $\phi$ be the limit of a convergent subsequence  $(\phi_{f_{n_k}})_{k}$ of $(\phi_{f_n})_n$.   Then $\phi:(\mathbb D, 0)\rightarrow (U_f(0), 0)$  is conformal. By Proposition \ref{kernel-conformal},  
		$$\limsup_{k\rightarrow \infty} \overline{R_{f_{n_k}}(0)} \bigcap K(f)\subset L_{U_f(0), \phi(1)}.$$
	The fact $\overline{R_f(0)}\subset \limsup_{k\rightarrow \infty} \overline{R_{f_{n_k}}(0)} $ implies that
	 the landing point 	$\nu_f$ of   $R_f(0)$ is in  $L_{U_f(0), \phi(1)}$.
		Hence $\phi(1)$ is the unique $b\in \partial U_f(0)$ whose limb $L_{U_f(0), b}$ contains $\nu_f$. Therefore $\phi$ is uniquely determined by the normalization $\phi(0)=0, \phi(1)=b$.
		
		Since any  convergent subsequence   of $(\phi_{f_n})_n$ has the same limit $\phi$, the   sequence $(\phi_{f_n})_n$   converges to   $\phi$ in $\mathbb D$.
	\end{proof}
	
	\begin{pro} \label{lp-0-ray} Let $D=(B, S)\in  \partial_{\rm reg} \mathcal B_d$ and $f\in I_{\Phi}(D)$.  Let $\nu_f$ be the landing point of the external ray $R_f(0)$, and let $\phi_f$ be given by Proposition \ref{unique-rm}.
		
		(1).  If $1\notin {\rm supp}(S)$, then $\nu_f$ is repelling and  $\nu_f=\phi_f(1)\in \partial U_{f}(0)$.
		
		
		(2).     If $1\in {\rm supp}(S)$, then   $\nu_f\in L_{U_f(0), \phi_f(1)}$. In this case,    either $\nu_f\notin \partial U_{f}(0)$, or  $\nu_f=\phi_f(1)$ and $\nu_f$   is a parabolic fixed point of $f$.
	\end{pro}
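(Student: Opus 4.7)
The plan is to read off Proposition \ref{lp-0-ray} almost directly from the three preceding propositions of this section, using $1\in\partial\mathbb D$ as the pivotal boundary fixed point. The proof of Proposition \ref{unique-rm} already identifies $\phi_f(1)$ as the unique $b\in\partial U_f(0)$ whose limb contains $\nu_f$, so in both cases $\nu_f\in L_{U_f(0),\phi_f(1)}$ from the outset; the only question is whether this limb is trivial or not.

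For part (1), Lemma \ref{degenerate-0}(1) forces $\zeta=1$, hence $B_\zeta=B$. Since $B(1)=1$, the forward $B$-orbit of $1$ stays at $1$ and avoids ${\rm supp}(S)$, so $1\notin E_\zeta(D)$. Proposition \ref{pointwise}(1) then makes $\phi_f(1)$ an $f$-repelling fixed point, and Proposition \ref{pointwise}(3) makes the limb $L_{U_f(0),\phi_f(1)}$ trivial. Combined with $\nu_f\in L_{U_f(0),\phi_f(1)}$, this yields $\nu_f=\phi_f(1)\in\partial U_f(0)$, as claimed.

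For part (2), the inclusion $\nu_f\in L_{U_f(0),\phi_f(1)}$ is already noted, and the real work is the case $\nu_f\in\partial U_f(0)$. By the definition of the limb decomposition in Theorem \ref{RY}, $L_{U_f(0),\phi_f(1)}\cap\overline{U_f(0)}=\{\phi_f(1)\}$, which forces $\nu_f=\phi_f(1)$. Since $R_f(0)$ is $f$-invariant, $\nu_f$ is $f$-fixed. I would then extend the semiconjugacy $\zeta B=\phi_f^{-1}\circ f\circ \phi_f$ of Proposition \ref{div-eq} from $\mathbb D$ to $\overline{\mathbb D}$ by continuity: Theorem \ref{RY}(1) together with Carath\'eodory's theorem promote $\phi_f$ to a homeomorphism $\overline{\mathbb D}\to\overline{U_f(0)}$, and the self-containment $f(\overline{U_f(0)})\subset\overline{U_f(0)}$ keeps $f\circ\phi_f$ within range. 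Evaluating at $z=1$ then yields $\zeta=B_\zeta(1)=\phi_f^{-1}(f(\phi_f(1)))=\phi_f^{-1}(\nu_f)=1$, so $B_\zeta=B$ and $1$ is $B$-fixed. Because $1\in{\rm supp}(S)\subset E_\zeta(D)$, Proposition \ref{pointwise}(2) finally concludes that $\phi_f(1)=\nu_f$ is $f$-parabolic.

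The one delicate step is the boundary extension of the semiconjugacy together with the injectivity of $\phi_f$ on $\overline{\mathbb D}$ needed in part (2); without these one cannot force $\zeta=1$, and the parabolic character of $\nu_f$ would not drop out of Proposition \ref{pointwise}(2). Everything else is a routine unwinding of the propositions already established.
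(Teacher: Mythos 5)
Your proof is correct and runs along essentially the same lines as the paper's, built on Propositions \ref{div-eq}, \ref{pointwise}, and \ref{unique-rm}. The only substantive variation is in part (1): the paper reaches $\nu_f=\phi_f(1)$ via the pointwise convergence $\phi_{f_n}(1)\to\phi_f(1)$ together with $\nu_{f_n}=\phi_{f_n}(1)$ and the stability of external rays near a repelling point, whereas you note that the proof of Proposition \ref{unique-rm} already gives $\nu_f\in L_{U_f(0),\phi_f(1)}$ in both cases, and then the triviality of that limb (Proposition \ref{pointwise}(3)) forces $\nu_f=\phi_f(1)$ directly. This is a slightly more unified route, since the same starting observation carries both parts, but it does not change the underlying mechanism. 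In part (2), your spelling out of the Carath\'eodory extension of the semiconjugacy $\zeta B=\phi_f^{-1}\circ f\circ\phi_f$ to $\overline{\mathbb D}$ in order to justify $B_\zeta(1)=1$ (and hence $\zeta=1$) is exactly the step the paper states more tersely; your level of detail is welcome and the supporting facts you invoke (Theorem \ref{RY}(1), Carath\'eodory, and $f(\overline{U_f(0)})\subset\overline{U_f(0)}$) are available in the paper. No gap.
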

\begin{proof} Let $B_n, f_n, \phi_{f_n}, \phi_f, \zeta, E_\zeta(D)$ be given as in Proposition \ref{pointwise}. 
	
	(1).  If $1\notin {\rm supp}(S)$, by Proposition \ref{pointwise}, $\phi_f(1)$ is a repelling fixed point of $f$,  the limb $L_{U_{f}(0), \phi_f(1)}$ is trivial, and $\phi_{f_n}(1)\rightarrow \phi_f(1)$. Note that  point $\nu_{f_n}=\phi_{f_n}(1)$ for each $n$. By the stability of external rays \cite[Proposition 8.1]{DH}, we have $\phi_f(1)=\nu_f$.
	
	(2). If $1\in {\rm supp}(S)$, then $1\in E_\zeta(D)$.  By (the proof of) Proposition  \ref{unique-rm},   $\nu_f\in L_{U_f(0), \phi_f(1)}$.
	If $\nu_f\in \partial U_f(0)$, then $\phi_f(1)=\nu_f$. 
	In this case,  $f(\nu_f)=\nu_f$ implies that   $B_\zeta(1)=1$ (hence $\zeta=1$). By Proposition \ref{pointwise} (2), $\nu_f$ is   a parabolic fixed point of $f$. 
\end{proof}

Let $D=(B, S)\in \partial_{\rm reg} \mathcal B_d$   with $1\notin {\rm supp}(S)$.  For any $f\in I_{\Phi}(D)$, let $\phi_f$ be given by Proposition \ref{unique-rm}. 
Define two maps $\theta_f^{\pm}: \partial \mathbb D\rightarrow \mathbb R/\mathbb Z$ by
$$\theta_f^{\pm}=\theta_{U_{f}(0)}^{\pm}\circ \phi_f|_{\partial \mathbb D},$$
where $\theta_{U_{f}(0)}^{\pm}$ are given in \S \ref{continuity-property}.  By Proposition \ref{pointwise}, they satisfy the  properties: 

\begin{itemize}
	\item if   $q\in \partial{ \mathbb D}\setminus \bigcup_{j\geq 0} B^{-j}({\rm supp}(S))$, then $\theta_f^+(q)=\theta_f^-(q)=\theta$, where $R_{f}(\theta)$ is the unique external ray 
	landing at $\phi_f(q)$.
	
	\item  if   $q\in \bigcup_{j\geq 0} B^{-j}({\rm supp}(S))$, then  $R_{f}(\theta_f^+(q)), R_{f}(\theta_f^-(q))$ land  at $\phi_f(q)$, and the sets  $R_{f}(\theta_f^-(q))$, $L_{U_{f}(0), \phi_f(q)}$, $R_{f}(\theta_f^+(q))$ attach at $ \phi_f(q)$ in positive cyclic order.
\end{itemize}

\begin{pro} \label{com-angles} Let $D=(B, S)\in  \partial_{\rm reg} \mathcal B_d$  with $1\notin {\rm supp}(S)$.  Then for any two maps $f, g\in I_{\Phi}(D)$, we have 
	$$\theta_f^{+}=\theta_g^{+}, \ \theta_f^-=\theta_g^-.$$
In other words, the maps  $\theta_f^{\pm}$
are independent of the choice of $f\in I_{\Phi}(D)$.
\end{pro}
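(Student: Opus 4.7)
The plan is to show that $\theta_f^\pm$ coincide with a pair of \emph{combinatorial} angle functions $\theta^\pm:\partial\mathbb{D}\to \mathbb{R}/\mathbb{Z}$ determined by $D$ alone. Write $E(D)=\bigcup_{j\geq 0} B^{-j}(\operatorname{supp}(S))$; since $1\notin \operatorname{supp}(S)$, Lemma \ref{degenerate-0}(1) forces $\zeta=1$ in Proposition \ref{pointwise}, so $E(D)$ here matches $E_\zeta(D)$ for both $f$ and $g$. The strategy has three steps: (i) define $\theta(q)$ on the dense set of $B$-periodic points in $\partial\mathbb{D}\setminus E(D)$ directly from $B$; (ii) identify $\theta_f^\pm(q)=\theta_g^\pm(q)=\theta(q)$ there via convergence of external rays; (iii) extend to all of $\partial\mathbb{D}$ by (one-sided) continuity.

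For the combinatorial step, observe that for any $h\in \mathcal{H}_d$, the Blaschke product $B_h:=\Psi(h)$ is uniformly expanding on $\partial\mathbb{D}$, and the map $\theta_h:\partial\mathbb{D}\to \mathbb{R}/\mathbb{Z}$ determined by requiring $\phi_h(\xi)$ to be the landing point of the external ray $R_h(\theta_h(\xi))$ is a homeomorphism conjugating $B_h|_{\partial\mathbb{D}}$ to $m_d(\theta)=d\theta$. Given a $B$-periodic point $q\in \partial\mathbb{D}\setminus E(D)$ of period $p$, the implicit function theorem applied to $B_h^p(z)-z$ yields a local analytic continuation $h\mapsto q(h)$ to a $B_h$-periodic point, which extends globally because $\mathcal{H}_d$ is simply connected \cite{Mil12} and $q(h)$ remains a hyperbolic repeller throughout. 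Then $\theta_h(q(h))$ varies continuously in $h$ with values in the finite set of period-$p$ points of $m_d$, so it is constant on the connected set $\mathcal{H}_d$; call this common value $\theta(q)$.

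Now fix $f\in I_\Phi(D)$ and choose $(B_n)_n$ in $\mathcal{B}_d$ with $B_n\to D$ and $f_n:=\Phi(B_n)\to f$. By Lemma \ref{degenerate-0}(1), $B_n\to B$ on $\widehat{\mathbb{C}}\setminus \operatorname{supp}(S)$, so $q_n:=q(B_n)\to q$ and $R_{f_n}(\theta(q))$ lands at $\phi_{f_n}(q_n)$. The limb $L_{U_f(0),\phi_f(q)}$ is trivial by Proposition \ref{pointwise}(3), so Proposition \ref{kernel-conformal}(1) gives $\phi_{f_n}(q_n)\to \phi_f(q)$, and Lemma \ref{continuity-rays} then yields the Hausdorff convergence $\overline{R_{f_n}(\theta(q))}\to \overline{R_f(\theta(q))}$; in particular $R_f(\theta(q))$ lands at $\phi_f(q)$. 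Triviality of the limb makes this the unique ray landing there, so $\theta_f^+(q)=\theta_f^-(q)=\theta(q)$, and the same argument for $g$ yields equality on the dense set of $B$-periodic points in $\partial\mathbb{D}\setminus E(D)$. To extend: for $q\in \partial\mathbb{D}\setminus E(D)$, Lemma \ref{cont-limb}(2) combined with triviality of the limb makes $\theta_f^\pm$ and $\theta_g^\pm$ continuous at $q$, so approximating by periodic points transfers the equality; for $q\in E(D)$, approach $q$ from the clockwise side by periodic $q_n\in \partial\mathbb{D}\setminus E(D)$, and Lemma \ref{cont-limb}(2), pulled back by the orientation-preserving $\phi_f,\phi_g$, gives $\theta_f^\pm(q_n)\to \theta_f^+(q)$ and $\theta_g^\pm(q_n)\to \theta_g^+(q)$, whence $\theta_f^+(q)=\theta_g^+(q)$; the counterclockwise approach gives $\theta_f^-(q)=\theta_g^-(q)$.

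The main obstacle is the well-definedness of $\theta(q)$ in the combinatorial step: one must verify that the analytic continuation of $q(h)$ persists globally on $\mathcal{H}_d$ and that $h\mapsto \theta_h(q(h))$ is continuous, in order to force constancy by discreteness of the target. This uses simple connectivity of $\mathcal{H}_d$, persistence of hyperbolic repellers, and stability of external ray landings at them \cite[Proposition 8.5]{DH}. Once this combinatorial invariant is in hand, the remaining arguments are routine applications of the continuity tools assembled in \S\ref{continuity-property}.
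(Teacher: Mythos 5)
Your approach is genuinely different from the paper's. The paper works with the dense set $Z=\bigcup_{l\geq 0}B^{-l}(1)$ of iterated $B$-preimages of $1$, fixes $\theta_f^\pm(1)=0$, and then recursively \emph{computes} the values of $\theta_f^\pm$ on each $B^{-k}(1)$ using the Goldberg--Milnor angular-width formula together with the divisor equality $\phi_f^*R_f^0=S$ from Proposition \ref{div-eq}; the resulting formula manifestly depends only on $D$. You instead work with the dense set of $B$-periodic points in $\partial\mathbb{D}\setminus E(D)$ and argue by rigidity: the angle attached to a persistent repelling cycle is continuous in the parameter and takes values in the discrete set of $m_d$-periodic angles, so it is constant on the connected set $\mathcal{H}_d$ and thus determined by $D$ alone. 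Both strategies finish with the same one-sided continuity extension via Lemma \ref{cont-limb}(2). Your route trades the explicit Goldberg--Milnor bookkeeping for a soft continuation argument, which is arguably cleaner conceptually, but it does lean harder on the external-ray stability machinery.

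There is, however, a genuine circularity in your landing step. You invoke Lemma \ref{continuity-rays} to deduce the Hausdorff convergence $\overline{R_{f_n}(\theta(q))}\to\overline{R_f(\theta(q))}$ and hence that $R_f(\theta(q))$ lands at $\phi_f(q)$. But the hypothesis of Lemma \ref{continuity-rays} is precisely that $R_f(\theta)$ already lands at a non-cut point $\xi\in\partial U$ — it is a continuity statement, not a landing criterion, so you cannot use it to locate the landing point of $R_f(\theta(q))$ in the first place. The gap is fillable with tools already at hand: either apply Proposition \ref{kernel-conformal}(2) (with $\theta_n\equiv\theta(q)$) together with the lower-semicontinuity of $R_{f_n}(\theta(q))$ in the exterior of $K(f)$ to force the landing point of $R_f(\theta(q))$ into the trivial limb $L_{U_f(0),\phi_f(q)}=\{\phi_f(q)\}$; or, more in keeping with your own combinatorial step, use stability of ray-landing at repelling cycles (\cite[Prop.~8.5]{DH}) — since $R_f(\theta_f^+(q))$ lands at the repelling periodic point $\phi_f(q)$, the ray of the same angle for $f_n$ lands at the continued repelling point, which must coincide with $\phi_{f_n}(q_n)$ for large $n$; uniqueness of the landing ray on the Jordan curve $J(f_n)$ then gives $\theta_f^+(q)=\theta_{f_n}(q_n)=\theta(q)$. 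Either fix closes the loop. The rest of the argument — well-definedness of $\theta(q)$ via continuation over the simply connected $\mathcal{H}_d$, density of admissible periodic points, and the one-sided extension — is sound.
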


We remark that Proposition \ref{com-angles} is false for  $D=(B, S)\in  \partial_{\rm reg} \mathcal B_d$  with $1\in {\rm supp}(S)$.  In fact there are  maps $f, g\in I_{\Phi}(D)$ with $\theta_f^+\neq \theta_g^+, \ \theta_f^-\neq \theta_g^-$. This fact is not used in this paper, so we omit its proof.

	

\begin{proof}  Since $D=(B, S)$ is regular, the mapping degree $e$ of $B$ satisfies $2 \leq e< d$.
Hence the set $Z=\bigcup_{l\geq 0}B^{-l}(1)$ 
 is dense in $\partial \mathbb D$. To show $\theta^{\pm}_f=\theta^{\pm}_g$, by Lemma  \ref{cont-limb}(2), it suffices to show  $\theta_f^{\pm}|_Z=\theta_g^{\pm}|_Z$.   In the following, we shall determine the precise value of $\theta_f^{\pm}$ on $Z$. 
 
Set $Z_0=\{1\}$, $Z_l=B^{-l}(1)\setminus B^{-(l-1)}(1)$ for $l\geq 1$, then $Z=\bigsqcup_{l\geq 0}Z_l$.
For $q, q'\in \partial\mathbb D$, let $[q,q']\subset \partial\mathbb D$ be an (closed) arc segment on $\partial\mathbb D$ with endpoints $q,q'$ so that $q, \zeta, q'$ are in the counter-clockwise order, for any $\zeta \in [q,q']\setminus\{q,q'\}$.
Let 
 $[q, q')=[q,q']\setminus\{q'\}$.
Note that the divisor $S=\sum_{q\in {\rm supp}(S)}\nu(q)\cdot q$ 
  induces a function $\nu: \partial \mathbb D\rightarrow \mathbb N$ so that $\nu(q)>0$ if and only if $q\in {\rm supp}(S)$.

By Propositions \ref{pointwise} (2) and  \ref{lp-0-ray}(1),  $\theta_f^+(1)=\theta_f^-(1)=0$.
 To determine $\theta_f^-|_{Z_1}$, write the points in $B^{-1}(1)$ as $q_0=1,  q_1, \cdots, q_{e-1}, q_e=q_0$,    in  the counter-clockwise order on $\partial \mathbb D$.
 For any $1\leq j<e$,  by   the divisor equality  $\phi_f^* R_f^0=S$ given by Proposition \ref{div-eq} and the relation between the angular width and the number of critical points  \cite[\S 2]{GM93},  we have that
 $$\theta_f^-(q_j)-\theta_f^-(q_0)=\frac{2\pi }{d}\Big(j+\sum_{\zeta\in [q_0, q_j)\cap {\rm supp}(S)} \nu(\zeta)\Big),$$
 $$\theta_f^+(q_j)-\theta_f^-(q_j)=\frac{2\pi \nu(q_j)}{d}.$$
 In this way, $\theta_f^+$ and $\theta_f^-$ are determined on $Z_1$.
 Assume by induction that  $\theta_f^+$ and $\theta_f^-$  are determined in $B^{-k}(1)$ for some $k\geq 1$. Take two adjacent points  {$p, p'\in B^{-k}(1)$ so that $[p, p']$ is disjoint from $ B^{-k}(1)\setminus \{p,p'\}$}. Then $B^{-k-1}(1)\cap [p, p']$ consists of $e+1$ points, labeled in   the counter-clockwise order as $q_0=p, q_1, \cdots, q_e=p'$.
  For any $1\leq j<e$, again by Proposition \ref{div-eq} and  \cite[\S 2]{GM93}, 
 $$\theta_f^-(q_j)-\theta_f^-(q_0)=\frac{2\pi }{d^{k+1}}\Big(j+\sum_{\zeta\in [q_0, q_j)\cap {\rm supp}(S)} \nu(\zeta)\Big),$$
 $$\theta_f^+(q_j)-\theta_f^-(q_j)=\frac{2\pi \nu(q_j)}{d^{k+1}}.$$
  In this way, $\theta_f^+$ and $\theta_f^-$ are determined on $Z_{k+1}$. By induction,   $\theta_f^{\pm}$  are determined on $Z$. 
  
  Note that $\theta_g^{\pm}|_{Z}$ are  determined in the same fashion, we  get $\theta_f^{\pm}|_Z=\theta_g^{\pm}|_Z$. 
	\end{proof}

	
	
	

	\section{Regular divisors:    singleton case}\label{regular}
	
	In this section, we show
	
	\begin{pro}\label{imp-singleton}   Let $D=(B,S)\in \partial_{\rm reg} \mathcal B_{d}$.  If
	$1\notin {\rm supp}(S)$,  $S$ is simple,  and $D$ has no dynamical relation, then $I_{\Phi}(D)$ is   a singleton.
	\end{pro}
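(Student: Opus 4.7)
The plan is to reduce the singleton property to the rigidity theorem of \cite{CWY} by showing that any two maps in $I_{\Phi}(D)$ share the same combinatorial skeleton, which rigidifies them completely.

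Pick arbitrary $f, g \in I_{\Phi}(D)$. First I would extract canonical data from each. By Proposition \ref{unique-rm}, the Riemann mapping $\phi_f : (\mathbb D, 0) \to (U_f(0), 0)$ is uniquely determined; and since $1 \notin \mathrm{supp}(S)$, Proposition \ref{lp-0-ray}(1) identifies $\nu_f = \phi_f(1)$ as an $f$-repelling fixed point. By Proposition \ref{div-eq} (where $\zeta = 1$ because $1 \notin \mathrm{supp}(S)$), one has the internal conjugacy $B = \phi_f^{-1} \circ f \circ \phi_f$ together with $S = \phi_f^*(R_f^0) = \phi_f^*(Z_f^0)$. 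The simplicity of $S$ ensures that for each $q \in \mathrm{supp}(S)$ the limb $L_{U_f(0), \phi_f(q)}$ carries exactly one critical point of $f$ (and exactly one preimage of $0$). Finally, Proposition \ref{com-angles} shows that the external-angle functions $\theta_f^{\pm}$ on $\partial \mathbb D$ depend only on $D$. Consequently the full combinatorial skeleton of $f$ (the Blaschke conjugacy on $U_f(0)$, the attaching points and multiplicities of limbs on $\partial U_f(0)$, and the external angles landing on each attaching point) is a function of $D$ alone, and the same data is produced for $g$.

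Next I would observe that under our hypotheses no parabolic cycles intrude: any $B$-periodic point of $\partial\mathbb D$ lies in $\partial\mathbb D \setminus E_1(D)$ where $E_1(D) = \bigcup_{j \geq 0} B^{-j}(\mathrm{supp}(S))$, and by Proposition \ref{pointwise}(1) its $\phi_f$-image is $f$-repelling. The condition that $D$ has no dynamical relation translates into the statement that the grand orbits of the critical points of $f$ located in the limbs $L_{U_f(0), \phi_f(q)}$, $q \in \mathrm{supp}(S)$, are pairwise disjoint. This is precisely the non-degeneracy hypothesis under which the rigidity theorem of \cite{CWY} governs the regular part of $\partial \mathcal H_d$. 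Applied to $f$ and $g$, which carry the identical combinatorial skeleton encoded by $D$, the rigidity theorem yields $f = g$, and hence $I_{\Phi}(D) = \{f\}$.

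The main obstacle will be the precise interface with \cite{CWY}: one must verify that the combinatorial equivalence between $f$ and $g$ obtained by juxtaposing the internal Blaschke conjugacy $B$ with the external-angle data $\theta_f^{\pm} = \theta_g^{\pm}$ is exactly the form of combinatorial equivalence consumed by the rigidity theorem, and that the hypothesis "no dynamical relation" matches the standing non-degeneracy condition there. A secondary technical point is the bookkeeping for the $B$-preperiodic points on $\partial \mathbb D$, whose $\phi_f$-images are Misiurewicz points of $f$; their stability under rigidity is what makes "no dynamical relation" the natural rigidifying hypothesis. Once these identifications are in place, the rest of the argument is a direct invocation.
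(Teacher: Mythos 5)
Your high-level strategy (reduce the singleton claim to the rigidity theorem of \cite{CWY}) is the same as the paper's, but your proposal contains a genuine gap and a factual error, and as written it would not close the argument.

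The gap is at the interface you yourself flag as the ``main obstacle.'' Theorem \ref{rigidity} is \emph{not} a combinatorial rigidity statement: it requires an actual homeomorphism $h:\mathbb C\to\mathbb C$ conjugating $f$ to $g$, conformal on $F(f)$, and normalized by $h'(\infty)=1$. Having ``the same combinatorial skeleton'' (same internal Blaschke model $B$, same $S$, same $\theta_f^\pm=\theta_g^\pm$) is the \emph{input data} from which such a conjugacy might be built --- it is not itself the hypothesis of the theorem. The bulk of the paper's proof is precisely the construction of this $h$: first one proves $\lambda_{\mathbb R}(f)=\lambda_{\mathbb R}(g)$ (Lemma \ref{equal-lam}), which is what allows the B\"ottcher conjugacy $h_\infty$ to extend over $J(f)$; then one builds conformal conjugacies on $U_f(0)$, on each cycle of parabolic basins (via the explicit model $B_{*,q}(z)=(3z^2+1)/(z^2+3)$), and on all aperiodic preimage components, checks that each piece matches $h_\infty$ on the shared boundary, and glues. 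None of this construction appears in your proposal, so there is nothing yet to feed into Theorem \ref{rigidity}. Also, the theorem has no ``non-degeneracy'' or ``no dynamical relation'' hypothesis; that hypothesis enters only in proving $\lambda_{\mathbb R}(f)=\lambda_{\mathbb R}(g)$ and in controlling the critical configuration (Lemma \ref{char-map-imp}).

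The factual error is your claim that ``no parabolic cycles intrude: any $B$-periodic point of $\partial\mathbb D$ lies in $\partial\mathbb D\setminus E_1(D)$.'' This is false: ``no dynamical relation'' forbids $B^l(q)=q'$ for \emph{distinct} $q,q'\in\mathrm{supp}(S)$, but a $B$-periodic $q\in\mathrm{supp}(S)$ (so $q\in E_1(D)$ with $j=0$) is perfectly allowed, and by Lemma \ref{char-map-imp}(1) each such $q$ produces a parabolic cycle of $f$. The paper devotes an entire block of the conjugacy construction to these parabolic basins, so your simplification would omit exactly the most delicate part of the gluing.
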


Here   $D=(B,S)\in \partial_{\rm reg}  {\mathcal B_d}$ has \textbf{dynamical relation} means that there are different points $q,q'\in {\rm supp}(S)$ and an integer $l\geq 1$
so that $B^l(q)=q'$.


We need the following theorems, established in the prequel \cite{CWY}:

\begin{thm}  [\cite{CWY}, Theorem 1]  \label{lc-J} For any $f\in \partial_{\rm reg}\mathcal H$, the Julia set $J(f)$ is locally connected.
	\end{thm}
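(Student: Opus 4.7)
The goal is to prove that $J(f)$ is locally connected for every $f \in \partial_{\rm reg}\mathcal H_d$. I would argue pointwise, separating points of $J(f)$ by how they interact with the attracting basin. For $x \in J(f)$ lying on the boundary of a pre-attracting Fatou component $U$ (a preimage of $U_f(0)$), Theorem \ref{RY} tells us $U$ is a Jordan domain and supplies a limb decomposition; by Corollary \ref{limb-arc}, local connectivity at $x$ is automatic when $L_{U,x} = \{x\}$, and in general it reduces to showing that limbs $L_{U,x}$ shrink as the associated angles $\theta_U^\pm$ close up on $x$. For points $x \in J(f)$ whose forward orbit never lands in the basin of $0$, a separate shrinking-neighborhood argument is required.

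The principal tool is a Yoccoz-type puzzle. I would choose a repelling periodic point $\alpha \in \partial U_f(0)$ at which several external rays land (or, when $\partial U_f(0)$ carries a parabolic cycle, a well-chosen repelling $\alpha$ that separates the postcritical set), use these rays together with an equipotential curve to form an initial partition of a neighborhood of $K(f)$, and pull back under $f$. The resulting nested puzzle pieces $P_n(x) \ni x$ form a basis of connected neighborhoods of $x$ once we establish $\mathrm{diam}(P_n(x)) \to 0$. The same shrinking result, applied to limbs attached to $\partial U$, settles the remaining part of the boundary case.

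Shrinking splits according to the behavior of the ``free'' critical orbits — the critical points $c$ of $f$ not in the basin of $0$. Since $D = (B,S)$ is regular, these critical points are constrained: each projects under $\sigma_{U_f(0)}$ into $\mathrm{supp}(S)$, so there are at most $d - \deg(B)$ such orbits and they live either on $\partial U_f(0)$ (and its preimages) or in limbs attached to specific boundary points. For free critical orbits that are non-recurrent in $J(f)$, shrinking follows from Ma\~n\'e-type expansion on a neighborhood of the $\omega$-limit set; for orbits landing on parabolic cycles, one modifies the puzzle near the parabolic petals using Fatou coordinates (a parabolic-puzzle construction); for genuinely recurrent free critical points one invokes the principal-nest machinery with Kahn--Lyubich / Kozlovski--Shen--van Strien style \textit{a priori} bounds to force combinatorial depth to translate into geometric contraction.

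The main obstacle is handling maps that simultaneously exhibit several of these pathologies — a single $f \in \partial_{\rm reg}\mathcal H_d$ may combine a parabolic cycle on $\partial U_f(0)$, Misiurewicz-like critical points, and recurrent critical points in pre-limbs, so one needs a puzzle construction and shrinking criterion uniform across these regimes. The combinatorial control provided by the divisor $D$ (finitely many free orbits, each associated with a prescribed point of $\mathrm{supp}(S)$) keeps the puzzle combinatorics manageable; the Blaschke parameterization thus plays a dual role, simultaneously fixing the combinatorial skeleton and bounding the number of independent non-hyperbolic features one must control at once.
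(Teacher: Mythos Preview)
The paper does not prove this theorem: it is quoted from the prequel \cite{CWY} (their Theorem~1) and used here as a black box, so there is no ``paper's own proof'' to compare against. Your outline is a plausible sketch of the Yoccoz puzzle strategy one expects to underlie such a result, and the structural information the paper does import from \cite{CWY} (Proposition~\ref{limit-pt-one-ray}, describing $K(f)$ as the closure of an increasing union of touching disks with single rays landing at limit points) is consistent with the puzzle picture you describe.

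That said, what you have written is an outline rather than a proof, and the hard steps are precisely the ones you list but do not carry out: constructing a puzzle that is compatible with parabolic cycles on $\partial U_f(0)$, and obtaining shrinking in the presence of recurrent free critical points via complex bounds. Invoking ``KSS/Kahn--Lyubich style \emph{a priori} bounds'' is not by itself a proof---those results apply in specific settings (at-most-finitely-renormalizable polynomials, box mappings with controlled geometry) and one must verify that the free critical dynamics of an arbitrary $f\in\partial_{\rm reg}\mathcal H_d$ falls into such a framework. Similarly, your claim that the regularity of $D=(B,S)$ constrains the free critical points to project into $\mathrm{supp}(S)$ uses Proposition~\ref{div-eq}, which in turn presupposes the map $\phi_f$ of Proposition~\ref{unique-rm}; these are fine to use, but the heavy lifting (shrinking of puzzle pieces in all regimes simultaneously) remains to be done. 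To actually compare approaches you would need to consult \cite{CWY} directly.
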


\begin{thm}  [\cite{CWY}, Theorem 2]\label{rigidity} If $f,g\in \partial_{\rm reg}\mathcal H$ are topologically conjugate $\phi\circ f=g\circ \phi$ by a homeomorphism $\phi:\mathbb C\rightarrow \mathbb C$, which is conformal in the Fatou set $F(f)$ with normalization $\phi'(\infty)=1$, then $f=g$.
\end{thm}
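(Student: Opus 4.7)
Given $f, g \in I_{\Phi}(D)$, my goal is to build a homeomorphism $H:\mathbb{C}\to\mathbb{C}$ with $H\circ f = g\circ H$, conformal on the Fatou set $F(f)$ and tangent to the identity at $\infty$. The rigidity theorem \ref{rigidity} (permissible since $f,g\in\partial_{\mathrm{reg}}\mathcal{H}_d$) then forces $f=g$, so $I_\Phi(D)$ is a singleton. The construction has a conformal piece on the immediate basins, a conformal piece on the basin of infinity, a dynamical extension to all other bounded Fatou components, and a combinatorial matching on the Julia sets. The hypotheses on $D$ (that $1\notin\mathrm{supp}(S)$, $S$ simple, no dynamical relation) are precisely what make this matching unambiguous.

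\textbf{Initial pieces.} On $\mathbb{C}\setminus K(f)$ set $H:=\psi_{g,\infty}^{-1}\circ\psi_{f,\infty}$, a conformal conjugacy tangent to the identity at $\infty$. On $U_f(0)$ set $H:=\phi_g\circ\phi_f^{-1}$, where $\phi_f,\phi_g$ are the canonical Riemann maps of Proposition \ref{unique-rm}. Since $1\notin\mathrm{supp}(S)$, Lemma \ref{degenerate-0} forces $\zeta=1$ in Proposition \ref{div-eq}, so $\phi_f^{-1}\circ f\circ\phi_f=B=\phi_g^{-1}\circ g\circ\phi_g$, which makes $H|_{U_f(0)}$ a conformal conjugacy. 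By Theorem \ref{lc-J}, $J(f),J(g)$ are locally connected, so both Böttcher maps extend to continuous surjections $\partial\mathbb{D}\twoheadrightarrow J(f)$, $\partial\mathbb{D}\twoheadrightarrow J(g)$.

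\textbf{Combinatorial matching of the Julia sets.} I will show that the two landing equivalence relations on $\partial\mathbb{D}$ (for $f$ and for $g$) coincide, giving the required homeomorphism $J(f)\to J(g)$ via $R_f(\theta)\mapsto R_g(\theta)$. On $\partial U_f(0)\leftrightarrow\partial U_g(0)$ the matching is supplied by Proposition \ref{com-angles}, which uses the simplicity of $S$ and $1\notin\mathrm{supp}(S)$ to pin down the angle functions $\theta_{f}^{\pm}=\theta_{g}^{\pm}$ explicitly via the algorithm in its proof. For a general $x\in J(f)$, there is a least $n\geq 0$ with $f^n(x)\in\partial U_f(0)$ if $x$ is accessible from a bounded Fatou component, and otherwise $x$ lies in a limb $L_{U_f(0),\phi_f(q)}$ where either $q$ is in the grand orbit under $B$ of a point of $\mathrm{supp}(S)$ (Proposition \ref{pointwise}(3)) or $q$ is $B$-recurrent. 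The "no dynamical relation" hypothesis guarantees that two distinct limbs at points of $\mathrm{supp}(S)$ have disjoint forward orbits, while simplicity of $S$ implies each limb $L_{U_f(0),\phi_f(q)}$, $q\in\mathrm{supp}(S)$, carries exactly one critical point. Together these rule out the only mechanism that could make the matching multi-valued, and imply that pulling the angle identifications back under $d$-tupling produces identical identifications on all of $\partial\mathbb{D}$.

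\textbf{Dynamical extension and conclusion.} Every bounded Fatou component $V$ of $f$ satisfies $f^n(V)=U_f(0)$ for some $n$. The combinatorial matching above assigns to $V$ a unique Fatou component $V'$ of $g$ with $g^n(V')=U_g(0)$ and matching external-ray addresses on $\partial V\leftrightarrow\partial V'$; lifting $H|_{U_f(0)}$ through the proper maps $f^n:V\to U_f(0)$ and $g^n:V'\to U_g(0)$ yields a conformal conjugacy $V\to V'$ (unique once we require compatibility with the boundary matching already fixed on $\partial V$). Doing this for every preimage extends $H$ to all of $F(f)$ conformally. Continuity of $H$ across $J(f)$ then follows from Proposition \ref{kernel-conformal} applied to the Riemann maps of the matched pairs $(V,V')$, combined with local connectivity of both Julia sets. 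The resulting $H:\mathbb{C}\to\mathbb{C}$ is a homeomorphism with $H'(\infty)=1$, conformal on $F(f)$, conjugating $f$ to $g$; Theorem \ref{rigidity} concludes $f=g$.

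\textbf{Main obstacle.} The delicate step is verifying that the combinatorial matching in paragraph three actually extends to a \emph{homeomorphism} between the Julia sets without collapsing or tearing. This requires showing that the landing identifications on $\partial\mathbb{D}$ are generated, under pullback by $z\mapsto z^d$, from the identifications on angles landing at $\partial U_f(0)$ together with those coming from the finitely many limbs supporting critical orbits; here the no-dynamical-relation hypothesis prevents a limb's landing angles from being non-trivially identified with those of another limb, and simplicity of $S$ keeps the branching at each critical limb under the control of a single local-degree computation. Working this out cleanly is the technical heart of the proof.
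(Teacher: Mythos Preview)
You have proved the wrong statement. The theorem in question is the \emph{rigidity} result: given $f,g\in\partial_{\mathrm{reg}}\mathcal H_d$ and a topological conjugacy $\phi$ that is conformal on $F(f)$ with $\phi'(\infty)=1$, conclude $f=g$. Your proposal instead takes $f,g\in I_\Phi(D)$, constructs such a conjugacy, and then \emph{invokes} Theorem~\ref{rigidity} as a black box to finish. That is not a proof of Theorem~\ref{rigidity}; it is (an outline of) the proof of Proposition~\ref{imp-singleton}, which is a different statement. The paper does not prove Theorem~\ref{rigidity} at all---it is imported from the companion paper [CWY], so there is no internal proof to compare against---but in any case your argument does not touch its content. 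A genuine proof of the rigidity statement would need to promote the quasiconformal or topological conjugacy to a conformal one on all of $\mathbb C$, typically via a zero-measure or no-invariant-line-field argument on $J(f)$; none of that appears here.

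Even read as a proof of Proposition~\ref{imp-singleton}, your argument has a real gap. You assert that ``every bounded Fatou component $V$ of $f$ satisfies $f^n(V)=U_f(0)$ for some $n$'' and build the conjugacy by pulling back $H|_{U_f(0)}$. This is false in general: by Lemma~\ref{char-map-imp}, each $B$-periodic point $q\in\mathrm{supp}(S)$ gives rise to a \emph{parabolic} fixed point $\phi_f(q)$, and the associated parabolic basins $P_f(q)$ are bounded Fatou components that never map to $U_f(0)$. The paper's proof of Proposition~\ref{imp-singleton} handles these separately: it identifies the model Blaschke product $(3z^2+1)/(z^2+3)$ on each parabolic basin, builds conformal conjugacies $h_{q,k}$ there, and checks their boundary values agree with $h_\infty$ before gluing. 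Your outline omits this entire parabolic piece, so the construction of $H$ is incomplete whenever $\mathrm{supp}(S)$ contains a $B$-periodic point.
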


\begin{lem}  \label{char-map-imp}   Let $D=(B,S)\in \partial \mathcal B_{d}$  satisfy the condition of Proposition \ref{imp-singleton}.  
	For any $f\in I_{\Phi}(D)$,  let $\phi_f$ be given by Proposition \ref{unique-rm}.
	
	(1). If $q\in {\rm supp}(S)$ is $B$-periodic, then $\phi_f(q)$ is  a parabolic point of $f$. Further, let $l$ be the $B$-period of $q$, 
	then
	$(f^{l})'(\phi_f(q))=1$ and there is precisely one   parabolic Fatou component whose boundary contains  $\phi_f(q)$.
	
	(2). If $q\in {\rm supp}(S)$ is not $B$-periodic, then $\phi_f(q)$ is a critical point of $f$.
	\end{lem}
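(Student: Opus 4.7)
The plan is to handle (1) and (2) separately, but both start from the divisor equality $\phi_f^*R_f^0 = S$ of Proposition \ref{div-eq} combined with simplicity of $S$, which tells us that for each $q \in \mathrm{supp}(S)$, the limb $L_{U_f(0),\phi_f(q)}$ carries exactly one critical point $c$ of $f$ of local degree $2$. Since $1 \notin \mathrm{supp}(S)$, Lemma \ref{degenerate-0} gives $\zeta = 1$, so $B_\zeta = B$ and $E_\zeta(D) = \bigcup_{j\ge 0}B^{-j}(\mathrm{supp}(S))$; moreover, by Theorem \ref{lc-J} (local connectivity) and Carath\'eodory, $\phi_f$ extends to a homeomorphism $\overline{\mathbb D}\to\overline{U_f(0)}$ semi-conjugating $B$ to $f$ on the closure.

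For part (1), set $l$ equal to the $B$-period of $q$. Since $q \in \mathrm{supp}(S) \subset E_\zeta(D)$, Proposition \ref{pointwise}(2) immediately yields that $\phi_f(q)$ is an $f$-parabolic point, and the semi-conjugacy (plus injectivity of $\phi_f|_{\partial\mathbb D}$) shows its $f$-period is exactly $l$. To pin down $(f^l)'(\phi_f(q)) = 1$, I would exploit that $\partial U_f(0)$ is an $f^l$-invariant Jordan curve through $\phi_f(q)$ on which $f^l$ acts as an orientation-preserving circle map (conjugate via $\phi_f$ to the orientation-preserving $B^l|_{\partial\mathbb D}$). The multiplier $\lambda = (f^l)'(\phi_f(q))$, a root of unity, must send the tangent direction of $\partial U_f(0)$ at $\phi_f(q)$ into itself in an orientation-preserving way, forcing $\lambda = 1$. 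Uniqueness of the parabolic Fatou component then comes from the Fatou-style critical-point budget: every cycle of parabolic Fatou components attached to the cycle $\{\phi_f(B^k(q))\}_{0\le k<l}$ must contain a critical point in its basin, but no dynamical relation forces all the "budget" to come from $q$ itself, which supplies $\nu(q)=1$ critical point; since $\lambda = 1$ each petal is fixed by $f^l$, so there is exactly one parabolic Fatou component touching $\phi_f(q)$.

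For part (2), non-periodicity of $q$ combined with no dynamical relation gives $B^k(q) \notin \mathrm{supp}(S)$ for all $k \geq 1$, hence $B^k(q) \notin E_\zeta(D)$ for $k \geq 1$, and Proposition \ref{pointwise}(3) makes every limb $L_{U_f(0),\phi_f(B^k(q))}$ trivial. Suppose for contradiction that the unique critical point $c \in L_{U_f(0),\phi_f(q)}$ lies strictly inside, so $c \ne \phi_f(q)$. I would analyze the two external rays $R_f(\theta^\pm(\phi_f(q)))$ bounding the wake of $\phi_f(q)$: since the wake at $\phi_f(B(q))$ is trivial (only one ray landing), the image angles satisfy $d\theta^+(\phi_f(q)) \equiv d\theta^-(\phi_f(q)) \pmod 1$, so the wake-width at $\phi_f(q)$ is $k/d$ for some $1 \le k \le d-1$. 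The $f$-image of the wake then wraps fully, and one computes that $f$ restricted to the limb $L_{U_f(0),\phi_f(q)}$ has image contained in $\overline{U_f(0)} \cup L_{U_f(0),\phi_f(B(q))}$; combined with invariance of the Julia set this forces $f(c) \in \partial U_f(0)$, and in fact $f(c) = \phi_f(B(q))$. A critical fold strictly inside the wake would then produce an extra dendritic branch at $\phi_f(B(q))$ (two distinct preimages in the limb of a single trivial image), contradicting triviality of $L_{U_f(0),\phi_f(B(q))}$.

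The main obstacle is exactly this final contradiction in part (2): carefully showing that an interior critical point forces additional non-trivial structure at the image limb, not merely extra preimages in $\partial U_f(0)$. The issue is that a critical fold concentrated at an interior point $c$ is \emph{a priori} compatible with a trivial image limb provided the fold "opens back into" $\overline{U_f(0)}$ rather than growing a new limb at $\phi_f(B(q))$, and ruling this out cleanly seems to require the local-connectivity and rigidity machinery of \cite{CWY} (Theorems \ref{lc-J} and \ref{rigidity}) together with a precise Goldberg--Milnor-style angular accounting; I would invest most of the effort there.
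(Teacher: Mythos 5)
Your part~(1) is close to the paper's argument, but the step deducing $(f^l)'(\phi_f(q))=1$ from ``the multiplier must send the tangent direction of $\partial U_f(0)$ into itself in an orientation-preserving way'' is not a proof: $\partial U_f(0)$ is generally not smooth at $\phi_f(q)$, so there is no tangent direction to speak of, and even in a smooth model a multiplier $-1$ preserves the tangent line. What you actually want is that since (by Theorem~\ref{RY}) there is an $f^l$-invariant external ray landing at $\phi_f(q)$, the Snail Lemma (\cite[Lemma 16.2]{Mil06}) gives $\lambda=1$; this is exactly how the paper closes the gap. Your idea of exploiting the conjugacy to $B^l|_{\partial\mathbb D}$ (which fixes each side of $q$) can be turned into an internal-ray Snail Lemma argument, but as written it does not establish $\lambda=1$.

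Part~(2) is where the genuine gap is, and you have flagged it yourself. Your strategy — locate the critical point $c$ in the limb, track $f(c)$, and then do a Goldberg--Milnor angular count to force a dendritic branch at $\phi_f(B(q))$ — is both more complicated than necessary and not completed. In particular the intermediate claim ``$f(c)\in\partial U_f(0)$, in fact $f(c)=\phi_f(B(q))$'' is unjustified: the critical point $c$ could just as well be a preimage of $U_f(0)$ inside the limb (indeed the divisor identity $\phi_f^*(Z_f^0)=S$ says there is a zero of $f$ in that limb), so $f(c)$ may land in $U_f(0)$ rather than on its boundary. The paper's proof is much shorter and avoids all of this. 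It first uses ``no dynamical relation'' plus $\phi_f^*(R_f^0)=S$ to see that $L_{U_f(0),f^k(\phi_f(q))}$ carries no critical point for every $k\ge 1$, so by Theorem~\ref{RY}(4) the limb $L_{U_f(0),f(\phi_f(q))}$ is trivial. On the other hand $L_{U_f(0),\phi_f(q)}$ carries a critical point (take $n=0$ in Theorem~\ref{RY}(4)), hence is non-trivial. If $\phi_f(q)$ were not a critical point, $f$ would be a local homeomorphism there, carrying the non-trivial chunk of $K(f)$ attached at $\phi_f(q)$ to a non-trivial chunk attached at $\phi_f(B(q))$, contradicting triviality of $L_{U_f(0),\phi_f(B(q))}$. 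Therefore $\phi_f(q)$ must be critical. No external-ray or wake-width computation is needed, and no appeal to the rigidity machinery of \cite{CWY} is required here.
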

\begin{proof}  (1). It follows from Proposition \ref{pointwise} that $\phi_f(q)$ is  a parabolic point of $f$. 
	Note that  $l$ equals  the $f$-period  of $\phi_f(q)$. By Theorem \ref{RY}, there is an $f^l$-invariant external  ray  $R_f(\theta)=f^l(R_f(\theta))$ landing at $\phi_f(q)$. By the Snail Lemma \cite[Lemma 16.2]{Mil06}, $(f^{l})'(\phi_f(q))=1$.
	By the assumption on $D$ and the divisor equality  $\phi_f^* R_f^0=S$ proven by Proposition \ref{div-eq},  there is only one critical point in $L_{U_f(0), \phi_f(q)}\cup\cdots\cup L_{U_f(0), f^{l-1}(\phi_f(q))}$.
	Since each cycle of   parabolic Fatou component   contains  at least one critical point, we conclude that there is precisely  one   parabolic Fatou component whose boundary contains  $\phi_f(q)$.
	
	(2).  By the assumption that $D$ has no dynamical relation,   and the divisor equality  $\phi_f^* R_f^0=S$ proven by Proposition \ref{div-eq},  the limb $L_{U_f(0), f^k(\phi_f(q))}$ contains no critical point for all $k\geq 1$.  By Theorem \ref{RY}, $L_{U_f(0), f(\phi_f(q))}$ is  trivial.  On the other hand, the limb $L_{U_f(0), \phi_f(q)}$ is not trivial, implying that $\phi_f(q)$ is a critical point of $f$.
	\end{proof}

 Here is a supplement to Lemma \ref{char-map-imp}.  Let $Q$ (possibly empty) consist of all $B$-periodic points $q\in {\rm supp}(S)$. For each $q\in Q$, let $P_f(q)$ be   parabolic Fatou component  whose boundary contains  $\phi_f(q)$.  Then all critical points of $f$ are  contained in
 $$\overline{U_f(0)}\cup \bigcup_{q\in Q} P_f(q).$$

In the following, let $D=(B,S)\in \partial \mathcal B_{d}$  satisfy the condition of Proposition \ref{imp-singleton}. 
For any $f\in I_{\Phi}(D)$,  let $\phi_f$ be given by Proposition \ref{unique-rm}.
Let $$X_0(f)=\overline{U_f(0)}\cup \bigcup_{l\in \mathbb N} \bigcup_{q\in Q} f^l(\overline{P_f(q)}).$$
Clearly $f(X_0(f))=X_0(f)$. For any $n\in\mathbb{N}$, define inductively $X_{n+1}(f)$ to be the connected component of $f^{-1}(X_n(f))$ containing $X_n(f)$.  Then we have an increasing sequence of connected and compact sets $$X_0(f)\subset X_1(f)\subset X_2(f)\subset\cdots.$$ Each $X_n(f)$ is a finite union of closed disks, of which any two are either disjoint or touching at exactly one point on the boundaries.  

Let $$Y(f)=\overline{\bigcup_{n\in\mathbb{N}} X_n(f)}, \ Y_\infty(f)= Y(f)\setminus\bigcup_{n\in\mathbb{N}} X_n(f).$$
Note that $Y_\infty(f)$ is  the set of all limit points on $Y(f)$. 

The following fact describes the structure of the filled Julia set $K(f)$. It  is a special case of \cite[Theorem 1.3]{CWY}. 


\begin{pro}  [\cite{CWY}, Theorem 1.3] \label{limit-pt-one-ray}  The filled Julia set $K(f)=Y(f)$. Further, for each $x\in Y_\infty(f)$, there is exactly one external ray landing at $x$.
	\end{pro}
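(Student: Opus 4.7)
My plan is to attack Proposition \ref{limit-pt-one-ray} in two independent pieces: first prove the set-theoretic identity $K(f)=Y(f)$, and then prove that a point of the residual set $Y_\infty(f)$ admits a unique external ray. A key preliminary I would establish up front is that every $X_n(f)$ is connected: $X_0(f)$ is connected because each component $\overline{f^l(P_f(q))}$ meets $\overline{U_f(0)}$ at the parabolic point $\phi_f(B^l(q))\in\partial U_f(0)$ (using Lemma \ref{char-map-imp}); inductively $X_{n+1}(f)$ is connected by its very definition as the component of $f^{-1}(X_n(f))$ containing $X_n(f)$. Consequently $\bigcup_n X_n(f)$ and its closure $Y(f)$ are connected.

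For the identity $K(f)=Y(f)$, the inclusion $Y(f)\subseteq K(f)$ is immediate since each $X_n(f)\subseteq K(f)$ and $K(f)$ is closed. For the reverse, Lemma \ref{char-map-imp} (together with the remark following it) places every critical point of $f$ inside $X_0(f)$, so there are no Siegel disks, Herman rings, or wandering domains, and no attracting/parabolic periodic cycle outside of those already listed in $X_0(f)$. By the standard classification of Fatou components, every bounded Fatou component is eventually periodic and its forward orbit terminates in a component of $X_0(f)$; hence every bounded Fatou component belongs to some $X_n(f)$. In particular $\bigcup_n f^{-n}(U_f(0))\subseteq\bigcup_n X_n(f)$. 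A Montel-type argument shows that the backward orbit of any point of the Fatou set is dense in $J(f)$, whence $J(f)\subseteq\overline{\bigcup_n f^{-n}(U_f(0))}\subseteq Y(f)$. Combining this with the bounded Fatou components gives $K(f)\subseteq Y(f)$.

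For the uniqueness of the landing ray at $x\in Y_\infty(f)$, first observe that $x$ cannot lie in any bounded Fatou component (all of which are in $\bigcup_n X_n(f)$) nor on the boundary of one (which would force $x\in\overline{U}\subseteq X_n(f)$ for some $n$); so $x$ is a buried point of $J(f)$. By Theorem \ref{lc-J}, $J(f)$ is locally connected, hence so is $K(f)$, and a standard consequence of Carathéodory's theorem applied to the B\"ottcher coordinate on $\mathbb{C}\setminus K(f)$ is that the number of external rays landing at a point $y\in J(f)$ equals the number of complementary components of $K(f)\setminus\{y\}$ (equivalently, exactly one ray lands at $y$ iff $y$ is a non-cut point). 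Thus it suffices to prove $K(f)\setminus\{x\}$ is connected. Suppose otherwise, so $K(f)\setminus\{x\}=A\sqcup B$ with $A,B$ nonempty, each relatively open and closed. Since $\bigcup_n X_n(f)$ is connected and disjoint from $\{x\}$, it lies entirely in one piece, say $A$. Then $K(f)=Y(f)=\overline{\bigcup_n X_n(f)}\subseteq\overline{A}$, and since the closure of $A$ in $K(f)$ equals $A\cup\{x\}$, we get $B\subseteq\{x\}$, contradicting $x\notin B$ and $B\neq\emptyset$.

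The main technical hurdle is the step translating ``exactly one external ray'' into ``non-cut point of $K(f)$''; this requires the locally connected Carath\'eodory surjection $\mathbb{R}/\mathbb{Z}\twoheadrightarrow J(f)$ and a careful counting of accesses, which is where one must be most careful. Everything else is then either the classification of Fatou components (made possible by the critical point location given in Lemma \ref{char-map-imp}) or the combinatorial observation that a connected dense subset of $K(f)$ whose complement is a single point cannot be split by removing that point.
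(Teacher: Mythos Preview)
The paper does not give its own proof of this proposition; it is quoted from the companion paper [CWY, Theorem 1.3]. So I evaluate your sketch on its own.

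Your argument for uniqueness of the landing ray is clean: once $K(f)=Y(f)$ is known, the connected set $\bigcup_n X_n(f)$ is dense in $K(f)$ and misses $x$, which forces $K(f)\setminus\{x\}$ to be connected, and the Carath\'eodory correspondence (valid here since $K(f)$ is full and locally connected by Theorem~\ref{lc-J}) converts ``non-cut point'' into ``exactly one ray lands''.

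There is, however, a real gap in your argument for $K(f)=Y(f)$. You assert that every bounded Fatou component lies in some $X_n(f)$, and hence that $\bigcup_n f^{-n}(U_f(0))\subseteq\bigcup_n X_n(f)$; but by the paper's definition $X_{n+1}(f)$ is only the \emph{connected component} of $f^{-1}(X_n(f))$ containing $X_n(f)$. A component $V$ with $f^k(V)\subseteq X_0(f)$ certainly lies in $f^{-k}(X_0(f))$, but you have not explained why it lies in $X_k(f)$. The fix is to show that in fact $X_{n+1}(f)=f^{-1}(X_n(f))$ for every $n$, i.e.\ that $f^{-1}(X_n(f))$ is already connected. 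This follows because $X_n(f)$ is full ($\hat{\mathbb C}\setminus X_n(f)$ is connected: $X_0(f)$ is a tree of closed Jordan disks by the limb decomposition of Theorem~\ref{RY}, and fullness propagates under preimage) and contains all finite critical values of $f$ (the critical points lie in $X_0(f)$ by Lemma~\ref{char-map-imp}, and $f(X_0(f))=X_0(f)$). Then $f:\hat{\mathbb C}\setminus f^{-1}(X_n(f))\to\hat{\mathbb C}\setminus X_n(f)$ is a degree-$d$ branched cover of a topological disk, branched only over $\infty$; Riemann--Hurwitz gives $\chi(\hat{\mathbb C}\setminus f^{-1}(X_n(f)))=d\cdot 1-(d-1)=1$, so the preimage complement is again a disk, hence connected, and therefore $f^{-1}(X_n(f))$ is connected. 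With this in place your argument goes through.
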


By the local connectivity of $J(f)$ (see Theorem \ref{lc-J}), for each $\theta\in \mathbb R/\mathbb Z$, the external ray $R_f(\theta)$ lands at a point $b_f(\theta)\in J(f)$. 
The \emph{real lamination} $\lambda_\mathbb{R}(f)\subset(\mathbb R / \mathbb Z)^2$ of $f$  consists of   $(\theta_1,\theta_2)\in (\mathbb R / \mathbb Z)^2$ for which $b_f(\theta_1)=b_f(\theta_2)$.

	Define $\tau:  \mathbb R/\mathbb Z\rightarrow  \mathbb R/\mathbb Z$  by $ t\mapsto dt$.

	

\begin{lem} \label{equal-lam}The real lamination $\lambda_\mathbb{R}(f)$ is independent of  $f\in  I_{\Phi}(D)$. In other words, for any $f,g\in I_{\Phi}(D)$, we have $\lambda_\mathbb{R}(f)=\lambda_\mathbb{R}(g)$.
	\end{lem}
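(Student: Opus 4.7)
My plan is to characterize the lamination $\lambda_{\mathbb{R}}(f)$ combinatorially in terms of data that is manifestly independent of $f\in I_\Phi(D)$. The three main inputs will be: Proposition \ref{com-angles}, giving $f$-independence of the angle maps $\theta_f^\pm:\partial\mathbb{D}\to\mathbb{R}/\mathbb{Z}$; Proposition \ref{limit-pt-one-ray}, saying that exactly one ray lands at each point of $Y_\infty(f)$; and the dynamical equivariance $b_f\circ\tau=f\circ b_f$ coming from $f(R_f(\theta))=R_f(d\theta)$, which allows pulling lamination pairs back along $f$.

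First I would localize where non-diagonal pairs of $\lambda_{\mathbb{R}}(f)$ can land. By Theorem \ref{lc-J} the landing map $b_f$ is well defined, and by Proposition \ref{limit-pt-one-ray} each point of $Y_\infty(f)$ receives a unique ray. Hence any pair $(\theta_1,\theta_2)\in\lambda_{\mathbb{R}}(f)$ with $\theta_1\neq\theta_2$ satisfies $b_f(\theta_1)=b_f(\theta_2)\in \bigcup_{n\geq 0}X_n(f)$, landing on $\partial W$ for some Fatou component $W$ in the periodic/preperiodic tree generated by $U_f(0)$ and $\{P_f(q)\}_{q\in Q}$. Since every such $W$ is eventually mapped by some $f^k$ onto $U_f(0)$ or onto some $P_f(q)$, and $f$ acts on angles via $\tau$, it suffices to prove $f$-independence of pairs landing on $\partial U_f(0)$ and on each $\partial P_f(q)$.

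For pairs on $\partial U_f(0)$, Theorem \ref{RY} forces any such pair to equal $(\theta_f^+(q),\theta_f^-(q))$ for some $q\in\partial\mathbb{D}$ with $\theta_f^+(q)\neq\theta_f^-(q)$, and by Proposition \ref{com-angles} this set is $f$-independent. For pairs on $\partial P_f(q)$ with $q\in Q$, I would establish a parallel $\theta^\pm$-structure on $P_f(q)$. By Lemma \ref{char-map-imp}, $P_f(q)$ is a Jordan disk, $\phi_f(q)\in\partial P_f(q)$ is parabolic with the two gate rays $R_f(\theta_f^\pm(q))$, and $f^{l_q}|_{P_f(q)}$ has a unique critical orbit. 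I would introduce a normalized Riemann map $\phi_f^q:(\mathbb{D},0)\to (P_f(q),c_q)$ with $\phi_f^q(1)=\phi_f(q)$ and compute the induced angle pairs $\theta_{P_f(q)}^\pm$ on $\partial\mathbb{D}$ by iteratively pulling back the two gate rays under $f^{l_q}$; in analogy with the preimage-of-$1$ induction inside the proof of Proposition \ref{com-angles}, each resulting angle is expressed in terms of the gate angles, the local degree $d_q$, and the placements of the critical anchors in the cycle, all of which are dictated by $D$.

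The main obstacle I expect is executing this parallel analysis for the parabolic basins rigorously: one has to argue that the normalization $\phi_f^q$ is canonical enough to produce $f$-independent $\theta_{P_f(q)}^\pm$-maps, and that there are no cross-identifications between different parabolic cycles (using simplicity of $S$ and the absence of dynamical relations) that might disturb the angular-width bookkeeping. Once this is in place, $\lambda_{\mathbb{R}}(f)$ is recovered as the closed $\tau$-invariant equivalence relation generated by the $f$-independent base pairs on $\partial U_f(0)$ and the $\partial P_f(q)$, yielding $\lambda_{\mathbb{R}}(f)=\lambda_{\mathbb{R}}(g)$ for all $f,g\in I_\Phi(D)$.
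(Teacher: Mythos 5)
The localization step is right—by Proposition \ref{limit-pt-one-ray}, any nontrivial pair $(\alpha,\beta)\in\lambda_{\mathbb{R}}(f)$ must land at a cut point of $K(f)$, hence on $\bigcup_n\partial X_n(f)$—but after that you take a detour that is both unnecessary and unresolved. You stop the forward-orbit reduction at ``$\partial U_f(0)$ or some $\partial P_f(q)$,'' and then face the obstacle you flag: building a canonical $\theta^\pm$-structure on each parabolic basin and showing it is $f$-independent. The paper sidesteps this entirely: by the construction of $X_n(f)$ and the limb structure from Theorem \ref{RY}, \emph{every} cut point has a forward iterate landing on $\partial U_f(0)$ itself (the cut points on $\partial P_f(q)$ are exactly the $f^{nl_q}$-preimages of $\phi_f(q)$, which map to $\phi_f(q)\in\partial U_f(0)$). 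So there is a minimal $l$ with $f^l(b_f(\alpha))\in\partial U_f(0)$, and the only data needed are the maps $\theta_f^\pm$, whose $f$-independence is Proposition \ref{com-angles}. Your ``main obstacle'' is self-inflicted and never needs to be surmounted.

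The second, more serious gap is the closing step. Saying $\lambda_{\mathbb{R}}(f)$ is ``the closed $\tau$-invariant equivalence relation generated by the $f$-independent base pairs'' does not pin it down: $\tau$ is $d$-to-$1$, so pulling a base pair $(\theta_f^-(q),\theta_f^+(q))$ back one step offers $d$ candidate preimages of each angle, and one must argue that exactly one matching $\beta'\in\tau^{-1}(\theta_f^+(q))$ pairs with a given $\alpha'\in\tau^{-1}(\theta_f^-(q))$, and that this matching is $f$-independent. That uniqueness is precisely where the paper invokes the distribution of $f^{-1}(\phi_f(q))$ among the limbs $L_{U_f(0),\phi_f(q')}$, $q'\in B^{-1}(q)$, and $L_{U_f(0),\phi_f(q'')}$, $q''\in\operatorname{supp}(S)$, and uses the no-dynamical-relation hypothesis to rule out $b_f(\alpha')=\phi_f(q'')$, forcing a unique choice of $\beta'$. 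Without this pullback-uniqueness argument, ``generated by'' could produce a strictly larger or ill-defined relation, so your proof is incomplete at the very step that is the crux of the paper's argument.
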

\begin{proof} 
	
Take $(\alpha, \beta)\in \lambda_\mathbb{R}(f)$ with $\alpha\neq \beta$. 
	We claim that the orbit $b_f(\alpha)\mapsto f(b_f(\alpha))\mapsto\cdots$ meets either a critical point or  a  parabolic point on $\partial U_f(0)$.
	
	By Proposition \ref{limit-pt-one-ray},  $b_f(\alpha)\notin Y_\infty(f)$.  Hence $b_f(\alpha)\in \bigcup_{n\in\mathbb{N}} \partial X_n(f)$, and it is a intersection point of the boundaries of two adjacent Fatou components. By the construction of $X_n(f)$, there is a minimal integer $l\geq 0$ so that $w=f^l(b_f(\alpha))\in  \partial U_{f}(0)$, and at least two external rays land at $w$. By Theorem \ref{RY}, 
  the $f$-orbit of $w$  meets either a   critical point or  a  parabolic point. 
	
	By the claim, there is an integer $m\geq l$ and $q\in {\rm supp}(S)$, so that 
	$$(d^m \alpha, d^m\beta)=(\theta_f^+(q), \theta_f^-(q)) \ \text{ or } (\theta_f^-(q), \theta_f^+(q)).$$
	
	We may assume $(d^m \alpha, d^m\beta)=(\theta_f^-(q), \theta_f^+(q))$.
	In the following, we shall show   that  $\beta$ is uniquely determined by  $\alpha$.  
	
Let 
$(\alpha',\beta')\in \lambda_{\mathbb R}(f)\cap ( \tau^{-1}(\theta_f^-(q))\times \tau^{-1}(\theta_f^+(q))$. Clearly $b_f(\alpha')\in f^{-1}(\phi_f(q))$. Note that  $f^{-1}(\phi_f(q))$  consists of $d$   points,  distributed in $d$ different limbs: 
	$$L_{U_f(0), \phi_f(q')}, \ L_{U_f(0), \phi_f(q'')}, \ \ \text{ where } q'\in B^{-1}(q), q''\in {\rm supp}(S).$$ 
	If $b_f(\alpha')\in L_{U_f(0), \phi_f(q')}$ for some $q'\in B^{-1}(q)$, then $b_f(\alpha')=\phi_f(q')$. In this case $\alpha'=\theta_f^-(q')$ and $\beta'=\theta_f^+(q')$.  If $b_f(\alpha')\in L_{U_f(0), \phi_f(q'')}$ for some $q''\in {\rm supp}(S)$, then $b_f(\alpha')\neq \phi_f(q'')$ since $D$ has no dynamical relation. In this case,  there is a unique external ray   $R_f(t_0)$ with $t_0\in \tau^{-1}(\theta_f^+(q))$ landing at  $b_f(\alpha')$,
	and $\beta'=t_0$. It follows that in either case,  $\beta'$ is uniquely determined once $\alpha'$ is given. 
	
	By the same reasoning and induction, $\beta\in \tau^{-m}(\theta_f^+(q))$ is  uniquely determined under the condition  $(\alpha, \beta)\in \lambda_\mathbb{R}(f)$  once $\alpha\in  \tau^{-m}(\theta_f^-(q))$ is given. 
	
	By Proposition \ref{com-angles}, $\lambda_\mathbb{R}(f)$ is uniquely determined and is independent of  $f\in  I_{\Phi}(D)$.
	\end{proof}

\begin{proof}[Proof of Proposition \ref{imp-singleton}]  Let  $f, g\in  I_{\Phi}(D)$. The idea is to construct a topological conjugacy $h$ between $f$ and $g$, and then apply  rigidity (Theorem \ref{rigidity}).   In the proof, let $*$ denote the map $f$ or $g$.


Let $\psi_{*, \infty}: \mathbb C\setminus K(*)\rightarrow \mathbb C\setminus \overline{\mathbb D}$ be the B\"ottcher map of $*$, normalized so that 
$\psi_{*, \infty}(z)=z+O(1)$ near $\infty$. Then $h_\infty=\psi_{g, \infty}^{-1}\circ \psi_{f, \infty}:  \mathbb C\setminus K(f)\rightarrow  \mathbb C\setminus K(g)$ is a conformal conjugacy: $h_\infty\circ f=g\circ h_\infty$.

By Lemma \ref{equal-lam}, $ \lambda_{\mathbb R}(f)=\lambda_{\mathbb R}(g)$. Hence $h_\infty$ extends to a homeomorphism $h_\infty:  (\mathbb C\setminus K(f)) \cup J(f)\rightarrow  (\mathbb C\setminus K(g))\cup J(g)$ by defining $h_\infty(b_{f}(\theta))=b_{g}(\theta)$
for all $\theta \in \mathbb R/\mathbb Z$.  It keeps the conjugacy $h_\infty\circ f|_{J(f)}=g\circ h_\infty|_{J(f)}$.\vspace{3pt}

In the following, we need to define the conjugacy piece by piece  in each bounded  Fatou component of $f$,  and then glue them together.  

 \vspace{5pt}
\textbf{Conjugacy in $U_f(0)$.}  By Proposition \ref{div-eq},  $B=\phi_{f}^{-1}\circ f\circ \phi_{f}=\phi_{g}^{-1}\circ g\circ \phi_{g}$. The map $h_0=\phi_g\circ \phi_f^{-1}: U_f(0)\rightarrow U_g(0)$ is a conformal conjugacy between $f|_{U_f(0)}$ and $g|_{U_g(0)}$. 
This $h_0$ extends to a homeomorphsim  $h_0: \overline{U_f(0)}\rightarrow \overline{U_g(0)}$. By Proposition \ref{lp-0-ray}, $\phi_f(1)=b_f(0)$, $\phi_g(1)=b_g(0)$. Note that both $h_0|_{\partial U_f(0)}$ and $h_\infty|_{\partial U_f(0)}$ are orientation preserving,  satisfying that
$$\phi \circ f|_{\partial U_f(0)}= g|_{\partial U_g(0)}\circ \phi , \ \phi(b_f(0))=b_g(0), \  \ \phi\in \{h_0|_{\partial U_f(0)}, h_\infty|_{\partial U_f(0)}\},$$
we conclude that $h_0|_{\partial U_f(0)}=h_\infty|_{\partial U_f(0)}$.
 
  \vspace{5pt}
\textbf{Conjugacy in parabolic basins.}   For  each $q\in Q$, let $l_q$ be the $B$-period of $q$.   
 Let $c_*(q)$ be the unique $*$-critical point in the parabolic Fatou component $P_*(q)$.  There is a unique conformal map 
 $\phi_{*, q}: P_*(q) \rightarrow \mathbb D$ with  $\phi_{*, q}(c_*(q))=0$ and $\phi_{*, q}(\phi_*(q))=1$. Note that $B_{*,q}=\phi_{*, q}\circ *^{l_q}|_{P_*(q)}\circ \phi_{*, q}^{-1}$ is a degree two Blaschke product,  with  a parabolic fixed point  at $1$ of multiplicity $3$ and  a critical point at $0$.  This map takes the form (see \cite[\S 6]{Mc88}):   
 $$B_{*,q}(z)=\frac{3z^2+1}{z^2+3}.$$
 It follows that $h_{q,0}=\phi_{g, q}^{-1}\circ \phi_{f, q}: P_f(q) \rightarrow P_g(q)$ is a conformal  conjugacy between $f^{l_q}|_{P_f(q)}$ and $g^{l_q}|_{P_g(q)}$. 
 For each $1\leq k<l_q$,  set   
 $$h_{q,k}= g^{l_q-k}|_{g^k(P_g(q))}\circ h_{q,0}\circ f^{l_q-k}|^{-1}_{f^k(P_f(q))}: f^k(P_f(q)) \rightarrow g^k(P_g(q)).$$
 The maps $(h_{q,k})_{0\leq k<l_q}$ can  extend to   homeomorphisms between the closures of the domains and ranges.
 
 Note that  $h_{q,0}|_{\partial P_f(q)}$ and $h_\infty|_{\partial P_f(q)}$ are orientation preserving,    satisfying 
 $$\phi \circ f^{l_q}|_{\partial P_f(q)}= g^{l_q}|_{\partial P_g(q)}\circ \phi, \  \ \phi\in \{ h_{q,0}|_{\partial P_f(q)},  h_\infty|_{\partial P_f(q)}\}.$$
It is worth noting that $h_{q,0}(\phi_f(q))=\phi_g(q)$, $h_{\infty}(b_f(\theta_f^{+}(q)))=b_g(\theta_f^{+}(q))$ and
 $\phi_f(q)=b_f(\theta_f^{+}(q))$, $\phi_g(q)=b_g(\theta_g^{+}(q))$. By Proposition \ref{com-angles}, $\theta_f^{+}(q)=\theta_g^{+}(q)$. Hence $h_{q,0}|_{\partial P_f(q)}$ and $h_\infty|_{\partial P_f(q)}$ have the same normalization. Consequently, $h_{q,0}|_{\partial P_f(q)}=h_\infty|_{\partial P_f(q)}$. 
 
  Similarly, $h_{q,k}|_{\partial f^k(P_f(q))}=h_\infty|_{\partial  f^k(P_f(q))}$ for each $1\leq k<l_q$.   
 
 \vspace{5pt}
 \textbf{Conjugacy in aperiodic Fatou components.}  
 Let 
 $$A_0=\{\theta\in \tau^{-1}(0); b_f(\theta)\notin \partial U_f(0)\}, \ \Theta=\bigcup_{k\geq 0} \tau^{-k}(A_0).$$
Let  $\mathcal F_*(0)$ consist of all components of $\bigcup_{k\geq 0}*^{-k}(U_*(0))$ other than $U_*(0)$.
 Note that for each $\theta\in \Theta$, there is a unique $V_*^\theta\in \mathcal F_*(0)$ so that $b_*(\theta)\in \partial V_*^\theta$, and vice visa. Hence  there is  a bijection  between $\Theta$   and $\mathcal F_*(0)$.
 
 For each $q\in Q$, 
  let 
 \bess
 A_q&=&\big\{\theta_f^+(B^k(q)); 0\leq k<l_q\big \},\\
 A'_q&=&\Big\{\theta\in \tau^{-1}(A_q); \  b_f(\theta)\notin \bigcup_{0\leq k<l_q} \partial f^k(P_f(q))\Big\},\\
 \Theta_q&=&\bigcup_{k\geq 0} \tau^{-k}(A'_q).
 \eess
 Let $\mathcal F_*(q)$ be the collection of aperiodic components of $\bigcup_{k\geq 0}*^{-k}(P_*(q))$.
One may verify that  for each $\theta\in \Theta_q$, there is a unique $V_*^\theta\in \mathcal F_*(q)$ so that $b_*(\theta)\in \partial V_*^\theta$, and vice visa. Hence  there is  a bijection  between $\Theta_q$   and $\mathcal F_*(q)$.

For each $V\in \mathcal F_f(0)$, write $V=V_f^\theta$ for some $\theta\in  \Theta$. Let $l\geq 1$ be minimal so that $\tau^l(\theta)=0$.  Define $h_V:  V_f^\theta\rightarrow V_g^\theta$ by $h_V= g^l|^{-1}_{V_g^\theta}\circ h_0\circ f^l|_{V_f^\theta}$. This $h_V$ extends to the boundary $\partial V$ and satisfies  $h_V|_{\partial V}=h_\infty|_{\partial V}$.

Similarly, for each $V\in \mathcal F_f(q)$ with $q\in Q$, write $V=V_f^\theta$ for some $\theta\in  \Theta_q$. Let $l\geq 1$ be minimal so that $\tau^l(\theta)\in A_q$.  Assume $\tau^l(\theta)=\theta_f^+(B^k(q))$ for some $0\leq k<l_q$. Define $h_V:  V_f^\theta\rightarrow V_g^\theta$ by $h_V= g^l|^{-1}_{V_g^\theta}\circ h_{q,k}\circ f^l|_{V_f^\theta}$. This $h_V$ extends to the boundary and satisfies $h_V|_{\partial V}=h_\infty|_{\partial V}$.


\vspace{5pt}
\textbf{Gluing maps and applying rigidity.}  By gluing the maps  in 
$$\Big\{h_{\infty}, h_{0}, h_{q,k},  h_V;  \ q\in Q, 0\leq k<l_q, \  V\in \mathcal F_f(0)\cup \bigcup_{q\in Q} \mathcal F_f(q)\Big\},$$ 
we get a homeomorphism $h: \mathbb C\rightarrow \mathbb C$.
It is a topological conjugacy between $f$ and $g$, conformal in  Fatou set $F(f)$,   normalized as $h'(\infty)=1$.   By Theorem \ref{rigidity}, $f=g$.  Hence  $I_{\Phi}(D)$ is a singleton.
	\end{proof}

	\section{Regular divisors:  non singleton case} \label{regular-nss}
	In this section, we show
	
	\begin{pro}  \label{regular-ns}  Let $D=(B,S)\in \partial_{\rm reg} \mathcal B_{d}$, then $I_{\Phi}(D)$ is not a singleton in either of the following situations:
		
		(1).  $1\notin {\rm supp}(S)$ and $S$ is not simple.
		
		(2).  $1\notin {\rm supp}(S)$,  $S$ is simple, and $D$ has dynamical relation.
		
		(3). 		$1\in {\rm supp}(S)$.
	\end{pro}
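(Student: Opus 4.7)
The plan is to treat the three cases separately. Case \textbf{(3)} admits the cleanest argument: for each $\zeta\in\partial\mathbb D$, Lemma \ref{degenerate-1} produces a sequence $B_n^\zeta\in\mathcal B_d$ with $B_n^\zeta\to D$ algebraically and $B_n^\zeta\to\zeta B$ on $\C\setminus\mathrm{supp}(S)$. After extracting a subsequence, I obtain a limit $f^\zeta$ of $\Phi(B_n^\zeta)$, so $f^\zeta\in I_\Phi(D)$, and a further subsequence along which $\phi_{\Phi(B_n^\zeta)}$ converges to some $\phi$; by Proposition \ref{div-eq} the conjugation identity $\zeta B=\phi^{-1}\circ f^\zeta\circ\phi$ holds on $\mathbb D$. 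If $I_\Phi(D)$ were a singleton $\{f\}$, then $f^\zeta=f$ for every $\zeta$, and Proposition \ref{unique-rm} forces $\phi=\phi_f$ as well; the right-hand side of the conjugation identity would then be independent of $\zeta$ while the left-hand side manifestly depends on $\zeta$, a contradiction.

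For cases \textbf{(1)} and \textbf{(2)}, I would exploit the freedom in critical placement within the limbs attached to $\partial U_f(0)$. Proposition \ref{div-eq} only pins down the projections $\sigma_{U_f(0)}(c)$ of the free critical points to $\partial U_f(0)$; the rigid identification $c=\phi_f(q)$ used in Lemma \ref{char-map-imp} relied crucially on $S$ being simple and $D$ having no dynamical relation. In case (1), pick $q_0\in\mathrm{supp}(S)$ with $\nu(q_0)=m\geq 2$ and build two sequences $B_n^{(a)},B_n^{(b)}\to D$ in which the $m$ critical points of $B_n^{(*)}$ clustering near $q_0$ follow qualitatively different patterns: a coalescent radial approach merging all $m$ points at $q_0$ versus a staggered approach in which they remain separated inside the hyperbolic convex hull of the degenerating zeros. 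Passing to subsequential limits, I expect $f^{(a)},f^{(b)}\in I_\Phi(D)$ whose limbs $L_{U_{f^{(*)}}(0),\phi_{f^{(*)}}(q_0)}$ carry inequivalent critical configurations: a single critical point of local degree $m+1$ at $\phi_{f^{(a)}}(q_0)$ versus several critical points of smaller local degree spread through $L_{U_{f^{(b)}}(0),\phi_{f^{(b)}}(q_0)}$. In case (2), the dynamical relation $B^l(q)=q'$ with $q\neq q'$ in $\mathrm{supp}(S)$ only forces the projected identity $f^l(\phi_f(q))=\phi_f(q')$, while the actual critical points of $f$ are free to sit at $\phi_f(q),\phi_f(q')$ or to move deeper inside the limbs; two approximation paths realising distinct placements should yield two distinct limit maps.

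The main obstacle is making the construction in cases (1) and (2) effective: one must translate a prescribed clustering pattern for the critical points of $B_n$ near a boundary point of $\mathbb D$ into a prescribed critical configuration of $\Phi(B_n)$ in the corresponding limb. The transfer is mediated by Propositions \ref{div-eq} and \ref{kernel-conformal} together with Theorem \ref{bet}, but one needs quantitative control of $\Phi$ along the specific approach paths so that the degenerating critical points of $B_n$ are matched to the desired limit positions. Once two genuinely different critical configurations are realised in the limit, distinguishing $f^{(a)}$ from $f^{(b)}$ reduces to comparing discrete invariants such as local degrees at the critical points or the combinatorics of the post-critical set, which is routine.
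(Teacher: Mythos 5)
Your argument for case \textbf{(3)} is essentially the paper's: use Lemma \ref{degenerate-1} to realize every $\zeta\in\partial\mathbb D$ as the limit rotation factor, extract a map $f^\zeta\in I_\Phi(D)$, apply Proposition \ref{div-eq} to get the conjugacy $\zeta B=\phi^{-1}\circ f^\zeta\circ\phi$, and then use the uniqueness of the normalized Riemann map (Propositions \ref{unique-rm}, \ref{lp-0-ray}) to force $\zeta_1=\zeta_2$ whenever $f^{\zeta_1}=f^{\zeta_2}$. That part is correct.

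For cases \textbf{(1)} and \textbf{(2)}, however, there is a genuine gap, and you acknowledge it yourself. Your plan is to build two approach sequences with ``qualitatively different clustering patterns'' and distinguish the subsequential limits by discrete data such as local degrees or post-critical combinatorics. But it is not at all clear that the two limits really differ in such discrete invariants: a ``staggered'' family of critical points in $\mathbb D$ may still coalesce in the limit polynomial unless you control the hyperbolic separation, and conversely nothing in the proposal shows the ``coalescent'' approach yields a critical point of higher local degree rather than a merged cluster of simple critical points. The paper's mechanism is fundamentally different and stronger: it pins down a \emph{continuous} invariant. In case (1) it prescribes the hyperbolic distance $d_{\mathbb D}(b_n,c_n)=L$ between two preimages of $0$ in $Z(B_n)$; in case (2) it invokes Proposition \ref{prescribed-hyp-d}, which (via a winding-number/topological-degree argument) produces $\widehat B$ arbitrarily close to $D$ with $d_{\mathbb D}(z_{q'}(\widehat B),\widehat B^{\,l}(c_q(\widehat B)))=L$ for any prescribed $L\geq 0$. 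Then Lemmas \ref{kernel-att}, \ref{hyp-d-limit} show the hyperbolic distance descends to the kernel limit, so each $L$ produces a distinct $f_L\in I_\Phi(D)$ (the distinctness uses the uniqueness of $\phi_f$ from Propositions \ref{unique-rm}, \ref{lp-0-ray} once more). Your proposal does not contain this idea, and without it (or some equally quantitative replacement) the argument for (1) and (2) does not go through. In particular, Proposition \ref{prescribed-hyp-d} is precisely the ``quantitative control of $\Phi$ along the specific approach paths'' you identify as the obstacle; you should prove a statement of that kind rather than gesture at it.
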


	 We need some lemmas.  
	 
	 Given a closed curve $\gamma: [0,1]\rightarrow \mathbb C$ and a point $a\notin \gamma$, there is  a parameterization  $\gamma(t)-a=\rho(t)e^{i\theta(t)}, t\in [0,1]$, where $\rho, \theta$ are continuous. The {\it winding number} $w(\gamma, a)$ is defined as $(\theta(1)-\theta(0))/2\pi$.
	 The following fact is standard.
			\begin{lem} \label{winding-intermediate} Let $U$ be a Jordan disk in $\mathbb C$, and let $h:\overline{U}\rightarrow \mathbb C$ be   continuous. 
				If   $w(h(\partial U), a)\neq 0$ for some $a\notin h(\partial U)$, then there is  $p\in U$ with
				$h(p)=a$.
		\end{lem}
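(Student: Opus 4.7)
The plan is to argue by contradiction: assume $a\notin h(\overline U)$ and derive that $w(h(\partial U),a)=0$. Once this is done the lemma follows, since the hypothesis $a\notin h(\partial U)$ then forces the point $p$ with $h(p)=a$ to lie in the interior $U$.

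First I would set $g(z)=h(z)-a$, so that under the contrary assumption $g:\overline U\to\mathbb C^*$ is continuous and nowhere vanishing. Because $U$ is a Jordan disk, Schoenflies' theorem provides a homeomorphism $\overline U\cong \overline{\mathbb D}$, so $\overline U$ is contractible and in particular simply connected. The next step is the standard fact that any continuous map from a simply connected (in fact contractible) space into $\mathbb C^*$ admits a continuous lift through the universal cover $\exp:\mathbb C\to\mathbb C^*$: there exists a continuous $\ell:\overline U\to\mathbb C$ with $g=\exp\circ\,\ell$. Writing $\ell=\log|g|+i\Theta$, this gives a continuous single-valued argument $\Theta:\overline U\to\mathbb R$ for $g$.

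Now parameterize $\partial U$ by a closed curve $\gamma:[0,1]\to\partial U$ with $\gamma(0)=\gamma(1)$. By definition of the winding number via any continuous branch of the argument of $g\circ\gamma=h\circ\gamma-a$,
\[
w(h(\partial U),a)=\frac{\Theta(\gamma(1))-\Theta(\gamma(0))}{2\pi}=0,
\]
since $\Theta$ is single-valued on $\overline U$ and $\gamma$ is closed. This contradicts the hypothesis $w(h(\partial U),a)\neq 0$, so the assumption $a\notin h(\overline U)$ fails. Combined with $a\notin h(\partial U)$, we obtain $p\in U$ with $h(p)=a$, as required.

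The only potentially nontrivial step is the existence of the continuous logarithm $\ell$ on $\overline U$; this is the standard lifting lemma for coverings applied to $\exp:\mathbb C\to\mathbb C^*$ and the contractible space $\overline U$, so I do not expect any real obstacle. One could alternatively phrase the argument in terms of degrees of maps into $S^1$ (the map $z\mapsto g(z)/|g(z)|$ extends continuously from $\partial U$ to $\overline U$, hence its restriction to $\partial U$ has degree zero, and this degree equals $w(h(\partial U),a)$), which is essentially the same reasoning packaged differently.
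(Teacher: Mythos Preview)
Your proof is correct and takes essentially the same approach as the paper: both argue by contradiction from $a\notin h(\overline U)$ and exploit the contractibility of $\overline U$ to force $w(h(\partial U),a)=0$. The paper phrases this via homotopy invariance of the winding number (contracting $\partial U$ to a constant curve in $\overline U$ and pushing forward by $h$), while you phrase it via the lifting lemma (existence of a continuous logarithm); these are equivalent packagings of the same idea.
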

	\begin{proof} If not, then $a\notin h(\overline{U})$. Note that $\partial U$ is homotopic in $\overline{U}$ to a constant curve $\gamma_0\subset \overline{U}$.  It follows that $h(\partial U)$ is homotopic to $h(\gamma_0)$ in $\mathbb C\setminus \{a\}$.
		Since the winding number is a homotopy invariant,  we have $w(h(\partial U), a)=w(h(\gamma_0), a)= 0$.
		This is a contradiction.
		\end{proof}
	
	 Let $D=(B,S)\in \partial_{\rm reg} \mathcal B_{d}$.
 Suppose $S$ is simple. 
When $\widehat{B}\in \mathcal B_d$ is sufficiently close to $D$, for each  $q\in {\rm supp}(S)$, 
there is a unique zero of  $\widehat{B}$ that is close to $q$, denote this zero by $z_q(\widehat{B})$;  by Theorem \ref{bet}, there is also a  unique critical point of  $\widehat{B}$   close to $q$,  denote  this  critical point by $c_q(\widehat{B})$.

\begin{lem} \label{cont-orbit} 	 Let $D=(B,S)\in \partial_{\rm reg} \mathcal B_{d}$. Suppose $S$ is simple and $1\notin {\rm supp}(S)$. Let $q\in {\rm supp}(S)$  and let  $l\geq 1$ be an integer so that $\{ B^{k}(q); 1\leq k<l \}\cap {\rm supp}(S)=\emptyset$.
Then for any sequence $(B_n)_n\subset \mathcal B_d$ converging to $D$ algebraically, we have 
	$$B_n^l(c_{q}(B_n))\rightarrow B^{l}(q).$$
	\end{lem}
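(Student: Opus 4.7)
The plan is induction on $l$, with the base case $l = 1$ carrying the main content.

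For the inductive step, assume $B_n^k(c_q(B_n)) \to B^k(q)$ for some $1 \le k < l$. The hypothesis on $l$ gives $B^k(q) \notin \mathrm{supp}(S)$, so we can pick an open neighborhood $W$ of $B^k(q)$ in $\widehat{\mathbb C}$ disjoint from $\mathrm{supp}(S)$. By Lemma \ref{degenerate-0}(1), $B_n \to B$ uniformly on $W$. Since $B_n^k(c_q(B_n)) \in W$ for large $n$, uniform convergence together with continuity of $B$ yields $B_n^{k+1}(c_q(B_n)) \to B^{k+1}(q)$.

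The main obstacle is the base case $l = 1$: proving $B_n(c_q(B_n)) \to B(q)$. Continuity alone does not suffice, since $c_q(B_n) \to q \in \mathrm{supp}(S)$, where $B_n$ does not converge to $B$. The strategy is to isolate the degenerating Blaschke factor near $q$ and then exploit the critical-point equation. Factor
\begin{equation*}
B_n = \tilde B_n \cdot \prod_{q' \in \mathrm{supp}(S)} \phi_n^{(q')}, \qquad \phi_n^{(q')}(z) = \frac{1-\overline{z_{q'}(B_n)}}{1-z_{q'}(B_n)}\cdot\frac{z - z_{q'}(B_n)}{1 - \overline{z_{q'}(B_n)}\, z},
\end{equation*}
so that $\tilde B_n$ is the Blaschke product of degree $d - |\mathrm{supp}(S)|$ built from the remaining factors. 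Since $S$ is simple, the non-degenerating zeros of $B_n$ remain in a fixed compact subset of $\mathbb D$, and $\tilde B_n \to B$ uniformly on $\overline{\mathbb D}$. For each $q' \in \mathrm{supp}(S) \setminus \{q\}$, the computation in the proof of Lemma \ref{degenerate-0} gives $\phi_n^{(q')} \to 1$ uniformly on a neighborhood of $q$, hence $\phi_n^{(q')}(c_q(B_n)) \to 1$. It therefore suffices to show $\phi_n^{(q)}(c_q(B_n)) \to 1$.

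Set $z_q = z_q(B_n)$, $c_q = c_q(B_n)$, $u_n = c_q - z_q$, $v_n = 1 - \overline{z_q}\,c_q$, and $\epsilon_n = 1 - |z_q|^2$; these satisfy the algebraic identity $v_n = \epsilon_n - \overline{z_q}\,u_n$. The critical equation $B_n'(c_q) = 0$, after logarithmic differentiation, reads $\sum_{q'} (\phi_n^{(q')})'(c_q)/\phi_n^{(q')}(c_q) = -\tilde B_n'(c_q)/\tilde B_n(c_q)$. The terms with $q' \ne q$ vanish in the limit (each has numerator $1 - |z_{q'}(B_n)|^2 \to 0$ and denominator bounded away from zero at $c_q$), while the $q' = q$ term equals $\epsilon_n/(u_n v_n)$. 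The right-hand side converges to the finite, nonzero value $-B'(q)/B(q)$, using that $|B(q)| = 1$ and that $B$ has no critical points on $\partial \mathbb D$. Hence $u_n v_n \sim -\epsilon_n B(q)/B'(q)$, and balancing this against $v_n = \epsilon_n - \overline{z_q}\,u_n$ forces $u_n = \Theta(\sqrt{\epsilon_n})$, from which $v_n \sim -\overline{q}\,u_n$ and $u_n/v_n \to -q$. Combined with $(1 - \overline{z_q})/(1 - z_q) \to -\overline{q}$ (valid because $q \ne 1$), this gives $\phi_n^{(q)}(c_q) \to -\overline{q} \cdot (-q) = 1$, completing the base case.
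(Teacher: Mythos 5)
Your proof is correct, but the base case $l=1$ is handled by a genuinely different argument than the paper's. Where you factor $B_n$ into a ``stable'' part $\tilde B_n$ and the degenerating Blaschke factors, take the logarithmic derivative at the critical point, and perform an asymptotic balance to extract $u_n = \Theta(\sqrt{\epsilon_n})$ and $u_n/v_n \to -q$, the paper instead argues topologically: assuming $B_n(c_q(B_n))$ stays $\delta$-far from $B(q)$ along a subsequence, it picks a small disk $\mathbb D(q,r)$, uses Lemma \ref{degenerate-0} plus Rouch\'e to make $B_n|_{\partial\mathbb D(q,r)}$ a Jordan curve inside $\mathbb D(B(q),\delta)$, takes the component $V_n$ of the complement containing $B(q)$ and the attached component $U_n\subset\mathbb D(q,r)$ of $B_n^{-1}(V_n)$, observes that $U_n$ is then critical-point-free, hence $B_n:U_n\to V_n$ is conformal and $U_n=\mathbb D(q,r)$, contradicting $c_q(B_n)\in\mathbb D(q,r)$. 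Your computation is sharper in that it actually resolves the rate at which $c_q(B_n)$ and $z_q(B_n)$ approach $q$ (the $\sqrt{\epsilon_n}$ scale, with $v_n\sim -\overline q\,u_n$), information the paper's soft argument does not produce; the paper's approach is shorter and avoids any need to verify that the remainder terms in the logarithmic-derivative identity are genuinely $o(1)$. The inductive step you give for $1\le k<l$ is the same one the paper uses. One minor point worth making explicit in your write-up: the logarithmic derivative of $B_n$ at $c_q(B_n)$ is legitimate because, for large $n$, $c_q(B_n)\ne z_q(B_n)$ --- otherwise $z_q(B_n)$ would be a multiple zero of $B_n$, contradicting the convergence of $Z_{B_n}$ to $Z_B+S$ with $S$ simple and $q\notin\mathrm{supp}(Z_B)$.
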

	\begin{proof}  We first claim  $B_n(c_{q}(B_n))\rightarrow B(q)$.  If it is not true, by passing to  subsequence, we assume     $B_n(c_q(B_n))\notin \mathbb D(B(q), \delta)$ for some $\delta>0$ and for all $n$.
		Choose small $r>0$ so that $\overline{B(\mathbb D(q, r))}\subset  \mathbb D(B(q), \delta)$. Let $A$ be a thin annular neighborhood of $\partial \mathbb D(q, r)$ so that $B|_A$ is univalent and  $B(A)\subset \mathbb D(B(q), \delta)$. 
		By Lemma \ref{degenerate-0},      $B_n$ converges 
		uniformly to $B$ in $A$. By Rouche's Theorem, $B_n|_{\partial \mathbb D(q, r)}$ is injective for large $n$, 
		 hence $B_n(\partial \mathbb D(q, r))$ is a Jordan curve in $\mathbb D(B(q), \delta)$.
		Let $V_n$ be the  component of $\C \setminus B_n(\partial \mathbb D(q, r))$   containing $B(q)$. Let $U_n$ be the component of $B_n^{-1}(V_n)$ such that $\partial \mathbb D(q, r)\subset \partial U_n$ and $U_n\subset  \mathbb D(q, r)$.
		The assumption $B_n(c_q(B_n))\notin \mathbb D(B(q), \delta)$ implies that 
		$U_n$ contains no critical point of $B_n$. Hence $B_n: U_n\rightarrow V_n$ is conformal, which implies that $U_n$ is simply connected.   Therefore $U_n=\mathbb D(q, r)$. However this is a contradiction since $c_q(B_n)\in \mathbb D(q, r)$.
		
		By the claim and Lemma \ref{degenerate-0},  along with the assumption that $\{ B^{k}(q); 1\leq k<l \}\cap {\rm supp}(S)=\emptyset$,  we conclude that $B_n^l(c_{q}(B_n))\rightarrow B^{l}(q)$.
		\end{proof}

\begin{pro} \label{prescribed-hyp-d} 	 Let $D=(B,S)\in \partial_{\rm reg} \mathcal B_{d}$. Suppose $S$ is simple and $1\notin {\rm supp}(S)$.
	Let $q\in {\rm supp}(S)$,  and let  $l\geq 1$ be an integer so that $\{ B^{k}(q); 1\leq k<l \}\cap {\rm supp}(S)=\emptyset$ and $q':= B^{l}(q)\in {\rm supp}(S)$.
	 Then for any $L\geq 0$ and any small   $\epsilon>0$, there is $\widehat{B}\in \mathcal B_d\cap N_{\epsilon}(D)$ such that the hyperbolic distance 
	$$d_{\mathbb D}(z_{q'}(\widehat{B}), \widehat{B}^l(c_{q}(\widehat{B}))=L.$$
	\end{pro}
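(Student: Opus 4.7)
The strategy is to construct a continuous 1-parameter family of Blaschke products in $\mathcal{B}_d \cap N_\epsilon(D)$ along which the hyperbolic distance $\mathcal{L}(\widehat{B}) := d_{\mathbb{D}}(z_{q'}(\widehat{B}), \widehat{B}^l(c_q(\widehat{B})))$ varies continuously, attains the value $0$ and diverges to $+\infty$, and then to invoke the intermediate value theorem. I would fix a base $\widehat{B}_* \in \mathcal{B}_d \cap N_{\epsilon'}(D)$ for some $\epsilon' \leq \epsilon/2$ sufficiently small, and consider the family $\{\widehat{B}_z\}$ obtained from $\widehat{B}_*$ by replacing the zero $z_{q'}(\widehat{B}_*)$ with $z \in \mathbb{D}$ while keeping all other zeros fixed. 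Writing $z_0 := z_{q'}(\widehat{B}_*)$, $p_* := \widehat{B}_*^l(c_q(\widehat{B}_*))$ and $P(z) := \widehat{B}_z^l(c_q(\widehat{B}_z))$, both $z_0$ and $p_*$ lie close to $q'$ by Lemma \ref{cont-orbit}, and $P$ is continuous in $z$.

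For the divergence endpoint, letting $z \to q'$ along a radial path inside $\mathbb{D}$, Lemma \ref{degenerate-0} gives $\widehat{B}_z \to \zeta \widehat{B}_\infty$ on compact subsets of $\widehat{\mathbb{C}} \setminus \{q'\}$ for some $\zeta \in \partial \mathbb{D}$, where $\widehat{B}_\infty \in \mathcal{B}_{d-1}$ is obtained from $\widehat{B}_*$ by removing $z_{q'}$; Theorem \ref{bet} gives $c_q(\widehat{B}_z) \to c_q(\widehat{B}_\infty) \in \mathbb{D}$, and iterating $l$ times (the hypothesis $\{B^k(q) : 1 \leq k < l\} \cap \mathrm{supp}(S) = \emptyset$ keeps intermediate values away from $q'$ so the convergence persists through each step), $P(z)$ converges to a point strictly inside $\mathbb{D}$. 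Since $|z| \to 1$ while $|P(z)|$ stays bounded away from $1$, $\mathcal{L}(z) \to +\infty$. For the small-$\mathcal L$ endpoint, I would apply Lemma \ref{winding-intermediate} to the continuous map $F(z) := z - P(z)$ on a small closed Euclidean disk $\overline{\mathbb{D}(p_*, r)}$. The key point is that because $c_q$ is a critical point of $\widehat{B}_z$ we have $(\widehat{B}^l)'(c_q) = 0$, so $\partial_z P = (\partial_z \widehat{B}^l)|_{c_q}$ expands via the chain rule as a finite sum of terms $(\widehat{B}^{l-1-k})'(\widehat{B}^{k+1}(c_q)) \cdot (\partial_z \widehat{B})(\widehat{B}^k(c_q))$ for $0 \leq k \leq l-1$; each factor is uniformly bounded since by hypothesis the orbit points $\widehat{B}^k(c_q) \approx B^k(q)$ stay away from both $z \approx q'$ and from the zeros of $\widehat{B}$. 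This yields a uniform Lipschitz constant $C$ for $P$, and by choosing $\widehat{B}_*$ close enough to $D$ so that $|z_0 - p_*|$ is small compared to $(1-C)r$, the estimate $|P(z) - p_*| < r$ on $\partial \mathbb{D}(p_*, r)$ guarantees that $F(\partial \mathbb{D}(p_*, r))$ winds around $0$ with winding number $+1$; Lemma \ref{winding-intermediate} then produces $z^* \in \mathbb{D}(p_*, r)$ with $P(z^*) = z^*$, hence $\mathcal{L}(z^*) = 0$.

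Joining $z^*$ to the radial path by a continuous arc inside $\mathbb{D} \cap \mathbb{D}(q', \epsilon)$ produces a continuous family along which $\mathcal{L}$ varies from $0$ to $+\infty$, and IVT supplies a parameter with $\mathcal{L} = L$, yielding the desired $\widehat{B}$. The main obstacle I anticipate is the uniform Lipschitz control of $P$ required in the winding step: the factors $(\widehat{B}^{l-1-k})'$ are evaluated at points near $\partial \mathbb{D}$ where Blaschke derivatives can a priori be large, so each factor must be controlled by a careful case analysis exploiting the disjointness hypothesis on $\mathrm{supp}(S)$, and one must additionally ensure the bound $C < 1$ after shrinking the disk so that the winding count is unambiguous.
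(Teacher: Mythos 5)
Your overall strategy (construct a continuous one-parameter family along which the hyperbolic distance attains $0$ and $+\infty$, then invoke an intermediate-value/winding argument) parallels the paper, and your divergence endpoint is correct. However, the argument for the $\mathcal L=0$ endpoint contains a genuine gap that you already half-suspect, and I don't think it can be patched within your setup.

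You vary the zero near $q'$ while keeping all other zeros fixed, and you want a contraction estimate $|P(z)-P(z')|\le C|z-z'|$ with $C<1$ so that the winding number of $F(z)=z-P(z)$ on a small disk $\partial\mathbb D(p_*,r)$ equals that of $z\mapsto z-p_*$. But the relevant derivative
\[
\partial_z P \;=\; \sum_{k=0}^{l-1}\bigl(\widehat B_z^{\,l-1-k}\bigr)'\!\bigl(\widehat B_z^{\,k+1}(c_q)\bigr)\cdot \bigl(\partial_z\widehat B_z\bigr)\!\bigl(\widehat B_z^{\,k}(c_q)\bigr)
\]
has no reason to be small. In the limit $\widehat B_*\to D$, the orbit points $\widehat B_z^{\,j}(c_q)$ approach the circle points $B^j(q)$, so the chain-rule factors $\bigl(\widehat B_z^{\,l-1-k}\bigr)'$ approach products $\prod_j|B'(B^j(q))|$; a degree-$\deg(B)\ge 2$ Blaschke product is expanding on $\partial\mathbb D$ in the mean, and these boundary derivatives are typically $>1$. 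Moreover the factor $\bigl(\partial_z\widehat B_z\bigr)(w)$ with $w\approx q$ and $z\approx q'$ has modulus roughly $1/|1-\bar z w|\approx 1/|q-q'|$, which is bounded but not small. Thus $C$ is generically $\ge 1$, the Rouch\'e comparison on the shrinking disk $\mathbb D(p_*,r)$ fails, and with it the existence of a fixed point $z^*=P(z^*)$. No amount of shrinking $r$ helps, since the Lipschitz constant does not improve as $\widehat B_*\to D$.

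The paper sidesteps exactly this difficulty by varying the opposite zero: it keeps the multi-point $\mathbf x$ near $\mathrm{supp}(S)\smallsetminus\{q\}$ fixed (so in particular $z_{q'}=x_{q'}$ is fixed) and lets the zero near $q$ range over the full half-disk $U=\mathbb D\cap\mathbb D(q,\delta)$. The map $h(\zeta)=\widehat B^{\,l}_{\zeta,\mathbf x}(c_q(\widehat B_{\zeta,\mathbf x}))$ then extends continuously to $\overline U$, and the decisive structural fact is that when $\zeta$ reaches the boundary arc $\overline\beta=\partial\mathbb D\cap\overline{\mathbb D(q,\delta)}$ the critical point $c_q$ itself escapes to $\partial\mathbb D$, so $h(\overline\beta)\subset\partial\mathbb D$. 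Meanwhile $h(\alpha)$ (for $\alpha=\mathbb D\cap\partial\mathbb D(q,\delta)$) stays uniformly close to the curve $\gamma$ obtained by deleting the other degenerating zeros, which runs through the interior of $\mathbb D$ away from $q'$. Hence $h(\partial U)$ is a loop consisting of a circle arc on $\partial\mathbb D$ near $q'$ joined to an interior arc near $\gamma$, and it encircles the hyperbolic circle $\partial\mathbb D_{\rm hyp}(x_{q'},L)$ once. This gives winding number $1$ around any $\xi$ on that hyperbolic circle with no Lipschitz input at all, and Lemma \ref{winding-intermediate} finishes the proof for every $L\ge 0$ simultaneously, without a separate $\mathcal L=0$ step or an IVT step. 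Your construction doesn't inherit this boundary collapse (when your parameter $z$ hits $\partial\mathbb D$, $c_q$ stays interior and so does $P(z)$), which is why it needs the contraction estimate that isn't available.
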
 
\begin{proof}  Fix $L\geq 0$ and $\epsilon>0$. 
	For  $\delta\in (0, \epsilon)$, let $\alpha=\mathbb D\cap \partial\mathbb D(q, \delta)$ and $\beta=\partial\mathbb D\cap \mathbb D(q, \delta)$ be circular arcs, with common endpoints $a,b\in \partial \mathbb D$.  We may assume $\delta$ is small  so that $1\notin \overline{\beta}$ and $B^k(\overline{\beta})\cap ({\rm supp}(S)\cup \overline{\beta})=\emptyset$ for all $1\leq k<l$.
	
	For each $\zeta\in \alpha$, let $B_\zeta\in  \mathcal B_{{\rm deg}(B)+1}$ be determined by the divisor equality $Z(B_\zeta)=Z(B)+1\cdot \zeta$. Then we 
	get a continuous map
$$\gamma: \alpha \rightarrow \mathbb D, \ \zeta\mapsto B_{\zeta}^l(c_{q}(B_\zeta)).$$
Note that as $\zeta$ approaches  $\omega\in \{a,b\}$ along $\alpha$, $B_\zeta$ converges to $(B, 1\cdot \omega)$ algebraically. 
By Lemma \ref{cont-orbit}, we have $\gamma(a)=B^{l}(a)$ and $\gamma(b)=B^{l}(b)$.


Let $\tau\in (0, \epsilon)$.  Note that  each multipoint $$\mathbf{x}=(x_p)_{p\in {\rm supp}(S)\setminus\{q\}} \in X_\tau:=\prod_{p\in {\rm supp}(S)\setminus\{q\}} (\mathbb D(p, \tau)\cap \mathbb D), $$ 
 induces a divisor $D_\mathbf{x}=\sum_{p\in {\rm supp}(S)\setminus\{q\}} 1\cdot x_p \in {\rm Div}_{{\rm deg}(S)-1}(\mathbb D)$.  For any $(\zeta, \mathbf{x})\in \alpha\times X_{\tau}$, 
there are Blaschke products $\widehat{B}_{\mathbf x}\in \mathcal B_{d-1}$,  $\widehat{B}_{\zeta, \mathbf x}\in \mathcal B_d$  determined by the  divisor equalities 
\begin{equation}\label{zero-div-eq}
	Z(\widehat{B}_{\mathbf x})=Z(B)+D_\mathbf{x}, \ 
	Z(\widehat{B}_{\zeta, \mathbf x})=Z(B)+ 1\cdot \zeta+D_\mathbf{x}.
\end{equation}
 We may  assume $\tau$ is small so that
\begin{itemize}
	\item    $\bigcup_{1\leq k<l}\widehat{B}_{\mathbf x}^k(\overline{\beta})$ is disjoint from the  $\epsilon_0$-neighborhood of    $\overline{\beta}\cup \bigcup_{p\in {\rm supp}(S)\setminus\{q\}}\overline{\mathbb D(p, \tau)}$,  for some $\epsilon_0>0$ and for all  $\mathbf x\in X_{\tau}$;
	
	\item  $\overline{\mathbb D_{\rm hyp}(x_{q'}, L)}\subset \mathbb D(q', \min\{ \epsilon,   d(\gamma, q')/2\})$  for all  $x_{q'}\in \mathbb D(q', \tau)\cap\mathbb D$, where  $ d(\gamma, q')=\min_{w\in \gamma}|q'-w|$. 
\end{itemize}

The map  $$H:  
\begin{cases} \alpha\times X_{\tau} \rightarrow  \mathbb D, \\
	(\zeta, \mathbf x)\mapsto \widehat{B}^l_{\zeta, \mathbf x}(c_{q}(\widehat{B}_{\zeta, \mathbf x})) 
\end{cases}$$
is continuous. By   Lemmas \ref{degenerate-0} and  \ref{cont-orbit}, $H$ extends to a continuous map $\overline{H}: \overline{\alpha}\times \overline{X_{\tau}}\rightarrow \overline{\mathbb D}$. Since $ \overline{\alpha}\times \overline{X_{\tau}}$ is compact, $\overline{H}$ is uniformly continuous.  It follows that there is $\tau'<\tau$ so that 
\begin{equation} \label{H-close}
	|\overline{H}(\zeta, \mathbf x)-\overline{H}(\zeta, \mathbf x_0)|< d(\gamma, q')/2,  \  \forall \ (\zeta, \mathbf x)\in \overline{\alpha}\times \overline{X_{\tau'}},
	\end{equation}
  where
$\mathbf{x_0}=(p)_{p\in {\rm supp}(S)\setminus\{q\}}$. 
Note that $\overline{H}(\zeta, \mathbf x_0)=B_{\zeta}^l(c_{q}(B_\zeta))$ for $\zeta\in \alpha$, and $\gamma=\overline{H}(\alpha, \mathbf x_0)$.



 	
 

\begin{figure}[h]
	\begin{center}
		\includegraphics[height=5cm]{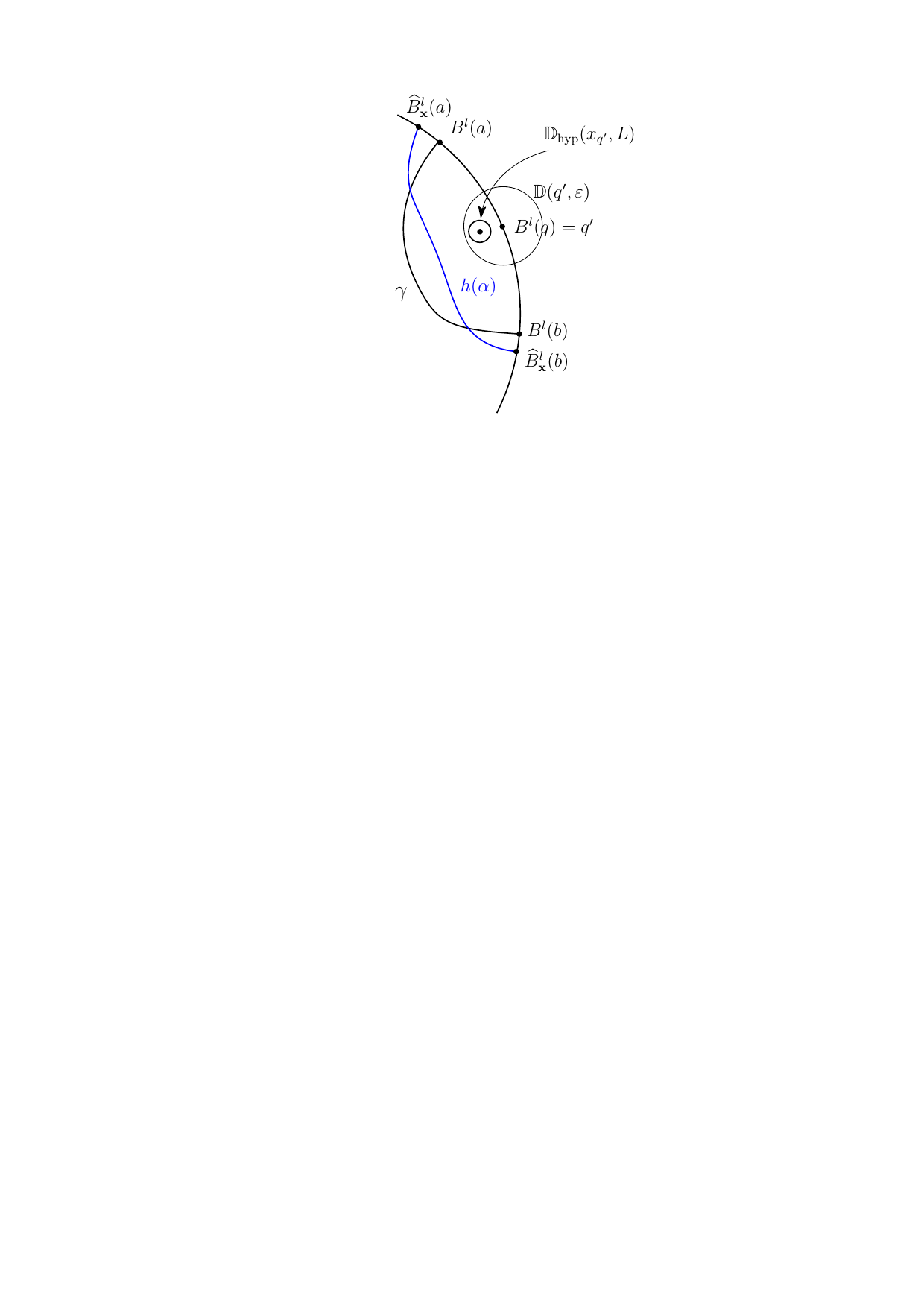}
	\end{center}
\caption{Finding Blaschke product with prescribed mapping behavior.}
\label{fig: prescribeB}
\end{figure}

In the following, we fix some $\mathbf x \in  {X_{\tau'}}$. 
Let  $U:=\mathbb D\cap \mathbb D(q, \delta)$.  For each $\zeta \in U$,  the equation \eqref{zero-div-eq}  determines a unique   $\widehat{B}_{\zeta, \mathbf x}\in  \mathcal B_{d}$.
The map $$h:  
\begin{cases} U \rightarrow  \mathbb C, \\
	\zeta\mapsto \widehat{B}^l_ {\zeta, \mathbf x}(c_{q}(\widehat{B}_{\zeta, \mathbf x})) 
\end{cases}$$
  is continuous.  As $\zeta$ approaches  $s\in \partial \mathbb D\cap \partial U=\overline{\beta}$,  $\widehat{B}_{\zeta, \mathbf x}$ converges to $(\widehat{B}_{\mathbf x}, 1\cdot s)$ algebraically.
   By the assumptions  $1\notin \overline{\beta}$,  $\widehat{B}_{\mathbf x}^k(\overline{\beta})\cap   \overline{\beta}=\emptyset$ for all $1\leq k<l$ and  Lemma \ref{cont-orbit},  { $h(\zeta)$ tends to $\widehat{B}_{\mathbf x}^l(s)\in \partial \mathbb D$. } By defining $h(s)=\widehat{B}_{\mathbf x}^l(s)$ for $s\in \overline{\beta}$,  we can extend $h$  to a continuous map $h: \overline{U}\rightarrow \mathbb C$. 
Note that    $h(\partial U)$ has two parts:  $h(\alpha)=\overline{H}(\alpha, \mathbf x)$ which is $d(\gamma, q')/2$-close to $\gamma$ by \eqref{H-close}, and  $h(\overline{\beta})=\widehat{B}_{\mathbf x}^l( \overline{\beta})\subset \partial\mathbb D$, see Figure \ref{fig: prescribeB}.

  Take an arbitrary point $\xi\in \partial \mathbb D_{\rm hyp}(x_{q'}, L)$, then $\xi\notin h(\partial U)$ and the winding number $w(h(\partial U), \xi)=1$. By Lemma \ref{winding-intermediate}, there is $\zeta\in U$ with
$h(\zeta)=\xi$.  This gives a  Blaschke product $\widehat{B}= \widehat{B}_{\zeta, \mathbf x}\in \mathcal B_d$ with $d_{\mathbb D}(x_{q'}, \widehat{B}_{\zeta, \mathbf x}^l(c_{q}(\widehat{B}_{\zeta, \mathbf x})))=L$. Note that $x_{q'}=z_{q'}(\widehat{B}_{\zeta, \mathbf x})$, the proof is completed.
	\end{proof}

		\begin{proof}[Proof of Proposition \ref{regular-ns}]
			(1). Assume $1\notin {\rm supp}(S)$ and  let $q\in {\rm supp}(S)$ have multiplicity $\nu(q)\geq 2$.  
			 For any number $L\geq 0$, 	 choose two sequences $(b_n)_n, (c_n)_n$ both converging to $q$, and $d_{\mathbb D}(b_n, c_n)=L$ for all $n$. 
			 Let $(B_n)_n\subset \mathcal B_{d}$ be given by
			 $$Z(B_n)=Z(B)+ 1\cdot b_n+(\nu(q)-1)\cdot c_n, \ \forall n, $$ 
			By choosing a subsequence, we assume
			$f_{n}:=\Phi(B_n)\rightarrow f_L\in I_{\Phi}(D)$. Since $D$ is regular, $0$ is an attracting fixed point of $f_L$.
		
			By the choice of $B_{n}$, 
		 the preimage set	$f_{n}^{-1}(0)$ contains   $\phi_{f_{n}}(b_n)$  and $\phi_{f_{n}}(c_n)$ with hyperbolic distance 
			\begin{equation} \label{hyp-d-n}
				d_{U_{f_n}(0)}(\phi_{f_{n}}(b_n), \phi_{f_{n}}(c_n))=d_{\mathbb D}(b_n, c_n)=L, \ \forall n\geq 1.
				\end{equation}
			
			Passing to a subsequence, we assume $\phi_{f_{n}}(b_n)\rightarrow  b^L$,  $\phi_{f_{n}}(c_n)\rightarrow  c^L$. Then $b^L, c^L\subset f_L^{-1}(0)$.   Lemma \ref{kernel-att} and Remark \ref{kernel-att-pre} give the kernel convergence   
		$$(U_{f_{n}}(0), \phi_{f_{n}}(b_n))\rightarrow (U_{f_L}(b^L), b^L).$$
		
		By Lemma \ref{hyp-d-limit},   
		\begin{equation}\label{case1} c^L \in U_{f_L}(b^L) \  \text{ and } \ d_{U_{f_L}(b^L)}(b^L, c^L)=L.
			\end{equation}
		
		For each $L\geq 0$, by  Propositions \ref{div-eq} and \ref{unique-rm},   there is a unique conformal map $\phi_L: (\mathbb D, 0)\rightarrow (U_{f_L}(0), 0)$ satisfying that 
		$$B=\phi_{L}^{-1}\circ f_L \circ \phi_{L}, \  S=\phi_L^*(R_{f_L}^0)=\phi_L^*(Z_{f_L}^0).$$
	
		If $f_{L}=f_{L'}:=f$ for  $L,L'\geq 0$,  then $\phi_L, \phi_{L'}: (\mathbb D, 0)\rightarrow (U_{f}(0), 0)$ are conformal maps with 
		  $\phi_L(1)=\phi_{L'}(1)=\nu_f$ (by Proposition \ref{lp-0-ray}), hence 
		$\phi_L=\phi_{L'}:=\phi$. If follows that $f^{-1}(0)\cap L_{U_f(0), \phi(q)}=\{b^L,c^L\}=\{b^{L'}, c^{L'}\}$. By \eqref{case1}, $L=L'$. This means that different $L$ corresponds to different $f_L$.
		
		Note that $\{f_L; L\geq 0\}\subset I_{\Phi}(D)$. Therefore $I_{\Phi}(D)$  is not a singleton.
		
		
			
		

	(2). The idea is almost same as (1), but here we shall use Proposition \ref{prescribed-hyp-d}.  Suppose $S$ is simple and $1\notin {\rm supp}(S)$. Since $S$ has dynamical relation, there exist $q\in {\rm supp}(S)$ and a minimal integer $l\geq 1$  so that $q'=B^l(q)\in  {\rm supp}(S)\setminus\{q'\}$.  By Proposition \ref{prescribed-hyp-d}, for any $L\geq 0$ and any integer $n\geq 1$, there is  ${B}_n\in \mathcal B_d\cap    N_{1/n}(D)$ with the following property 
	$$d_{\mathbb D}(z_{q'}(B_n), B_n^l(c_{q}(B_n))=L.$$
	
	 By choosing a subsequence, we assume 
	$f_{n}:=\Phi(B_n)\rightarrow f_L\in I_{\Phi}(D)$. 
Note that	 $0$ is an attracting fixed point of $f_L$.  
 We further assume $\phi_{f_n}(z_{q'}(B_n))\rightarrow a$,   $\phi_{f_n}(B_n^l(c_{q}(B_n))\rightarrow b$ and $\phi_{f_n}(c_{q}(B_n))\rightarrow c$.  
  It follows that $f'_L(c)=0$, $f_L^l(c)=b$ and $f_L(a)=0$.  
	By Lemma \ref{kernel-att} and Remark \ref{kernel-att-pre},  we have the kernel convergence   
	$$(U_{f_{n}}(0), \phi_{f_n}(z_{q'}(B_n)))\rightarrow (U_{f_L}(a), a).$$
	
		  By Lemma \ref{hyp-d-limit},  
	 $b\in U_{f_L}(a)$ and $d_{U_{f_L}(a)}(a, b)=L$.
	
	 Note that $\{f_L; L\geq 0\}\subset I_{\Phi}(D)$. By  the same reasoning as (1),   different $L$ corresponds to different $f_L$, hence $I_{\Phi}(D)$  is not a singleton.
	 
	 
	 (3).  Since $1\in {\rm supp}(S)$, by Lemma \ref{degenerate-1} and Propositions \ref{div-eq} and \ref{unique-rm},  for any $\zeta \in \partial\mathbb D$, there exist  $f_\zeta\in I_{\Phi}(D)$, a conformal map $\phi_\zeta: (\mathbb D, 0)\rightarrow (U_{f_\zeta}(0), 0)$,  satisfying that 
	\begin{equation}\label{1-div-eq}\zeta B=\phi_{\zeta}^{-1}\circ f_\zeta \circ \phi_{\zeta}, \  S=\phi_\zeta^*(R_{f_\zeta}^0).  
		\end{equation}

If $f_{\zeta_1}=f_{\zeta_2}=f$ for  $\zeta_1, \zeta_2\in \partial\mathbb D$,  then the  conformal maps  $\phi_{\zeta_1}, \phi_{\zeta_2}: (\mathbb D,0)\rightarrow (U_f(0),0)$ have the same normalization $\phi_{\zeta_1}(1)=\phi_{\zeta_2}(1)$ (by Proposition \ref{lp-0-ray}). Hence $\phi_{\zeta_1}=\phi_{\zeta_2}$. This implies that $\zeta_1=\zeta_2$ by \eqref{1-div-eq}. 

This means that $I_{\Phi}(D)$ which contains $\{f_\zeta; \zeta\in \partial\mathbb D\}$ is not a singleton.
		\end{proof}

		\section{Singular divisors} \label{sing-div}

	In this section, we show

\begin{pro}\label{sing-imp3}   For any   $D=(B, S)\in \partial_{\rm sing}  {\mathcal B_d}$, we have 
	$$I_{\Phi}(D)\supseteq \{f_*\}, \ \text{ where } f_*(z)=z+z^d.$$
	The equality $I_{\Phi}(D)= \{f_*\}$ holds if and only if $S$ is simple and $1\notin {\rm supp}(S)$.
\end{pro}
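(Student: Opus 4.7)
Proof plan: The proposition decomposes into three statements: \textbf{(a)} $f_* \in I_\Phi(D)$ for every singular $D$; \textbf{(b)} if $S$ is simple and $1 \notin \mathrm{supp}(S)$, then $I_\Phi(D) \subseteq \{f_*\}$; \textbf{(c)} if $S$ is not simple or $1 \in \mathrm{supp}(S)$, then $I_\Phi(D)$ contains a point other than $f_*$. Together these give the full statement, using that $I_\Phi(D)$ is always nonempty (by compactness of $\overline{\mathcal{H}_d}$ and the density of $\mathcal{B}_d$ in $\overline{\mathcal{B}_d}$).

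I would attack (b) first, as it is the conceptual heart. Suppose $f \in I_\Phi(D)$ is realized by $f_n = \Phi(B_n) \to f$ with $B_n \to D$ algebraically. Since $1 \notin \mathrm{supp}(S)$, Lemma \ref{degenerate-0}(1) gives $B_n \to \mathrm{id}$ uniformly on compacts of $\C \setminus \mathrm{supp}(S)$, so $f'(0) = \lim_n B_n'(0) = 1$ and $f$ has the form $f(z) = z + a_2 z^2 + \cdots + z^d$. Let $k \ge 2$ be the vanishing order of $f(z) - z$ at $0$; by the Leau--Fatou flower theorem, $f$ has exactly $k-1$ attracting parabolic petals at $0$. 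The goal is to show $k = d$, which together with monicness forces $f = f_*$. Since $S$ is simple with $d-1$ distinct support points in $\partial \mathbb{D} \setminus \{1\}$, the $d-1$ critical points of $B_n$ converge to these distinct boundary points, and their mutual hyperbolic distances in $\mathbb{D}$ tend to $+\infty$. Conformal invariance transports this to the polynomial side: the critical points $c_{i,n} = \phi_{f_n}(c_{i,n}')$ of $f_n$ are pairwise at hyperbolic distance tending to $+\infty$ in $U_{f_n}(0)$. Passing to pointed Carath\'eodory convergence $(U_{f_n}(0), c_{i,n}) \to (V_i, c_i)$, where $V_i$ is the Fatou component of $f$ containing $c_i = \lim c_{i,n}$, and applying Lemma \ref{hyp-d-limit}(2), distinct critical points of $f$ arising from distinct support points of $S$ must lie in distinct Fatou components of $f$. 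Since each $V_i$ is a Carath\'eodory limit of the immediate attracting basin $U_{f_n}(0)$ and touches $0$ on its boundary, $V_i$ is an attracting petal of $f$ at $0$. Hence $f$ has at least $d-1$ petals, giving $k - 1 \ge d - 1$; since $\deg(f - z) = d$ forces $k \le d$, we conclude $k = d$ and $f = f_*$.

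For (a), I would use (b) together with a density argument. Nice divisors are dense in $\partial_{\mathrm{sing}} \mathcal{B}_d \cong \mathrm{Div}_{d-1}(\partial \mathbb{D})$. By (b) combined with nonemptiness of $I_\Phi$, $I_\Phi(D_m) = \{f_*\}$ for every nice $D_m$, so there is a sequence $(B_{m,n})_n$ with $B_{m,n} \to D_m$ and $\Phi(B_{m,n}) \to f_*$ as $n \to \infty$. Given arbitrary singular $D$, choose nice $D_m \to D$ and extract a diagonal sequence $(B_{m, n_m})_m$ with $n_m$ large enough that $B_{m, n_m} \to D$ algebraically and $\Phi(B_{m, n_m}) \to f_*$, proving $f_* \in I_\Phi(D)$. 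For (c), I mirror the templates of Proposition \ref{regular-ns}: if some $q \in \mathrm{supp}(S)$ has multiplicity $\nu(q) \ge 2$, select $\nu(q)$ Blaschke zeros approaching $q$ at prescribed mutual hyperbolic distance $L > 0$ inside $\mathbb{D}$; the resulting limits $f_L \in I_\Phi(D)$ depend non-trivially on $L$ through Lemma \ref{hyp-d-limit}, producing many distinct elements of $I_\Phi(D)$. If $1 \in \mathrm{supp}(S)$, Lemma \ref{degenerate-1} supplies sequences $B_n \to D$ with $B_n \to \zeta \cdot \mathrm{id}$ on $\mathbb{D}$ for any prescribed $\zeta \in \partial \mathbb{D}$; any limit $f_\zeta \in I_\Phi(D)$ satisfies $f_\zeta'(0) = \zeta$, so $\zeta \neq 1$ gives $f_\zeta \neq f_*$.

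The principal obstacle is step (b), specifically the claim that distinct critical points of $f$ arising from distinct support points of $S$ lie in distinct attracting petals of $f$. The delicate point is that $U_{f_n}(0)$ has no Carath\'eodory kernel based at the degenerating parabolic fixed point $0$, so one cannot apply Lemma \ref{hyp-d-limit} at the natural basepoint. The workaround is to base the pointed disk at $c_{i,n}$ instead: a stability/orbit argument shows each $c_i$ lies in the interior of the parabolic basin of $f$, so $(U_{f_n}(0), c_{i,n})$ converges in the Carath\'eodory sense to the Fatou component $V_i$ pointed at $c_i$, making Lemma \ref{hyp-d-limit}(2) applicable. Verifying that $V_i$ is an attracting petal (rather than merely a Fatou component in the parabolic basin) requires tracking that $U_{f_n}(0)$ always contains a path from $c_{i,n}$ to $0$, so $V_i$ touches $0$ on its boundary.
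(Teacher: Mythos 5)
Your decomposition into (a), (b), (c) matches the paper's (the paper proves (b) as Lemma~\ref{sim-imp} and packages (a) together with (c) as Lemma~\ref{non-sim}), and your part~(a) (density of simple divisors with $1\notin\mathrm{supp}$, nested neighborhoods/diagonal argument) and your $1\in\mathrm{supp}(S)$ subcase of (c) (via Lemma~\ref{degenerate-1} and the multiplier $f'(0)=\zeta$) are essentially the paper's. But part~(b), the core of the result, and the non-simple subcase of (c), take genuinely different routes than the paper, and both have gaps.

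For (b), the crucial unjustified step is the Carath\'eodory convergence $(U_{f_n}(0), c_{i,n}) \to (V_i, c_i)$ \emph{with $V_i$ equal to the Fatou component of $f$ containing $c_i$}. Lemma~\ref{kernel-att} (kernel convergence) in the paper is stated only for an \emph{attracting} limit fixed point; in the singular case the limit is parabolic, and $\phi_{f_n}\to 0$ degenerates (Lemma~\ref{to-zero}), so the kernel based at $0$ is trivial. Rebasing at $c_{i,n}$ does give a nonempty kernel, but nothing forces this kernel to equal the full Fatou component $V_i$ of $f$: under parabolic implosion (especially when $f_n'(0)\to 1$ with uncontrolled argument) the immediate basin $U_{f_n}(0)$ can limit to a \emph{proper} subdomain of the parabolic basin. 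If the kernel were a proper subdomain, Lemma~\ref{hyp-d-limit}(2) would only tell you $c_j$ lies outside the kernel, not outside the Fatou component, and the petal count would not follow. Your parenthetical about tracking a path from $c_{i,n}$ to $0$ shows $V_i$ touches $0$, but does not upgrade the kernel to the full component nor show that distinct $i$ yield distinct petals. The paper sidesteps this entirely: instead of kernel convergence, it uses Lemma~\ref{para-m} to count components of $K(f)\setminus\{0\}$, places two critical points in one such component, and then reaches a contradiction via the $L^2$/length machinery (Proposition~\ref{convergent-rays} and Lemma~\ref{to-zero2}): for almost every radius, the Euclidean length of $\phi_{f_n}([0,\zeta])$ goes to $0$, yet any path in $U_{f_n}(0)$ from $0$ to a curve $\gamma_n$ joining the two critical points (which must leave a fixed $\epsilon_0$-neighborhood $W_n^0$ of $0$) has length $\geq\epsilon_0$. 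That argument makes no appeal to kernel convergence.

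For (c) with $S$ not simple, you propose to mirror Proposition~\ref{regular-ns}(1) by fixing hyperbolic distances between zeros and invoking Lemma~\ref{hyp-d-limit}. That proof again relies on kernel convergence (Lemma~\ref{kernel-att}/Remark~\ref{kernel-att-pre}), which requires the limiting fixed point $0$ to be attracting --- false here. The paper's argument for this subcase is much simpler and does not touch kernel convergence at all: choose $B_n$ with \emph{non-simple} free ramification divisor $(1-1/n)\cdot S$, so every $f_n$ has a non-simple critical point, hence any limit $f\in I_\Phi(D)$ has a non-simple critical point, whereas $f_*(z)=z+z^d$ has all simple critical points; therefore $f\neq f_*$. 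You should replace your Carath\'eodory-based steps in both (b) and the non-simple part of (c) with arguments that do not presuppose kernel convergence at a degenerating parabolic fixed point.
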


	
	
	

Note that for any $D \in  \partial_{\rm sing}  {\mathcal B_d}$ and   $f\in I_{\Phi}(D)$,  $f$ has a fixed point at $0$.

	\begin{lem} \label{sing-multiplier}   Let $D=(B,S) \in  \partial_{\rm sing}  {\mathcal B_d}$.
		
		(1). If   $1\notin {\rm supp}(S)$, then for any  $f\in I_{\Phi}(D)$, we have $f'(0)=1$.
		
		(2).   If   $1\in {\rm supp}(S)$, then for any $\zeta\in \partial \mathbb D$, there is   $f\in I_{\Phi}(D)$ with $f'(0)=\zeta$.
		\end{lem}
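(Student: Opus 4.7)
The plan is to compute $f'(0)$ via the relation $\Psi(f)'(0) = f'(0)$, then pass to the limit using Lemmas \ref{degenerate-0} and \ref{degenerate-1}.

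First, a key observation: for any $f \in \mathcal{H}_d$, the Riemann map $\psi_f : (U_f(0),0) \to (\mathbb{D},0)$ fixes the origin, so $\psi_f'(0) \neq 0$, and the conjugacy $\Psi(f) = \psi_f \circ f \circ \psi_f^{-1}$ has derivative at $0$ equal to $\psi_f'(0) \cdot f'(0) \cdot (\psi_f'(0))^{-1} = f'(0)$. In particular, for any sequence $B_n = \Psi(f_n) \in \mathcal{B}_d$ with $f_n \in \mathcal{H}_d$, one has $B_n'(0) = f_n'(0)$. So if $f_n \to f$ algebraically, then $f'(0) = \lim_n B_n'(0)$.

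Next, recall $D = (B, S) \in \partial_{\mathrm{sing}} \mathcal{B}_d$ forces $B \in \mathcal{B}_1 = \{\mathrm{id}\}$, so $B$ is the identity and $B'(0) = 1$.

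For part (1), take any $f \in I_\Phi(D)$. By definition there is a sequence $(B_n)_n \subset \mathcal{B}_d$ with $B_n \to D$ algebraically and $f_n := \Phi(B_n) \to f$. Since $1 \notin \mathrm{supp}(S)$, Lemma \ref{degenerate-0}(1) gives $B_n \to B = \mathrm{id}$ locally uniformly on $\widehat{\mathbb{C}} \setminus \mathrm{supp}(S)$; in particular $B_n \to \mathrm{id}$ in a neighborhood of $0$, so by Weierstrass $B_n'(0) \to 1$. Combining with the observation above, $f'(0) = \lim_n f_n'(0) = \lim_n B_n'(0) = 1$.

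For part (2), fix $\zeta \in \partial\mathbb{D}$. Since $1 \in \mathrm{supp}(S)$, Lemma \ref{degenerate-1} supplies a sequence $B_n \in \mathcal{B}_d$ with $B_n \to D$ algebraically and $B_n \to \zeta B = \zeta \cdot \mathrm{id}$ locally uniformly on $\widehat{\mathbb{C}} \setminus \mathrm{supp}(S)$; hence $B_n'(0) \to \zeta$. The polynomials $f_n := \Phi(B_n) \in \mathcal{H}_d \subset \mathcal{C}_d$ lie in the compact connectedness locus, so passing to a subsequence we may assume $f_n \to f$ for some $f \in \overline{\mathcal{H}_d}$. Since $\lim_n f_n'(0) = \lim_n B_n'(0) = \zeta \in \partial\mathbb{D}$, we have $|f'(0)| = 1$, so $f \notin \mathcal{H}_d$ and thus $f \in \partial \mathcal{H}_d$. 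By construction $f \in I_\Phi(D)$ and $f'(0) = \zeta$, as required.

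The argument is essentially a one-line limit computation once the identity $\Psi(f)'(0) = f'(0)$ is recorded; there is no real obstacle. The only mild point of care is ensuring that the limit $f$ produced in (2) actually lies in $\partial \mathcal{H}_d$, which is automatic because $|f'(0)| = 1$ forces $f$ out of the (open) hyperbolic component $\mathcal{H}_d$ but compactness keeps it inside $\overline{\mathcal{H}_d}$.
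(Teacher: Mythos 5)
Your proposal is correct and follows essentially the same route as the paper: both rest on the implicit identity $\Psi(f)'(0)=f'(0)$ (which you helpfully make explicit) together with passage to the limit of $B_n'(0)$. The only cosmetic difference is that for part~(1) the paper computes $B_n'(0)=\prod_k\frac{1-\overline{a_k(n)}}{1-a_k(n)}(-a_k(n))\to 1$ directly from the Blaschke formula, whereas you invoke Lemma~\ref{degenerate-0} near $0$ and apply Weierstrass — but that lemma's proof is exactly that computation, so the content is identical.
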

	\begin{proof}
		(1).   Let $(B_n)_n$ be a sequence in $\mathcal B_{d}$ converging to $D$ algebraically, suppose $B_n$ has zeros $0, a_1(n), \cdots, a_{d-1}(n)$, then
		$$B_n'(0)=\prod_{k=1}^{d-1}A_k(n), \ \text{ where } A_k(n)=\frac{1-\overline{a_k(n)}}{1-a_k(n)}(-a_k(n)).$$
		Assume $\lim_{n}a_k(n)=q\in {\rm supp}(S)$.	The assumption $1\notin {\rm supp}(S)$  implies that $ A_k(n)\rightarrow 1$ and $B_n'(0)\rightarrow 1$.   It follows that for any  $f\in I_{\Phi}(D)$,   $f'(0)=1$.

		(2).  If   $1\in {\rm supp}(S)$,  by Lemma \ref{degenerate-1},  for any $\zeta\in \partial \mathbb D$, there is a sequence $(B_n)_n$  in $\mathcal B_{d}$  such that 
		$B_n\rightarrow D$ algebraically, and $B_{n}$ converges  to $\zeta B$ in 	$\C\setminus {\rm  supp}(S)$.  
	A subsequence of  $f_n=\Phi(B_n)$ has a limit $f\in I_{\Phi}(D)$ with $f'(0)=\zeta$.
		\end{proof}

	\begin{lem} \label{para-m}  Let $f\in   \partial_{\rm sing}  {\mathcal H_d}$ have a parabolic fixed point at $0$ with $f'(0)=1$ and parabolic  multiplicity  \footnote{The {\it parabolic  multiplicity}   is the minimal integer $m$ so that $f(z)=z(1+az^m+o(z^m))$ near $0$,  where $a\neq 0$, see \cite{BE}. It equals the number of the immediate parabolic basins of $0$.}  $m\geq 1$. 
	Then $K(f)\setminus\{0\}$ has exactly $m$ connected components.
\end{lem}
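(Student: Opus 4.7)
The plan is to prove the lemma by using $m$ external rays landing at $0$ to partition $\widehat{\mathbb{C}}$ into $m$ sectors, and showing that $K(f)\setminus\{0\}$ has exactly one connected component in each sector, corresponding to one immediate parabolic basin.

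First, I would set up the global sector structure. By the Leau--Fatou flower theorem at the parabolic fixed point $0$ with multiplier $1$ and multiplicity $m$, there are $m$ attracting directions carrying the immediate basins $P_1,\ldots,P_m$, alternating with $m$ repelling directions. Classical landing theory at parabolic periodic points of polynomials with connected Julia set (see \cite[Chapter 18]{Mil06}) gives $m$ external rays $R_1,\ldots,R_m$ landing at $0$, one per repelling direction, each fixed by $f$. Together with $\{0,\infty\}$ these rays form a graph on $\widehat{\mathbb{C}}$ with $2$ vertices and $m$ edges, whose complement has $m$ open faces $S_1,\ldots,S_m$ by Euler's formula, each containing exactly one immediate basin $P_i$. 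Since $K(f)\cap R_j=\{0\}$, we obtain the disjoint decomposition $K(f)\setminus\{0\}=\bigsqcup_i(K(f)\cap S_i)$ with each summand nonempty, yielding at least $m$ components.

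For the upper bound, I would show each $K(f)\cap S_i$ is connected. Let $C$ be any component; then $\overline{C}\setminus C\subseteq K(f)\cap\partial S_i=\{0\}$, so if $0\notin\overline{C}$ then $C$ would be clopen in $K(f)$ (connected because $f\in\mathcal{C}_d$), forcing $C=K(f)$, a contradiction. Hence every component of $K(f)\cap S_i$ accumulates at $0$. It then suffices to exhibit a small $\epsilon>0$ for which $K(f)\cap S_i\cap\mathbb{D}(0,\epsilon)$ is connected, since the common accumulation at $0$ forces every component to contain this local piece, making them coincide.

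The main obstacle is establishing this local connectedness. The plan is to prove $K(f)\cap V\subseteq\bigcup_k\overline{P_k}$ for a sufficiently small neighborhood $V$ of $0$. Covering $V\setminus\{0\}$ by the attracting and repelling petals of the Leau--Fatou flower, any $z\in K(f)$ lying in a repelling petal $\mathcal{Q}_j$ has bounded $f$-orbit which, by the local parabolic dynamics, must eventually enter an adjacent attracting petal $\mathcal{P}_j$ or $\mathcal{P}_{j+1}$ (placing $z$ in $P_j\cup P_{j+1}$) or stay on the separating Julia arc (placing $z\in\overline{P_j}\cup\overline{P_{j+1}}$). Combined with $\overline{P_k}\setminus\{0\}\subset S_k$ (which follows from $\overline{P_k}$ being connected, contained in $K(f)$, and meeting the rays only at $0$), this yields $K(f)\cap S_i\cap V = V\cap(\overline{P_i}\setminus\{0\})$, which for $V$ small is a single attracting-petal piece minus its cusp, hence connected. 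The subtle point is excluding that basins of other non-repelling cycles of $f$ could accumulate at $0$ and contaminate $K(f)\cap S_i$: this is handled by the fact that $f\in\overline{\mathcal H_d}$ inherits the single-attracting-cycle structure of the central hyperbolic component, so the only non-repelling cycle of $f$ is the parabolic cycle at $0$, and hence the only Fatou components of $f$ with $0$ in their closure are $P_1,\ldots,P_m$.
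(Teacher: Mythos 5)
Your proposal has a genuine gap and, in fact, is circular at its starting point.

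\textbf{The circularity.} You open by claiming that classical landing theory gives exactly $m$ external rays landing at $0$, ``one per repelling direction.'' Classical theory gives that every ray landing at $0$ is fixed by $f$ and lands through a repelling direction, and that there is \emph{at least} one ray per repelling direction; it does \emph{not} give that there is \emph{exactly} one per direction. In fact, the number of rays landing at $0$ equals the number of connected components of $K(f)\setminus\{0\}$ (this is \cite[Corollary 6.7]{Mc94}, cited in the paper), which is exactly the quantity the lemma is computing. So asserting ``$m$ rays'' is assuming the conclusion. For a general polynomial with a parabolic fixed point of multiplier $1$ and multiplicity $m$, more than one ray can land through a repelling direction, producing extra components of $K(f)\setminus\{0\}$; it is precisely the hypothesis $f\in\partial\mathcal H_d$ that rules this out, and it must be used substantively.

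\textbf{The Leau--Fatou step fails.} Your key local claim $K(f)\cap V\subseteq\bigcup_k\overline{P_k}$ is not established by the flower argument. If $z\in K(f)$ lies in a repelling petal $\mathcal Q_j$, its forward orbit eventually leaves $\mathcal Q_j$, but it need \emph{not} enter an adjacent attracting petal: it can exit the neighborhood $V$ entirely and land on a piece of $K(f)$ far from $0$ --- for instance, on an extra limb attached to $0$ through a repelling direction. Bounded orbit does not mean orbit confined to $V$. The flower only controls the orbits that never leave $V$, and those are exactly the ones converging to $0$ through attracting petals. So the inclusion you need is again equivalent to what must be proved.

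\textbf{The wrong constraint from $f\in\overline{\mathcal H_d}$.} You invoke the boundary hypothesis only to exclude \emph{other non-repelling cycles} and conclude that the only Fatou components with $0$ in their closure are $P_1,\ldots,P_m$. Even granting this (which is itself not immediate), it does not rule out extra limbs at $0$ consisting of Julia set with, say, a preperiodic critical point; such a configuration is perfectly possible for a polynomial and would produce extra components of $K(f)\setminus\{0\}$. The paper instead uses the boundary hypothesis through a \emph{degree count}: assuming extra limbs $K_1,\dots,K_l$ exist, the Shrinking Lemma forces each $K_s$ to contain a critical point; removing them, one builds a polynomial-like restriction $f\colon U\to V$ of degree $d-\sum d_s < d$ whose filled Julia set contains all the immediate parabolic basins. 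Perturbing to $f_n\in\mathcal H_d$, this polynomial-like restriction persists with the same degree, but its filled Julia set must contain $U_{f_n}(0)$, which has mapping degree $d$ --- a contradiction. This degree argument is the essential mechanism, and your proposal has no substitute for it.

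\end{document}
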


	\begin{proof}
	We label the immediate parabolic basins of $f$ at $0$ by 
	$A_1, \cdots, A_m$.  	To prove the lemma, it suffices to show $\bigcap_{1\leq k\leq m} L_{A_k, 0}=\{0\}$.
	
		\begin{figure} \label{fig-para-limb}
		\begin{center}
			\includegraphics[height=4cm]{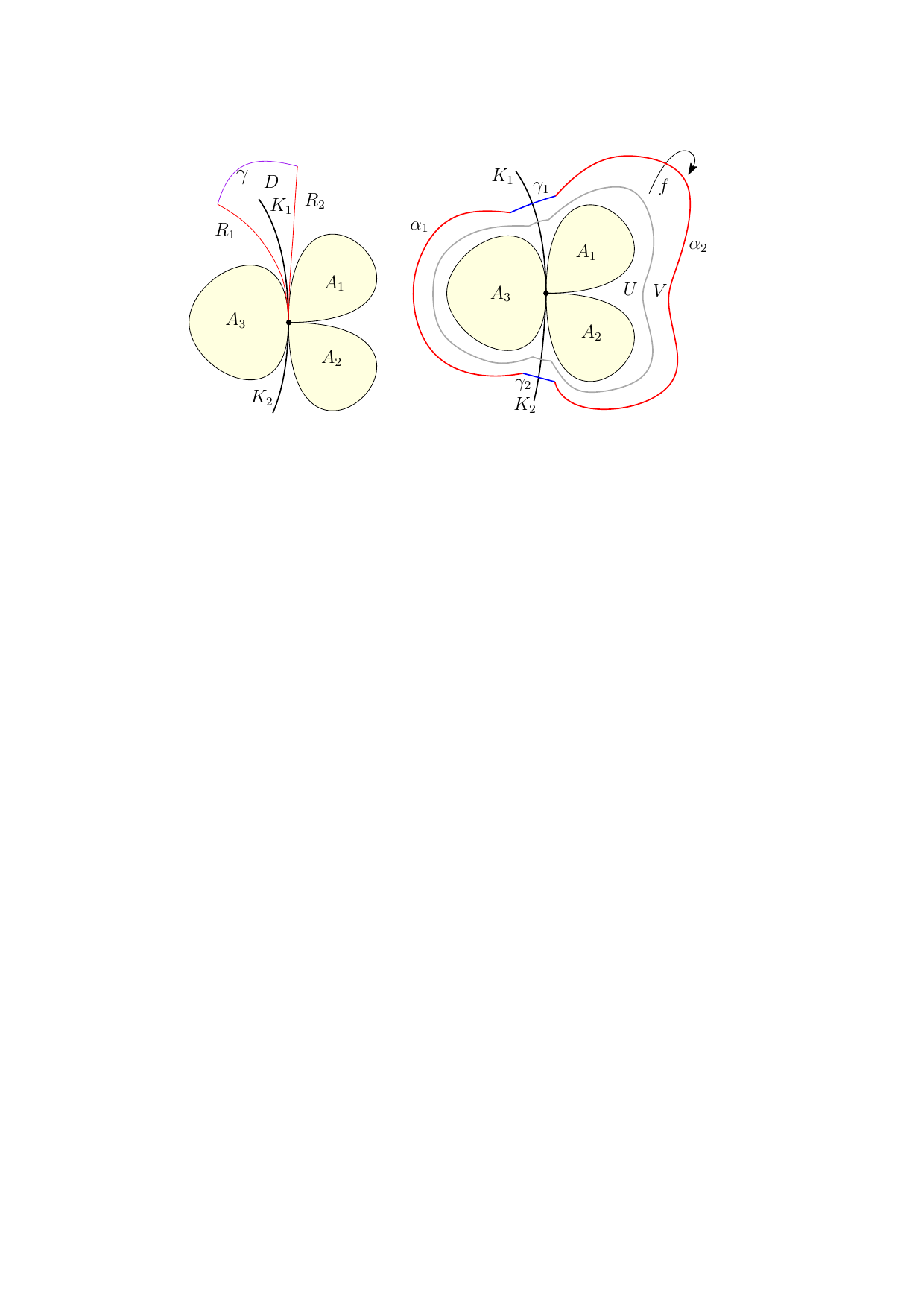}
		\end{center}
		\caption{The parabolic basins and limbs ($l=2$).}
		\label{fig: fig-para-limb}
	\end{figure}

	Note that the $f$-parabolic point  $0$ is the landing point of finitely many external rays \cite[\S 18]{Mil06}, and the number of these rays equals the number of the connected components of $K(f)\setminus \{0\}$ \cite[Corollary 6.7]{Mc94}. 
	If the conclusion is false, then $ \bigcap_{1\leq k\leq m} L_{A_k, 0}$ consists of finitely many connected  components $K_1, \cdots, K_l$, $l\geq 1$. We claim that
	each $K_s$ contains at least one critical point of $f$.  To see this, observe that $K_s$ is in a domain $D$   bounded by two invariant external rays $R_1, R_2$  and an  equipotential  segment $\gamma$, see Figure \ref{fig: fig-para-limb} (left).    If $K_s$ contains no critical point of $f$, then for each $j$,  there is a unique  connected component $D_j$ of $f^{-j}(D)$ whose boundary $\partial D_j$ contains segments in  $R_1\cup R_2$.  Note that $K_s\subset D_j$.  
  The Shrinking Lemma \cite[\S 12]{LM97} yields ${\rm diam}(\partial D_j)\rightarrow 0$ as $j\rightarrow \infty$. However, this contradicts the fact that ${\rm diam}(\partial D_j)\geq {\rm diam}(K_s)>0$ for all $j$.  Hence each $K_s$ contains at least one critical point of $f$.

	Note that each $K_s$ intersects a repelling petal of $0$, which enables the construction of a polynomial-like mapping as follows:
	
	Let $V$ be bounded by $l$ equipotential segments $\alpha_1, \cdots, \alpha_l$  in the basin of $\infty$ of $f$, together with $l$ arcs $\gamma_1,\cdots, \gamma_l$ in the  $l$ repelling petals, see Figure \ref{fig: fig-para-limb} (right). We require that   each $\gamma_s$ takes the form  $\{{\rm Re}(w)=L_s\}$ in the   corresponding  repelling Fatou coordinate.  By shrinking $V$, we may assume $\overline{V}\cap (K_1\cup\cdots \cup K_l)$ contains no critical values. 
	Let $U$ be the component of $f^{-1}(V)$ containing $\bigcup_{1\leq k\leq m}\overline{A_k}$. Then $f: U\rightarrow V$ is a polynomial-like mapping with degree ${\rm deg}(f|_U)=d-\sum_{s=1}^l d_s<d$, where $d_s$ is the number of $f$-critical points in $K_s$.   Its filled Julia set $K(f|_U):=\bigcap_{j\geq 0} f^{-j}(V)$ is connected and contains $\bigcup_{1\leq k\leq m}\overline{A_k}$.
	
	Now for any sequence $(f_n)_n$ in $\mathcal H_d$ approaching $f$ and for  large $n$, $f|_U$ induces  a polynomial-like restriction  $f_n:U_n\rightarrow V$ of $f_n$  with   degree ${\rm deg}(f|_U)$ and   $0\in U_n$.  Since  the filled Julia set $K(f_n|_{U_n})=\bigcap_{j\geq 0} f_n^{-j}(V)$ contains $U_{f_{n}}(0)$,  the degree ${\rm deg}(f_n|_{U_n})\geq d$. Contradiction!
	\end{proof}

	\begin{lem} \label{to-zero}  Let $f\in   \partial_{\rm sing}  {\mathcal H_d}$
		have a parabolic fixed point at $0$ with $f'(0)=1$.
		Then for any sequence $(f_n)_n$ in $\mathcal H_d$ converging to $f$, the   conformal maps $\phi_{f_n}:\mathbb D\rightarrow U_{f_n}(0)$ converge (locally and uniformly in $\mathbb D$) to $0$.	 
\end{lem}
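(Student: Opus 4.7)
The plan is to combine a normal-family argument for $(\phi_{f_n})_n$ with the conjugacy relation $f_n\circ\phi_{f_n}=\phi_{f_n}\circ B_n$, where $B_n=\Psi(f_n)$, to force every subsequential limit of $(\phi_{f_n})_n$ to equal the zero constant.

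The first step is to pin down the limiting behavior of $B_n$. By compactness of $\overline{\mathcal B_d}$, pass to a subsequence so that $B_n\to D=(B,S)\in\partial\mathcal B_d$ algebraically; then $f\in I_{\Phi}(D)$. Differentiating the conjugacy at $0$ gives $B_n'(0)=f_n'(0)\to f'(0)=1$. On the other hand, $|B_n'(0)|=\prod_{k=1}^{d-1}|a_k(n)|$, where $a_k(n)$ are the nonzero zeros of $B_n$. So $|a_k(n)|\to 1$ for every $k$, which forces $B=\mathrm{id}_{\mathbb D}$, i.e.\ $D$ is singular. Lemma \ref{degenerate-0} then yields $B_n\to \zeta\cdot \mathrm{id}_{\mathbb D}$ in $\C\setminus\mathrm{supp}(S)\supset \mathbb D$ along the subsequence for some $\zeta\in\partial\mathbb D$; evaluating the derivative at $0$ gives $\zeta=\lim B_n'(0)=1$. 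Since every subsequence of $(B_n)_n$ admits a further one converging to $\mathrm{id}_{\mathbb D}$, we conclude $B_n\to \mathrm{id}_{\mathbb D}$ locally uniformly in $\mathbb D$.

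Next, since $f_n\to f$ algebraically in $\mathcal P_d$, the filled Julia sets $K(f_n)\supset \phi_{f_n}(\mathbb D)$ stay inside a fixed compact subset of $\mathbb C$, so $(\phi_{f_n})_n$ is a normal family on $\mathbb D$ with $\phi_{f_n}(0)=0$. Let $\phi$ be any subsequential limit, locally uniform on $\mathbb D$; then $\phi$ is holomorphic and $\phi(0)=0$. For a fixed compact $K\subset\mathbb D$, one has $B_n\to \mathrm{id}$ uniformly on $K$ while $(\phi_{f_n})_n$ is equicontinuous on a slightly larger compact set, so $\phi_{f_n}\circ B_n\to \phi$ uniformly on $K$; combined with $f_n\circ\phi_{f_n}\to f\circ\phi$ on $K$, the identity $f_n\circ\phi_{f_n}=\phi_{f_n}\circ B_n$ passes to the limit to give $f\circ\phi=\phi$ on $\mathbb D$. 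Therefore $\phi(\mathbb D)\subset\mathrm{Fix}(f)$, which is a finite set since $f$ has degree $d\ge 3$; by continuity and connectedness of $\mathbb D$, $\phi$ is constant, and $\phi(0)=0$ yields $\phi\equiv 0$. All subsequential limits being $0$, the whole sequence $(\phi_{f_n})_n$ converges to $0$ locally uniformly in $\mathbb D$. The only delicate point is justifying the interchange of the limit with the composition $\phi_{f_n}\circ B_n$, but this is a routine equicontinuity (Koebe distortion) matter once $B_n\to\mathrm{id}_{\mathbb D}$ has been secured.
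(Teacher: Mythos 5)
Your proof is correct, but it takes a genuinely different route from the paper's. The paper gives a short, quantitative argument: by the Koebe $1/4$-theorem,
\[
|\phi_{f_n}(z)|\le|\phi_{f_n}'(0)|\,\frac{|z|}{(1-|z|)^2},\qquad
|\phi_{f_n}'(0)|\le 4\,\operatorname{dist}(0,J(f_n)),
\]
and since the multiple fixed point of $f$ at $0$ splits under perturbation into $0$ plus repelling fixed points of $f_n$ landing in $J(f_n)$ and tending to $0$, one gets $\phi_{f_n}'(0)\to 0$ and hence $\phi_{f_n}\to 0$ directly. Your argument is softer: you first pin down $B_n\to\operatorname{id}_{\mathbb D}$ (via $B_n'(0)=f_n'(0)\to 1$, which forces all nonzero zeros of $B_n$ to the circle and then $\zeta=1$ in Lemma \ref{degenerate-0}), then invoke normality of $(\phi_{f_n})$ via uniform boundedness of $K(f_n)$, and finally pass to the limit in the conjugacy $f_n\circ\phi_{f_n}=\phi_{f_n}\circ B_n$ to obtain $f\circ\phi=\phi$ for any subsequential limit $\phi$; since $\operatorname{Fix}(f)$ is finite and $\mathbb D$ is connected, $\phi$ is the constant $\phi(0)=0$. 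Both proofs are valid. The paper's is shorter and more explicit about the rate (it isolates the geometric reason, a shrinking conformal radius of $U_{f_n}(0)$ at $0$), whereas yours is more structural and reuses the already-established machinery around algebraic limits of $B_n$, at the cost of the equicontinuity step needed to pass to the limit in $\phi_{f_n}\circ B_n$. One small point worth making explicit in your write-up: you should note that $f_n(0)=0$ for all $n$ and that $U_{f_n}(0)$ is the unique attracting basin (since $f_n\in\mathcal H_d$), so that the conjugacy relation you invoke is indeed the one between $f_n|_{U_{f_n}(0)}$ and $B_n$ on $\mathbb D$.
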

\begin{proof} By Koebe distortion theorem \cite[Theorem 5.3]{A},
	$$|\phi_{f_n}(z)|\leq |\phi_{f_n}'(0)|\frac{|z|}{(1-|z|)^2}, \ |\phi_{f_n}'(0)|\leq 4 \cdot {\rm dist}(0, J(f_n)),$$
	where ${\rm dist}(0, J(f_n))=\min_{z\in J(f_n)}|z|$.
	Since $J(f_n)$ contains an $f_n$-repelling fixed point  which tends to $0$ as $n\rightarrow 0$, we get $\phi_{f_n}'(0)\rightarrow 0$. 
	The 
	 convergence $\phi_{f_n}\rightarrow 0$ follows immediately.
	\end{proof}

	\begin{lem} \label{to-zero2}  Let $f\in   \partial_{\rm sing}  {\mathcal H_d}$ have a parabolic fixed point at $0$ with $f'(0)=1$.
	 Let $(f_n)_n$ be a sequence  in $\mathcal H_d$ converging to $f$. 
		There exist a full measure subset $E$ of $\partial\mathbb D$, and a subsequence $(f_{n_k})_k$  of $(f_n)_n$ such that for any  $\zeta\in E$, the Euclidean length of the curve $\phi_{f_{n_k}}([0,\zeta])$
	  converges to $0$ as $k\rightarrow 0$.	 
\end{lem}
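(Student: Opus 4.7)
The plan is to apply Proposition \ref{convergent-rays} (in the disk version indicated in the remark following it) to the sequence of Riemann mappings $\phi_{f_n}: \mathbb{D} \to U_{f_n}(0)$, using Lemma \ref{to-zero} to identify the limit function as the constant $0$. The Euclidean length of $\phi_{f_n}([0,\zeta])$ at $\zeta = e^{i\theta}$ is exactly the length function
\[L_{\phi_{f_n}}(\theta) = \int_0^1 |\phi_{f_n}'(\rho e^{i\theta})|\, d\rho,\]
so the statement will follow by first establishing $L^1$-convergence of $L_{\phi_{f_n}}$ to $0$ on $[0,2\pi]$ and then extracting an a.e.-convergent subsequence.

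First I would verify the hypotheses. Since $f_n \to f$ in $\mathcal{C}_d$, by upper semicontinuity of the filled Julia set (see, e.g., \cite[Proposition 8.1]{DH}), all the sets $K(f_n)$ for large $n$ lie in a common bounded region of $\mathbb{C}$, hence
\[\|\phi_{f_n}'\|_{L^2(\mathbb{D})}^2 = \mathrm{area}(U_{f_n}(0)) \leq \mathrm{area}(K(f_n)) \leq C < \infty\]
uniformly in $n$. This gives the uniform $L^2$-bound on derivatives required by Proposition \ref{convergent-rays}. Next, by Lemma \ref{to-zero}, $\phi_{f_n}$ converges locally uniformly in $\mathbb{D}$ to the constant function $0$. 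Thus, in the notation of Proposition \ref{convergent-rays}, the limit function is $f \equiv 0$ and its length function $L_f(\theta) \equiv 0$.

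Now applying Proposition \ref{convergent-rays}(1) (in its disk version), we obtain
\[\lim_{n \to \infty} \int_0^{2\pi} L_{\phi_{f_n}}(\theta)\, d\theta = 0.\]
By a standard consequence of $L^1$-convergence, there is a subsequence $(f_{n_k})_k$ such that $L_{\phi_{f_{n_k}}}(\theta) \to 0$ for almost every $\theta \in [0, 2\pi]$; let $E \subset \partial \mathbb{D}$ be the full measure set of $\zeta = e^{i\theta}$ where this pointwise convergence holds. For any such $\zeta \in E$, the length of $\phi_{f_{n_k}}([0,\zeta])$ equals $L_{\phi_{f_{n_k}}}(\theta)$, which tends to $0$, completing the proof.

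The only step requiring care is the uniform area bound on $U_{f_n}(0)$. This is straightforward here because all $f_n$ lie in the connectedness locus $\mathcal{C}_d$ near $f$, so their filled Julia sets are uniformly bounded. If one wanted a fully self-contained argument, one could alternatively bound $\mathrm{area}(U_{f_n}(0))$ by the area of the disk of radius $\max_{n} \sup_{z \in K(f_n)} |z|$, which is finite by the standard estimate $K(g) \subset \overline{\mathbb{D}(0, R_g)}$ for polynomials $g \in \mathcal{P}_d$ with $R_g$ depending continuously on the coefficients. No additional dynamical input is needed beyond Proposition \ref{convergent-rays} and Lemma \ref{to-zero}.
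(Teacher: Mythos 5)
Your proof is correct and takes essentially the same route as the paper's: apply the disk version of Proposition \ref{convergent-rays} with the limit identified as $0$ by Lemma \ref{to-zero}, then pass to an a.e.-convergent subsequence. You are slightly more explicit than the paper about the uniform $L^2$-bound on $\phi_{f_n}'$ via the uniform boundedness of $K(f_n)$, which is a welcome addition but not a different argument.
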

\begin{proof}  By  Proposition \ref{convergent-rays} and Lemma \ref{to-zero}, the length function 
	$L_n: \partial \mathbb D\rightarrow [0,+\infty]$,  defined by $L_n(\xi)=\int_{0}^1|\phi_{f_n}'(r\xi)|dr$, converges to $0$ in the $L^1$-norm as $n\rightarrow \infty$. Hence there is a full measure subset $E$ of $\partial\mathbb D$ and subsequence 
	$(L_{n_k})_k$ so that $L_{n_k}$ converges to $0$ pointwisely in $E$. 
		\end{proof}

 \vspace{5pt}
\noindent \textbf{Proof of Proposition \ref{sing-imp3}.}  Proposition \ref{sing-imp3} follows from   Lemmas \ref{sim-imp} and \ref{non-sim}.    Corollary 1.1 is an immediate consequence of  Proposition \ref{sing-imp3}.
 	\begin{lem} \label{sim-imp}   Let $D=(B,S) \in  \partial_{\rm sing}  {\mathcal B_d}$ with $S$ simple and $1\notin {\rm supp}(S)$, then
			$$I_{\Phi}(D)= \{f_*\} \ \text{ with } f_*(z)=z+z^d.$$
\end{lem}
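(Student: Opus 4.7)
The plan is to show that any $f \in I_{\Phi}(D)$ equals $f_*(z) = z + z^d$; together with the nonemptiness of $I_{\Phi}(D)$ (by compactness of $\overline{\mathcal H_d}$), this yields the lemma. Write $f(z) = z + a_2 z^2 + \cdots + a_{d-1} z^{d-1} + z^d$ and observe that $f = f_*$ is equivalent to the monic polynomial $f(z) - z$ having $0$ as a root of multiplicity exactly $d$, equivalently, to the parabolic multiplicity $m$ of $f$ at $0$ being equal to $d - 1$. Since $1 \notin {\rm supp}(S)$, Lemma \ref{sing-multiplier}(1) gives $f'(0) = 1$, so $0$ is a parabolic fixed point of $f$ and $m \geq 1$ is well defined. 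The entire proof thus reduces to verifying that $m = d - 1$.

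The upper bound $m \leq d - 1$ is automatic: each immediate parabolic basin of $0$ contains at least one critical point in its cycle, and $f$ has only $d - 1$ finite critical points. For the matching lower bound, I pick $B_n \to D$ algebraically with $f_n := \Phi(B_n) \to f$. By Theorem \ref{bet}, the $d - 1$ free critical points of $B_n$ can be enumerated as $c_{1,n}, \ldots, c_{d-1,n}$ with $c_{k,n} \to q_k$, where ${\rm supp}(S) = \{q_1, \ldots, q_{d-1}\} \subset \partial \mathbb{D} \setminus \{1\}$ are distinct. Under the Riemann maps $\phi_{f_n} : \mathbb{D} \to U_{f_n}(0)$, they correspond to the critical points $\widehat{c}_{k,n} := \phi_{f_n}(c_{k,n})$ of $f_n$ in $U_{f_n}(0)$; passing to a subsequence, $\widehat{c}_{k,n} \to \widehat{c}_k$, a critical point of $f$. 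I then claim $\widehat{c}_1, \ldots, \widehat{c}_{d-1}$ lie in $d - 1$ pairwise distinct immediate parabolic basins of $0$ for $f$. Choosing basepoints $a_{k,n} \in U_{f_n}(0)$ close to $\widehat{c}_{k,n}$, a parabolic analogue of Lemma \ref{kernel-att} should yield Carath\'eodory convergence $(U_{f_n}(0), a_{k,n}) \to (P_k, a_k)$ with $P_k$ an immediate parabolic basin of $f$ and $a_k = \lim a_{k,n}$; the distinctness of the $q_k$ on $\partial \mathbb{D}$, combined with $B_n \to {\rm id}$ uniformly on arcs of $\partial \mathbb{D}$ away from ${\rm supp}(S)$ (Lemma \ref{degenerate-0}), then forces the $P_k$ to be pairwise distinct. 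This delivers $m \geq d - 1$, hence $m = d - 1$.

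With $m = d - 1$ established, $f(z) - z$ is monic of degree $d$ with a root of multiplicity $d$ at $0$, so $f(z) - z = z^d$ and $f = f_*$. The chief technical obstacle is the claim that distinct $q_k$ produce distinct immediate parabolic basins $P_k$: it calls for a parabolic-implosion analysis of how the collapsing attracting component $U_{f_n}(0)$ fragments into the $m$ immediate parabolic basins of $f$ joined at $0$, and in particular of the routing of the $\widehat{c}_{k,n}$, which are distinguished only by their Blaschke-model positions $q_k \in \partial \mathbb{D}$, into genuinely different petals rather than all collapsing to $0$ (delicate in view of Lemma \ref{to-zero}) or concentrating in a single basin.
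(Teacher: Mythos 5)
Your high-level strategy agrees with the paper's: reduce the lemma to showing the parabolic multiplicity $m$ of $f$ at $0$ equals $d-1$, using Lemma \ref{sing-multiplier}(1) for $f'(0)=1$. But the crucial step is missing. You claim that the $d-1$ critical points $\widehat{c}_k$ of $f$, indexed by the distinct points $q_k\in {\rm supp}(S)$, must lie in $d-1$ pairwise distinct immediate parabolic basins, and you yourself flag this as ``the chief technical obstacle'' calling for ``a parabolic-implosion analysis''. That obstacle is not a detail you can wave at --- it is the entire content of the lemma, and your sketch supplies no mechanism for it. Your proposed route (a parabolic analogue of Lemma \ref{kernel-att}, Carath\'eodory convergence of $(U_{f_n}(0), a_{k,n})$ to petals $P_k$, and ``$B_n\to{\rm id}$ away from ${\rm supp}(S)$'' forcing the $P_k$ distinct) is not carried out, and it is not clear it can be: the limit critical points $\widehat{c}_k$ need not sit in immediate parabolic basins at all --- they could land in preperiodic Fatou components or on $J(f)$ --- so the pointed-disk limits you want may not be immediate basins, and some may fail to exist in $\mathcal E$.

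The paper sidesteps all of that by arguing by contradiction and working at the level of components of $K(f)\setminus\{0\}$ rather than immediate basins. By Lemma \ref{para-m} there are exactly $m$ such components, so if $m<d-1$ two critical points $c_1(f), c_2(f)$ of $f$ lie in a single component $L$. For $f_n\to f$, the nearby critical points $c_1(f_n), c_2(f_n)$ then lie in a single component $W_n$ of $U_{f_n}(0)\setminus\overline{W_n^0}$, where $W_n^0$ is the component of $\mathbb D(0,\epsilon_0)\cap U_{f_n}(0)$ containing $0$ and $\epsilon_0<\min\{|c_i(f)|\}$. A path $\gamma_n\subset W_n$ joining them pulls back under $\phi_{f_n}^{-1}$ to a curve in $\mathbb D\setminus\{0\}$ joining the two Blaschke critical points, which tend to distinct $q_1\neq q_2\in {\rm supp}(S)$ (here simplicity of $S$ is used). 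That pulled-back curve separates $0$ from an arc of $\partial\mathbb D$, so for a generic $\zeta$ in that arc the radial segment $[0,\zeta]$ must cross $\phi_{f_n}^{-1}(\gamma_n)$; hence $\phi_{f_n}([0,\zeta])$ escapes $\mathbb D(0,\epsilon_0)$ and has length at least $\epsilon_0$. But Lemma \ref{to-zero2} (a consequence of Proposition \ref{convergent-rays} together with $\phi_{f_n}\to 0$ from Lemma \ref{to-zero}) guarantees that for a.e.\ $\zeta$ the length of $\phi_{f_{n_k}}([0,\zeta])$ tends to $0$ along a subsequence. That contradiction is exactly the ``implosion analysis'' your sketch postpones. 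So your proposal is a correct outline of the problem but does not amount to a proof.
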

\begin{proof} Let $f\in I_{\Phi}(D)$.  By Lemma \ref{sing-multiplier}, $f$ has a parabolic fixed point at $0$ with  $f'(0)=1$.  To show $f=f_*$, it suffices to show that the parabolic multiplicity $m$ of $f$ at $0$ equals $d-1$.
	
	Assume by contradiction that $m<d-1$.  By Lemma \ref{para-m},   $K(f)\setminus\{0\}$ has exactly $m$ connected components. Since $f$ has $d-1$ critical points in $K(f)$,   one component of $K(f)\setminus\{0\}$, denoted as $L$,    contains at least two critical points $c_1(f), c_2(f)$ (possibly same). 
 Take a sequence $(B_n)_n$ in $\mathcal B_d$ so that $B_n\rightarrow D$  algebraically, and $f_n:=\Phi(B_n)\rightarrow f$. There are  critical points $c_1(f_n)$ and $c_2(f_n)$ of $f_n$ so that $c_1(f_n)\rightarrow c_1(f)$ and  $c_2(f_n)\rightarrow c_2(f)$.
 
 Note that the fixed point $0$ of $f$ splits into an attracting fixed point $0$ and $m$-repelling fixed points $r_1(f_n), \cdots, r_m(f_n)$ of $f_n$. Further 
 $$\delta_n:=\max_{1\leq k\leq m} |r_k(f_n)|\rightarrow 0 \ \text{ as } \ n\rightarrow \infty.$$

 Take a positive  $\epsilon_0<\min\{|c_1(f)|, |c_2(f)|\}$.  It is clear that for large $n$, 
 $$\delta_n<\epsilon_0<\min\{|c_1(f_n)|, |c_2(f_n)|\}.$$
  Let $W_n^0$ be the connected component of $\mathbb D(0, \epsilon_0)\cap U_{f_n}(0)$ containing $0$.
 The fact   $c_1(f), c_2(f)\in L$ implies that $c_1(f_n)$ and $c_2(f_n)$ are in the same connected component $W_n$ of $U_{f_n}(0)\setminus \overline{W_n^0}$ \footnote{The asymptotic shapes of the filled Julia set $K(f_n)$ are implicitly studied by Oudkerk \cite{O} using gate structure and parabolic implosion. }.   See Figure \ref{fig: para-singular}.

Since $W_n$ is path-connected, there is a curve $\gamma_n$ in $W_n$ connecting $c_1(f_n)$ and $c_2(f_n)$.
It follows that $\phi_{f_n}^{-1}(\gamma_n)$ is a curve in $\mathbb D\setminus\{0\}$ connecting the two critical points $c_1^{(n)}=\phi_{f_n}^{-1}(c_1(f_n)), c_2^{(n)}=\phi_{f_n}^{-1}(c_2(f_n))$ of  $B_n$.   By Theorem \ref{bet} and passing to a subsequence if necessary, we assume $c_1^{(n)}\rightarrow q_1$ and $c_2^{(n)}\rightarrow q_2$, where $q_1, q_2\in {\rm supp}(S)$.  Since $S$ is simple, we have $q_1\neq q_2$. 

By Lemma \ref{to-zero2},  there  exist a subsequence $(f_{n_k})_k$ and  $\zeta\in \partial \mathbb D\setminus\{q_1, q_2\}$ so that $[0, \zeta]\cap \phi_{f_{n_k}}^{-1}(\gamma_{n_k})\neq \emptyset$ and the Euclidean length of $\phi_{f_{n_k}}([0, \zeta])$ tends to zero as $k\rightarrow \infty$. 

 \begin{figure} 
	\begin{center}
		\includegraphics[height=4cm]{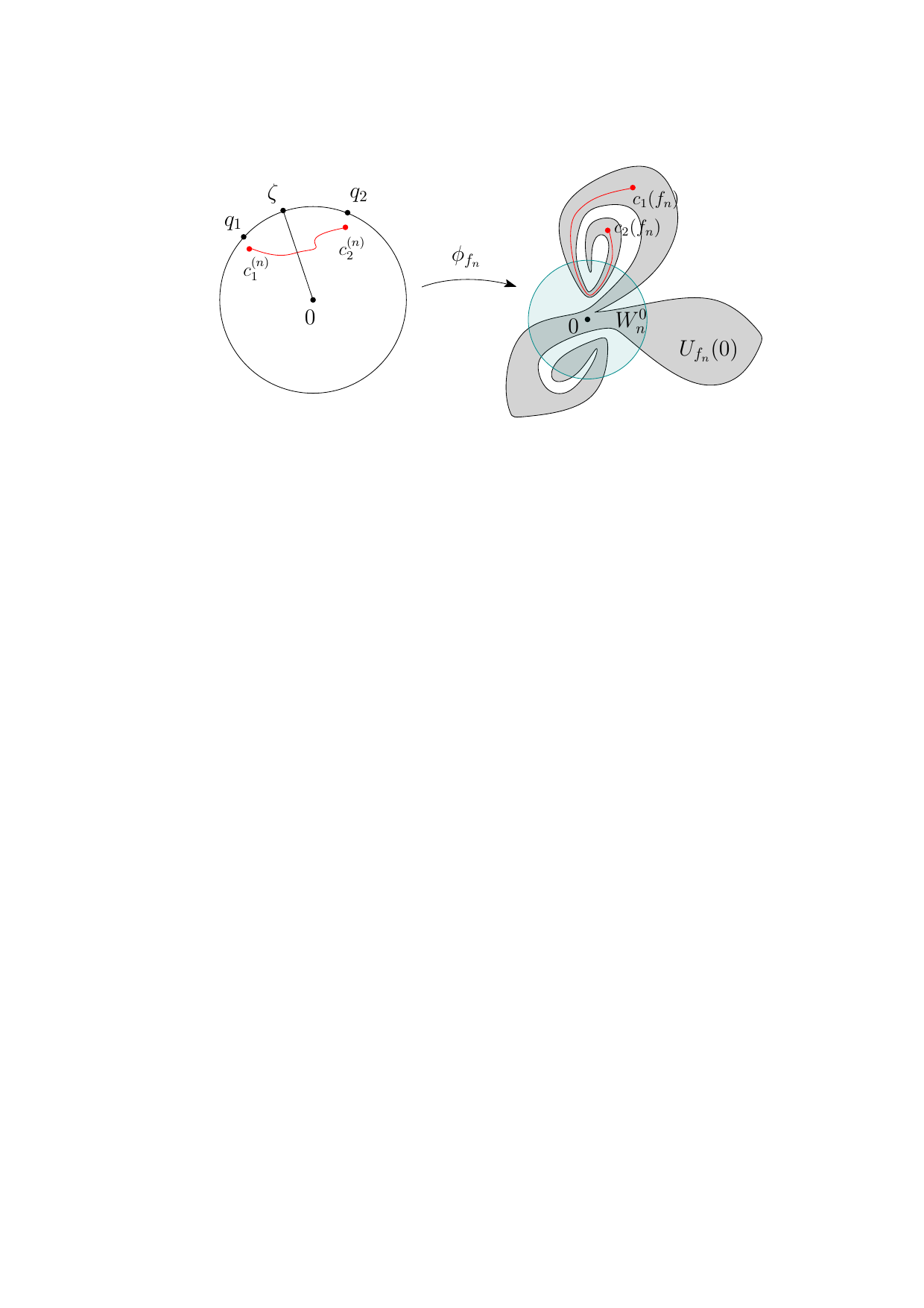}
	\end{center}
	\caption{The perturbation $f_n$ of $f$.}
	\label{fig: para-singular}
\end{figure}

On the other hand, the fact $\phi_{f_{n_k}}([0, \zeta])\cap \gamma_{n_k}\neq \emptyset$ means that $\phi_{f_{n_k}}([0, \zeta])$ is a  curve in $U_{f_{n_k}}(0)$ connects $0$ and a point on $\gamma_{n_k}$.  Since $\gamma_{n_k}\cap W_{n_k}^0=\emptyset$,   the length of  $\phi_{f_{n_k}}([0, \zeta])$ 
   is at least $\epsilon_0$. This is  a contradiction. 
	\end{proof}

	\begin{lem} \label{non-sim}   Let $D=(B,S) \in  \partial_{\rm sing}  {\mathcal B_d}$. Assume $S$ is not simple  or $1\in {\rm supp}(S)$, then
	$$I_{\Phi}(D)\supsetneq \{f_*\} \ \text{ with } f_*(z)=z+z^d.$$
\end{lem}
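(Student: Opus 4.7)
The proof splits into showing $f_* \in I_\Phi(D)$ and producing some $f \in I_\Phi(D)$ with $f \neq f_*$. Writing $D = (B, S)$ with $B$ the identity, the first inclusion follows from a diagonal argument: approximate $S$ by simple divisors $S^{(k)} \in {\rm Div}_{d-1}(\partial \mathbb D)$ avoiding $1$ in their support (split each multiplicity of $S$ into distinct nearby points chosen away from $1$), and set $D^{(k)} = (B, S^{(k)})$. Then $D^{(k)} \to D$ in $\partial\mathcal B_d$, and Lemma \ref{sim-imp} gives $I_\Phi(D^{(k)}) = \{f_*\}$. Choosing $B_k \in \mathcal B_d$ within $1/k$ of $D^{(k)}$ with $\Phi(B_k)$ within $1/k$ of $f_*$ and applying the triangle inequality yields $B_k \to D$ algebraically with $\Phi(B_k) \to f_*$, hence $f_* \in I_\Phi(D)$.

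The second inclusion splits naturally by the location of $1$. When $1 \in {\rm supp}(S)$, Lemma \ref{sing-multiplier}(2) directly provides $f_\zeta \in I_\Phi(D)$ with $f_\zeta'(0) = \zeta$ for any prescribed $\zeta \in \partial \mathbb D$, and any $\zeta \neq 1$ gives a map distinct from $f_*$ since $f_*'(0) = 1$. The genuinely harder case is $1 \notin {\rm supp}(S)$ with $S$ non-simple. Here the multiplier is forced to equal $1$ by Lemma \ref{sing-multiplier}(1), so multipliers cannot distinguish, and we must find another invariant. The key observation is that $f_*'(z) = 1 + dz^{d-1}$ has only simple zeros, so $f_*$ has no multiple critical points; the plan is to build an $f \in I_\Phi(D)$ that does have one.

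The construction proceeds as follows. Write $S = \mu \cdot q + S_0$ with $\mu \geq 2$ and $q \notin {\rm supp}(S_0)$. Pick $a_n \to q$ in $\mathbb D$ and any $T_n \in {\rm Div}_{d-1-\mu}(\mathbb D)$ with $T_n \to S_0$, and use Heins-Zakeri (Theorem \ref{z-p}) to define $B_n \in \mathcal B_d$ as the unique Blaschke product with free ramification divisor $R_{B_n} = \mu \cdot a_n + T_n$. Theorem \ref{bet}, asserting that the extended Heins-Zakeri homeomorphism is the identity on boundary divisors, guarantees $Z_{B_n} \to S$, so $B_n \to D$ algebraically. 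By construction $a_n$ is a critical point of $B_n$ of multiplicity $\mu$, and the conjugacy $B_n = \phi_{f_n}^{-1} \circ f_n \circ \phi_{f_n}$ with $f_n = \Phi(B_n)$ transplants this to a critical point of $f_n$ of multiplicity $\mu$ at $\phi_{f_n}(a_n) \in U_{f_n}(0) \subset K(f_n)$.

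The main obstacle is ensuring that this multiple critical point persists in a subsequential limit $f$ of $(f_n)_n$. Since $\mathcal C_d$ is compact, the filled Julia sets $K(f_n)$ stay uniformly bounded near any limit, so after passing to a further subsequence we have $\phi_{f_n}(a_n) \to c \in \mathbb C$. Then $f_n' \to f'$ as polynomials of degree $d-1$, and a Rouch\'e count of the zeros of $f_n'$ in small disks about $c$ forces $c$ to be a zero of $f'$ of multiplicity at least $\mu \geq 2$. Hence the limit $f$ has a multiple critical point, yielding $f \in I_\Phi(D) \setminus \{f_*\}$ and completing the proof.
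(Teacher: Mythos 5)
Your proposal is correct and follows essentially the same route as the paper: the first inclusion is obtained by approximating $S$ with simple divisors avoiding $1$ and invoking Lemma \ref{sim-imp}, and the second map in $I_{\Phi}(D)$ is produced by Lemma \ref{sing-multiplier}(2) when $1\in{\rm supp}(S)$, and otherwise by using Heins--Zakeri together with Theorem \ref{bet} to build $B_n\to D$ with a prescribed multiple ramification point degenerating to $\partial\mathbb D$, so that the limit retains a multiple critical point and cannot be $f_*$. The only differences are cosmetic: the paper degenerates the whole ramification divisor radially via $R_n=\sum\nu(q)(1-1/n)q$ and leaves the persistence of the multiple critical point implicit, whereas you isolate a single atom of $S$ and spell out the boundedness-plus-Rouch\'e step.
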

\begin{proof} By the density of simple divisors,  there is  a sequence of simple $(S_n)_n$ in ${\rm Div}_{d-1}(\partial \mathbb D)$ so that $S_n\rightarrow S$ and $1\notin  {\rm supp}(S_n)$.  
For any $\epsilon>0$, there exist an integer $n>0$ and a  number $r\in(0,\epsilon)$ with $N_{r}(D_n)\subset N_{\epsilon}(D)$,  where $D_n=(B, S_n)$. It follows that  $I_{\Phi}(D_n)\subset \overline{\Phi(N_{r}(D_n))}\subset \overline{\Phi(N_{\epsilon}(D))}$.   By Lemma \ref{sim-imp},  $I_{\Phi}(D_n)= \{f_*\}$. Hence $f_*\in \overline{\Phi(N_{\epsilon}(D))}$ for all $\epsilon>0$, this  implies  that $f_*\in \bigcap_{\epsilon>0}\overline{\Phi(N_{\epsilon}(D))}=I_{\Phi}(D)$. 
	 
	 In the following, we show $I_{\Phi}(D)\neq \{f_*\}$.  If $1\in {\rm supp}(S)$, it follows from Lemma \ref{sing-multiplier}  (2). 
	 
	  Assume $S$ is not simple.
	 Write $S=\sum_{q\in {\rm supp}(S)} \nu(q)\cdot q$ and let
	 $$R_n= \sum_{q\in {\rm supp}(S)} \nu(q)\cdot (1-1/n)q \in {\rm Div}_{d-1}(\mathbb D), \ n\geq 1.$$
	 Clearly $R_n\rightarrow S$ in ${\rm Div}_{d-1}(\overline{\mathbb D})$.
	 Let  $B_n\in \mathcal B_{d}$ have free ramification divisor  $R_n$.
	 By Theorem \ref{bet}, $B_n\rightarrow D$ algebraically.
	  Let $f$  be  one limit map of the sequence  $f_n=\Phi(B_n)$, then $f\in I_{\Phi}(D)$.  Since $S$ is not simple,  $f_n$ has non-simple critical points for all $n$. It follows that $f$ has  non-simple critical points.  Since all critical points of $f_*$ are simple, we have $f\neq f_*$.  
	 \end{proof}
 


	\section{Proof of the main theorems} \label{proof-main}
	
	In this section, we shall prove Theorems \ref{bet-main},  \ref{lc-R} and  \ref{cusp-dense}.  For each $f\in \mathcal P_d$ and $\epsilon>0$,  let $\mathcal N_\epsilon(f)$ denote  the $\epsilon$-neighborhood of $f$ in $\mathcal P_d$.
	
	\begin{pro} \label{decomposition}  For  any $f\in \partial_{\rm reg}\mathcal H_d$, there is a unique  divisor  $D=(B,S) \in  \partial_{\rm reg}  \mathcal B_{d}$ so that $f\in I_{\Phi}(D)$.  This implies the following decomposition 
		$$ \partial_{\rm reg}\mathcal H_d=\bigsqcup_{D\in  \partial_{\rm reg}  \mathcal B_{d}} I_{\Phi}(D).$$
	\end{pro}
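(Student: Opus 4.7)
The plan is to split the claim into existence and uniqueness of the divisor $D$, and before either, to verify the inclusion $I_\Phi(D)\subseteq \partial_{\rm reg}\mathcal H_d$ for every $D\in \partial_{\rm reg}\mathcal B_d$, so that the purported disjoint union lies inside $\partial_{\rm reg}\mathcal H_d$. The pivotal identity throughout is the conjugacy invariance of the multiplier at $0$: since $\Psi(f_n)=\phi_{f_n}^{-1}\circ f_n\circ \phi_{f_n}$ with $\phi_{f_n}(0)=0$, one has $f_n'(0)=\Psi(f_n)'(0)$; by the explicit Blaschke formula, $|\Psi(f_n)'(0)|=\prod_{k=1}^{d-1}|a_k(n)|$, where $a_1(n),\ldots,a_{d-1}(n)$ are the free zeros of $\Psi(f_n)$.

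For the containment, let $D=(B,S)\in\partial_{\rm reg}\mathcal B_d$ and $f\in I_\Phi(D)$. Proposition \ref{div-eq} furnishes $\zeta\in\partial\mathbb D$ with $\zeta B=\phi_f^{-1}\circ f\circ \phi_f$, so $f'(0)=\zeta B'(0)$. Since $B\in \mathcal B_l$ has degree $l\geq 2$, the Schwarz lemma gives $|B'(0)|<1$, hence $|f'(0)|<1$ and $f\in \partial_{\rm reg}\mathcal H_d$.

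For existence, given $f\in \partial_{\rm reg}\mathcal H_d$, choose $(f_n)_n\subset \mathcal H_d$ with $f_n\to f$. The compact space $\overline{\mathcal B_d}={\rm Div}_{d-1}(\overline{\mathbb D})$ lets us extract a subsequence with $\Psi(f_n)\to D$ algebraically for some $D\in \overline{\mathcal B_d}$. If $D\in \mathcal B_d$, continuity of $\Phi$ would give $f=\lim_n \Phi(\Psi(f_n))=\Phi(D)\in \mathcal H_d$, contradicting $f\in\partial \mathcal H_d$, so $D\in \partial\mathcal B_d$ and $f\in I_\Phi(D)$ directly from the definition. To exclude $D\in \partial_{\rm sing}\mathcal B_d$, observe that in the singular case all free zeros of $\Psi(f_n)$ escape to $\partial\mathbb D$, so $|\Psi(f_n)'(0)|\to 1$; combined with $f_n'(0)=\Psi(f_n)'(0)$ this would force $|f'(0)|=1$, contradicting $f\in \partial_{\rm reg}\mathcal H_d$. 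Hence $D\in \partial_{\rm reg}\mathcal B_d$.

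For uniqueness, suppose $f\in I_\Phi(D_1)\cap I_\Phi(D_2)$ with $D_i=(B_i,S_i)\in \partial_{\rm reg}\mathcal B_d$. By Proposition \ref{unique-rm}, there is a unique conformal map $\phi_f:(\mathbb D,0)\to(U_f(0),0)$ serving as the common limit for any sequence in $\mathcal H_d$ approaching $f$; in particular $\phi_f$ depends only on $f$. Applying Proposition \ref{div-eq} with each $D_i$ produces $\zeta_i\in\partial\mathbb D$ with $\zeta_i B_i=\phi_f^{-1}\circ f\circ \phi_f$, whence $\zeta_1 B_1=\zeta_2 B_2$ on $\mathbb D$. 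Evaluating at $z=1$ (using $B_1(1)=B_2(1)=1$) yields $\zeta_1=\zeta_2$ and therefore $B_1=B_2$. The identity $S_i=\phi_f^*(R_f^0)$ from the same proposition depends only on $f$, giving $S_1=S_2$; hence $D_1=D_2$. The disjoint decomposition then follows formally from existence, uniqueness and the containment. No individual step is genuinely hard; the only subtle point is using the multiplier at $0$ to separate regular from singular algebraic limits.
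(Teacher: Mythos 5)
Your proof is correct and follows the paper's own argument essentially step for step: compactness of $\overline{\mathcal B_d}$ for existence, and Propositions \ref{unique-rm} and \ref{div-eq} for uniqueness via the conjugacy identity $\zeta_i B_i = \phi_f^{-1}\circ f\circ\phi_f$ and $S_i=\phi_f^*(R_f^0)$. Your explicit multiplier computation $|f_n'(0)|=\prod_k |a_k(n)|$ to rule out a singular accumulation divisor simply fills in a step the paper asserts tersely, and evaluating $\zeta_1 B_1=\zeta_2 B_2$ at $z=1$ is a slightly tidier route to $B_1=B_2$ than the paper's zero-divisor comparison.
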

\begin{proof}Let $(f_n)_n$ be  a sequence in $\mathcal H_d$ converging to $f$.  Since $\overline{\mathcal B_d}$ is compact, the   sequence
	$(\Psi(f_n))_n$ has an accumulation divisor $D\in  \partial \mathcal B_{d}$. Hence $f\in I_{\Phi}(D)$. The assumption $f\in \partial_{\rm reg}\mathcal H_d$ implies that $D\in \partial_{\rm reg}  \mathcal B_{d}$.
	
	Suppose that there are $D_1=(B_1, S_1), D_2=(B_2, S_2)\in \partial_{\rm reg} \mathcal B_d$ so that $f\in I_{\Phi}(D_1)\cap I_{\Phi}(D_2)$.  By Proposition   \ref{unique-rm},  there is a unique conformal map $\phi_f:(\mathbb D, 0)\rightarrow (U_f(0), 0)$  so that
	for any sequence $(f_n)_n$ in $\mathcal H_d$ converging to $f$, the conformal maps 
	$(\phi_{f_n})_n$ converge to $\phi_f$ in $\mathbb D$.  
	We may choose two sequences  $(f_n)_n$ and $(g_n)_n$ converging to $f$ so that $\Psi(f_n)\rightarrow D_1$ and $\Psi(g_n)\rightarrow D_2$ .
By Proposition \ref{div-eq}, 
		$$\zeta_k B_k=\phi_{f}^{-1}\circ f\circ \phi_{f}, \  S_k=\phi_f^*(R_f^0), \ \ k=1,2,$$
	for some $\zeta_k\in \partial \mathbb D$.  It follows that $\zeta_1 B_1=\zeta_2 B_2$ and  $S_1=S_2$. The former equality   implies
	 $Z(B_1)= Z(B_2)$. Since $B_1(1)=B_2(1)=1$, we have $B_1=B_2$. 
			\end{proof}
		
  


By Proposition \ref{decomposition}, there is a well-defined map   
	$$\Pi:  
\begin{cases}\partial_{\rm reg}\mathcal H_d\rightarrow  \partial_{\rm reg}  \mathcal B_{d}, \\
	f\mapsto D_f
\end{cases}$$
where $D_f$ is the unique divisor in $\partial_{\rm reg}  \mathcal B_{d}$ so that $f\in I_{\Phi}(D_f)$.

\begin{cor}\label{unique-zeta} For any $f\in \partial_{\rm reg}\mathcal H_d$, write $D=(B,S)=\Pi(f)$. There is a unique number $\zeta=\zeta(f)\in \partial \mathbb D$ such that for any sequence  $(f_n)_n$ in $\mathcal H_d$ converging to $f$, the Blaschke products $B_n=\Psi(f_n)$ converge to $\zeta B$ in $\C-{\rm supp}(S)$. 
 \end{cor}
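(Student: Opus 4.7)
The plan is to deduce this directly from Propositions \ref{div-eq} and \ref{unique-rm}, using Lemma \ref{degenerate-0} to supply subsequential limits in $\C\setminus\mathrm{supp}(S)$. Let $(f_n)_n\subset \mathcal H_d$ converge to $f$ and set $B_n=\Psi(f_n)$. First I observe that $B_n\to D$ algebraically: by compactness of $\overline{\mathcal B_d}$, every subsequential algebraic limit of $(B_n)_n$ lies in $\partial \mathcal B_d$ and witnesses $f\in I_\Phi(\cdot)$, so by the uniqueness in Proposition \ref{decomposition} that limit must equal $\Pi(f)=D$.

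Next, given any subsequence of $(B_n)_n$, Lemma \ref{degenerate-0} yields a further subsequence converging in $\C\setminus\mathrm{supp}(S)$ to $\zeta B$ for some $\zeta\in\partial\mathbb D$ (automatically $\zeta=1$ when $1\notin\mathrm{supp}(S)$). By Proposition \ref{unique-rm}, along the original sequence the conformal maps $\phi_{f_n}$ converge in $\mathbb D$ to a map $\phi_f$ that depends only on $f$; hence Proposition \ref{div-eq} applied along the chosen subsequence yields the identity $\zeta B=\phi_f^{-1}\circ f\circ \phi_f$ on $\mathbb D$.

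The right-hand side is determined by $f$ alone, so the Blaschke product $\zeta B$ is determined by $f$, independently of the (sub)sequence. Since $B$ is the fixed Blaschke component of $D=\Pi(f)$, evaluating at any point $z_0\in\mathbb D$ with $B(z_0)\neq 0$ recovers $\zeta$ as the scalar $(\zeta B)(z_0)/B(z_0)$, so $\zeta$ itself depends only on $f$; denote this common value $\zeta(f)$. A standard subsequence-of-subsequence argument then upgrades subsequential convergence in $\C\setminus\mathrm{supp}(S)$ to whole-sequence convergence $B_n\to\zeta(f)B$, giving both existence and uniqueness. There is no substantive obstacle here; the corollary is a clean bookkeeping consequence of the uniqueness of $\phi_f$ together with the conjugacy identity of Proposition \ref{div-eq}.
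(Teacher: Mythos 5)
Your proof is correct and follows essentially the same route as the paper's: deduce $B_n\to D$ algebraically from Proposition \ref{decomposition}, invoke Lemma \ref{degenerate-0} for subsequential limits $\zeta B$, then pin down $\zeta$ uniquely via the conjugacy identity $\zeta B=\phi_f^{-1}\circ f\circ \phi_f$ coming from Propositions \ref{unique-rm} and \ref{div-eq}, and finally pass from subsequential to full convergence. You spell out the compactness and subsequence-of-subsequence steps a bit more explicitly than the paper does, but the content is the same.
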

\begin{proof} By Proposition \ref{decomposition}, $B_n\rightarrow D$ algebraically. By  Lemma \ref{degenerate-0}, if $1\notin {\rm supp}(S)$, then $B_n$ converges to $B$ in $\C-{\rm supp}(S)$, in this case $\zeta=1$; if $1\in {\rm supp}(S)$, then there exist a subsequence $(B_{n_k})_k$ and a number $\zeta\in \partial \mathbb D$ so that 
	$B_{n_k}$  converges to $\zeta B$ in $\C-{\rm supp}(S)$.  For the latter,   Proposition \ref{unique-rm} gives a unique conformal map $\phi_f:(\mathbb D, 0)\rightarrow (U_f(0), 0)$   so that 
	$\phi_{f_n}$ converges to $\phi_f$ in $\mathbb D$. 
	By  Proposition \ref{div-eq},  $\zeta B=\phi_{f}^{-1}\circ f\circ \phi_{f}$. 
	This equality implies that  $\zeta$ is independent of   the subsequence. Therefore the whole sequence $(B_n)_n$ converges to $\zeta B$ in $\C-{\rm supp}(S)$.
	\end{proof}


	\begin{pro} \label{proj-cont}    The map $\Pi:  \partial_{\rm reg}\mathcal H_d\rightarrow  \partial_{\rm reg}  \mathcal B_{d}$ is continuous.
\end{pro}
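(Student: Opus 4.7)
My plan is to argue by contradiction, combining the compactness of $\overline{\mathcal B_d}$ with the uniqueness assertion in Proposition \ref{decomposition}. Suppose $\Pi$ is not continuous at some $f \in \partial_{\rm reg}\mathcal H_d$, so there is a sequence $(f_n)_n$ in $\partial_{\rm reg}\mathcal H_d$ with $f_n \to f$ but $\Pi(f_n) \not\to \Pi(f)$. Since $\overline{\mathcal B_d} = {\rm Div}_{d-1}(\overline{\mathbb D})$ is compact, after passing to a subsequence I may assume $\Pi(f_n) \to D'$ in $\overline{\mathcal B_d}$ with $D' \neq \Pi(f)$; the goal is to force $D' = \Pi(f)$ and reach a contradiction.

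Next I would perform a diagonal extraction. For each $n$, the definition of $\Pi(f_n)$ supplies a sequence $(g_{n,k})_{k\geq 1}$ in $\mathcal H_d$ with $g_{n,k} \to f_n$ and $\Psi(g_{n,k}) \to \Pi(f_n)$ as $k \to \infty$. Choosing $k_n$ sufficiently large for each $n$ and setting $g_n = g_{n, k_n}$ yields $(g_n)_n \subset \mathcal H_d$ with $g_n \to f$ in $\mathcal P_d$ and $\Psi(g_n) \to D'$ in $\overline{\mathcal B_d}$. In particular $f \in I_\Phi(D')$.

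The key step is then to prove $D' \in \partial_{\rm reg}\mathcal B_d$, for once this holds, Proposition \ref{decomposition} forces $D' = \Pi(f)$ by uniqueness, contradicting the choice of $D'$. I would rule out the remaining two strata as follows. If $D' \in \mathcal B_d$ (the interior), continuity of $\Phi$ on $\mathcal B_d$ would give $f = \lim_n g_n = \lim_n \Phi(\Psi(g_n)) = \Phi(D') \in \mathcal H_d$, contradicting $f \in \partial \mathcal H_d$. If instead $D' \in \partial_{\rm sing}\mathcal B_d$, then all $d-1$ free zeros of $\Psi(g_n)$ converge to points on $\partial \mathbb D$, and the explicit product formula for $\Psi(g_n)'(0)$ (as used in the proof of Lemma \ref{sing-multiplier}) yields $|\Psi(g_n)'(0)| \to 1$. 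Since $\psi_{g_n}(0)=0$ gives $\Psi(g_n)'(0) = g_n'(0)$ by the chain rule, this would force $|f'(0)| = 1$, contradicting $f \in \partial_{\rm reg}\mathcal H_d$.

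The argument is largely formal once Proposition \ref{decomposition} is in hand. The only delicate part is the need to exclude both the interior $\mathcal B_d$ and the singular stratum $\partial_{\rm sing}\mathcal B_d$ as possible subsequential limits of $\Pi(f_n)$ in $\overline{\mathcal B_d}$; the former is excluded by continuity of $\Phi$ together with $f \in \partial\mathcal H_d$, and the latter by the derivative-at-zero computation combined with $|f'(0)|<1$. The diagonal extraction itself and the invocation of Proposition \ref{decomposition} are otherwise routine.
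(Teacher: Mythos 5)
Your proof is correct and follows essentially the same route as the paper: subsequential compactness, a diagonal extraction to produce $g_n\in\mathcal H_d$ with $g_n\to f$ and $\Psi(g_n)\to D'$ so that $f\in I_{\Phi}(D')$, and then the uniqueness assertion of Proposition \ref{decomposition}. The only cosmetic differences are that the paper argues directly rather than by contradiction and observes that $\partial\mathcal B_d$ is already closed in $\overline{\mathcal B_d}$ (so the subsequential limit of $\Pi(f_n)$ lies in $\partial\mathcal B_d$ automatically, making your interior-stratum exclusion superfluous), while your explicit $B_n'(0)$ computation to rule out $\partial_{\rm sing}\mathcal B_d$ fills in a step the paper states tersely.
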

\begin{proof} Let $(f_n)_n$ be a sequence in $\partial_{\rm reg}\mathcal H_d$ converging to $f\in \partial_{\rm reg}\mathcal H_d$. Since $\partial \mathcal B_d$ is compact, passing to a subsequence,  we assume $(\Pi(f_n))_n$ has a limit $D\in \partial \mathcal B_d$. Since $f\in \partial_{\rm reg}\mathcal H_d$, we have $D\in \partial_{\rm reg}  \mathcal B_{d}$.
	
	 In the following, we show $f\in I_{\Phi}(D)$. For each $n\geq 1$, choose $g_n\in \mathcal N_{1/n}(f_n)\cap \mathcal H_d$ so that $\Psi(g_n)\in N_{1/n}(\Pi(f_n))$, then 
	$$g_n\rightarrow f, \ \Psi(g_n)\rightarrow D.$$
	Hence $f\in I_{\Phi}(D)$,   equivalently $D=\Pi(f)$, establishing the continuity of $\Pi$.
	\end{proof}

	\begin{rmk} \label{ext-psi}    The homeomorphism  $\Psi:   \mathcal H_d\rightarrow   \mathcal B_{d}$  extends to a continuous map $\overline{\Psi}:   \mathcal H_d\sqcup \partial_{\rm reg}\mathcal H_d\rightarrow   \mathcal B_{d}\sqcup \partial_{\rm reg}  \mathcal B_{d}$. 
\end{rmk}
\begin{proof} Set $\overline{\Psi}|_{ \mathcal H_d}=\Psi$ and
	 $\overline{\Psi}|_{\partial_{\rm reg}\mathcal H_d}=\Pi$.
	\end{proof}
 
 \begin{proof}[Proof of Theorem \ref{bet-main}]
 	By Propositions \ref{imp-singleton}, \ref{regular-ns} and \ref{sing-imp3}, we get the necessary and sufficient conditions for  $D\in \partial \mathcal B_d$ which allows  $\Phi$-extension.  The continuous extension  
 	$\overline{\Phi}:\mathcal B_d\sqcup \mathcal R\sqcup \mathcal S\rightarrow \overline{\mathcal H_d}$
   is defined as follows: if  $D\in \mathcal R$, then  $\overline{\Phi}(D)=f$, where  $f$ is the unique map in $I_{\Phi}(D)$ (by Proposition \ref{imp-singleton});  if  $D\in \mathcal S$, then $\overline{\Phi}(D)=f_*$
(by Proposition \ref{sing-imp3}).

 	It remains to show that $\overline{\Phi}|_{\mathcal R}: \mathcal R\rightarrow \overline{\Phi}(\mathcal R)$ is a homeomorphism.
 	
 	First, the equality  $\Pi\circ \overline{\Phi}|_{\mathcal R}={\rm id}$ implies that $\overline{\Phi}|_{\mathcal R}$ is a bijection. For any $D\in \mathcal R$, by the proven fact $I_{\Phi}(D)=\bigcap_{\delta>0} \overline{\Phi(N_{\delta}(D))}=\{f\}$ in Proposition \ref{imp-singleton}, we conclude that for any 
 	$\epsilon>0$, there is a $\delta>0$ so that ${\rm diam}(\overline{\Phi(N_{\delta}(D))})<\epsilon$.
 	For any $E\in U_{\delta}(D)\cap \mathcal R$,  choose  small $\delta_E>0$ so that 
 	$N_{\delta_E}(E)\subset N_{\delta}(D)$, it follows that $\{\overline{\Phi}(E)\}=I_{\Phi}(E)\subset \overline{\Phi(N_{\delta_E}(E))}\subset \overline{\Phi(N_{\delta}(D))}$ . Hence $\overline{\Phi}(E)$ is in the $\epsilon$-neighborhood of $f$. This shows the continuity of $\overline{\Phi}|_{\mathcal R}$.
 	
 	By Proposition \ref{proj-cont},  $\overline{\Phi}|_{\mathcal R}^{-1}=\Pi$ is continuous. Hence 
 	 $\overline{\Phi}|_{\mathcal R}: \mathcal R\rightarrow \overline{\Phi}(\mathcal R)$ is a homeomorphism.
 		\end{proof}
 	
 \begin{proof}[Proof of Theorem \ref{lc-R}]   For any $f\in \overline{\Phi}(\mathcal R)$,   let $D=\Pi(f) \in \partial_{\rm reg} \mathcal B_{d}$.  Since $\partial_{\rm reg} \mathcal B_{d}$ is open  in $\partial \mathcal B_{d}$, there is  $\epsilon_D>0$ so that $U_{\epsilon_D}(D)\cap  \partial \mathcal B_{d}\subset  \partial_{\rm reg} \mathcal B_{d}$.  
 	For each $0<\epsilon\leq  \epsilon_D$, note that  $U_{\epsilon }(D)\cap  \partial \mathcal B_{d}$ is an open and path-connected subset of $ \partial \mathcal B_{d}$ containing $D$.
 By Proposition \ref{proj-cont}, 
 $$\mathcal U_\epsilon:=\Pi^{-1}(U_{\epsilon }(D)\cap  \partial \mathcal B_{d})$$
 is an  open  subset of $\partial_{\rm reg} \mathcal H_d$ containing  $f$. Since $\partial_{\rm reg} \mathcal H_d$ is an open subset of $\partial \mathcal H_d$,  $\mathcal U_\epsilon$ is also open in $\partial \mathcal H_d$.
 
 In the following, we show that  $\{\mathcal U_\epsilon; 0<\epsilon\leq  \epsilon_D\}$ is a family of   connected neighborhoods of $f$ with ${\rm diam}(\mathcal U_\epsilon)\rightarrow 0$ as $\epsilon\rightarrow 0$.

 For each $E\in U_{\epsilon }(D)\cap  \partial \mathcal B_{d}$, let $\gamma_E: [0,1]\rightarrow U_{\epsilon }(D)\cap  \partial \mathcal B_{d}$ be a path with $\gamma_E(0)=D$ and $\gamma_E(1)=E$. 
 Let $r_0>0$ be small so that 
$\bigcup_{E'\in \gamma_E}U_{r_0}(E')\subset U_{\epsilon }(D)$. 
We first show that 
 \begin{equation}\label{eq-set}
 \Pi^{-1}(\gamma_E)=K(E): =\bigcap_{0<r<r_0}\overline{\Phi\Big(\bigcup_{E'\in \gamma_E}N_r(E')\Big)}.
 \end{equation}

To see this, first note that $ \Pi^{-1}(\gamma_E)\subset K(E)$.  Conversely,  for any $g\in
K(E)$, there is a sequence $(g_n)_n$ in   $\mathcal H_d$, and a sequence $(E_n)_n$ in $\gamma_E$ so that 
$g_n\rightarrow g$ and $\Psi(g_n)\in N_{1/n}(E_n)$ for all $n\geq 1$. Passing to a subsequence, and by the compactness of $\gamma_E$,  we assume $E_n\rightarrow E_* \in \gamma_E$.
 It follows that $g\in I_{\Phi}(E_*)\subset\Pi^{-1}(\gamma_E)$. This establishes the equality \eqref{eq-set}.
 
By \eqref{eq-set}, $\Pi^{-1}(\gamma_E)$ is connected, because it is a shrinking sequence of connected compacta.
The connectivity of $\mathcal U_\epsilon$  follows from the facts:
$$\mathcal U_\epsilon=\bigcup_{E\in U_{\epsilon }(D)\cap  \partial \mathcal B_{d}}\Pi^{-1}(\gamma_E), \ \ f\in \bigcap_{E\in U_{\epsilon }(D)\cap  \partial \mathcal B_{d}}\Pi^{-1}(\gamma_E).$$

It remains to show ${\rm diam}(\mathcal U_\epsilon)\rightarrow 0$ as $\epsilon\rightarrow 0$.
We claim $\mathcal U_\epsilon\subset \overline{\Phi(N_{\epsilon}(D))}$.  In fact, for any $E\in U_{\epsilon }(D)\cap  \partial \mathcal B_{d}$, there is $r>0$ so that $N_r(E)\subset N_{\epsilon }(D)$. It follows that 
$\Pi^{-1}(E)=I_{\Phi}(E)\subset \overline{\Phi(N_{r}(E))}\subset \overline{\Phi(N_{\epsilon}(D))}$. The claim follows. 

By the claim and Proposition \ref{imp-singleton}, ${\rm diam}(\mathcal U_\epsilon)\leq {\rm diam}(\overline{\Phi(N_{\epsilon}(D))})\rightarrow 0$ as $\epsilon\rightarrow 0$, completing the proof.
 	\end{proof}
 	

 \begin{proof}[Proof of Theorem \ref{cusp-dense}] For any $f\in \overline{\Phi}(\mathcal R)$, let $D=(B_0,S_0)=\Pi(f) \in \mathcal R$.  By  Proposition \ref{imp-singleton}, $ {\rm diam}(\overline{\Phi(N_{\delta}(D))})\rightarrow 0$ as $\delta\rightarrow 0$.  For any given $\epsilon>0$, choose $\delta>0$ so that 
 	${\rm diam}(\overline{\Phi(N_{\delta}(D))})<\epsilon$.
 	Since $S_0$ is simple, write $S_0=\sum_{k=1}^{{\rm deg}(S_0)}1\cdot q_k$, where  ${\rm deg}(S_0)=d-{\rm deg}(f|_{U_f(0)})$.  We further assume $\delta$ is small so that $\overline{\mathbb D(q_k, \delta)}, 1\leq k\leq {\rm deg}(S_0)$ are pairwise disjoint,  without containing $1$ (since $1\notin {\rm supp}(S_0)$).
 	
 	
 	

 		Let $l={\rm deg}(S_0)-m-n$. For each $d-l<k\leq d$,
 	 choose $q'_k\in \mathbb D\cap \mathbb D(q_k, \delta)$. Let $B\in \mathcal B_{d-m-n}$ be the Blaschke product with  zero divisor 
 	 $Z(B_0)+\sum_{d-l<k\leq d}1\cdot q'_k$.  
 	 There are  $B$-periodic points $a_1, \cdots, a_{m}, b_1,\cdots, b_n \in \partial \mathbb D$, coming from $(m+n)$  different $B$-periodic cycles. 
 	 Since $B$-periodic points are dense in $\partial \mathbb D$, we may assume $a_k\in \partial \mathbb D\cap \mathbb D(q_k, \delta)$, $1\leq k\leq m$. For $1\leq j\leq n$, since $\bigcup_{l\geq 0} B^{-l}(b_j)$ is also dense in $\partial \mathbb D$, there is a $B$-aperiodic point $b'_j\in \mathbb D(q_{m+j}, \delta)\cap \bigcup_{l\geq 0} B^{-l}(b_j)$. Now set
 	 $$S=\sum_{k=1}^{m}1\cdot a_k+\sum_{j=1}^{n}1\cdot b'_j, \ \ E=(B, S).$$
 	 Clearly $E\in U_{\delta}(D)$,  $S$ is simple and $1\notin {\rm supp}(S)$.  By the choices of $a_k$ and $b'_j$, $E$ has no dynamical relation. Hence $E\in \mathcal R$.  
 	 
 	 
 	  By Proposition \ref{imp-singleton}, there is a unique map $g$ in $I_{\Phi}(E)\subset \overline{\Phi(N_{\delta}(D))}\subset  \mathcal N_\epsilon(f)$.  By Lemma \ref{char-map-imp}, this $g$ is geometrically finite,  having $m$ parabolic cycles and  $n$ critical points on $\partial U_g(0)$.
 	  	\end{proof}
   	
   		\begin{rmk}  
   			 $\overline{\Phi}(\mathcal R)$ is not dense in $\mathcal \partial \mathcal H_d$ for $d\geq 4$. 
   				\end{rmk}
   			\begin{proof}[Sketch of proof]
   				For any $d\geq 4$, there is  a divisor $D=(B, 2\cdot q)\in \partial_{\rm reg}  \mathcal B_{d}$ with  $q\in \partial \mathbb D \setminus\{1\}$.  By Proposition \ref{regular-ns}, $I_{\Phi}(D)$ is  not a singleton. 
   			Take  $f\in I_{\Phi}(D)$ so that a  component $V_f$ of $f^{-1}(U_f(0))\setminus U_f(0)$  contains a critical point $c_1(f)$, and $\partial V_f\cap \partial U_f(0)$ consists of another critical point $c_2(f)$. 
   			Then there is a neighborhood of $\mathcal N_\epsilon(f)$ of $f$ so that 
   			$ \mathcal N_\epsilon(f)\cap \partial\mathcal H_d\subset \partial_{\rm reg} \mathcal H_d$,  and $\Pi(g)$ takes the form $(B_g, 2 \cdot q_g)$ for any $g\in \mathcal N_\epsilon(f)\cap \partial\mathcal H_d$.
   			Hence $\mathcal N_\epsilon(f)\cap \overline{\Phi}(\mathcal R)=\emptyset$.
   			\end{proof}

	\bibliographystyle{alpha} 

\begin{thebibliography}{5}
	 	
	 	\bibitem[A]{A}  L. V. Ahlfors.  {\it Conformal invariants: topics in geometric function theory}. AMS Chelsea Publishing,1973.
	 	
	 	\bibitem[BOPT14]{BOPT14} A. Blokh, L. Oversteegen, R. Ptacek, and V. Timorin. The main cubioid. {\it Nonlinearity}. 27:1879–1897, 2014.
	 	 
	 	\bibitem[BOPT16]{BOPT16} A. Blokh, L. Oversteegen, R. Ptacek, and V. Timorin. Quadratic-like dynamics of cubic polynomials. {\it Commun. Math. Phy.} 341:733–749, 2016.
	 	
	 	\bibitem[BOPT18]{BOPT18} A. Blokh, L. Oversteegen, R. Ptacek, and V. Timorin. Complementary components to the cubic principal hyperbolic domain. {\it Proc. Amer. Math. Soc.} 146:4649–4660, 2018.
	 	
	 \bibitem[BE]{BE}  X. Buff and A. Epstein.	A Parabolic Yoccoz Inequality. {\it Fund. Math.} (2002), 172, 249-289.
	 	
	 	\bibitem[BT]{BT}  X. Buff and L. Tan.  Dynamical convergence and polynomial vector fields. {\it J. Diff. Geom.}  77 (2007) 1-41.
	 	
	 	
	 		 \bibitem[CWY]{CWY}  J. Cao, X. Wang, Y. Yin.  Boundary of the central hyperbolic component I:  dynamical properties. arXiv:2506.18487.  2025
	 	 
	 	 \bibitem[Car]{Car}  C. Carath\'eodory. {\it Conformal Representation.} Cambridge University Press, 1952.


 \bibitem[De01]{De01} L. DeMarco.  Dynamics of rational maps: a positive current on the bifurcation locus. 
{\it  Math. Research Letters.} 8 (2001) 57-66.  

	 \bibitem[De05]{De05} L. DeMarco. Iteration at the boundary of the space of rational maps.  {\it  Duke Math. J.} 130(2005), 169–197.	 
	 	 
	 	 \bibitem[DeP11]{DeMP}  L. DeMarco and K. Pilgrim,  Polynomial basins of infinity. {\it Geom. Funct. Anal.}  21 (2011) 920–950.
	 	 
	 	 \bibitem[D94]{D94} A. Douady, Does a Julia set depend continuously on the polynomial? {\it Proc. Sympos. Appl. Math.}, 49, AMS Providence, RI, 1994, 91–138.
	 	 
	 	 \bibitem[DES]{DES} A. Douady, F. Estrada,  P. Sentenac. Champs de vecteurs polynomiaux sur $\mathbb C$, manuscript.
	 	
	 	\bibitem[DH]{DH} A. Douady, J. H. Hubbard. {\it Exploring the Mandelbrot set.} The Orsay notes. 1985. 
	 	
	 	   	\bibitem[F92]{F92}  D. Faught. Local connectivity in a family of cubic polynomials. Ph.D. Thesis, Cornell University, 1992.
	 	
	 	
	 	\bibitem[GWW25]{GWW25}  Y. Gao, X. Wang, and Y. Wang. Boundaries of the bounded hyperbolic components of polynomials. 2025. arXiv:2504.16496.
	 	
	 	
	 	\bibitem[GM93]{GM93}  L. Goldberg, J. Milnor.   
	 	Fixed points of polynomial maps. Part II. Fixed point portraits. 
	 	{\it Ann. Scient. \'Ec. Norm. Sup. }(4),  26(1): 51-98,  1993.
	 	
	 	
	 	
	 	
	 	\bibitem[H]{H}  M. Heins. On a class of conformal metrics. {\it Nagoya Math. J.} 21 (1962), 1–60.
	 	
	 	
	 	
	 	\bibitem[Luo24]{Luo24}Y. Luo. On geometrically finite degenerations I: boundaries of main hyperbolic components. {\it J. Eur. Math. Soc.},   26 (10):3793–3861, 2024.
	 	
	  	 \bibitem[Mc88]{Mc88}	C. McMullen.  Automorphism of rational maps. In {\it Holomorphic Functions and
	 	Moduli I}. 31–60. Springer-Verlag, 1988.
	 	
	 	
	 \bibitem[Mc91]{Mc91}	C. McMullen. Cusps are dense. {\it  Annals of Math.} 133(1991), 217–247.
	 	
	\bibitem[Mc94a]{Mc94}  C. McMullen. {\it Complex Dynamics and Renormalization}. Ann. of Math. Studies 135, Princeton Univ. Press, Princeton, NJ, 1994.
	
	\bibitem[Mc94b]{Mc94b} C. McMullen. Rational maps and Teichmuller space, volume 1574 of {\it Lecture Notes in Mathematics}, pages 430–433. Springer-Verlag, 1994.
	
	
\bibitem[Mc00]{Mc00}	C. McMullen. Hausdorff dimension and conformal dynamics II: 
Geometrically finite rational maps. {\it  Comment. Math. Helv. } 75 (2000) 535–593. 

\bibitem[Mc09a]{Mc09a} C. McMullen. A Compactification of the Space of Expanding Maps on the Circle. {\it   Geom. Funct. Anal. }18 (2009), 2101–2119.

\bibitem[Mc09b]{Mc09b}  C.McMullen. Ribbon $\mathbb R$-trees and holomorphic dynamics on the unit disk.  {\it  J. Topol. } 2(2009), 23–76.


 \bibitem[Mc10]{Mc10} C. McMullen,  Dynamics on the unit disk: Short geodesics and simple cycles. {\it Comment. Math. Helv.} 85 (2010), no. 4, pp. 723–749.

\bibitem[Mil06]{Mil06} J. Milnor, {\it Dynamics in One Complex Variable (3rd Edition)}. Ann. of Math. Studies 160, Princeton Univ. Press, Princeton, NJ, 2006.


\bibitem[Mil12]{Mil12}  J. Milnor. Hyperbolic components (with an appendix by A. Poirier). {\it Contemp. Math.} 573, 2012, 183-232.

\bibitem[LM97]{LM97} M. Lyubich, Y. Minsky.
Laminations in holomorphic dynamics.
{\it J. Diff. Geom. } 47(1): 17-94 (1997). 

\bibitem[O]{O} R. Oudkerk.  The parabolic implosion: Lavaurs maps and strong convergence for rational maps. {\it Contemporary Mathematics}. Volume 303. 2002.

\bibitem[PT09]{PT09} C. Petersen and L. Tan. Analytic coordinates recording cubic dynamics. In Dierk Schleicher, editor, {\it Complex Dynamics: Families and Friends},   413– 450.  ed. D. Schleicher, A K Peters, 2009. 

\bibitem[PZ24a]{PZ24} C. Petersen, S. Zakeri. Hausdorff limits of external rays: the topological picture.  {\it Proc. London Math. Soc.} 128 (2024) e12598.

\bibitem[PZ24b]{PZ24b}  C. Petersen, S. Zakeri. Buff forms and invariant curves of near-parabolic maps. arXiv:2412.17125.


 

\bibitem[R]{R} P. Roesch. Hyperbolic components of polynomials with a fixed critical point of maximal order.  {\it Ann. Scient. \'Ec. Norm. Sup. }(4), 40(6):901-949, 2007.


\bibitem[RY22]{RY22} P. Roesch and Y. Yin. Bounded critical Fatou components are Jordan domains for polynomials. {\it Sci. China Math. } 65(2):331–358, 2022.

	
	\bibitem[W]{W}	J. Walsh.  Note on the location of zeros of extremal polynomials in the non-euclidean plane. {\it  Acad Serbe Sci. Publ Inst Math.}  4: 157–160. 1952.
	
	\bibitem[Z]{Z} S. Zakeri.  On critical points of proper holomorphic maps on the unit disk.
{\it 	Bull. London Math. Soc.} 30 (1998) 62-66.
\end{thebibliography}

\end{document}